\numberwithin{equation}{section}
\theoremstyle{plain}
\newtheorem{theorem}{Theorem}
\newtheorem{proposition}{Proposition}
\theoremstyle{remark}
\newtheorem{remark}{Remark}
\begin{document}

\begin{frontmatter}
\title{Adaptive Minimax Estimation over Sparse $\ell_q$-Hulls}
\runtitle{Adaptive Minimax Estimation over Sparse $\ell_q$-Hulls}
%\vspace{-0.5cm}
\begin{aug}
%%%%%%%%%%%%%%%%%%%
\author[a]{\fnms{Zhan}  \snm{Wang}\ead[label=e1]{wangx607@stat.umn.edu}},
\author[b]{\fnms{Sandra} \snm{Paterlini}\ead[label=e2]{sandra.paterlini@unimore.it} \thanksref{t1}},
\author[c]{\fnms{Fuchang} \snm{Gao}\ead[label=e3]{fuchang@uidaho.edu}},
\and
\author[a]{\fnms{Yuhong} \snm{Yang}\ead[label=e4]{yyang@stat.umn.edu} \thanksref{t2}}

\vspace{-0.2cm}
\address[a]{School of Statistics, University of Minnesota, USA,\\ \printead{e1,e4}}
\vspace{-0.2cm}
\address[b]{Department of Economics,  University of Modena and Reggio E., Italy,\\ \printead{e2}}
\vspace{-0.2cm}
\address[c]{Department of Mathematics, University of Idaho, USA, \\ \printead{e3}}
\thankstext{t1}{ Sandra Paterlini's research was partially supported by Fondazione Cassa di Risparmio di Modena.}
\thankstext{t2}{ Yuhong Yang's research was partially supported by NSF grant DMS-1106576.}

\runauthor{Z.Wang, S.Paterlini, F. Gao and Y.Yang}
\end{aug}
\begin{abstract} 
Given a dictionary of $M_n$ initial estimates of the unknown true regression function, we aim to construct linearly aggregated estimators 
that target the best performance among all the linear combinations under a sparse $q$-norm ($0 \leq q \leq 1$) constraint on the linear coefficients. 
Besides identifying the optimal rates of aggregation for these $\ell_q$-aggregation problems, our multi-directional (or universal)
 aggregation strategies by model mixing 
or model selection achieve the optimal
rates simultaneously over the full range of $0\leq q \leq 1$ for
general $M_n$ and upper bound $t_n$ of the $q$-norm. Both random and fixed designs, 
with known or unknown error variance, are handled, and the $\ell_q$-aggregations examined in this work cover major types of aggregation
problems previously studied in the literature.  Consequences on minimax-rate adaptive regression under $\ell_q$-constrained true
coefficients ($0 \leq q \leq 1$) are also provided. 

Our results show that the minimax rate of $\ell_q$-aggregation ($0 \leq q \leq 1$) is basically determined by an effective model
size, which is a sparsity index that depends on $q$, $t_n$, $M_n$, and the sample size $n$ in an easily interpretable way based on a classical model
selection theory that deals with a large number of models. In addition, in the fixed design case, the model selection approach is seen to
yield optimal rates of convergence not only in expectation but also with exponential decay of deviation probability. In contrast, the model mixing approach can have leading 
constant one in front of the target risk in the oracle inequality while not offering optimality in deviation probability.\\
\vspace{-0.4cm}
\end{abstract}

\begin{keyword}
\kwd{minimax risk}
\kwd{adaptive estimation}
\kwd{sparse $\ell_q$-constraint}
\kwd{linear combining}
\kwd{aggregation}
\kwd{model mixing}
\kwd{model selection}
\end{keyword}

\end{frontmatter}

%------------------------------------------------------------------------------------------------------------------------------------------------------------------
%%%%%%%%%%%%%%%%%%%%%%%%% 1. INTRODUCTION SANDRA%%%%%%%%%%%%%%%%%%%
%---------------------------------------------------------------------------------------------------------------------------------------------

\section{Introduction}

\label{introduction}

The idea of sharing strengths of different estimation procedures by
combining them instead of choosing a single one has led to fruitful and
exciting research results in statistics and machine learning. In statistics,
the theoretical advances have centered on optimal risk bounds that require
almost no assumption on the behaviors of the individual estimators to be
integrated (see, e.g., \cite{Yang1996, Yang2000, Catoni1997, Catoni2004,
JuditskyNemirovski2000, Nemirovski2000, Yang2004, Tsybakov2003} for early
representative work). While there are many different ways that one can
envision to combine the advantages of the candidate procedures, the
combining methods can be put into two main categories: those intended for 
\textit{combining for adaptation}, which aims at combining the procedures to
perform adaptively as well as the best candidate procedure no matter what
the truth is, and those for \textit{combining for improvement}, which aims
at improving over the performance of all the candidate procedures in certain
ways. Whatever the goal is, for the purpose of estimating a target function
(e.g., the true regression function), we expect to pay a price: the risk of
the combined procedure is typically larger than the target risk. The
difference between the two risks (or a proper upper bound on the difference)
is henceforth called \textit{risk regret} of the combining method.

The research attention is often focused on one but the main step in the
process of combining procedures, namely, \textit{aggregation of estimates},
wherein one has already obtained estimates by all the candidate procedures
(based on initial data, most likely from data splitting, or previous
studies), and is trying to aggregate these estimates into a single one based
on data that are independent of the initial data. The performance of the
aggregated estimator (conditional on the initial estimates) plays the most
important role in determining the total risk of the whole combined
procedure, although the proportion of the initial data size and the later
one certainly also influences the overall performance. In this work, we will
mainly focus on the aggregation step.

It is now well-understood that given a collection of procedures, although
combining procedures for adaptation and selecting the best one share the
same goal of achieving the best performance offered by the candidate
procedures, the former usually wins when model selection uncertainty is high
(see, e.g., \cite{YuanYang2005}). Theoretically, one only needs to pay a
relatively small price for aggregation for adaptation (\cite{Yang2000b,
Catoni2004, Tsybakov2003}). In contrast, aggregation for improvement under a
convex constraint or $\ell _{1}$-constraint on coefficients is associated
with a higher risk regret (as shown in \cite{JuditskyNemirovski2000,
Nemirovski2000, Yang2004, Tsybakov2003}). Several other directions of
aggregation for improvement, defined via proper constraints imposed on the $%
\ell _{0}$-norm alone or in conjunction with the $\ell _{1}$-norm of the
linear coefficients, have also been studied, including linear aggregation
(no constraint, \cite{Tsybakov2003}), aggregation to achieve the best
performance of a linear combination of no more than a given number of
initial estimates (\cite{Buneaetal2007}) and also under an additional
constraint on the $\ell _{1}$-norm of these coefficients (\cite{Lounici2007}%
). Interestingly, combining for adaptation has a fundamental role for
combining for improvement: it serves as an effective tool in constructing
multi-directional (or universal) aggregation methods that simultaneously
achieve the best performance in multiple specific directions of aggregation
for improvement. This strategy was taken in section 3 of \cite{Yang2004},
where aggregations of subsets of estimates are then aggregated to be
suitably aggressive and conservative in an adaptive way. Other uses of
subset models for universal aggregation have been handled in \cite%
{Buneaetal2007, RigolletTsybakov2011}.

The goal of this paper is to propose aggregation methods that achieve the
performance (in risk with/without a multiplying factor), up to a multiple of
the optimal risk regret as defined in \cite{Tsybakov2003}, of the best
linear combination of the initial estimates under the constraint that the $q$%
-norm ($0\leq q\leq 1$) of the linear coefficients is no larger than some
positive number $t_{n}$ (henceforth the \textit{$\ell _{q}$-constraint}). We
call this type of aggregation \textit{$\ell _{q}$-aggregation}. It turns out
that the optimal rate is simply determined by an \textit{effective model
size }$m_{\ast }$, which roughly means that only $m_{\ast }$ terms are
really needed for effective estimation. We strive to achieve the optimal $%
\ell _{q}$-aggregation simultaneously for all $q$ ($0\leq q\leq 1$) and $%
t_{n}$ ($t_{n}>0$). From the work in \cite{JuditskyNemirovski2000, Yang2004,
Tsybakov2003, AudibertCatoni2010}, it is known that by suitable aggregation
methods, the squared $L_{2}$ risk is no larger than that of the best linear
combination of the initial $M_{n}$ estimates with the $\ell _{1}$-norm of
the coefficients bounded by 1 plus the order $(\log (M_{n}/\sqrt{n}%
)/n)^{1/2} $ when $M_{n}\geq \sqrt{n}$ or $M_{n}/n$ when $M_{n}<\sqrt{n}$.
Two important features are evident here: 1) When $M_{n}$ is large, its
effect on the risk enlargement is only through a logarithmic fashion; 2) No
assumption is needed at all on how the initial estimates are possibly
correlated. The strong result comes from the $\ell _{1}$-constraint on the
coefficients.

Indeed, in the last decade of the twentieth century, the fact that $\ell
_{1} $-type of constraints induce sparsity has been used in different ways
for statistical estimation to attain relatively fast rates of convergence as
a means to overcome the curse of dimensionality. Among the most relevant
ones, Barron \cite{Barron1994} studied the use of $\ell _{1}$-constraint in
construction of estimators for fast convergence with neural nets; Tibshirani 
\cite{Tibshirani1996} introduced the Lasso; Chen, Donoho and Saunders \cite%
{Chenetal1998} proposed the basis pursuit with over complete bases.
Theoretical advantages have also been pointed out. Barron \cite{Barron1993}
showed that for estimating a high-dimensional function that has integrable
Fourier transform or a neural net representation, accurate approximation
error is achievable. Together with model selection over finite dimensional
neural network models, relatively fast rates of convergence, e.g., $[(d\log
n)/n]^{1/2},$ where $d$ is the input dimension, are obtained (see, e.g., 
\cite{Barron1994} with parameter discretization, section III.B in \cite%
{YangBarron1998} and section 4.2 in \cite{Barronetal1999} with continuous
models). Donoho and Johnstone \cite{DonohoJohnstone1994} identified how the $%
\ell _{q}$-constraint ($q>0$) on the mean vector affects estimation accuracy
under $\ell _{p}$ loss ($p\geq 1$) in an illustrative Gaussian sequence
model. For function estimation, Donoho \cite{Donoho1993} studied sparse
estimation with unconditional orthonormal bases and related the essential
rate of convergence to a sparsity index. In that direction, for a special
case of function classes with unconditional basis defined basically in terms
of bounded $q$-norm on the coefficients of the orthonormal expansion, the
rate of convergence $(\log n/n)^{1-q/2}$ was given in \cite{YangBarron1998}
(section 5). The same rate also appeared in the earlier work of Donoho and
Johnstone \cite{DonohoJohnstone1994} in some asymptotic settings. Note that
when $q=1,$ this is exactly the same rate of the risk regret for $\ell _{1}$%
-aggregation when $M_{n}$ is of order $n^{\kappa }$ for $1/2\leq \kappa
<\infty $.

General model selection theories on function estimation intend to work with
general and possibly complicatedly dependent terms. Considerable research
has been built upon subset selection as a natural way to pursue sparse and
flexible estimation. When exponentially many or more models are entertained,
optimality theories that handle a small number of models (e.g., \cite%
{Shibata1981, Li1987}) are no longer suitable. General theories were then
developed for estimators based on criteria that add an additional penalty to
the AIC type criteria, where the additional penalty term prevents
substantial overfitting that often occurs when working with exponentially
many models by standard information criteria, such as AIC and BIC. A
masterpiece of work with tremendous breadth and depth is Barron, Birg\'{e}
and Massart \cite{Barronetal1999}, and some other general results in
specific contexts of density estimation and regression with fixed or random
design are in \cite{YangBarron1998, Yang1999, BirgeMassart2001, Baraud2000,
Baraud2002, Birge2004}.

These model selection theories are stated for nonparametric scenarios where
none of the finite-dimensional approximating models is assumed to hold but
they are used as suitable sieves to deliver good estimators when the size of
the sieve is properly chosen (see, e.g., \cite{ShenWong1994, vandegeer1995,
BirgeMassart1998} for non-adaptive sieve theories). If one makes the
assumption that a subset model of at most $k_{n}$ terms holds ($\ell _{0}$%
-constraint), then the general risk bounds mentioned in the previous
paragraph immediately give the order $k_{n}\log \left( M_{n}/k_{n}\right) /n$
for the risk of estimating the target function under quadratic type losses.

Thus, the literature shows that both $\ell _{0}$- and $\ell _{1}$%
-constraints result in fast rates of convergence (provided that $M_{n}$ is
not too large and $k_{n}$ is relatively small), with hard-sparsity directly
coming from that only a small number of terms is involved in the true model
under the $\ell _{0}$-constraint, and soft-sparsity originating from the
fact that there can only be a few large coefficients under the $\ell _{1}$%
-constraint. In this work, with new approximation error bounds in $\ell
_{q,t_{n}}^{M_{n}}$-hulls (defined in section \ref{notationanddefintion})
for $0<q\leq 1$, from a theoretical standpoint, we will see that model
selection or model combining with all subset models in fact simultaneously
exploits the advantage of sparsity induced by $\ell _{q}$-constraints for $%
0\leq q\leq 1$ to the maximum extent possible.

Clearly, all subset selection is computationally infeasible when the number
of terms $M_{n}$ is large. To overcome this difficulty, an interesting
research direction is based on greedy approximation, where terms are added
one after another sequentially (see, e.g., \cite{Barronetal2008}). Some
general theoretical results are given in the recent work of \cite%
{Huangetal2008}, where a theory on function estimation via penalized squared
error criteria is established and is applicable to several greedy
algorithms. The associated risk bounds yield optimal rate of convergence for
sparse estimation scenarios. For aggregation methods based on exponential
weighting under fixed design, practical algorithms based on Monte Carlo
methods have been given in \cite{DalalyanTsybakov2009, RigolletTsybakov2011}.

Considerable recent research has focused on $\ell _{1}$-regularization,
producing efficient algorithms and related theories. Interests are both on
risk of regression estimation and on variable selection. Some estimation
risk bounds are in \cite{Bickeletal2009, GreenshteinRitov2004,
Koltchinskii2009a, Koltchinskii2009b, MeinshausenBuhlmann2006,
MeinshausenYu2009, Wainwright2009, vandegeer2008, ZhangHuang2008,
ZhangCH2010, ZhangT2009, YeZhang2010}.

The $\ell _{q}$-constraint, despite being non-convex for $0<q<1$, poses an
easier optimization challenge than the $\ell _{0}$-constraint, which is
known to define a NP-hard optimization problem and be hardly tractable for
large dimensions. Although a few studies have devoted to the algorithmic
developments of the $\ell _{q}$-constraint optimization problem, such as
multi-stage convex relaxation algorithm (\cite{Zhang2010}) and the DC
programming approach (\cite{Gassoetal2009}), little work has been done with
respect to the theoretical analysis of the $\ell _{q}$-constrained framework.

Sparse model estimation by imposing the $\ell _{q}$-constraint has found
consensus among academics and practitioners in many application fields,
among which, just to mention a few, compressed sensing, signal and image
compression, gene-expression, cryptography and recovery of loss data. The $%
\ell _{q}$-constraints do not only promote sparsity but also are often
approximately satisfied on natural classes of signal and images, such as the
bounded variation model for images and the bump algebra model for spectra (%
\cite{Donoho2006}).

Our $\ell _{q}$-aggregation risk upper bounds require no assumptions on
dependence of the initial estimates in the dictionary and the true
regression function is arbitrary (except that it has a known sup-norm upper
bound in the random design case). The results readily give minimax rate
optimal estimators for a regression function that is representable as a
linear combination of the predictors subject to $\ell _{q}$-constraints on
the linear coefficients.

Two recent and interesting results are closely related to our work, both
under fixed design only. Raskutti, Wainwright and Yu \cite{Raskuttietal2010}
derived in-probability minimax rates of convergence for estimating the
regression functions in $\ell _{q,t_{n}}^{M_{n}}$-hulls with minimal
conditions for the full range of $0\leq q\leq 1.$ In addition, in an
informative contrast, they have also handled the quite different problem of
estimating the coefficients under necessarily much stronger conditions.
Rigollet and Tsybakov \cite{RigolletTsybakov2011} nicely showed that
exponential mixing of least squares estimators by an algorithm of Leung and
Barron \cite{LeungBarron2006} over subset models achieves universal
aggregation of five different types of aggregation, which involve $\ell _{0}$%
- and/or $\ell _{1}$-constraints. Furthermore, they implemented a MCMC based
algorithm with favorable numerical results. As will be seen, in this context
of regression under fixed design, our theoretical results are broader with
improvements in several different ways.

Our theoretical work emphasizes adaptive minimax estimation under the mean
squared risk. Building upon effective estimators and powerful risk bounds
for model selection or aggregation for adaptation, we propose several
aggregation/combining strategies and derive the corresponding oracle
inequalities or index of resolvability bounds. Upper bounds for $\ell _{q}$%
-aggregations and for linear regression with $\ell _{q}$-constraints are
then readily obtained by evaluating the index of resolvability for the
specific situations, incorporating an approximation error result that
follows from a new and precise metric entropy calculation on function
classes of $\ell _{q,t_{n}}^{M_{n}}$-hulls. Minimax lower bounds that match
the upper rates are also provided in this work. Whatever the relationships
between the dictionary size $M_{n}$, the sample size $n$, and upper bounds
on the $\ell _{q}$-constraints, our estimators automatically take advantage
of the best sparse $\ell _{q}$-representation of the regression function in
a proper sense.

By using classical model selection theory, we have a simple explanation of
the minimax rates, by considering the effective model size $m_{\ast }$,
which provides the best possible trade-off between the approximation error,
the estimation error, and the additional price due to searching over not
pre-ordered terms. The optimal rate of risk regret for $\ell _{q}$%
-aggregation, under either hard or soft sparsity (or both together), can
then be unifyingly expressed as 
\begin{equation*}
REG(m_{\ast })=1\wedge \frac{m_{\ast }\left( 1+\log \frac{M_{n}}{m_{\ast }}%
\right) }{n},
\end{equation*}%
which can then be interpreted as the log number of models of size $m_{\ast }$
divided by the sample size ($\wedge 1$), as was previously suggested for the
hard sparsity case $q=0$ (e.g., Theorem 1 of \cite{YangBarron1998}, Theorems
1 and 4 of \cite{Yang1999}).

The paper is organized as follows. In section \ref{preliminaries}, we
introduce notation and some preliminaries of the estimators and aggregation
algorithms that will be used in our strategies. In addition, we derive
metric entropy and approximation error bounds for $\ell _{q,t_{n}}^{M_{n}}$%
-hulls that play an important role in determining the minimax rate of
convergence and adaptation. In section \ref{lqaggregation}, we derive
optimal rates of $\ell _{q}$-aggregation and show that our methods achieve
multi-directional aggregation. We also briefly talk about $\ell _{q}$-combination of
procedures. In section \ref{linearregressionrandom}, we
derive the minimax rate for linear regression with $\ell _{q}$-constrained
coefficients also under random design. In section \ref{linearregressionfixed}%
, we handle $\ell _{q}$-regression/aggregation under fixed design with known
or unknown variance. A discussion is then reported in section \ref%
{discussion}. In section \ref{generaloracle}, oracle inequalities are given
for the random design. Proofs of the results are provided in section \ref%
{proofs}. We note that some upper and lower bounds in the last two sections
may be of independent interest.

%------------------------------------------------------------------------------------------------------------------------------------------------------------------
%%%%%%%%%%%%%%%%%%%%%%%%% 2. Preliminaries %%%%%%%%%%%%%%%%%%%%%%%%%%
%------------------------------------------------------------------------------------------------------------------------------------------------------------------
\vspace*{.2in}

\section{Preliminaries}

\label{preliminaries}

Consider the regression problem where a dictionary of $M_{n}$ prediction
functions ($M_{n}\geq 2$ unless stated otherwise) are given as initial
estimates of the unknown true regression function. The goal is to construct
a linearly combined estimator using these estimates to pursue the
performance of the best (possibly constrained) linear combinations. A
learning strategy with two building blocks will be considered. First, we
construct candidate estimators from subsets of the given estimates. Second,
we aggregate the candidate estimators using aggregation algorithms or model
selection methods to obtain the final estimator.  
%The first step is crucial because the prediction functions we were originally given may be quite poor in estimating the truth and thus aggregating them directly would likely not lead to optimal results. 

\subsection{Notation and definition}

\label{notationanddefintion}

Let $(\mathbf{X}_{1},Y_{1}),\ldots ,(\mathbf{X}_{n},Y_{n})$ be $n$ ($n\geq 2$%
) i.i.d. observations where $\mathbf{X}_{i}=(X_{i,1},\ldots ,X_{i,d})$, $%
1\leq i\leq n,$ take values in $\mathcal{X}\subset \mathbb{R}^{d}$ with a
probability distribution $P_{X}$. We assume the regression model 
\begin{equation}
Y_{i}=f_{0}(\mathbf{X}_{i})+\varepsilon _{i},\hspace*{0.4in}i=1,\ldots n,
\label{regression}
\end{equation}%
where $f_{0}$ is the unknown true regression function to be estimated. The
random errors $\varepsilon _{i}$, $1\leq i\leq n$, are independent of each
other and of $\mathbf{X}_{i}$, and have the probability density function $%
h(x)$ (with respect to the Lebesgue measure or a general measure $\mu $)
such that $E(\varepsilon _{i})=0$ and $E(\varepsilon _{i}^{2})=\sigma
^{2}<\infty $. The quality of estimating $f_{0}$ by using the estimator $%
\hat{f}$ is measured by the squared $L_{2}$ risk (with respect to $P_{X} $) 
\begin{equation*}
R(\hat{f};f_{0};n)=E\Vert \hat{f}-f_{0}\Vert ^{2}=E\left( \int (\hat{f}%
-f_{0})^{2}dP_{X}\right) ,
\end{equation*}%
where, as in the rest of the paper, $\Vert \cdot \Vert $ denotes the $L_{2}$%
-norm with respect to the distribution of $P_{X}$.

Let $F_{n}=\{f_{1},f_{2},\ldots ,f_{M_{n}}\}$ be a dictionary of $M_{n}$
initial estimates of $f_{0}$. In this paper, unless stated otherwise, $\Vert
f_{j}\Vert \leq 1,$ $1\leq j\leq M_{n}$. Consider the constrained linear
combinations of the estimates $\mathcal{F}=\left\{ f_{\theta
}=\sum_{j=1}^{M_{n}}\theta _{j}f_{j}:\theta \in \Theta _{n},f_{j}\in
F_{n}\right\} $, where $\Theta _{n}$ is a subset of $\mathbb{R}^{M_{n}}$.
The problem of constructing an estimator $\hat{f}$ that pursues the best
performance in $\mathcal{F}$ is called \textit{aggregation of estimates}. We
consider aggregation of estimates with sparsity constraints on $\theta $.
For any $\theta =(\theta _{1},\ldots ,\theta _{M_{n}})^{\prime }$, define
the $\ell _{0}$-norm and the $\ell _{q}$-norm ($0<q\leq 1$) by 
\begin{equation*}
\Vert \theta \Vert _{0}=\sum_{j=1}^{M_{n}}I(\theta _{j}\neq 0),%
\mbox{\space and
\space }\Vert \theta \Vert _{q}=\left( \sum_{j=1}^{M_{n}}|\theta
_{j}|^{q}\right) ^{1/q},
\end{equation*}%
where $I(\cdot )$ is the indicator function. Note that for $0<q<1$, $\Vert
\cdot \Vert _{q}$ is not a norm but a quasinorm, and for $q=0,$ $\Vert \cdot
\Vert _{0}$ is not even a quasinorm. But we choose to refer them as norms
for ease of exposition. For any $0\leq q\leq 1$ and $t_{n}>0$, define the $%
\ell _{q}$-ball
\begin{equation*}
B_{q}(t_{n};M_{n})=\left\{ \mathbf{\theta }=(\theta _{1},\theta _{2},\ldots
,\theta _{M_{n}})^{\prime }:\Vert \mathbf{\theta }\Vert _{q}\leq
t_{n}\right\} .
\end{equation*}%
When $q=0,$ $t_{n}$ is understood to be an integer between $1$ and $M_{n},$
and sometimes denoted by $k_{n}$ to be distinguished from $t_{n}$ when $q>0.$
Define the \textit{$\ell _{q,t_{n}}^{M_{n}}$-hull} of $F_{n}$ to be the
class of linear combinations of functions in $F_{n}$ with the $\ell _{q}$%
-constraint 
\begin{equation*}
\mathcal{F}_{q}(t_{n})=\mathcal{F}_{q}(t_{n};M_{n};F_{n})=\left\{ f_{\theta
}=\sum_{j=1}^{M_{n}}\theta _{j}f_{j}:\theta \in B_{q}(t_{n};M_{n}),f_{j}\in
F_{n}\right\} ,0\leq q\leq 1,t_{n}>0.
\end{equation*}%
One of our goals is to propose an estimator $\hat{f}_{F_{n}}=%
\sum_{j=1}^{M_{n}}\hat{\theta}_{j}f_{j}$ such that its risk is upper bounded
by a multiple of the smallest risk over the class $\mathcal{F}_{q}(t_{n})$
plus a small risk regret term 
\begin{equation*}
R(\hat{f}_{F_{n}};f_{0};n)\leq C\inf_{f_{\theta }\in \mathcal{F}%
_{q}(t_{n})}\Vert f_{\theta }-f_{0}\Vert ^{2}+REG_{q}(t_{n};M_{n}),
\end{equation*}%
where $C$ is a constant that does not depend on $f_{0}$, $n$, and $M_{n}$,
or $C=1$ under some conditions. We aim to obtain the optimal order of
convergence for the risk regret term.

\subsection{Two starting estimators}

\label{somestartingestimators}

A key step of our strategy is the construction of candidate estimators using
subsets of the initial estimates. The following two estimators (T- and
AC-estimators) were chosen because of the relatively mild assumptions for
them to work with respect to the squared $L_{2}$ risk. Under the data
generating model (\ref{regression}) and i.i.d. observations $(\mathbf{X}%
_{1},Y_{1}),\ldots ,(\mathbf{X}_{n},Y_{n})$, suppose we are given $%
(g_{1},\ldots ,g_{m})$ terms for the regression problem.

When working on the minimax upper bounds in random design settings, we will
always make the following assumption on the true regression function.

\vspace*{.06in} {\scshape Assumption BD}: There exists a known constant $L >
0$ such that $\Vert f_{0}\Vert _{\infty }\leq L<\infty $.

%%%%%%% T-estimators %%%%%%%
\vspace*{0.1in} \textbf{(T-estimator)} Birg\'{e} \cite{Birge2004}
constructed the T-estimator and derived its $L_{2}$ risk bounds under the
Gaussian regression setting. The following proposition is a simple
consequence of Theorem 3 in \cite{Birge2004}. Suppose \newline
{\scshape T1}. The error distribution $h(\cdot )$ is normal; \newline
{\scshape T2}. $0<\sigma <\infty $ is known.

%%%%%%% proposition 1 %%%%%%%

\begin{proposition}
\label{Birge} Suppose Assumptions {\scshape BD} and {\scshape T1}, {\scshape %
T2} hold. We can construct a T-estimator $\hat{f}^{(T)}$ such that 
\begin{equation*}
E\Vert \hat{f}^{(T)}-f_{0}\Vert ^{2}\leq C_{L,\sigma}\left( \inf_{\vartheta
\in \mathbb{R}^{m}}\left\Vert \sum_{j=1}^{m}\vartheta
_{j}g_{j}-f_{0}\right\Vert ^{2}+\frac{m}{n}\right) ,
\end{equation*}%
where $C_{L,\sigma}$ is a constant depending only on $L$ and $\sigma$.
\end{proposition}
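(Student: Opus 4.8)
The plan is to instantiate Theorem 3 of \cite{Birge2004} with the \emph{single} linear model $\bar S=\mathrm{span}(g_1,\dots,g_m)\subset L_2(P_X)$, whose linear dimension is at most $m$, and then rewrite the resulting bound in the stated form.

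First I would record an elementary approximation reduction. Since $0\in\bar S$, the $L_2(P_X)$-projection $f^{\ast}$ of $f_0$ onto $\bar S$ obeys $\|f^{\ast}-f_0\|\le\|f_0\|\le L$, hence $\|f^{\ast}\|\le 2L$; consequently, writing $\bar S_L=\{f\in\bar S:\|f\|\le 2L\}$, one has $\inf_{f\in\bar S_L}\|f-f_0\|^2=\inf_{f\in\bar S}\|f-f_0\|^2=\inf_{\vartheta\in\mathbb R^m}\big\|\sum_{j=1}^m\vartheta_jg_j-f_0\big\|^2$. Working with $\bar S_L$ rather than all of $\bar S$ is convenient because $\bar S_L$ is a ball of radius $2L$ in a space of dimension $\le m$, so by the standard volumetric estimate its $\varepsilon$-metric entropy (with respect to $\|\cdot\|$) is at most $m\log(6L/\varepsilon)$; in the terminology of \cite{Birge2004} this says that $\bar S_L$ is a model whose metric dimension is of order $m$ uniformly over scales, with a constant free of $f_0$, $n$ and $M_n$.

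Next I would apply the Gaussian-regression instance of Theorem 3 in \cite{Birge2004}: under {\scshape T1}--{\scshape T2} the regression experiment with known variance $\sigma^2$ has the affinity/testing properties required there, with effective noise level $\sigma/\sqrt n$, and Assumption {\scshape BD} (together with the bound $\|f^{\ast}\|\le 2L$) confines the relevant functions to a fixed bounded set so that the separation inequalities underlying the tests hold uniformly. Feeding in the single model $\bar S_L$ with weight $1$ then produces a T-estimator $\hat f^{(T)}$ with
\[
E\|\hat f^{(T)}-f_0\|^2\le C_1\Big(\inf_{f\in\bar S_L}\|f-f_0\|^2+\frac{\sigma^2(1+m)}{n}\Big)
\]
for an absolute constant $C_1$. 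Using $m\ge1$ to absorb $\sigma^2(1+m)/n\le 2\sigma^2 m/n$, substituting the identity from the reduction step, and collecting the dependence on $L$ and $\sigma$ into one constant $C_{L,\sigma}$, one obtains exactly the asserted inequality.

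The step I expect to require the most care is the middle one: checking that Birgé's hypotheses are genuinely met for the $L_2(P_X)$ (possibly random-design) loss, so that the risk is controlled in the population norm rather than an empirical one, and that both the metric-dimension bound for $\bar S_L$ and the constant furnished by Theorem 3 depend only on $L$ and $\sigma$ and not on $f_0$, $n$ or $M_n$. The two ingredients that make this work are Assumption {\scshape BD}, which keeps the effective part of the model in a fixed bounded region, and the volumetric entropy estimate for balls in finite-dimensional normed spaces; the remainder is bookkeeping of constants.
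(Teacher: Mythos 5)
Your derivation matches the paper's: Proposition \ref{Birge} is presented there simply as a direct consequence of Theorem 3 of Birg\'{e} (2004), applied exactly as you do to the single linear model spanned by $g_1,\dots,g_m$ (metric dimension of order $m$), with Assumption {\scshape BD} supplying the boundedness that Birg\'{e}'s random-design tests require and the constant collecting the dependence on $L$ and $\sigma$. The one small imprecision is that the entropy condition in Birg\'{e}'s framework is a \emph{local} one (packing numbers of balls of radius $x\eta$ at separation $\eta$, uniformly in $\eta$), not the global volumetric bound $m\log(6L/\epsilon)$ you quote, which degenerates as $\epsilon\to 0$; for a ball in an $m$-dimensional subspace the local packing bound $(1+2x)^m$ is what yields metric dimension of order $m$, so the conclusion is unaffected.
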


%%%%%%% AC-estimators %%%%%%%
\vspace*{0.1in} \textbf{(AC-estimator)} For our purpose, consider the class
of linear combinations with the $\ell _{1}$-constraint $\mathcal{G}%
=\{g=\sum_{j=1}^{m}\vartheta _{j}g_{j}:\Vert \vartheta \Vert _{1}\leq s\}$
for some $s>0$. Audibert and Catoni proposed a sophisticated AC-estimator $%
\hat{f}_{s}^{(AC)}$ (\cite{AudibertCatoni2010}, page 25). The following
proposition is a direct result from Theorem 4.1 in \cite{AudibertCatoni2010}
under the following conditions. \newline
{\scshape AC1}. There exists a constant $H>0$ such that $\sup_{g,g^{^{\prime
}}\in \mathcal{G},\mathbf{x}\in \mathcal{X}}|g(\mathbf{x})-g^{^{\prime }}(%
\mathbf{x})|=H<\infty .$ \newline
{\scshape AC2}. There exists a constant $\sigma ^{\prime }>0$ such that $%
\sup_{\mathbf{x}\in \mathcal{X}}E\left( (Y-g^{\ast }(\mathbf{X}))^{2}|%
\mathbf{X}=\mathbf{x}\right) \leq \left( \sigma ^{\prime }\right)
^{2}<\infty $, where $g^{\ast }=\inf_{g\in \mathcal{G}}\left\Vert
g-f_{0}\right\Vert ^{2}$.

%Note that if there exists $L_{0}$ such that $\sup_{j}\Vert g_{j}\Vert_{\infty }\leq L_{0}$, then $H\leq sL_{0}.$

%%%%%%% proposition 2 %%%%%%%

\begin{proposition}
\label{Audibert} Suppose Assumptions {\scshape AC1} and {\scshape AC2} hold.
For any $s>0$, we can construct an AC-estimator $\hat{f}_{s}^{(AC)}$ such
that 
\begin{equation*}
E\Vert \hat{f}_{s}^{(AC)}-f_{0}\Vert ^{2}\leq \inf_{g \in \mathcal{G}}
\left\Vert g -f_{0}\right\Vert ^{2}+c \left( 2 \sigma^{\prime }+H \right)^{2}%
\frac{m}{n},
\end{equation*}%
where $c$ is a pure constant.
\end{proposition}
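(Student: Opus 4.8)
The plan is to obtain this statement as a direct specialization of Theorem~4.1 of \cite{AudibertCatoni2010}, so the work is mostly a matter of matching their framework to the present setup and bookkeeping constants. Audibert and Catoni study least-squares-type estimation of $f_0$ by predictors ranging over a bounded convex subset of a finite-dimensional linear space, and the estimator they construct (their min-max/PAC-Bayesian procedure) satisfies an oracle inequality of the schematic form
\[
E\|\hat f-f_0\|^2 \;\le\; \inf_{g\in\mathcal{G}}\|g-f_0\|^2 \;+\; c\,(\text{effective noise/oscillation level})^2\,\frac{\dim}{n},
\]
with leading constant one in front of the oracle risk. First I would take $\mathcal{G}=\{\sum_{j=1}^m\vartheta_j g_j:\|\vartheta\|_1\le s\}$, which is a convex subset of $\mathrm{span}(g_1,\ldots,g_m)$, so the relevant dimension entering their bound is at most $m$; this is exactly what produces the $m/n$ term.

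Next I would verify that Assumptions AC1 and AC2 supply precisely the two quantities on which their remainder depends. Assumption AC1 says the functions in $\mathcal{G}$ have oscillation at most $H$ over $\mathcal{X}$, i.e. $\mathcal{G}$ has $L_\infty$-diameter $H$; this is the geometric input controlling how badly a poor predictor in $\mathcal{G}$ can behave pointwise. Assumption AC2 bounds the conditional second moment of the residual at the oracle point $g^\ast$ by $(\sigma')^2$; this plays the role of the noise variance proxy. Combining the two via $|Y-g(\mathbf{X})|\le|Y-g^\ast(\mathbf{X})|+|g^\ast(\mathbf{X})-g(\mathbf{X})|$ shows that $\sup_{\mathbf{x}\in\mathcal{X}}E\big((Y-g(\mathbf{X}))^2\mid \mathbf{X}=\mathbf{x}\big)\le(\sigma'+H)^2$ uniformly over $g\in\mathcal{G}$, so that a quantity of size $2\sigma'+H$ (or $\sigma'+H$, up to absorbing a factor into the universal constant) is a legitimate value for the scale parameter appearing in Theorem~4.1. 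Plugging $\dim\le m$ and this scale into their bound and renaming the absolute constant yields the asserted inequality; note that no Gaussian or even symmetry assumption on the errors is needed here, which is the reason for including this estimator alongside the T-estimator.

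The only genuine work — and the step where one must be careful — is the constant tracking. Audibert and Catoni's theorem is stated with a somewhat intricate remainder: depending on the version it may isolate a variance term from a diameter-dependent term, may carry an extra additive constant, or may express the complexity of the $\ell_1$-ball through a covering/entropy argument rather than through $m$ directly. One therefore has to check that all such contributions collapse into a single term of order $(2\sigma'+H)^2\,m/n$ with a pure constant $c$ that is independent of $s$, $m$, $n$, $f_0$, and the dictionary $(g_1,\ldots,g_m)$. I do not expect any probabilistic or approximation-theoretic obstacle beyond what is already established in \cite{AudibertCatoni2010}; the content of the proposition is the clean repackaging of their result in the form that will be convenient for the subsequent subset-aggregation arguments.
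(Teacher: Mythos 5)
Your proposal matches what the paper actually does: Proposition~\ref{Audibert} is stated there as a direct specialization of Theorem~4.1 of Audibert and Catoni (2010), with $\mathcal{G}$ the $\ell_1$-ball inside the $m$-dimensional span (so the dimension entering the remainder is $m$), AC1 supplying the $L_\infty$-diameter $H$ and AC2 the noise proxy $\sigma'$, and the paper gives no further argument beyond this citation. Your additional remarks — the $L_2$ triangle-inequality bound $(\sigma'+H)^2$ on the uniform conditional second moment and the caveat about constant tracking — are consistent with that reading, so the proposal is correct and follows essentially the same route.
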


Note that under the assumption $\Vert f_{0}\Vert _{\infty }\leq L,$ we can
always enforce the estimators $\hat{f}^{(T)}$ and $\hat{f}_{s}^{(AC)}$ to be
in the range of $[-L,L]$ with the same risk bounds in the propositions.

%%%%%%%%%%%%%%%%%%% 2.3 Aggregation algorithms %%%%%%%%%%%%%%%%%%%%%%
\vspace*{0.1in}

\subsection{Two aggregation algorithms for adaptation}

\label{aggregationalgorithms}

Suppose $N$ estimates $\check{f}_{1},\ldots ,\check{f}_{N}$ are obtained
from $N$ candidate procedures based on some initial data. Two aggregation
algorithms, the ARM algorithm (Adaptive Regression by Mixing, Yang \cite%
{Yang2001}) and Catoni's algorithm (Catoni \cite{Catoni2004}), can be used
to construct the final estimator $\hat{f}$ by aggregating the candidate
estimates $\check{f}_{1},\ldots ,\check{f}_{N}$ based on $n$ additional
i.i.d. observations $(\mathbf{X}_{i},Y_{i})_{i=1}^{n}$. The ARM algorithm
requires knowing the form of the error distribution but it allows heavy tail
cases. In contrast, Catoni's algorithm does not assume any functional form
of the error distribution, but demands exponential decay of the tail
probability.

%%%%%%% Yang's algorithm %%%%%%%
\vspace*{0.1in} \textbf{(The ARM algorithm)} Suppose \newline
{\scshape Y1}. There exist two known constants $\underline{\sigma }$ and $%
\overline{\sigma }$ such that $0<\underline{\sigma }\leq \sigma \leq 
\overline{\sigma }<\infty $; \newline
{\scshape Y2}. The error density function $h(x)$ has a finite fourth moment
and for each pair of constants $R_{0}>0$ and $0<S_{0}<1$, there exists a
constant $B_{S_{0},R_{0}}$ (depending on $S_{0}$ and $R_{0}$) such that for
all $|R|<R_{0}$ and $S_{0}\leq S\leq S_{0}^{-1}$, 
\begin{equation*}
\int h(x)\log \frac{h(x)}{S^{-1}h((x-R)/S)}dx\leq
B_{S_{0},R_{0}}((1-S)^{2}+R^{2}).
\end{equation*}%
We can construct an estimator $\hat{f}^{Y}$ which aggregates $\check{f}%
_{1},\ldots ,\check{f}_{N}$ by the ARM algorithm as
described below.

\begin{itemize}
\item[Step 1.] Split the data into two parts $Z^{(1)} = ( \mathbf{X}_i,
Y_i)_{i = 1}^{n_1}$, $Z^{(2)} = ( \mathbf{X}_i, Y_i)_{i = n_1 + 1}^{n}$.
Take $n_1 = \lceil n / 2 \rceil$.

\item[Step 2.] Estimate $\sigma ^{2}$ for each $\check{f}_{k}$
using the data $Z^{(1)}$,  
\begin{equation*}
\hat{\sigma}_{k}^{2}=\frac{1}{n_{1}}\sum_{i=1}^{n_{1}}\left( Y_{i}-\check{f}%
_{k}(\mathbf{X}_{i})\right) ^{2},\mbox{\space for }1\leq k\leq N.
\end{equation*}%
Clip the estimate $\hat{\sigma}_{k}^{2}$ into the range $[\underline{\sigma }%
^{2},\overline{\sigma }^{2}]$ if needed.

\item[Step 3.] Evaluate predictions for each $k$. For $n_{1}+1\leq l \leq n$%
, predict $Y_{l }$ by $\check{f}_{k}(\mathbf{X}_{l })$ and compute 
\begin{equation*}
E_{k,l }= \frac{ \prod_{i = n_1 + 1}^{l} h\left( (Y_i - \check{f}_k (\mathbf{%
X}_i)) / \hat{\sigma}_{k} \right) }{\hat{\sigma}_{k}^{l - n_1} }.
\end{equation*}

\item[Step 4.] Compute the final estimate $\hat{f}^{Y}=\sum_{k=1}^{N}W_{k}%
\check{f}_{k}$ with 
\begin{equation*}
W_{k}=\frac{1}{n-n_{1}}\sum_{l =n_{1}+1}^{n}W_{k,l }%
\mbox{\space
\space \space and \space }W_{k,l }=\frac{\pi _{k}E_{k,l }}{\sum_{j=1}^{N}\pi
_{j}E_{j,l }},
\end{equation*}%
where $\pi _{k}$ are prior probabilities such that $\sum_{k=1}^{N}\pi _{k}=1$%
.
\end{itemize}

%%%%%%% proposition 3 %%%%%%%

\begin{proposition}
\label{Yang2004} (Yang \cite{Yang2004}, Proposition 1) Suppose Assumptions {%
\scshape BD} and {\scshape Y1}, {\scshape Y2} hold, and $\Vert \check{f}%
_{k}\Vert _{\infty }\leq L<\infty $ with probability $1$, $1\leq k\leq N$.
The estimator $\hat{f}^{Y}$ by the ARM algorithm has the risk 
\begin{equation*}
R(\hat{f}^{Y};f_{0};n)\leq C_{Y}\inf_{1\leq k\leq N}\left( \Vert \check{f}%
_{k}-f_{0}\Vert ^{2}+\frac{\sigma ^{2}}{n}\left( 1+\log \frac{1}{\pi _{k}}%
\right) \right) ,
\end{equation*}%
where $C_{Y}$ is a constant that depends on $\underline{\sigma },\overline{%
\sigma },L,$ and also $h$ (through the fourth moment of the random error and 
$B_{S_{0},R_{0}}$ with $S_{0}=\underline{\sigma }/\overline{\sigma },R_{0}=L$%
).
\end{proposition}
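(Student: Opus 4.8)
The plan is to follow the prequential (predictive-density) analysis underlying the ARM algorithm. Since the candidates $\check f_1,\dots,\check f_N$ are built from data independent of $(\mathbf X_i,Y_i)_{i=1}^{n}$, it suffices to prove the bound conditionally on $\check f_1,\dots,\check f_N$. Write $p_{f_0}(y\mid x)=\sigma^{-1}h((y-f_0(x))/\sigma)$ for the true conditional density and $p_k(y\mid x)=\hat\sigma_k^{-1}h((y-\check f_k(x))/\hat\sigma_k)$ for the $k$-th plug-in predictive density, which depends on the data only through $\hat\sigma_k$, hence only through $Z^{(1)}$. Setting $W_{k,n_1}:=\pi_k$, one reads off from Step~3 that $E_{k,l}=\prod_{i=n_1+1}^{l}p_k(Y_i\mid X_i)$ and that the $W_{k,l}$ are the Bayes posterior weights, so the Bayes predictive density of $Y_l$ given $X_l$ and the past is the finite mixture $q^{(l)}(y\mid x)=\sum_{k}W_{k,l-1}\,p_k(y\mid x)$, whose conditional mean is $\hat f^{Y}_{l-1}(x):=\sum_k W_{k,l-1}\check f_k(x)$ (using that $h$ has mean $0$); moreover $\hat f^{Y}$ equals the average $(n-n_1)^{-1}\sum_{l=n_1+1}^{n}\hat f^{Y}_{l-1}$ up to two endpoint weights that perturb it by only $O(1/n)$ in $L_2$.

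First I would record the Bayes telescoping bound: from $\prod_{l=n_1+1}^{n}q^{(l)}(Y_l\mid X_l)=\sum_k\pi_kE_{k,n}\ge\pi_{k_0}E_{k_0,n}$ for any fixed $k_0$, taking $-\log$, adding $\sum_l\log p_{f_0}(Y_l\mid X_l)$ to both sides, and taking the conditional expectation (using that $q^{(l)}$ is measurable with respect to $Z^{(1)}$ and $(X_i,Y_i)_{i=n_1+1}^{l-1}$, while $(X_l,Y_l)$ is fresh and i.i.d.), one obtains
\begin{equation*}
\sum_{l=n_1+1}^{n}E\,D\bigl(p_{f_0}(\cdot\mid X_l)\,\|\,q^{(l)}(\cdot\mid X_l)\bigr)\le\log\tfrac{1}{\pi_{k_0}}+(n-n_1)\,E\,D\bigl(p_{f_0}(\cdot\mid X)\,\|\,p_{k_0}(\cdot\mid X)\bigr).
\end{equation*}
It then remains to compare both sides with squared $L_2$ distance.

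For the right-hand term I would apply Assumption~Y2 directly: with shift $R=(\check f_{k_0}(X)-f_0(X))/\sigma$ and scale $S=\hat\sigma_{k_0}/\sigma$, which lie in the ranges demanded by Y2 (the clipping in Step~2 forces $S\in[\underline\sigma/\overline\sigma,\overline\sigma/\underline\sigma]$, and $|R|$ is bounded by Assumption~BD and $\|\check f_{k_0}\|_\infty\le L$), the substitution $z=(y-f_0(X))/\sigma$ turns $D(p_{f_0}(\cdot\mid X)\,\|\,p_{k_0}(\cdot\mid X))$ into exactly the quantity bounded in Y2 by $B\bigl((1-S)^2+R^2\bigr)$; after taking expectations this is at most $(B/\sigma^2)\bigl(E(\sigma-\hat\sigma_{k_0})^2+\|\check f_{k_0}-f_0\|^2\bigr)$, and a standard second-moment estimate for the clipped $\hat\sigma_{k_0}^2$ (conditional bias $\|\check f_{k_0}-f_0\|^2$, variance $O(1/n_1)$ by the finite fourth moment in Y2 and the uniform boundedness, clipping only decreasing the error) gives $E(\sigma-\hat\sigma_{k_0})^2\lesssim\|\check f_{k_0}-f_0\|^2+n^{-1}$. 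For the left-hand term I would prove a uniform reverse comparison,
\begin{equation*}
D\bigl(p_{f_0}(\cdot\mid x)\,\|\,q^{(l)}(\cdot\mid x)\bigr)\ge c_0\,\bigl(f_0(x)-\hat f^{Y}_{l-1}(x)\bigr)^2,
\end{equation*}
with $c_0$ depending only on $L,\underline\sigma,\overline\sigma,h$; integrating over $x$, summing over $l$, and using convexity of $v\mapsto\|f_0-v\|^2$ (Jensen over $l$, absorbing the endpoint perturbation) bounds the left side of the cumulative inequality below by $c_0(n-n_1)\,E\|f_0-\hat f^{Y}\|^2$. Dividing by $c_0(n-n_1)$, using $n-n_1\ge n/4$, $\underline\sigma\le\sigma\le\overline\sigma$ and $\pi_{k_0}\le1$ to collapse all constants into a single $C_Y$, and taking the infimum over $k_0$ then yields the assertion.

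The hard part is the reverse comparison, because $q^{(l)}$ is a \emph{mixture} of shifted and rescaled copies of $h$, for which Assumption~Y2 (an upper bound, and only for a single shift-scale change) and Pinsker's inequality both fail to apply directly. After the standard change of variables it reduces to showing $D\bigl(h\,\|\,\sum_k w_k s_k^{-1}h((\cdot-r_k)/s_k)\bigr)\ge c_0'\bigl(\sum_k w_kr_k\bigr)^2$ uniformly over $w$ in the simplex, $s_k$ in a fixed interval around $1$, and $|r_k|$ bounded. I would prove this by splitting on $|\bar r|$ with $\bar r:=\sum_k w_kr_k$: when $|\bar r|$ is bounded away from $0$, the finite fourth moment keeps a definite fraction of the mixture's mass within a bounded distance of its mean, so a mean displacement of order $1$ separates the mixture from $h$ in total variation and Pinsker's inequality yields a positive lower bound on $D$; when $|\bar r|$ is small, only mixtures themselves close to $h$ matter, and a second-order expansion using the finite Fisher information implied by Y2 gives the quadratic lower bound, a possible degeneracy between location and scale perturbations being excluded because $h$ is a genuine density. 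A compactness argument over this bounded family makes $c_0'$ uniform. The telescoping identity, the prequential bookkeeping, and the variance-estimation step are all routine.
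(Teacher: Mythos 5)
The paper itself offers no proof of this proposition---it is imported verbatim from Yang (2004), Proposition 1---and your reconstruction follows the same prequential architecture as that source: condition on the dictionary (which is independent of the aggregation data), telescope the log of the Bayes mixture predictive density to get the cumulative Kullback--Leibler bound with the $\log (1/\pi_{k_{0}})$ overhead, control the per-step divergence of a single candidate via Assumption Y2 together with a second-moment analysis of the clipped $\hat{\sigma}_{k}^{2}$, and convert the cumulative divergence of the mixture predictive density back into the squared $L_{2}$ risk of the convex combination. Those steps are all set up correctly, including the identification of $R$ and $S$ in Y2, the bound $E(\sigma -\hat{\sigma}_{k_{0}})^{2}\lesssim \Vert \check{f}_{k_{0}}-f_{0}\Vert ^{2}+n^{-1}$, and the $O(1/n)$ endpoint discrepancy between $\hat{f}^{Y}$ and the average of the predictive means.

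The gap is in the one step you yourself flag as hard, the reverse comparison $D\bigl(p_{f_{0}}(\cdot \mid x)\,\Vert \,q^{(l)}(\cdot \mid x)\bigr)\geq c_{0}\bigl(f_{0}(x)-\hat{f}_{l-1}^{Y}(x)\bigr)^{2}$. Your proposed proof of it does not go through as sketched: Assumption Y2 is an \emph{upper} bound on the divergence under a single shift--scale perturbation and supplies neither the finite Fisher information nor any divergence lower bound that your second-order expansion requires; the concluding compactness argument would have to be uniform over mixtures with an arbitrary number $N$ of components, which the sketch does not address; and in the large-displacement branch the truncation-plus-Pinsker route yields $D\gtrsim \bar{r}^{4}$ rather than $\bar{r}^{2}$, repairable only because $\bar{r}$ is bounded. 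The standard and much shorter fix is to pass to the Hellinger distance, $D(p\,\Vert \,q)\geq \int (\sqrt{p}-\sqrt{q})^{2}$, and note that by Cauchy--Schwarz
\begin{equation*}
(\mu _{p}-\mu _{q})^{2}=\Bigl( \int y(\sqrt{p}-\sqrt{q})(\sqrt{p}+\sqrt{q})\,dy\Bigr) ^{2}\leq 2\Bigl( \int y^{2}p\,dy+\int y^{2}q\,dy\Bigr) \int (\sqrt{p}-\sqrt{q})^{2}\,dy,
\end{equation*}
while the second moments of $p_{f_{0}}(\cdot \mid x)$ and of the mixture $q^{(l)}(\cdot \mid x)$ are uniformly bounded by $L^{2}+\overline{\sigma }^{2}$ thanks to BD, $\Vert \check{f}_{k}\Vert _{\infty }\leq L$ and the clipping of $\hat{\sigma}_{k}$. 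Since the mean of $q^{(l)}(\cdot \mid x)$ is exactly $\hat{f}_{l-1}^{Y}(x)$, this yields the reverse comparison with an explicit constant, uniformly in $N$ and in the weights, and with it the rest of your argument closes.
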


\begin{remark}
If $\sigma $ is known or other estimators of $\sigma $ are available, the
data splitting is not required, and the ARM algorithm consists of only Steps
3 and 4.
\end{remark}

%%%%%%% Catoni's algorithm %%%%%%%
\vspace*{.1in} \textbf{(Catoni's algorithm)} Suppose for some positive
constant $\alpha < \infty$, there exist known constants $U_{\alpha},
V_{\alpha} < \infty$ such that \newline
{\scshape C1}. $E(\exp ( \alpha | \varepsilon_i | ) ) \leq U_{\alpha}$;
\newline
{\scshape C2}. $\frac{ E( \varepsilon_i^2 \exp ( \alpha | \varepsilon_i | ) )%
}{E(\exp ( \alpha | \varepsilon_i | ) )} \leq V_{\alpha}$. 

The estimator built using Catoni's algorithm is $\hat{f}%
^{C}=\sum_{k=1}^{N}W_{k}\check{f}_{k}$ with 
\begin{equation*}
W_{k}=\frac{1}{n}\sum_{l=1}^{n}\frac{\pi _{k}\left(
\prod_{i=1}^{l}q_{k}(Y_{i}|\mathbf{X}_{i})\right) }{\sum_{j=1}^{N}\pi _{j}
\left( \prod_{i=1}^{l}q_{j}(Y_{i}|\mathbf{X}_{i}) \right) },%
\mbox{\space and
\space}q_{k}(Y_{i}|\mathbf{X}_{i})=\sqrt{\frac{\lambda _{C}}{2\pi }}\exp
\left\{ -\frac{\lambda _{C}}{2}(Y_{i}-\check{f}_{k}(\mathbf{X}%
_{i}))^{2}\right\} ,
\end{equation*}%
where $\lambda _{C}=\min \{\frac{\alpha }{2L},(U_{\alpha
}(17L^{2}+3.4V_{\alpha }))^{-1}\}$, and $\pi _{k}$ is the prior for $\check{f%
}_{k}$, $1\leq k\leq N$, such that $\sum_{k=1}^{N}\pi _{k}=1$.

%%%%%%% proposition 4 %%%%%%%

\begin{proposition}
\label{Catoni} (Catoni \cite{Catoni2004}, Theorem 3.6.1) Suppose Assumptions 
{\scshape BD} and {\scshape C1}, {\scshape C2} hold, and $\Vert \check{f}%
_{k}\Vert _{\infty }\leq L<\infty $, $1\leq k\leq N$. The estimator $\hat{f}%
^{C}$ that aggregates $\check{f}_{1},\ldots ,\check{f}_{N}$ by Catoni's
algorithm has the risk 
\begin{equation*}
R(\hat{f}^{C};f_{0};n)\leq \inf_{1\leq k\leq N}\left( \Vert \check{f}%
_{k}-f_{0}\Vert ^{2}+\frac{2}{n\lambda _{C}}\log \frac{1}{\pi _{k}}\right) .
\end{equation*}
\end{proposition}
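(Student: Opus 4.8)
\emph{Proof strategy.} The estimator $\hat f^{C}$ is precisely the Ces\`aro-averaged Gibbs (progressive mixture) estimator analyzed by Catoni, built on the Gaussian pseudo-likelihoods $q_{k}$ at inverse temperature $\lambda_{C}$ with prior $(\pi_{k})_{k=1}^{N}$ over the finite dictionary $\{\check f_{1},\dots,\check f_{N}\}$. The plan is therefore to verify the hypotheses of Theorem 3.6.1 of \cite{Catoni2004} in the present notation and then read off the conclusion: the only real work is to check that the stated choice of $\lambda_{C}$ meets Catoni's exponential-moment requirement and to translate his PAC-Bayes bound into the oracle inequality as stated, since Theorem 3.6.1 itself already accounts for the averaging over $l=1,\dots,n$ in Step 4.

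First I would pass to the bounded situation: since $\Vert f_{0}\Vert_{\infty}\le L$ by Assumption BD and $\Vert\check f_{k}\Vert_{\infty}\le L$ for every $k$, the residuals decompose as $Y_{i}-\check f_{k}(\mathbf X_{i})=\varepsilon_{i}+\delta_{k}(\mathbf X_{i})$ with $|\delta_{k}(\mathbf X_{i})|=|f_{0}(\mathbf X_{i})-\check f_{k}(\mathbf X_{i})|\le 2L$ almost surely. The core quantity to control is the per-observation log-Laplace transform of the excess quadratic loss $\ell_{k}(Y,\mathbf X)=(Y-\check f_{k}(\mathbf X))^{2}-(Y-f_{0}(\mathbf X))^{2}$, namely $\log E\exp\{-\lambda_{C}(\ell_{k}-E\ell_{k})\}$, which Catoni's theorem needs to be dominated by a constant multiple of $\lambda_{C}^{2}$ times a variance proxy. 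Writing $\ell_{k}=\delta_{k}^{2}+2\delta_{k}\varepsilon$ and using $|\delta_{k}|\le 2L$, the exponent is controlled by a term of size at most $2\lambda_{C}L|\varepsilon|$; the constraint $\lambda_{C}\le\alpha/(2L)$ is exactly what makes $E\exp\{2\lambda_{C}L|\varepsilon_{i}|\}\le E\exp\{\alpha|\varepsilon_{i}|\}\le U_{\alpha}$ available from Condition C1, and a second-order expansion together with Condition C2 (which bounds $E[\varepsilon_{i}^{2}\exp(\alpha|\varepsilon_{i}|)]/E[\exp(\alpha|\varepsilon_{i}|)]\le V_{\alpha}$) produces a variance proxy of the form $c'(L^{2}+V_{\alpha})$. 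Matching this computation against Catoni's Bernstein-type lemma is what pins down the explicit constants, so that the second alternative in $\lambda_{C}=\min\{\alpha/(2L),(U_{\alpha}(17L^{2}+3.4V_{\alpha}))^{-1}\}$ is precisely the regime in which the log-Laplace transform is bounded as required; this verification --- bookkeeping of Catoni's exponential-moment inequality under C1--C2 and the $2L$ bound --- is the main (and essentially the only) technical obstacle.

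With the exponential-moment condition in force, Theorem 3.6.1 of \cite{Catoni2004} applies to $\hat f^{C}$ and yields the PAC-Bayes bound
\[
R(\hat f^{C};f_{0};n)\le\inf_{\rho}\left(\int\Vert\check f_{k}-f_{0}\Vert^{2}\,d\rho(k)+\frac{2}{n\lambda_{C}}\,\mathrm{KL}(\rho\,\|\,\pi)\right),
\]
where the infimum is over probability measures $\rho$ on $\{1,\dots,N\}$; here the convexity of the squared $L_{2}$ loss (Jensen's inequality), applied both to the mixture over $k$ and, via the standard telescoping identity for progressive mixtures, to the Ces\`aro average over $l=1,\dots,n$, is what lets the risk of $\sum_{k}W_{k}\check f_{k}$ be compared with the $\rho$-average of the individual risks, with leading constant one. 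Finally I would specialize the infimum to the Dirac mass $\rho=\delta_{k}$, for which $\mathrm{KL}(\delta_{k}\|\pi)=\log(1/\pi_{k})$, and take the minimum over $1\le k\le N$, obtaining exactly
\[
R(\hat f^{C};f_{0};n)\le\inf_{1\le k\le N}\left(\Vert\check f_{k}-f_{0}\Vert^{2}+\frac{2}{n\lambda_{C}}\log\frac{1}{\pi_{k}}\right),
\]
which is the assertion.
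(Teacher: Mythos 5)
Your proposal is correct and follows the same route as the paper, which offers no proof of its own for this proposition but simply invokes Theorem 3.6.1 of Catoni (2004): the result is a direct specialization of Catoni's PAC-Bayesian bound for the progressive mixture rule to Dirac masses on the finite dictionary. Your additional bookkeeping --- checking that the choice $\lambda_{C}=\min\{\alpha/(2L),(U_{\alpha}(17L^{2}+3.4V_{\alpha}))^{-1}\}$ satisfies Catoni's exponential-moment requirement under {\scshape C1}, {\scshape C2} and the sup-norm bounds --- is exactly the verification the paper leaves implicit, so there is nothing to object to.
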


\begin{remark}
In the risk bound above, the multiplying constant in front of $\Vert \check{f%
}_{k}-f_{0}\Vert ^{2}$ is one, which can be important sometimes. Catoni \cite%
{Catoni2004} provided results under weaker assumptions than {\scshape C1}
and {\scshape C2}. In particular, $\varepsilon _{i}$ and $\mathbf{X}_{i}$ do
not have to be independent.
\end{remark}

%%%%%%%%%%%%%%%%% 2.4 Metric entropy %%%%%%%%%%%%%%%%%%%

\subsection{Metric entropy and sparse approximation error of $\ell
_{q,t_{n}}^{M_{n}}$-hulls}

\label{metricandsparse}

It is well-known that the metric entropy plays a fundamental role in
determining minimax-rates of convergence, as shown, e.g., in \cite%
{Birge1986, YangBarron1999}.

For each $1\leq m\leq M_{n}$ and each subset $J_{m}\subset \{1,2,\ldots
,M_{n}\}$ of size $m$, define $\mathcal{F}_{J_{m}}=\{\sum_{j\in J_{m}}\theta
_{j}f_{j}:\theta \in \mathbb{R}^{m}\}.$ Let 
\begin{equation*}
d^{2}(f_{0};\mathcal{F})=\inf_{f_{\theta }\in \mathcal{F}}\Vert f_{\theta
}-f_{0}\Vert ^{2}
\end{equation*}%
denote the smallest approximation error to $f_{0}$ over a function class $%
\mathcal{F}$.

%%%%%%%Entropy number and sparse approximation error 1 %%%%%%%

\begin{theorem}
\label{Th1} (Metric entropy and sparse approximation bound for $\ell
_{q,t_{n}}^{M_{n}}$-hulls) Suppose $F_{n}=\{f_{1},f_{2},...,f_{M_{n}}\}$
with $\Vert f_{j}\Vert _{L^{2}(\nu )}\leq 1$, $1\leq j\leq M_{n}$, where $%
\nu $ is a $\sigma $-finite measure.

(i) For $0<q\leq 1$, there exists a positive constant $c_{q}$ depending only
on $q$, such that for any $0<\epsilon <t_{n}$, $\mathcal{F}_{q}(t_{n})$
contains an ${\epsilon }$-net $\{e_{j}\}_{j=1}^{N_{\epsilon }}$ in the $%
L_{2}(\nu )$ distance with $\Vert e_{j}\Vert _{0}\leq 5(t_{n}\epsilon
^{-1})^{2q/(2-q)}+1$ for $j=1,2,...,N_{\epsilon }$, where $N_{\epsilon }$
satisfies 
\begin{equation}
\log N_{\epsilon }\leq \left\{ 
\begin{array}{ll}
c_{q}\left( t_{n}\epsilon ^{-1}\right) ^{\frac{2q}{2-q}}\log (1+M_{n}^{\frac{%
1}{q}-\frac{1}{2}}t_{n}^{-1}\epsilon ) & {\mbox{ if }\epsilon }>t_{n}M_{n}^{%
\frac{1}{2}-\frac{1}{q}}, \\ 
{c_{q}M_{n}\log (1+M_{n}^{\frac{1}{2}-\frac{1}{q}}t_{n}\epsilon ^{-1})} & {%
\mbox{ if }{\epsilon }\leq t_{n}M_{n}^{\frac{1}{2}-\frac{1}{q}}.}%
\end{array}%
\right.  \label{entropy}
\end{equation}

(ii) For any $1\leq m\leq M_{n}$, $0<q\leq 1$, $t_{n}>0$, there exists a
subset $J_{m}$ and $f_{\theta ^{m}}\in \mathcal{F}_{J_{m}}$ with $\Vert
\theta ^{m}\Vert _{1}\leq t_{n}$ such that the sparse approximation error is
upper bounded as follows 
\begin{equation}
\Vert f_{\theta ^{m}}-f_{0}\Vert ^{2}-d^{2}(f_{0};\mathcal{F}%
_{q}(t_{n}))\leq 2^{2/q-1}t_{n}^{2}m^{1-2/q}.  \label{approx}
\end{equation}
\end{theorem}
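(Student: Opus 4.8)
Both parts would run on one device — a Maurey-type stochastic (empirical-measure) approximation of a hull element by a sparse linear combination, together with the elementary decay estimate for the ordered coordinates of a vector in an $\ell_q$-ball — and I would prove (ii) first, then build the net of (i) on top of it. For (ii): by compactness of $B_q(t_n;M_n)\subset\mathbb R^{M_n}$ and continuity of $\theta\mapsto f_\theta$, pick $\theta^*$ with $\|\theta^*\|_q\le t_n$ attaining $\|f_{\theta^*}-f_0\|^2=d^2(f_0;\mathcal F_q(t_n))$; since $q\le1$, $\|\theta^*\|_1\le\|\theta^*\|_q\le t_n$. Order the coordinates so $|\theta^*_{(1)}|\ge|\theta^*_{(2)}|\ge\cdots$; Markov's inequality on $\sum_j|\theta^*_{(j)}|^q\le t_n^q$ gives $|\theta^*_{(j)}|\le t_nj^{-1/q}$, and a one-line H\"older estimate then yields $\sum_{j>k}|\theta^*_{(j)}|\le t_n(k+1)^{1-1/q}$ for the tail past the $k$ largest coordinates. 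I would take $k=\lfloor m/2\rfloor$, let $J$ index the $k$ largest coordinates, keep $\theta^*$ exactly on $J$, and write $f_{\theta^*_{J^c}}=s\,E[\xi]$ with $s=\|\theta^*_{J^c}\|_1$ and $\xi$ equal to $\mathrm{sign}(\theta^*_j)f_j$ with probability $|\theta^*_j|/s$; the scaled average $g=\frac{s}{m-k}\sum_{i=1}^{m-k}\xi_i$ of $m-k$ i.i.d.\ copies $\xi_i$ of $\xi$ is then supported on $\le m-k$ indices of $J^c$, has $\ell_1$-coefficient-norm $\le s$, has $E[g]=f_{\theta^*_{J^c}}$, and, using $\|f_j\|\le1$, satisfies $E\|g-f_{\theta^*_{J^c}}\|^2\le s^2/(m-k)$. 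Then $f_{\theta^m}:=f_{\theta^*_J}+g$ has support $\le m$ and $\|\theta^m\|_1\le\|\theta^*_J\|_1+s=\|\theta^*\|_1\le t_n$. The key point is $E[f_{\theta^m}]=f_{\theta^*}$, so in $\|f_{\theta^m}-f_0\|^2=\|f_{\theta^m}-f_{\theta^*}\|^2+2\langle f_{\theta^m}-f_{\theta^*},f_{\theta^*}-f_0\rangle+d^2$ the cross term is centered, whence $E[\|f_{\theta^m}-f_0\|^2]-d^2=E\|g-f_{\theta^*_{J^c}}\|^2\le s^2/(m-k)\le t_n^2(k+1)^{2-2/q}/(m-k)$; with $k=\lfloor m/2\rfloor$ this is $\le2^{2/q-1}t_n^2m^{1-2/q}$, so some realization of $f_{\theta^m}$ obeys \eqref{approx}. (The cases $m=1$ and ``$\theta^*$ already $m$-sparse'' are immediate.)

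For (i): fix $\epsilon\in(0,t_n)$ and an arbitrary $f_\theta\in\mathcal F_q(t_n)$. Running the construction of (ii) with ``$f_0=f_\theta$'' (so $d^2=0$), re-optimizing the head/tail split point and absorbing integer rounding, produces $g$ with $\|g\|_0\le m^*$, $\ell_1$-coefficient-norm $\le t_n$, and $\|g-f_\theta\|\le\epsilon/2$, where $m^*\le5(t_n\epsilon^{-1})^{2q/(2-q)}+1$; the exponent $\tfrac{2q}{2-q}=(\tfrac1q-\tfrac12)^{-1}$ is exactly the one making $2^{2/q-1}t_n^2(m^*)^{1-2/q}$ of order $\epsilon^2$, and this is already the sparsity bound claimed in the theorem. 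It then remains to cover, to within $\epsilon/2$, the family of $m^*$-sparse approximants so produced. If $m^*<M_n$, i.e.\ $\epsilon>t_nM_n^{1/2-1/q}$, I would union over the $\binom{M_n}{m^*}$ supports and, on each, cover the image of the $\ell_q$-ball of radius $t_n$ carried by those $m^*$ coordinates; the quantitative heart is that, at the scale $\epsilon\asymp t_n(m^*)^{1/2-1/q}$ to which $m^*$ is tuned, this image — a hull of at most $m^*$ vectors of $L_2(\nu)$-norm $\le1$ — admits a net of size $\exp(O(m^*))$, so that $\log\binom{M_n}{m^*}$ dominates and, collapsed, gives the first branch of \eqref{entropy}. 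If $m^*\ge M_n$, i.e.\ $\epsilon\le t_nM_n^{1/2-1/q}$, sparsification is vacuous and I would estimate the metric entropy of the whole hull directly; below its critical scale this is of order $M_n\log(1+M_n^{1/2-1/q}t_n\epsilon^{-1})$ — the $\ell_q$-analogue of the sharp Ball--Pajor/Carl entropy bound for convex hulls in a Hilbert space ($q=1$) — which is the second branch. In both regimes the net elements carry at most $m^*$ nonzero coefficients (the bound being vacuous when $m^*\ge M_n$, since then $M_n$ itself lies below $5(t_n\epsilon^{-1})^{2q/(2-q)}+1$), and a small shrinkage of the discretized approximants, or taking each per-support net inside the relevant $\ell_q$-ball, keeps the net inside $\mathcal F_q(t_n)$.

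The step I expect to be hardest is the entropy bound for a hull of bounded vectors in a Hilbert space used in both regimes of (i): the crude route $B_q(t_n;M_n)\subset B_1(t_n;M_n)$ followed by an $\ell_1$-cover loses a logarithm — it ignores that $F_n$ lives in $L_2(\nu)$ and so cannot be too spread out — and obtaining the stated sharp logarithmic factors $\log(1+M_n^{1/q-1/2}t_n^{-1}\epsilon)$ and $\log(1+M_n^{1/2-1/q}t_n\epsilon^{-1})$, together with glueing the two regimes seamlessly at $m^*\asymp M_n$, calls for delicate, Hilbert-space-sensitive covering estimates — the technical core of the theorem. Part (ii), by contrast, is the short Maurey argument above once one notices the centering of the cross term.
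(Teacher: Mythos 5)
Your part (ii) is correct and is essentially the paper's own argument: threshold the optimal $\theta^*$ to isolate the at most $m/2$ largest coordinates, bound the tail $\ell_1$-mass by $t_n(k+1)^{1-1/q}$, run Maurey's empirical approximation on the tail, and use that the cross term is centered because $\mathbb{E}[f_{\theta^m}]=f_{\theta^*}$; your choice $k=\lfloor m/2\rfloor$ reproduces the constant $2^{2/q-1}$ exactly as in the paper (which thresholds by level $t_nm^{-1/q}$ rather than by rank, an immaterial difference).

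Part (i), however, has a genuine gap, and you have in effect flagged it yourself. Your outline reduces everything to two covering estimates that you assert but do not prove: (a) in the sparse regime, that the image of the $\ell_q$-ball restricted to a fixed support of size $m^{*}$ admits an $(\epsilon/2)$-net of cardinality $\exp(O(m^{*}))$ at the critical scale $\epsilon\asymp t_n(m^{*})^{1/2-1/q}$, and (b) in the dense regime, the bound $M_n\log(1+M_n^{1/2-1/q}t_n\epsilon^{-1})$ for the whole hull. Neither follows from the route you sketch: the naive per-support covering (pass to the $\ell_1$-ball of radius $t_n$ and cover it in $\ell_1$-distance, or discretize by Maurey with $k\asymp(t_n/\epsilon)^2$ atoms and count $\binom{2m^{*}+k}{k}$ realizations) yields $\exp(O(m^{*}\log m^{*}))$ rather than $\exp(O(m^{*}))$ at that scale, precisely the logarithm you say must be avoided; and the Ball--Pajor/Carl bound you invoke is for convex hulls, i.e.\ $q=1$, so the ``$\ell_q$-analogue'' you appeal to is essentially the statement being proved. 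The mechanism the paper uses to close this is different in structure from ``sparsify, then union over supports'': it imports the entropy numbers of the embedding $\ell_q^{M_n}\hookrightarrow\ell_1^{M_n}$ (Edmunds--Triebel), i.e.\ a covering of $B_q(1)$ by $2^{k-1}$ $\ell_1$-balls whose radii $\epsilon_k$ decay in $k$, and then discretizes each $\ell_1$-ball by Maurey with $m$ chosen so that $\binom{2M_n+m}{m}\le 2^k$, so that the count of integer-lattice realizations is matched to the number of $\ell_1$-pieces; the two claimed logarithmic factors fall out of the three regimes of $\epsilon_k$. The sparsification of the resulting net elements (large coordinates kept exactly, small-coordinate tail re-approximated by Maurey) is then a post-processing step, not the source of the entropy bound. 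Without an input playing the role of the $\ell_q\to\ell_1$ entropy numbers, your outline does not yield the stated logarithmic factors in either branch of (\ref{entropy}).
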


The metric entropy estimate (\ref{entropy}) is the best possible. Indeed, if 
$f_{j}$, $1\leq j\leq M_{n}$, are orthonormal functions, then (\ref{entropy}%
) is sharp in order for any $\epsilon $ satisfying that $\epsilon /t_{n}$ is
bounded away from 1 (see \cite{Kuhn2001}). Also note that if we let $\nu
_{0} $ be the discrete measure $\frac{1}{n}\sum_{i=1}^{n}\delta _{x_{i}},$
where $\mathbf{x}_{1}$, $\mathbf{x}_{2}$, ..., $\mathbf{x}_{n}$ are fixed
points in a fixed design, then $\Vert g\Vert _{L^{2}(\nu _{0})}=(\frac{1}{n}%
\sum_{i=1}^{n}|g(\mathbf{x}_{i})|^{2})^{1/2}$. Thus, part (i) of Theorem \ref%
{Th1} implies Lemma 3 of \cite{Raskuttietal2010}, with an improvement of a $%
\log (M_{n})$ factor when $\epsilon \approx t_{n}M_{n}^{\frac{1}{2}-\frac{1}{%
q}}$, and an improvement from $(t_{n}\epsilon ^{-1})^{\frac{2q}{2-q}}\log
(M_{n}) $ to $M_{n}\log (1+M_{n}^{\frac{1}{q}-\frac{1}{2}}t_{n}\epsilon
^{-1})$ when $\epsilon <t_{n}M_{n}^{\frac{1}{2}-\frac{1}{q}}$. These
improvements are useful to derive the exact minimax rates for some of the
possible situations in terms of $M_{n},$ $q,$ and $t_{n}.$

With the tools provided in Yang and Barron \cite{YangBarron1999}, given fixed $q$ and $t_n$, one can
derive minimax rates of convergence for $\ell _{q}$-aggregation problems and
also for linear regression with $\ell _{q}$-constraints. However, the goal
for this work is to obtain adaptive estimators that simultaneously work for $%
\mathcal{F}_{q}(t_{n})$ with any choice of $0\leq q\leq 1$ and $t_{n}$, and
more. 

%%%%%%%%%%%%% 2.5 An insight from the approximation error %%%%%%%%%%%%%%%%%%%%

\subsection{An insight from the sparse approximation bound based on
classical model selection theory}

\label{aninsight}

Consider general $M_{n},t_n$ and $0<q\leq 1$. With the approximation error
bound in Theorem \ref{Th1}, classical model selection theory can 
provide key insight on what to expect regarding the minimax rate of
convergence for estimating a function in  $\ell _{q,t_{n}}^{M_{n}}$-hull.

Suppose $J_{m}$ is the best subset model of size $m$ in terms of having the
smallest $L_{2}$ approximation error to $f_{0}$. Then the estimator based on 
$J_{m}$ is expected to have the risk (under some squared error loss) of
order 
\begin{equation*}
t_{n}^{2}m^{1-2/q}+\frac{\sigma ^{2}m}{n}.
\end{equation*}%
Minimizing this bound over $m,$ we get the best choice (in order) in the
range $1\leq m\leq M_{n}\wedge n:$ 
\begin{equation*}
m^{\ast }=m^{\ast }(q,t_{n})=\left\lceil \left( nt_{n}^{2}\tau \right)
^{q/2}\right\rceil \wedge M_{n}\wedge n,
\end{equation*}%
where $\tau =\sigma ^{-2}$ is the precision parameter.
When $q=0$ with $t_{n}=k_{n},$ $m^{\ast }$ should be taken to be $%
k_{n}\wedge n.$ It is the \textit{ideal model size} (in order) under the $%
\ell _{q}$-constraint because it provides the best possible trade-off
between the approximation error and estimation error when $1\leq m\leq
M_{n}\wedge n$. The ratio $m^{\ast }/M_{n}$ is called a sparsity index in 
\cite{YangBarron1998} (section III.D) that characterizes, up to a log
factor, how much sparse estimation by model selection improves the
estimation accuracy based on nested models only. The calculation of
balancing the approximation error and the estimation error is well-known to
lead to the minimax rate of convergence for general full approximation sets
of functions with pre-determined order of the terms in an approximation
system (see section 4 of \cite{YangBarron1999}). However, when the terms are
not pre-ordered, there are many models of the same size $m^{\ast },$ and one
must pay a price for dealing with exponentially many or more models (see,
e.g., section 5 of \cite{YangBarron1999}). The classical model selection
theory that deals with searching over a large number of models tells us that
the price of searching over ${\binom{{M_{n}}}{{m}^{\ast }}}$ many models is
the addition of the term $\log {\binom{{M_{n}}}{{m}^{\ast }}}/n$ (e.g., \cite%
{BarronCover1991, YangBarron1998, Barronetal1999, Yang1999,
BirgeMassart2001, Baraud2002}). That is, the risk (under squared error type
of loss) of the estimator based on subset selection with a model descriptive
complexity term of order $\log {\binom{{M_{n}}}{{m}}}$ added to the AIC-type
of criteria is typically upper bounded in order by the smallest value of%
\begin{equation*}
\text{(squared) approximation error}_{m}+\frac{\sigma ^{2}m}{n}+\frac{\sigma
^{2}\log {\binom{{M_{n}}}{{m}}}}{n}
\end{equation*}%
over all the subset models, which is called the index of the resolvability
of the function to be estimated. Note that $\frac{m}{n}+\frac{\log {\binom{{%
M_{n}}}{{m}}}}{n}$ is uniformly of order $m\left( 1+\log \left( \frac{{M_{n}}%
}{m}\right) \right) /n$ over $0\leq m\leq M_{n}.$ Evaluating the above bound
at $m^{\ast }$ in our context yields a quite sensible rate of convergence.
Note also that $\log {\binom{{M_{n}}}{{m}^{\ast }}}/n$ (price of searching)
is of a higher order than $\frac{m^{\ast }}{n}$ (price of estimation) when $%
m^{\ast }\leq M_{n}/2$. Define 
\begin{equation*}
SER(m)=1+\log \left( \frac{{M_{n}}}{m}\right) \asymp \frac{m+\log {\binom{{%
M_{n}}}{{m}}}}{m},\hspace*{0.3in}1\leq m\leq M_{n},
\end{equation*}%
to be the ratio of the price with searching to that without searching (i.e.,
only the price of estimation of the parameters in the model). Here
\textquotedblleft $\asymp $\textquotedblright\ means of the same order as $%
n\rightarrow \infty $. Observe that reducing $m^{\ast }$ slightly will
reduce the order of searching price $\frac{m^{\ast }SER\left( m^{\ast
}\right) }{n}$ (since $x(1+\log \left( M_{n}/x\right) )$ is an increasing
function for $0<x<M_{n}$) and increase the order of the squared bias plus
variance (i.e., $t_{n}^{2}m^{1-2/q}+\frac{\sigma ^{2}m}{n}$). The best
choice will typically make the approximation error $t_{n}^{2}m^{1-2/q}$ of
the same order as $\frac{m(1+\log \frac{M_{n}}{m})}{n}$. Define \begin{equation*}
m_{\ast }=m_{\ast }(q,t_{n})=\left\{ 
\begin{array}{ll}
m^{\ast } & \text{if }m^{\ast }=M_{n}\wedge n, \\ 
\left\lceil \frac{m^{\ast }}{\left( 1+\log \frac{M_{n}}{m^{\ast }}\right)
^{q/2}}\right\rceil =\left\lceil \frac{m^{\ast }}{SER(m^{\ast })^{q/2}}%
\right\rceil & \text{otherwise.}%
\end{array}%
\right.
\end{equation*}

We call this the \textit{effective model size} (in order) under the $\ell
_{q}$-constraint because evaluating the index of resolvability expression
from our oracle inequality at the best model of this size gives the minimax
rate of convergence, as will be seen. When $m^{\ast }=n,$ the minimax risk
is of order 1 (or higher sometimes) and thus does not converge. Note that
the down-sizing factor $SER(m^{\ast })^{q/2}$ from $m^{\ast }$ to $m_{\ast }$
depends on $q$: it becomes more severe as $q$ increases; when $q=1,$ the
down-sizing factor reaches the order $\left( 1+\log \left( \frac{M_{n}}{%
m^{\ast }}\right) \right) ^{1/2}$. Since the risk of the ideal model and
that by a good model selection rule differ only by a factor of $\log
(M_{n}/m^{\ast })$, as long as $M_{n}$ is not too large, the price of
searching over many models of the same size is small, which is a fact well
known in the model selection literature (see, e.g., \cite{YangBarron1998},
section III.D).

For $q=0,$ under the assumption of at most $k_{n}\leq M_{n}\wedge n$ nonzero
terms in the linear representation of the true regression function, the risk
bound immediately yields the rate $\left( 1+\log {\binom{{M_{n}}}{{k}_{n}}}%
\right) /n\asymp $ $\frac{k_{n}\left( 1+\log \frac{M_{n}}{k_{n}}\right) }{n}$%
. Thus, from all above, we expect that $\frac{m_{\ast }SER\left( m_{\ast
}\right) }{n}\wedge 1$ is the unifying optimal rate of convergence for
regression under the $\ell _{q}$-constraint for $0\leq q\leq 1.$

The aforementioned rates of convergence for estimating functions in $\ell
_{q,t_{n}}^{M_{n}}$-hulls for $0\leq q\leq 1$ will be confirmed, and our
estimators will achieve the rates adaptively in some generality. From the insight
gained above, to construct a multi-directional (or universal) aggregation
method that works for all $0\leq q\leq 1,$ it suffices to aggregate the
estimates from the subset models for adaptation, which will automatically
lead to simultaneous optimal performance in $\ell
_{q,t_{n}}^{M_{n}}$-hulls.

%------------------------------------------------------------------------------------------------------------------------------------------------------------------
%%%%%%%%%%%%%%%%%%%%% 3. $l_q$-aggregation of estimates %%%%%%%%%%%%%%%%%%%%%
%------------------------------------------------------------------------------------------------------------------------------------------------------------------
\vspace*{0.2in}

\section{$\ell_q$-aggregation of estimates}

\label{lqaggregation}

Consider the setup from section \ref{notationanddefintion}. We focus on the
problem of aggregating the estimates in $F_{n}$ to pursue the best
performance in $\mathcal{F}_{q}(t_{n})$ for $0\leq q\leq 1$, $t_{n}>0$,
which we call \textit{$\ell _{q}$-aggregation of estimates}. To be more
precise, when needed, it will be called $\ell _{q}(t_{n})$-aggregation, and
for the special case of $q=0,$ we call it $\ell _{0}(k_{n})$-aggregation for 
$1\leq k_{n}\leq M_{n}.$
%%%%%%%%%%%%%%%%%%%%%% 3.1. An oracle inequality %%%%%%%%%%%%%%%%%%%%%%

\subsection{The strategy}

\label{thestrategy}

For each $1\leq m\leq M_{n}\wedge n$ and each subset model $J_{m}\subset
\{1,2,\ldots ,M_{n}\}$ of size $m$, let $\mathcal{F}_{J_{m}}$ be as defined
in section \ref{metricandsparse}, and let $\mathcal{F}_{J_{m},s}^{L}=\{f_{%
\theta }=\sum_{j\in J_{m}}\theta _{j}f_{j}:\Vert \theta \Vert _{1}\leq
s,\Vert f_{\theta }\Vert _{\infty }\leq L\}$ be the class of $\ell _{1}$%
-constrained linear combinations in $F_{n}$ with a sup-norm bound on $%
f_{\theta }$. Our strategy is as follows.

\begin{itemize}
\item[Step I.] Divide the data into two parts: $Z^{(1)} = ( \mathbf{X}_i,
Y_i)_{i = 1}^{n_1}$ and $Z^{(2)} = (\mathbf{X}_i, Y_i)_{i = n_1 + 1}^{n}$.

\item[Step II.] Based on data $Z^{(1)}$, obtain a T-estimator for each
function class $\mathcal{F}_{J_{m}}$, or obtain an AC-estimator for each
combination of $s\in \mathbb{N}$ and function class $\mathcal{F}%
_{J_{m},s}^{L}$.

\item[Step III.] Based on data $Z^{(2)}$, combine all estimators obtained in
step II and the null model ($f\equiv 0$) using Catoni's or the ARM
algorithm. Let $p_{0}$ be a small positive number in $(0,1)$. In all, we
have to combine $\sum_{m=1}^{M_{n}\wedge n}{\binom{{M_{n}}}{{m}}}$
T-estimators with the weight $\pi _{J_{m}}=(1-p_{0})\left( (M_{n}\wedge n){%
\binom{{M_{n}}}{{m}}}\right) ^{-1}$ and the null model with the weight $\pi
_{0}=p_{0}$, or combine countably many AC-estimators with the weight $\pi
_{J_{m},s}=(1-p_{0})\left( (1+s)^{2}(M_{n}\wedge n){\binom{{M_{n}}}{{m}}}%
\right) ^{-1}$ and the null model with the weight $\pi _{0}=p_{0}$. (Note
that sub-probabilities on the models do not affect the validity of the risk
bounds to be given.)
\end{itemize}

For simplicity of exposition, from now on and when relevant, we assume $n$
is even and choose $n_{1}=n/2$ in our strategy. However, similar results
hold for other values of $n$ and $n_{1}$.

We use the expression \textquotedblleft \textbf{E}-\textbf{G} strategy" for
ease of presentation where \textbf{E} = \textbf{T} or \textbf{AC} represents
the estimators constructed in Step II, and \textbf{G} = \textbf{C} or 
\textbf{Y} stands for the aggregation algorithm used in Step III. By our
construction, Assumption AC1 is automatically satisfied: for each $J_{m}$, $%
H_{J_{m},s}=\sup_{f,f^{\prime }\in \mathcal{F}_{J_{m},s}^{L},\mathbf{x}\in 
\mathcal{X}}|f(\mathbf{x})-f^{\prime }(\mathbf{x})|\leq 2L$. Assumption AC2
is met with $\left( \sigma ^{\prime }\right) ^{2}=\sigma ^{2}+4L^{2}.$

We assume the following conditions are satisfied for each strategy,
respectively. 
\begin{eqnarray*}
\mbox{{\scshape A}}_{\mathbf{T}-\mathbf{C}}\mbox{ and }\mbox{{\scshape A}}_{%
\mathbf{T}-\mathbf{Y}} &:&\mbox{{\scshape BD}, {\scshape T1}, {\scshape T2}}.
\\
\mbox{{\scshape A}}_{\mathbf{AC}-\mathbf{C}} &:&%
\mbox{{\scshape BD},
 {\scshape C1}, {\scshape C2}}. \\
\mbox{{\scshape A}}_{\mathbf{AC}-\mathbf{Y}} &:&%
\mbox{{\scshape BD},
 {\scshape Y1}, {\scshape Y2}}.
\end{eqnarray*}

Given that {\scshape T1}, {\scshape T2} are stronger than {\scshape C1}, {%
\scshape C2} and {\scshape Y1}, {\scshape Y2}, it is enough to require their
satisfaction in $\mbox{{\scshape A}}_{\mathbf{T}-\mathbf{C}}$ and $%
\mbox{{\scshape A}}_{\mathbf{T} -\mathbf{Y}}$.
%%%%%%%%%%%%%%%%%%%%% 3.2 Minimax rates of convergence %%%%%%%%%%%%%%%%%%
\vspace*{.1in}

\subsection{Minimax rates for $\ell_q$-aggregation of estimates}

\label{minimaxratesforlqaggregation}

\textbf{\ }Let $\mathcal{F}_{q}^{L}(t_{n})=\mathcal{F}_{q}(t_{n})\cap
\{f:\Vert f\Vert _{\infty }\leq L\}$ for $0\leq q\leq 1$. In the previous
section, we have defined $m_{\ast }=m_{\ast }(q,t_{n})$ to be the effective
model size for $0<q\leq 1.$ Now, for ease of presentation, we extend the
definition to 
\begin{equation*}
m_{\ast }^{\mathcal{F}}=\left\{ 
\begin{array}{ll}
m_{\ast }(q,t_{n}) & \text{for case 1: }\mathcal{F=F}_{q}(t_{n}),0<q\leq 1,
\\ 
k_{n}\wedge n & \text{for case 2: }\mathcal{F=F}_{0}(k_{n}), \\ 
m_{\ast }(q,t_{n})\wedge k_{n} & \text{for case 3: }\mathcal{F=F}%
_{q}(t_{n})\cap \mathcal{F}_{0}(k_{n}),0<q\leq 1.%
\end{array}%
\right. 
\end{equation*}%
Note that in the third case, we are simply taking the smaller one between
the effective model sizes from the soft sparsity constraint ($\ell _{q}$%
-constraint with $0<q\leq 1$) and the hard sparsity one ($\ell _{0}$%
-constraint), and this smaller size defines the final sparsity. Define%
\begin{equation*}
REG(m_{\ast }^{\mathcal{F}})=\sigma ^{2}\left( 1\wedge \frac{m_{\ast }^{%
\mathcal{F}}\cdot \left( 1+\log \left( \frac{M_{n}}{m_{\ast }^{\mathcal{F}}}%
\right) \right) }{n}\right) ,
\end{equation*}%
which will be shown to be typically the optimal rate of the risk regret for $%
\ell _{q}$-aggregation.
 In particular, Theorems \ref{UpAgg} and \ref{LowAgg}
provide upper and lower bounds to determine the order of the risk regret for 
$\ell _{q}$-aggregation of estimates. The specific behaviors of $REG(m_{\ast
}^{\mathcal{F}})$ for the three different cases will be precisely discussed
later.

For case 3, we intend to achieve the best performance of linear combinations
when both $\ell _{0}$- and $\ell _{q}$-constraints are imposed on the linear
coefficients, which results in $\ell _{q}$-aggregation using just a subset
of the initial estimates and will be called $\ell _{0}\cap \ell _{q}$%
-aggregation. For the special case of $q=1,$ this $\ell _{0}\cap \ell _{1}$%
-aggregation is studied in Yang \cite{Yang2004} (page 36) for
multi-directional aggregation and in Lounici \cite{Lounici2007} (called $D$%
-convex aggregation) more formally, giving also lower bounds. Our results
below not only handle $q<1$ but also close a gap of a logarithmic factor in
upper and lower bounds in \cite{Lounici2007}.

For ease of presentation, we may use the same symbol (e.g., $C$) to denote
possibly different constants of the same nature.

%%%%%%% theorem 2. Upper bounds for $l_q$ aggregation  %%%%%%%

\begin{theorem}
\label{UpAgg} Suppose $\mbox{{\scshape A}}_{\mathbf{E}-\mathbf{G}}$ holds
for the \textbf{E}-\textbf{G} strategy respectively. Our estimator $\hat{f}%
_{F_{n}}$ simultaneously has the following properties.

\begin{itemize}
\item[(i)] For \textbf{T-} strategies, for $\mathcal{F=F}_{q}(t_{n})$ with $%
0<q\leq 1,$ or $\mathcal{F=F}_{0}(k_{n}),$ or $\mathcal{F=F}_{q}(t_{n})\cap 
\mathcal{F}_{0}(k_{n})$ with $0<q\leq 1,$ we have%
\begin{equation*}
R(\hat{f}_{F_{n}};f_{0};n)\leq \left[ C_{0}d^{2}(f_{0};\mathcal{F}%
)+C_{1}REG(m_{\ast }^{\mathcal{F}})\right] \wedge \left[ C_{0}\left(
\left\Vert f_{0}\right\Vert ^{2}\vee \frac{C_2 \sigma ^{2}}{n}\right) \right]
.
\end{equation*}

\item[(ii)] For \textbf{AC-} strategies, for $\mathcal{F=F}_{q}(t_{n})$ with 
$0<q\leq 1,$ or $\mathcal{F=F}_{0}(k_{n}),$ or $\mathcal{F=F}_{q}(t_{n})\cap 
\mathcal{F}_{0}(k_{n})$ with $0<q\leq 1,$ we have%
\begin{eqnarray*}
&&R(\hat{f}_{F_{n}};f_{0};n)\leq C_{1} REG(m_{\ast }^{\mathcal{F}})+ \\
&&C_{0}\left\{ 
\begin{array}{ll}
d^{2}(f_{0};\mathcal{F}_{q}^{L}(t_{n}))+ \frac{C_2 \sigma ^{2}\log (1+t_{n})%
}{n} & \text{for case 1,} \\ 
\inf_{s\geq 1}\left( \inf_{\{\theta :\Vert \theta \Vert _{1}\leq s,\Vert
\theta \Vert _{0}\leq k_{n},\Vert f_{\theta }\Vert _{\infty }\leq
L\}}\left\Vert f_{\theta }-f_{0}\right\Vert ^{2}+ \frac{C_2 \sigma ^{2}\log
(1+s)}{n}\right) & \text{for case 2,} \\ 
d^{2}(f_{0};\mathcal{F}_{q}^{L}(t_{n})\cap \mathcal{F}_{0}^{L}(k_{n}))+ 
\frac{C_2 \sigma ^{2}\log (1+t_{n})}{n} & \text{for case 3.}%
\end{array}%
\right.
\end{eqnarray*}%
Also, $R(\hat{f}_{F_{n}};f_{0};n)\leq C_0 \left( \left\Vert f_{0}\right\Vert
^{2}\vee \frac{C_{2}\sigma ^{2}}{n} \right).$
\end{itemize}

For all these cases, $C_{0}$ and $C_2$ do not depend on $%
n,f_{0},t_{n},q,k_{n},M_{n}$; $C_{1}$ does not depend on $%
n,f_{0},t_{n},k_{n},M_{n}$. These constants may depend on $L,$ $p_{0},$ $%
\sigma^{2}$ or $\overline{\sigma }^{2}/\underline{\sigma }^{2},$ $\alpha ,$ $%
U_{\alpha },V_{\alpha }$ when relevant. An exception is that $C_{0}=1$ for
the \textbf{AC-C} strategy.
\end{theorem}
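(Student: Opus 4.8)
The plan is to run the E-G strategy through its two stages and combine the three cited propositions with the sparse approximation bound of Theorem \ref{Th1}(ii). First I would fix an arbitrary target class $\mathcal{F}$ (one of the three cases) and an arbitrary model size $1\le m\le M_n\wedge n$. By Theorem \ref{Th1}(ii) there is a subset $J_m$ and a coefficient vector $\theta^m$ with $\|\theta^m\|_1\le t_n$ (when $q>0$; for $q=0$ one just uses the best $k_n$-term least-squares target directly) such that $\|f_{\theta^m}-f_0\|^2 \le d^2(f_0;\mathcal{F}) + 2^{2/q-1}t_n^2 m^{1-2/q}$. On $Z^{(1)}$ the Step II estimator attached to $J_m$ (a T-estimator on $\mathcal{F}_{J_m}$, or an AC-estimator on $\mathcal{F}_{J_m,s}^L$ with $s=\lceil t_n\rceil$) satisfies, by Proposition \ref{Birge} or Proposition \ref{Audibert}, conditionally on $Z^{(1)}$, a risk bound of the form $C_{L,\sigma}\big(\|f_{\theta^m}-f_0\|^2 + m/n\big)$ (for T) or $\|f_{\theta^m}-f_0\|^2 + c(2\sigma'+H)^2 m/n$ with leading constant one (for AC). Here I would use that AC1 holds with $H\le 2L$ and AC2 with $(\sigma')^2=\sigma^2+4L^2$, as already noted in the text, and that the sup-norm truncation to $[-L,L]$ preserves the bounds.

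Next, Step III aggregates all these estimators (plus the null model) via Catoni's algorithm or ARM on $Z^{(2)}$. Applying Proposition \ref{Catoni} (leading constant one) or Proposition \ref{Yang2004} (leading constant $C_Y$), and taking the infimum over the candidate list restricted to the single estimator indexed by $(J_m,s)$, the final risk is bounded by
\begin{equation*}
C\Big( R(\check f_{J_m,s}\mid Z^{(1)}) + \tfrac{\sigma^2}{n}\log\tfrac{1}{\pi_{J_m,s}}\Big),
\end{equation*}
and $\log(1/\pi_{J_m,s})$ is, by the chosen weights, of order $\log\binom{M_n}{m} + \log((M_n\wedge n)(1+s)^2) \asymp m\log(eM_n/m) + \log(1+t_n)$ (the $(1+s)^2$ term is absent for the T-strategies, which is why the extra $\log(1+t_n)$ appears only in the AC bounds). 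Combining, taking expectation over $Z^{(1)}$, and collecting terms, the bound becomes of order
\begin{equation*}
C_0\, d^2(f_0;\mathcal{F}) + C_1\Big( t_n^2 m^{1-2/q} + \frac{m(1+\log\frac{M_n}{m})}{n}\Big) + (\text{AC: } C_2\tfrac{\sigma^2\log(1+t_n)}{n}).
\end{equation*}
Now I minimize over $m$. By the discussion in section \ref{aninsight}, choosing $m$ of the order of the effective model size $m_\ast^{\mathcal{F}}$ makes the approximation term $t_n^2 m^{1-2/q}$ of the same order as $m(1+\log(M_n/m))/n$, so both are $\asymp REG(m_\ast^{\mathcal{F}})/\sigma^2$; for case 3 one takes the smaller of the two effective sizes, and for $q=0$ the term $t_n^2 m^{1-2/q}$ is replaced by the exact $k_n$-term approximation error, which vanishes once $m\ge k_n$. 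This yields the first bracket in (i) and the displayed bound in (ii).

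Finally, the second bracket $C_0(\|f_0\|^2 \vee C_2\sigma^2/n)$ is the "trivial" bound obtained by instead comparing, in Step III, against the null model $f\equiv 0$, whose prior weight is $p_0$: Proposition \ref{Catoni}/\ref{Yang2004} then gives $R(\hat f_{F_n})\le C(\|0-f_0\|^2 + \tfrac{\sigma^2}{n}\log\tfrac1{p_0}) = C(\|f_0\|^2 + C'\sigma^2/n)$, and taking the minimum of the two competitors gives the $\wedge$ in (i) (and the last sentence of (ii)). I expect the main obstacle to be bookkeeping rather than any single hard estimate: one must verify that $m_\ast^{\mathcal{F}}$ as defined genuinely realizes the minimum of $t_n^2 m^{1-2/q} + m\,SER(m)/n$ up to constants over $1\le m\le M_n\wedge n$ (including the boundary regimes $m^\ast = M_n\wedge n$ and $m^\ast = n$ where $REG$ saturates at $\sigma^2$), and that the constants $C_0, C_1, C_2$ can be made to depend only on the declared quantities ($L, p_0, \sigma^2$ or $\overline\sigma^2/\underline\sigma^2$, $\alpha, U_\alpha, V_\alpha$) uniformly in $n, f_0, t_n, q, k_n, M_n$ — in particular that the $q$-dependent constant $2^{2/q-1}$ from Theorem \ref{Th1}(ii) gets absorbed correctly (it does, because at $m\asymp m_\ast$ the product $t_n^2 m^{1-2/q}$ is controlled independently of $q$), which is exactly why $C_1$ is allowed to depend on nothing in the list except the noise-level constants.
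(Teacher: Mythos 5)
Your proposal is correct and follows essentially the same route as the paper: the paper first packages your two-stage argument (Step II risk bounds from Propositions \ref{Birge}/\ref{Audibert} plus Step III aggregation bounds from Propositions \ref{Catoni}/\ref{Yang2004}, with the prior weights contributing $\log\binom{M_n}{m}+\log(M_n\wedge n)$ and, for AC, $2\log(1+s)$) as the general oracle inequality of Theorem \ref{Oracle}, and then evaluates the resulting index of resolvability at $m=m_\ast^{\mathcal{F}}$ (or $k_n$, or the full/null model in the boundary regimes) using Theorem \ref{Th1}(ii), exactly as you describe. One small correction to your closing remark: the theorem deliberately omits $q$ from the list of quantities $C_1$ must not depend on, and indeed the constants tying $t_n^2 m_\ast^{1-2/q}$ to $m_\ast(1+\log(M_n/m_\ast))/n$ (including the factor $2^{2/q-1}$) do depend on $q$, so $C_1$ is $q$-dependent rather than uniform in $q$ — but this is permitted by the statement and does not affect the proof.
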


\begin{remark}
When $q=1,$ our theorem covers some important previous aggregation results.
With $t_{n}=1,$ Juditsky and Nemirovski \cite{JuditskyNemirovski2000}
obtained the optimal result for large $M_{n}$; Yang \cite{Yang2004} gave
upper bounds for all $M_{n},$ but the rate is slightly sub-optimal (by a
logarithmic factor) when $M_{n}=O(\sqrt{n})$ and with a factor larger than 1
in front of the approximation error; Tsybakov \cite{Tsybakov2003} presented
the optimal rate for both large and small $M_{n},$ but under the assumption
that the joint distribution of $\{f_{j}(\mathbf{X}),j=1,...,M_{n}\}$ is
known. For the case $M_{n}=O(\sqrt{n}),$ Audibert and Catoni \cite%
{AudibertCatoni2010} have improved over \cite{Yang2004} and \cite%
{Tsybakov2003} by giving an optimal risk bound. Even when $q=1,$ our result
is more general in that $t_{n}$ is allowed to be arbitrary.
 Note also that
in some specific cases, the induced sparsity with $\ell _{1}$-constraint was
explored earlier in e.g., \cite{DonohoJohnstone1994, Barron1994,
YangBarron1998}. The latter two papers dealt with nonparametric situations
with mild assumptions on the terms in the approximation systems. In
particular, when the true function has a finite-order linear expression, the
estimators achieve the minimax optimal rate $\sqrt{(\log n)/n}$ when $M_{n}$
grows polynomially fast in $n.$
\end{remark}

\begin{remark}
The upper rate for $q=0$ as well as its interpretation is not new in the
literature (see, e.g., Theorem 1 of \cite{YangBarron1998}, Theorems 1 and 4
of \cite{Yang1999}): by noticing that there are ${\binom{M_{n}}{k_{n}}}$
subsets of size $k_{n}$ and that $\log {\binom{M_{n}}{k_{n}}}\leq
k_{n}\left( 1+\log (M_{n}/k_{n})\right) $, the rate for $q=0,$ which
directly imposes hard sparsity on the maximum number of relevant terms, is just the log number of models of size $k_{n}$ divided
by the sample size.
\end{remark}

\begin{remark}
Note that an extra term of $\log (1+t_{n})/n$ is present in the upper bounds
of the estimator obtained by \textbf{AC-} strategies. For case 1, if $%
t_{n}\leq e^{cn} \wedge e^{cm_* (1 + \log (M_n / m_*))}$ for a pure constant 
$c$, then $\log (1+t_{n})/n$ is upper bounded by a multiple of $REG(m_{\ast
}^{\mathcal{F}_{q}(t_{n})})$. Then, under the condition that the
approximation errors involved in the risk bounds are of the same order, 
\textbf{AC-} strategies have the same upper bound orders as \textbf{T-}
strategies. For case 2, the same is true if for some $s\leq e^{cn} \wedge
e^{ck_{n}( 1 + \log (M_{n}/k_{n})) } $, the $\ell _{1}$ norm constraint does
not enlarge the approximation error order. 
\end{remark}

\begin{remark}
For case 2, the boundedness assumption of $\Vert f_{j}\Vert \leq 1$, $1\leq
j\leq M_{n}$ is not necessary.
\end{remark}

\begin{remark}
If the true function $f_{0}$ happens to have a small $L_{2}$ norm such that $%
\Vert f_{0}\Vert ^{2}\vee \frac{\sigma ^{2}}{n}$ is of a smaller order than $%
REG(m_{\ast }^{\mathcal{F}}),$ then its inclusion in the risk bounds may
improve the rate of convergence.
\end{remark}

%%%%%%%%%%%%%%%%%%%%% a lower bound %%%%%%%%%%%%%%%%%%%%%%%%%%%%%%
Next, we show that the upper rates in Theorem \ref{UpAgg} cannot be
generally improved by giving a theorem stating that the lower bounds of the
risk are of the same order in some situations, as is typically done in the
literature on aggregation of estimates. The following theorem implies that
the estimator by our strategies is indeed minimax adaptive for $\ell _{q}$%
-aggregation of estimates. Let $f_{1},\ldots ,f_{M_{n}}$ be an orthonormal
basis with respect to the distribution of $\mathbf{X}$. Since the earlier
upper bounds are obtained under the assumption that the true regression
function $f_{0}$ satisfies $\Vert f_{0}\Vert _{\infty }\leq L$ for some
known (possibly large) constant $L>0,$ for our lower bound result below,
this assumption will also be considered. For the last result in part (iii)
below under the sup-norm constraint on $f_{0},$ the functions $f_{1},\ldots
,f_{M_{n}}$ are specially constructed on $[0,1]$ and $P_{X}$ is the uniform
distribution on $[0,1]$. See the proof for details.

In order to give minimax lower bounds without any norm assumption on $f_{0},$
let $\widetilde{m}_{\ast }^{\mathcal{F}}$ be defined the same as $m_{\ast }^{%
\mathcal{F}}$ except that the ceiling of $n$ is removed. Define 
\begin{equation*}
\overline{REG}(\widetilde{m}_{\ast }^{\mathcal{F}})=\frac{\sigma ^{2}%
\widetilde{m}_{\ast }^{\mathcal{F}}\cdot \left( 1+\log \left( \frac{M_{n}}{%
\widetilde{m}_{\ast }^{\mathcal{F}}}\right) \right) }{n}\wedge \left\{ 
\begin{array}{ll}
t_{n}^{2} & \text{for cases 1 and 3,} \\ 
\infty  & \text{for case 2,}%
\end{array}%
\right. 
\end{equation*}%
\begin{equation*}
\underline{REG}(m_{\ast }^{\mathcal{F}})=REG(m_{\ast }^{\mathcal{F}})\wedge
\left\{ 
\begin{array}{ll}
t_{n}^{2} & \text{for cases 1 and 3,} \\ 
\infty  & \text{for case 2.}%
\end{array}%
\right. 
\end{equation*}

%%%%%%% theorem 3. Lower bounds for $l_q$ aggregation %%%%%%%

\begin{theorem}
\label{LowAgg} Suppose the noise $\varepsilon$ follows a normal distribution
with mean $0$ and variance $\sigma ^{2}>0$.

\begin{description}
\item[(i)] For any aggregated estimator $\hat{f}_{F_{n}}$ based on an
orthonormal dictionary $F_{n}=\{f_{1},\ldots ,f_{M_{n}}\}$, for $\mathcal{F=F%
}_{q}(t_{n}),$ or $\mathcal{F=F}_{0}(k_{n}),$ or $\mathcal{F=F}%
_{q}(t_{n})\cap \mathcal{F}_{0}(k_{n})$ with $0<q\leq 1,$ one can find a
regression function $f_{0}$ (that may depend on $\mathcal{F}$) such that%
\begin{equation*}
R(\hat{f}_{F_{n}};f_{0};n)-d^{2}(f_{0};\mathcal{F})\geq C\cdot \overline{REG}%
(\widetilde{m}_{\ast }^{\mathcal{F}}),
\end{equation*}%
where $C$ may depend on $q$ (and only $q$) for cases 1 and 3 and is an
absolute constant for case 2.

\item[(ii)] Under the additional assumption that $\Vert f_{0}\Vert \leq L$
for a known $L>0,$ the above lower bound becomes $C^{^{\prime }}\cdot 
\underline{REG}(m_{\ast }^{\mathcal{F}})$ for the three cases, where $%
C^{^{\prime }}$ may depend on $q$ and and $L$ for cases 1 and 3 and on $L$
for case 2.

\item[(iii)] With the additional knowledge $\Vert f_{0}\Vert _{\infty }\leq L
$ for a known $L>0,$ the lower bound $C^{^{\prime \prime }}\cdot \underline{%
REG}(m_{\ast }^{\mathcal{F}})$ also holds for the following situations: 1)
for $\mathcal{F=F}_{q}(t_{n})$ with $0<q\leq 1,$ if $\sup_{f_{\theta }\in 
\mathcal{F}_{q}(t_{n})}\Vert f_{\theta }\Vert _{\infty }\leq L;$ 2) for $%
\mathcal{F=F}_{0}(k_{n}),$ if $\sup_{1\leq j\leq M_{n}}\Vert f_{j}\Vert
_{\infty }\leq L<\infty $ and $\frac{k_{n}^{2}}{n}(1+\log \frac{M_{n}}{k_{n}}%
)$ are bounded above; 3) for $\mathcal{F=F}_{0}(k_{n}),$ if $M_{n}/\left(
1+\log \frac{M_{n}}{k_{n}}\right) \leq bn$ for some constant $b>0$ and the
orthonormal basis is specially chosen.
\end{description}
\end{theorem}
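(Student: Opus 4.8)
The plan is to prove Theorem~\ref{LowAgg} by reducing, in each of the three cases, the minimax lower bound over the corresponding hull $\mathcal{F}$ to a standard lower bound over a well-chosen finite subfamily of regression functions, using Fano's inequality (Assouad's lemma in the hard-sparsity subcase) together with the orthonormality of $F_n$. Since $f_1,\dots,f_{M_n}$ are orthonormal with respect to $P_X$, for $f_\theta=\sum_j\theta_j f_j$ we have $\|f_\theta-f_{\theta'}\|^2=\|\theta-\theta'\|_2^2$, so the problem is isometric to a Gaussian sequence (or linear-model) problem: observing $Y_i=\sum_j\theta_j f_j(\mathbf{X}_i)+\varepsilon_i$ gives, after projecting onto the $f_j$'s, essentially $\widehat{\theta}_j\approx\theta_j+\sigma n^{-1/2}\xi_j$ with $\xi_j$ i.i.d.\ standard normal (in the random-design case one argues conditionally on the design or uses a near-isometry; the precise bookkeeping of design randomness is deferred to the proof section). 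The key quantity driving everything is the effective model size: the rate $\overline{REG}(\widetilde m_\ast^{\mathcal F})$ is exactly the risk of estimating a vector that has about $\widetilde m_\ast^{\mathcal F}$ ``active'' coordinates of appropriate magnitude, so the construction must place the perturbation mass on that many coordinates at the critical scale.

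First I would handle part~(i), the no-norm-constraint bound. For case~2 ($\ell_0$, at most $k_n$ nonzero terms), I take the standard packing of $B_0(k_n;M_n)$: a set of sign-vectors supported on $k_n$ coordinates, each coordinate equal to $\pm\delta$ with $\delta^2\asymp\sigma^2(1+\log(M_n/k_n))/n$, of cardinality $\exp(c\,k_n(1+\log(M_n/k_n)))$ with pairwise $L_2$-separation $\gtrsim k_n\delta^2$; Fano then yields the bound $c\,k_n\delta^2\asymp\sigma^2 k_n(1+\log(M_n/k_n))/n=\overline{REG}(\widetilde m_\ast^{\mathcal F})$ (with the $\wedge\infty$ being vacuous since $f_0$ ranges over all of $\mathcal{F}$ with $d^2(f_0;\mathcal F)=0$). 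For case~1 ($0<q\le1$), I split on whether $\widetilde m_\ast=M_n\wedge n$ or not: in the ``interior'' regime I use a packing of $B_q(t_n;M_n)$ with $\widetilde m_\ast$ active coordinates at level $\delta$ chosen so that $\widetilde m_\ast\delta^q\lesssim t_n^q$ (keeping the points inside the $\ell_q$-ball) and $\widetilde m_\ast\delta^2\asymp \sigma^2\widetilde m_\ast(1+\log(M_n/\widetilde m_\ast))/n$ (matching the estimation price); balancing these two constraints is exactly the computation that produced the definition of $m_\ast$, and it is where the factor $SER(m^\ast)^{q/2}$ and the competing term $t_n^2$ in $\overline{REG}$ come from — when the $\ell_q$-budget is the binding constraint we land on the $t_n^2$ branch. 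Case~3 is the minimum of the case~1 and case~2 constructions, reflecting that the smaller effective model size dominates.

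For parts~(ii) and~(iii), I would re-examine the packing sets above and verify that the test functions also satisfy the imposed norm constraint. In part~(ii) the constraint is $\|f_0\|\le L$, i.e.\ $\|\theta\|_2\le L$; since the packing points have $\|\theta\|_2^2\asymp\widetilde m_\ast\delta^2\asymp REG$, as long as $REG\lesssim L^2$ this is automatic, and where it fails one simply truncates the construction, which replaces $\widetilde m_\ast$ by $m_\ast$ (the $\wedge n$ ceiling) and $\overline{REG}$ by $\underline{REG}=REG\wedge t_n^2$ — hence the switch to $m_\ast$ and $\underline{REG}$ in the statement. In part~(iii) the stronger requirement $\|f_0\|_\infty\le L$ is what forces a genuinely new idea: for arbitrary orthonormal $f_j$ one cannot control $\|\sum_j\theta_j f_j\|_\infty$ by $\|\theta\|_2$, so for subcase~3 one must \emph{build} the basis. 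The natural device is to use functions with disjoint supports (bumps) on $[0,1]$ under $P_X=\mathrm{Unif}[0,1]$: if $f_j$ is supported on an interval of length $1/M_n$ and normalized in $L^2$, then $\|f_j\|_\infty\asymp\sqrt{M_n}$, and a $k_n$-sparse combination with coefficients $\pm\delta$ has sup-norm $\asymp\delta\sqrt{M_n}$, which is $\le L$ precisely under the stated condition $M_n/(1+\log(M_n/k_n))\lesssim n$ (so that $\delta\sqrt{M_n}\lesssim\sqrt{\sigma^2 M_n/n}$ is bounded); subcases~1 and~2 of part~(iii) are instead the easy cases where the hypotheses ($\sup_{f_\theta}\|f_\theta\|_\infty\le L$, or $\sup_j\|f_j\|_\infty\le L$ with $k_n^2(1+\log(M_n/k_n))/n$ bounded) make the part~(ii) construction already sup-norm bounded after mild rescaling.

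The main obstacle I anticipate is the sup-norm-constrained lower bound in part~(iii): one needs a single basis construction on $[0,1]$ that is simultaneously orthonormal, yields the right metric entropy for the $\ell_q$-hull (so that the Fano argument produces the claimed rate and is not lossy), and keeps every competing test function uniformly bounded by $L$ — reconciling these three demands, and tracking exactly when the boundedness costs a logarithmic factor versus when it is free, is the delicate part. A secondary technical nuisance, in the random-design setting, is making the ``orthonormal in $L^2(P_X)$ $\Rightarrow$ nearly-orthonormal empirically'' step rigorous with high probability and folding the resulting fluctuations harmlessly into the constants; this is routine but must be done carefully so that the constants $C,C',C''$ retain the claimed (in)dependence on $q$, $L$, and $\sigma^2$.
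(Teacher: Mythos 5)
Your proposal is correct and follows essentially the same route as the paper: the paper also reduces to sequence space via orthonormality, applies the Yang--Barron packing/metric-entropy machinery (Fano's inequality packaged in Proposition~\ref{YangBarron}, with the sign-vector/Hamming packings at the effective model size and the critical scale $\delta^2\asymp\sigma^2(1+\log(M_n/\widetilde m_*))/n$), handles part~(ii) by capping the critical radius at $L$, and uses exactly the disjoint-bump basis $f_i=\sqrt{M_n}\,I_{[(i-1)/M_n,\,i/M_n)}$ on $[0,1]$ for part~(iii), subcase~3. The only cosmetic difference is that for the interior regime of case~1 the paper invokes the sharp global entropy of the $\ell_q$-hull from Theorem~\ref{Th1} (via K\"uhn's construction) rather than writing the packing out directly, and the random-design mutual-information bound is taken in $L_2(P_X)$ so no empirical near-orthonormality step is needed.
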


For satisfaction of $\sup_{f_{\theta }\in \mathcal{F}_{q}(t_{n})}\Vert
f_{\theta }\Vert _{\infty }\leq L,$ consider uniformly bounded functions $%
f_{j}$, then for $0<q\leq 1,$ 
\begin{equation*}
\Vert \sum_{j=1}^{M_{n}}\theta _{j}f_{j}\Vert _{\infty }\leq
\sum_{j=1}^{M_{n}}|\theta _{j}|\Vert f_{j}\Vert _{\infty }\leq \left(
\sup_{1\leq j\leq M_{n}}\Vert f_{j}\Vert _{\infty }\right) \Vert \theta
\Vert _{1}\leq \left( \sup_{1\leq j\leq M_{n}}\Vert f_{j}\Vert _{\infty
}\right) \Vert \theta \Vert _{q}.
\end{equation*}%
Thus, under the condition that $\left( \sup_{1\leq j\leq M_{n}}\Vert
f_{j}\Vert _{\infty }\right) t_{n}$ is upper bounded, $\sup_{f_{\theta }\in 
\mathcal{F}_{q}(t_{n})}\Vert f_{\theta }\Vert _{\infty }\leq L$ is met.

The lower bounds given in part (iii) of the theorem for the three cases of $%
\ell _{q}$-aggregation of estimates are of the same order of the upper
bounds in the previous theorem, respectively, unless $t_{n}$ is too small.
Hence, under the given conditions, the minimax rates for $\ell _{q}$%
-aggregation are identified. When no restriction is imposed on the norm of $%
f_{0},$ the lower bounds can certainly approach infinity (e.g., when $t_{n}$
is really large). That is why $\overline{REG}(\widetilde{m}_{\ast }^{%
\mathcal{F}})$ is introduced. The same can be said for later lower bounds.

For the new case $0<q<1,$ the $\ell _{q}$-constraint imposes a type of
soft-sparsity more stringent than $q=1$: even more coefficients in the
linear expression are pretty much negligible. For the discussion below,
assume $m^{\ast }<n.$ When the radius $t_{n}$ increases or $q\rightarrow 1$, 
$m^{\ast }$ increases given that the $\ell _{q}$-ball enlarges. When $%
m_{\ast }=m^{\ast }=M_{n}<n$, the $\ell _{q}$-constraint is not tight enough
to impose sparsity: $\ell _{q}$-aggregation is then simply equivalent to
linear aggregation and the risk regret term corresponds to the estimation
price of the full model, $M_{n}\sigma ^{2}/n$. In contrast, when $1<m_{\ast
}<M_{n}\wedge n$, the rate for $\ell _{q}$-aggregation can be expressed in
different ways:%
\begin{equation*}
\sigma ^{2-q}t_{n}^{q}\left( \frac{\log \left( 1+\frac{M_{n}}{(n\tau
t_{n}^{2})^{q/2}}\right) }{n}\right) ^{1-q/2}\asymp \frac{m_{\ast }}{n}%
SER\left( m_{\ast }\right) \asymp \frac{m_{\ast }}{n}SER\left( m^{\ast
}\right) \asymp \frac{m^{\ast }}{n}SER\left( m^{\ast }\right) ^{1-\frac{q}{2}%
}.
\end{equation*}%
The second expression is transparent in interpretation: due to the sparsity
condition, we only need to consider models of the effective size $m_{\ast }$
and the risk goes with the searching price $\frac{m_{\ast }}{n}SER\left(
m_{\ast }\right) $ (the estimation error of $m_{\ast }$ parameters is being
dominated in order). The last expression means that we can do better than
searching over the models of the ideal model size $m^{\ast },$ which has the
risk $\frac{m^{\ast }}{n}SER\left( m^{\ast }\right) .$ The minimax risk is
deflated by a factor of $SER\left( m^{\ast }\right) ^{\frac{q}{2}},$ which
becomes larger as $q\rightarrow 1$, pointing out that the factor $%
SER(m^{\ast })$ has to be downsized more as the $\ell _{q}$-ball becomes
larger. When $m^{\ast }=M_{n}$ (the full model), $SER(m^{\ast })$ reduces to
1. When $m^{\ast }\leq \left( 1+\log (M_{n}/m^{\ast })\right) ^{q/2}$ or
equivalently $m_{\ast }=1$, the $\ell _{q}$-constraint restricts the search
space of the optimization problem so much that it suffices to consider at
most one $f_{j}$ and the null model may provide a better risk.

Now let us explain that our $\ell _{q}$-aggregation includes the commonly
studied aggregation problems in the literature. First, when $q=1,$ we have
the well-known convex or $\ell _{1}$-aggregation (but now with the $\ell
_{1} $-norm bound allowed to be general). Second, when $q=0,$ with $%
k_{n}=M_{n}\leq n,$ we have the linear aggregation. For other $k_{n}<$ $%
M_{n}\wedge n,$ we have the aggregation to achieve the best linear
performance of only $k_{n}$ initial estimates. The case $q=0$ and $k_{n}=1$
has a special implication. Observe that from Theorem \ref{UpAgg}, we deduce
that for both the \textbf{T-} strategies and \textbf{AC-} strategies, under
the assumption $\sup_{j}\Vert f_{j}\Vert _{\infty }\leq L,$ our estimator
satisfies 
\begin{equation*}
R(\hat{f}_{F_{n}};f_{0};n)\leq C_{0}\inf_{1\leq j\leq M_{n}}\left\Vert
f_{j}-f_{0}\right\Vert ^{2}+C_{1}\sigma ^{2}\left( 1\wedge \frac{1+\log M_{n}%
}{n}\right) ,
\end{equation*}%
where $C_{0}=1$ for the \textbf{AC-C }strategy. Together with the lower
bound of the order $\sigma ^{2}\left( 1\wedge \frac{1+\log M_{n}}{n}\right) $
on the risk regret of aggregation for adaptation given in \cite{Tsybakov2003}%
, we conclude that $\ell _{0}(1)$-aggregation directly implies the
aggregation for adaptation (model selection aggregation). As mentioned
earlier, $\ell _{0}(k_{n})\cap \ell _{q}(t_{n})$-aggregation pursues the
best performance of the linear combination of at most $k_{n}$ initial
estimates with coefficients satisfying the $\ell _{q}$-constraint, which
includes the $D$-convex aggregation as a special case (with $q=1$).

%%%%%%%%%%%%%%%%%%%%% 3.3 Risk bounds on combining procedures with $l_q$ constraints %%%%%%%%%%%%%%%%%%%%
\vspace*{0.1in}

\subsection{$\ell _{q}$-combination of procedures}

\label{lqcombination}

Suppose we start with a collection of estimation procedures $\Delta
=\{\delta _{1},\ldots ,\delta _{M_{n}}\}$ instead of a dictionary of
estimates. Let $\hat{f}_{j}$ be the estimator of the unknown true regression
function based on the procedure $\delta _{j}$, $1\leq j\leq M_{n}$, at a
certain sample size. Our goal is to combine the estimators $\{\hat{f}%
_{j}:1\leq j\leq M_{n}\}$ to achieve the best performance in 
\begin{equation*}
\mathcal{F}_{q}(t_{n};\Delta )=\left\{ \hat{f}_{\theta
}=\sum_{j=1}^{M_{n}}\theta _{j}\hat{f}_{j}:\Vert \theta \Vert _{q}\leq
t_{n}\right\} ,\mathit{\space}0\leq q\leq 1,t_{n}>0.
\end{equation*}

We split the data $(\mathbf{X}_{1},Y_{1}),\ldots ,(\mathbf{X}_{n},Y_{n})$
into three parts: $Z^{(1)}=(\mathbf{X}_{i},Y_{i})_{i=1}^{n_{1}}$, $Z^{(2)}=(%
\mathbf{X}_{i},Y_{i})_{i=n_{1}+1}^{n_{1}+n_{2}}$ and $Z^{(3)}=(\mathbf{X}%
_{i},Y_{i})_{i=n_{1}+n_{2}+1}^{n}$. Use the data $Z^{(1)}$ to obtain
estimators $\hat{f}_{1},\ldots ,\hat{f}_{M_{n}}$ and use the data $Z^{(2)}$
to construct T-estimators or AC-estimators based on subsets of $\hat{f}%
_{1},\ldots ,\hat{f}_{M_{n}}$. The data $Z^{(3)}$ are used to construct the
final estimator $\hat{f}_{\Delta }$ by aggregating the T-estimators or
AC-estimators and the null model using Catoni's or the ARM algorithm as done in
the previous section. For simplicity, assume $n$ is a multiple of $4$ and
choose $n_{1}=n/2$, $n_{2}=n/4$. Upper bounds for combining procedures by
our strategy are obtained similarly. The only difference is that $%
d^{2}(f_{0};\mathcal{F})$ is replaced by the risk of the best constrained
linear combination of the estimators $\hat{f}_{1,n/2},\ldots ,\hat{f}%
_{M_{n},n/2},$ where we add the second subscript $n/2$ to emphasize that the
estimators are constructed with a reduced sample size. For example, by 
\textbf{T-} strategies, we have that for any $0<q\leq 1$ and $t_{n}>0$, 
\begin{equation*}
R(\hat{f}_{\Delta };f_{0};n)\leq C_{0}\inf_{\theta \in
B_{q}(t_{n};M_{n})}E\left\Vert f_{0}-\sum_{j=1}^{M_{n}}\theta _{j}\hat{f}%
_{j,n/2}\right\Vert ^{2}+C_{1}\cdot REG(m_{\ast }^{\mathcal{F}_{q}(t_{n})}),
\end{equation*}%
and again such risk bounds simultaneously hold for $0\leq q\leq 1$ and $%
t_{n}>0.$

Note that these risk bounds involve the accuracies of the candidate
procedures at a reduced sample size $n/2$ due to data splitting to come up
with the estimates to be aggregated. Ideally, we want to have $C_{0}=1$ and $%
\hat{f}_{j,n/2}$ replaced by $\hat{f}_{j,n}.$ At this time, we are unaware
of any such risk bound that holds for combining general estimators (in fixed
design case, Leung and Barron's algorithm does not involve data splitting,
but it works only for least squares estimators). Because of this, the
theoretical attractiveness that the constant $C_{0}$ being $1$ in the
aggregation stage, unfortunately, disappears since the remaining parts in
the risk bounds also depend on the data splitting and there seems to be no
reason to expect with certainty that an aggregation method with $C_{0}=1$
has a better risk, even asymptotically, than another one with $C_{0}>1$.
Therefore, for combining general statistical procedures, it is unclear how
useful $C_{0}=1$ is even from a theoretical perspective. (It seems that
there is one scenario that one can argue otherwise: the candidate estimates
are truly provided. In the application of combining forecasts sequentially,
the candidate forecasts may be provided by other experts/commercial
companies and the statistician does not have access to the data based on
which the forecasts are built. In this context, since no data splitting is
needed, $C_{0}=1$ leads to a theoretical advantage compared to $C_{0}>1$.)
For this reason, in our view, results with $C_{0}>1$ (but not too large) are
also important for combining procedures. Indeed, such results often have
strengths in other aspects such as allowing heavy tail distributions for the
errors and allowing dependence of the observations.

Nonetheless, regardless of the degree of practical relevance, limiting
attention to the aggregation step and pursuing $C_{0}=1$ in that local goal
is certainly not without a theoretical appeal.

Some additional interesting results on combining procedures are in \cite%
{Audibert2009, Birge2004, BuneaNobel2008, DalalyanTsybakov2007,
DalalyanTsybakov2009, Giraud2008, Goldenshluger2009, GyorfiOttucsak2007,
Gyorfietal2002, Wegkamp2003, Yang2001}.

%------------------------------------------------------------------------------------------------------------------------------------------------------------------
%%%%%%%%%%%%%%% 4. Linear regression with $l_q$-constrained coefficients %%%%%%%%%%%%%%%%%
%------------------------------------------------------------------------------------------------------------------------------------------------------------------
\vspace*{0.2in}

\section{Linear regression with $\ell _{q}$-constrained coefficients under
random design}

\label{linearregressionrandom}

Let's consider the linear regression model with $M_{n}$ predictors $%
X_{1},\ldots ,X_{M_{n}}$. Suppose the data are drawn i.i.d. from the
following model 
\begin{equation}
Y=f_{0}(\mathbf{X})+\varepsilon =\sum_{j=1}^{M_{n}}\theta
_{j}X_{j}+\varepsilon .  \label{linearregression}
\end{equation}%
As previously defined, for a function $f(x_{1},\ldots ,x_{M_{n}}):\mathcal{X}%
\rightarrow \mathbb{R}$, the $L_{2}$-norm $\Vert f\Vert $ is the square root
of $Ef^{2}(X_{1},\ldots ,X_{M_{n}})$, where the expectation is taken with
respect to $P_{X}$, the distribution of $\mathbf{X}$. Denote the $\ell
_{q,t_{n}}^{M_{n}}$-hull in this context by 
\begin{equation*}
\mathcal{F}_{q}(t_{n};M_{n})=\left\{ f_{\theta }=\sum_{j=1}^{M_{n}}\theta
_{j}x_{j}:\Vert \theta \Vert _{q}\leq t_{n}\right\} ,\mbox{\space}0\leq
q\leq 1,\mbox{\space}t_{n}>0.
\end{equation*}%
For linear regression, we assume coefficients of the true regression
function $f_{0}$ have a sparse $\ell _{q}$-representation ($0<q\leq 1$) or $%
\ell _{0}$-representation or both, i.e. $f_{0}\in \mathcal{F}$ where $%
\mathcal{F}=\mathcal{F}_{q}(t_{n};M_{n})$, $\mathcal{F}_{0}(k_{n};M_{n})$ or 
$\mathcal{F}_{q}(t_{n};M_{n})\bigcap \mathcal{F}_{0}(k_{n};M_{n})$.

%%%%%%%%%%% Assumptions and the strategy %%%%%%%%%%%%%%%%%%%
Assumptions BD and $A_{\mathbf{E-G}}$ are still relevant in this section. As
in the previous section, for AC-estimators, we consider $\ell _{1}$- and
sup-norm constraints.

For each $1\leq m\leq M_{n}\wedge n$ and each subset $J_{m}$ of size $m$,
let $\mathcal{G}_{J_{m}}=\{\sum_{j\in J_{m}}\theta _{j}x_{j}:\theta \in 
\mathbb{R}^{m}\}$ and $\mathcal{G}_{J_{m},s}^{L}=\{\sum_{j\in J_{m}}\theta
_{j}x_{j}:\Vert \theta \Vert _{1}\leq s,\Vert f_{\theta }\Vert _{\infty
}\leq L\}$. We introduce now the adaptive estimator $\hat{f}_{A}$, built
with the same strategy used to construct $\hat{f}_{F_{n}}$ except that we
now consider $\mathcal{G}_{J_{m}}$ and $\mathcal{G}_{J_{m},s}^{L}$ instead
of $\mathcal{F}_{J_{m}}$ and $\mathcal{F}_{J_{m},s}^{L}$.

%%%%%%%%%%%%%%% 4.1 Upper bounds of minimax rates for Linear regression %%%%%%%%%%%%%%%%%%
\vspace*{.1in}

\subsection{Upper bounds}

\label{upperbounds}

We give upper bounds for the risk of our estimator assuming 
$f_{0}\in 
\mathcal{F}_{q}^{L}(t_{n};M_{n})$, $\mathcal{F}_{0}^{L}(k_{n};M_{n})$, or $%
\mathcal{F}_{q}^{L}(t_{n};M_{n})\cap \mathcal{F}_{0}^{L}(k_{n};M_{n})$,
where $\mathcal{F}^{L}=\{f:f\in \mathcal{F},\Vert f\Vert _{\infty }\leq L\}$
for a positive constant $L$. Let $\alpha _{n}=$ $\sup_{f\in \mathcal{F}%
_{0}^{L}(k_{n};M_{n})}\inf \{\Vert \theta \Vert _{1}:$ $f_{\theta }=f\}$ be
the maximum smallest $\ell _{1}$-norm needed to represent the functions in $%
\mathcal{F}_{0}^{L}(k_{n};M_{n}).$ For ease of presentation, define $\Psi ^{%
\mathcal{F}}$ as follows:%
\begin{equation*}
\Psi ^{\mathcal{F}_{q}^{L}(t_{n};M_{n})}=\left\{ 
\begin{array}{ll}
\sigma ^{2} & \text{if $m_{\ast }=n$,} \\ 
\frac{\sigma ^{2}M_{n}}{n} & \text{if $m_{\ast }=M_{n}<n$,} \\ 
\sigma ^{2-q}t_{n}^{q}\left( \frac{1+\log \frac{M_{n}}{(nt_{n}^{2}\tau
)^{q/2}}}{n}\right) ^{1-q/2}\wedge \sigma ^{2} & \text{if $1<m_{\ast }<M_{n}$%
}\wedge n\text{,} \\ 
\left( t_{n}^{2}\vee \frac{\sigma ^{2}}{n}\right) \wedge \sigma ^{2} & \text{%
if $m_{\ast }=1$,}%
\end{array}%
\right.
\end{equation*}%
\begin{equation*}
\Psi ^{\mathcal{F}_{0}^{L}(k_{n};M_{n})}=\sigma ^{2}\left( 1\wedge \frac{%
k_{n}\left( 1+\log \frac{M_{n}}{k_{n}}\right) }{n}\right) ,
\end{equation*}%
\begin{equation*}
\Psi ^{\mathcal{F}_{q}^{L}(t_{n};M_{n})\cap \mathcal{F}%
_{0}^{L}(k_{n};M_{n})}=\Psi ^{\mathcal{F}_{q}^{L}(t_{n};M_{n})}\wedge \Psi ^{%
\mathcal{F}_{0}^{L}(k_{n};M_{n})}.
\end{equation*}%
In addition, for lower bound results, let $\underline{\Psi }^{\mathcal{F}%
_{q}^{L}(t_{n};M_{n})}$ ($0\leq q\leq 1$) and $\underline{\Psi }^{\mathcal{F}%
_{q}^{L}(t_{n};M_{n})\cap \mathcal{F}_{0}^{L}(k_{n};M_{n})}$ ($0<q\leq 1$)
be the same as $\Psi ^{\mathcal{F}_{q}^{L}(t_{n};M_{n})}$ and $\Psi ^{%
\mathcal{F}_{q}^{L}(t_{n};M_{n})\cap \mathcal{F}_{0}^{L}(k_{n};M_{n})}$,
respectively, except that when $0<q\leq 1$ and $m_{\ast }=1,$ $\underline{%
\Psi }^{\mathcal{F}_{q}^{L}(t_{n};M_{n})}$ takes the value $\sigma
^{2}\wedge t_{n}^{2}$ instead of $\sigma ^{2}\wedge \left( t_{n}^{2}\vee 
\frac{\sigma ^{2}}{n}\right) $ and $\underline{\Psi }^{\mathcal{F}%
_{q}^{L}(t_{n};M_{n})\cap \mathcal{F}_{0}^{L}(k_{n};M_{n})}$ is modified the
same way.

%%%%%%% theorem 4 %%%%%%%

\begin{theorem}
\label{UpReg} Suppose $A_{\mathbf{E}-\mathbf{G}}$ holds for the \textbf{E}-%
\textbf{G} strategy respectively, and $\sup_{1\leq j\leq M_{n}}\Vert
X_{j}\Vert _{\infty }\leq 1$. The estimator $\hat{f}_{A}$ simultaneously has
the following properties.

\begin{itemize}
\item[(i)] For \textbf{T-} strategies, for $\mathcal{F=F}_{q}^{L}(t_n;M_{n})$
with $0<q\leq 1,$ or $\mathcal{F=F}_{0}^{L}(k_{n};M_{n}),$ or $\mathcal{F=F}%
_{q}^{L}(t_n;M_{n})\cap \mathcal{F}_{0}^{L}(k_{n};M_{n})$ with $0<q\leq 1,$
we have 
\begin{equation*}
\sup_{f_{0}\in \mathcal{F}}R(\hat{f}_{A};f_{0};n)\leq C_{1}\Psi ^{\mathcal{F}%
},
\end{equation*}%
where the constant $C_{1}$ does not depend on $n.$

\item[(ii)] For \textbf{AC-} strategies, for $\mathcal{F=F}%
_{q}^{L}(t_n;M_{n}) $ with $0<q\leq 1,$ or $\mathcal{F=F}%
_{0}^{L}(k_{n};M_{n}),$ or $\mathcal{F=F}_{q}^{L}(t_n;M_{n})\cap \mathcal{F}%
_{0}^{L}(k_{n};M_{n})$ with $0<q\leq 1,$ we have 
\begin{equation*}
\sup_{f_{0}\in \mathcal{F}}R(\hat{f}_{A};f_{0};n)\leq C_{1}\Psi ^{\mathcal{F}%
}+C_{2}\left\{ 
\begin{tabular}{ll}
$\frac{\sigma ^{2} \log (1+\alpha _{n})}{n}$ & for $\mathcal{F=F}%
_{0}^{L}(k_{n};M_{n}),$ \\ 
$\frac{\sigma ^{2} \log (1+t_n)}{n}$ & otherwise,%
\end{tabular}%
\right.
\end{equation*}%
where the constants $C_{1}$ and $C_2$ do not depend on $n.$
\end{itemize}
\end{theorem}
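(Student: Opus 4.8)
The plan is to reduce Theorem \ref{UpReg} to Theorem \ref{UpAgg} applied with the dictionary of coordinate functions and then to verify, case by case, that the quantity $REG(m_{\ast}^{\mathcal{F}})$ appearing there coincides in order with $\Psi^{\mathcal{F}}$. First I would observe that $\hat{f}_A$ is literally $\hat{f}_{F_n}$ for the dictionary $f_j = x_j$, $1 \leq j \leq M_n$, and that $\sup_j \Vert X_j\Vert_{\infty}\leq 1$ forces $\Vert x_j\Vert = (E X_j^2)^{1/2}\leq 1$, so the standing normalization hypothesis of Theorem \ref{UpAgg} is met and $A_{\mathbf{E}-\mathbf{G}}$ transfers verbatim. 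Since we assume $f_0\in\mathcal{F}$ with $\mathcal{F}$ one of $\mathcal{F}_q^L(t_n;M_n)$, $\mathcal{F}_0^L(k_n;M_n)$, $\mathcal{F}_q^L(t_n;M_n)\cap\mathcal{F}_0^L(k_n;M_n)$, Assumption BD holds with the given $L$, and every approximation-error term in Theorem \ref{UpAgg} vanishes: $d^2(f_0;\mathcal{F})=0$ for the \textbf{T-}bounds, and $d^2(f_0;\mathcal{F}_q^L(t_n))=0$, $d^2(f_0;\mathcal{F}_q^L(t_n)\cap\mathcal{F}_0^L(k_n))=0$ for the \textbf{AC-}bounds, because $f_0$ already lies in those classes. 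For case $2$ with \textbf{AC-}strategies the inner infimum $\inf_{\{\theta:\,\Vert\theta\Vert_1\leq s,\ \Vert\theta\Vert_0\leq k_n,\ \Vert f_\theta\Vert_\infty\leq L\}}\Vert f_\theta-f_0\Vert^2$ is zero as soon as $s\geq\alpha_n$ by the definition of $\alpha_n$ (closing the finite-dimensional coefficient set if the infimum over coefficients is only approached), so choosing $s=\max(1,\lceil\alpha_n\rceil)\in\mathbb{N}$ leaves exactly the residual term $C_2\sigma^2\log(1+\alpha_n)/n$.

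What then remains is purely the identification $REG(m_{\ast}^{\mathcal{F}})\asymp\Psi^{\mathcal{F}}$, combined with the trivial bound $R(\hat f_A;f_0;n)\leq C_0(\Vert f_0\Vert^2\vee C_2\sigma^2/n)$ from Theorem \ref{UpAgg}. For case $2$, $m_{\ast}^{\mathcal{F}}=k_n\wedge n$ and $REG(k_n\wedge n)=\sigma^2\bigl(1\wedge \tfrac{(k_n\wedge n)(1+\log(M_n/(k_n\wedge n)))}{n}\bigr)$ is literally $\Psi^{\mathcal{F}_0^L(k_n;M_n)}$, checking the regimes $k_n\leq n$ and $k_n>n$ separately (in the latter both sides equal $\sigma^2$). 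For case $1$ I would run through the four regimes of $\Psi^{\mathcal{F}_q^L(t_n;M_n)}$: when $m_{\ast}=n$ necessarily $M_n\geq n$, so $REG=\sigma^2$; when $m_{\ast}=M_n<n$ one has $m_{\ast}=m^{\ast}$, $\log(M_n/m_{\ast})=0$, hence $REG=\sigma^2 M_n/n$; when $1<m_{\ast}<M_n\wedge n$ one is in the ``otherwise'' branch with $m_{\ast}\asymp m^{\ast}/SER(m^{\ast})^{q/2}$ and $m^{\ast}\asymp(nt_n^2\tau)^{q/2}$, and substituting into $\sigma^2 m_{\ast}SER(m_{\ast})/n$ together with $SER(m_{\ast})\asymp SER(m^{\ast})\asymp 1+\log(M_n/(nt_n^2\tau)^{q/2})$ recovers the displayed expression $\sigma^{2-q}t_n^q(\cdot)^{1-q/2}$ — this is exactly the chain of $\asymp$'s already displayed in Section \ref{minimaxratesforlqaggregation} — the $\wedge\sigma^2$ coming from the $1\wedge$ in $REG$; finally when $m_{\ast}=1$ I would invoke the trivial bound, using $\Vert f_0\Vert\leq\Vert\theta\Vert_1\leq\Vert\theta\Vert_q\leq t_n$ (valid since $\Vert x_j\Vert\leq1$ and $0<q\leq1$), to get $R\leq C_0(t_n^2\vee C_2\sigma^2/n)$, which together with $R\leq C_1 REG(1)\leq C_1\sigma^2$ yields $R\leq C\bigl((t_n^2\vee\sigma^2/n)\wedge\sigma^2\bigr)=C\,\Psi^{\mathcal{F}_q^L(t_n;M_n)}$ after a short dance with the $\min$ of the two branches. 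Case $3$ follows immediately by taking the smaller effective model size, i.e.\ the minimum of the two bounds, since $\Psi^{\mathcal{F}_q^L(t_n;M_n)\cap\mathcal{F}_0^L(k_n;M_n)}$ is exactly $\Psi^{\mathcal{F}_q^L(t_n;M_n)}\wedge\Psi^{\mathcal{F}_0^L(k_n;M_n)}$.

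For the \textbf{AC-}strategies the only difference is the additive term carried along from Theorem \ref{UpAgg}(ii): $C_2\sigma^2\log(1+\alpha_n)/n$ in case $2$ and $C_2\sigma^2\log(1+t_n)/n$ otherwise, which is precisely the second term in the asserted bound; and the leading constant $C_1$ (and $C_2$) stay free of $n$ because the constants in Theorem \ref{UpAgg} are. The hard part — though it is bookkeeping rather than new analysis — will be the arithmetic of matching $REG(m_{\ast})$ to the four-piece formula for $\Psi^{\mathcal{F}_q^L(t_n;M_n)}$, in particular the boundary regimes $m_{\ast}=1$ and $m_{\ast}=M_n<n$, where one has to bring in the null-model/trivial bound and the crude estimate $\Vert f_0\Vert\leq t_n$ to absorb cases in which $REG(1)$ overshoots $\Psi$; everything else is an immediate transcription of Theorems \ref{UpAgg} and \ref{Th1}.
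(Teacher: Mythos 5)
Your proposal is correct and follows essentially the same route as the paper: the paper likewise invokes the general oracle inequality (Theorem \ref{Oracle}, via the remark extending it to $\hat{f}_A$, which is what underlies Theorem \ref{UpAgg}), notes that $d^{2}(f_{0};\mathcal{F})=0$ since $f_{0}\in\mathcal{F}$, matches $REG(m_{\ast}^{\mathcal{F}})$ to $\Psi^{\mathcal{F}}$ regime by regime (full model for $m_{\ast}=M_{n}<n$, null model for $m_{\ast}=n$ and $m_{\ast}=1$ together with $\Vert f_{0}\Vert\leq\Vert\theta\Vert_{1}\leq\Vert\theta\Vert_{q}\leq t_{n}$), and handles the \textbf{AC} case~2 by plugging $s=\alpha_{n}$ into the inner infimum. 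No gaps.
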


\begin{remark}
\vspace{1pt}The constants $C_{1}$ and $C_2$ may depend on $L,$ $p_{0},$ $%
\sigma ^{2}$, $\overline{\sigma }^{2}/\underline{\sigma }^{2},$ $\alpha ,$ $%
U_{\alpha },V_{\alpha }$ when relevant.
\end{remark}

\begin{remark}
The rate $\left( \frac{\log n}{n}\right) ^{1-q/2}$ for $0<q<1$ has appeared
in related regression or normal mean problems, e.g., in \cite%
{DonohoJohnstone1994} (Theorem 3), \cite{YangBarron1999} (section 5), \cite%
{Huangetal2008} (section 6), and \cite{Ing2010}. For function classes
defined in terms of infinite order orthonormal expansion with bounded $q$%
-norm of the coefficients and with $\ell _{2}$-norm of the tail coefficients
decaying at a polynomial order, the rate of convergence $(\log n/n)^{1-q/2}$
is derived in \cite{YangBarron1998} (page 1588) (when the tail of the
coefficients decays fast, the rate is improved to $(1/n)^{1-q/2}$). Note
that only the upper rates are given there.
\end{remark}

%%%%%%%%%%%%% the lower bound %%%%%%%%%%%%
\vspace*{0.1in}

\subsection{Lower bounds}

\label{lowerbounds}

To derive lower bounds, we make the following near orthogonality assumption
on sparse sub-collections of the predictors. Such an assumption, similar to
the sparse Riesz condition (SRC) (Zhang \cite{Zhang2010}) under fixed
design, is used only for lower bounds but not for upper bounds.

{\scshape Assumption SRC}: For some $\gamma >0$, there exit two positive
constants $\underline{a}$ and $\overline{a}$ that do not depend on $n$ such
that for every $\theta $ with $\Vert \theta \Vert _{0}\leq \min (2\gamma
,M_{n})$ we have 
\begin{equation*}
\underline{a}\Vert \theta \Vert _{2}\leq \Vert f_{\theta }\Vert \leq 
\overline{a}\Vert \theta \Vert _{2}.
\end{equation*}

%%%%%%% theorem 5 %%%%%%%

\begin{theorem}
\label{LowReg} Suppose the noise $\varepsilon $ follows a normal
distribution with mean $0$ and variance $0<\sigma ^{2}<\infty $.

\begin{itemize}
\item[(i)] For $0<q\leq 1$, under Assumption SRC with $\gamma =m_{\ast }$,
we have 
\begin{equation*}
\inf_{\hat{f}}\sup_{f_{0}\in \mathcal{F}_{q}(t_{n};M_{n})}E\Vert \hat{f}%
-f_{0}\Vert ^{2}\geq c\underline{\Psi }^{\mathcal{F}_{q}^{L}(t_{n};M_{n})}.
\end{equation*}

\item[(ii)] Under Assumption SRC with $\gamma =k_{n}$, we have 
\begin{equation*}
\inf_{\hat{f}}\sup_{f_{0}\in \mathcal{F}_{0}(k_{n};M_{n})\cap \{f_{\theta
}:\Vert \theta \Vert _{2}\leq a_{n}\}}E\Vert \hat{f}-f_{0}\Vert ^{2}\geq
c^{^{\prime }}\left\{ 
\begin{array}{ll}
\underline{\Psi }^{\mathcal{F}_{0}^{L}(k_{n};M_{n})} & \text{if $a_{n}\geq 
\tilde{c}\sigma \sqrt{\frac{k_{n}\left( 1+\log \frac{M_{n}}{k_{n}}\right) }{n%
}}$,} \\ 
a_{n}^{2} & \text{if $a_{n}<\tilde{c}\sigma \sqrt{\frac{k_{n}\left( 1+\log 
\frac{M_{n}}{k_{n}}\right) }{n}}$.}%
\end{array}%
\right.
\end{equation*}%
where $\tilde{c}$ is a pure constant.

\item[(iii)] For any $0<q\leq 1$, under Assumption SRC with $\gamma = k_{n}
\wedge m_{\ast } $, we have 
\begin{equation*}
\inf_{\hat{f}}\sup_{f_{0}\in \mathcal{F}_{0}(k_{n};M_{n})\cap \mathcal{F}%
_{q}(t_{n};M_{n})}E\Vert \hat{f}-f_{0}\Vert ^{2}\geq c^{^{\prime \prime }}%
\underline{\Psi }^{\mathcal{F}_{q}^{L}(t_{n};M_{n})\cap \mathcal{F}%
_{0}^{L}(k_{n};M_{n})}.
\end{equation*}
\end{itemize}

For all cases, $\hat{f}$ is over all estimators and the constants $c,$ $%
c^{^{\prime }}$ and $c^{^{\prime \prime }}$ may depend on $\underline{a}$, $%
\overline{a},$ $q$ and $\sigma ^{2}$.
\end{theorem}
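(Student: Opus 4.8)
\section*{Proof proposal for Theorem \ref{LowReg}}

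The plan is to prove all three bounds by the standard reduction to a multiple-hypothesis testing problem, combining Fano's inequality with the Varshamov--Gilbert bound, and using Assumption SRC to turn the design-dependent problem into an essentially orthogonal (Gaussian-sequence-like) one. The starting observation is that, since $\mathbf{X}_1,\dots,\mathbf{X}_n$ are i.i.d.\ from a common $P_X$ and, conditionally on the design, the $Y_i$ are Gaussian with shifted means, for any two coefficient vectors $\theta^{(0)},\theta^{(1)}$ the Kullback--Leibler divergence between the induced distributions of the data equals $\tfrac{n}{2\sigma^{2}}\|f_{\theta^{(0)}}-f_{\theta^{(1)}}\|^{2}$. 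When every hypothesis used is supported on a set of size at most $\gamma$ (so pairwise differences have $\ell_0$-norm at most $2\gamma$), Assumption SRC bounds this divergence by $\tfrac{n\overline a^{2}}{2\sigma^{2}}\|\theta^{(0)}-\theta^{(1)}\|_2^{2}$ and bounds the $L_2$-separation of the regression functions from below by $\underline a$ times the $\ell_2$-separation of the coefficients. The remaining task is then purely combinatorial: for each regime appearing in the definition of $\underline{\Psi}$, build a large enough packing of sparse vectors lying in the relevant constraint set, with controlled pairwise $\ell_2$-separation, and invoke Fano.

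For part (i) I would treat the four regimes of $\underline{\Psi}^{\mathcal{F}_q^L(t_n;M_n)}$ separately. In the sparse regime $1<m_\ast<M_n\wedge n$, take vectors supported on subsets of size $s\asymp m_\ast$ with nonzero entries $\pm\mu$, where $\mu\asymp \epsilon\,\sigma\sqrt{\log(M_n/s)/n}$ for a small constant $\epsilon$ to be chosen depending on $\overline a,\underline a,q$; the definition of $m_\ast$ is exactly the value of $s$ for which $s\mu^{q}\asymp t_n^{q}$, so $\|\theta\|_q\le t_n$ holds. A Varshamov--Gilbert packing over the supports gives $\exp(c\,s\log(M_n/s))$ such vectors with pairwise Hamming distance $\gtrsim s$, hence $\ell_2$-separation squared $\gtrsim s\mu^{2}$ and pairwise KL divergence $\lesssim \overline a^{2}\epsilon^{2}s\log(M_n/s)$; choosing $\epsilon$ small makes Fano deliver a lower bound of order $s\mu^{2}\asymp t_n^{q}(\sigma^{2}\log(M_n/m_\ast)/n)^{1-q/2}\asymp \tfrac{m_\ast}{n}SER(m_\ast)$, which is $\Psi$'s third case. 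The regime $m_\ast=M_n<n$ is the ``dense'' case: SRC holds for all $\theta$, the $\ell_q$-ball is loose enough to contain all vectors with entries of size $\asymp\epsilon\sigma/\sqrt n$, and Assouad's lemma gives the rate $\sigma^{2}M_n/n$ (no logarithm). The regime $m_\ast=n$ is handled by the same construction with $n$-sparse vectors of magnitude $\asymp\epsilon\sigma/\sqrt n$, where separation and KL are both of order $\sigma^{2}$ up to constants and Fano over $\exp(cn)$ hypotheses yields a non-vanishing bound of order $\sigma^{2}$ (here $m_\ast=n$ forces $M_n\ge n$, so the supports exist). Finally, for $m_\ast=1$ the $\ell_q$-constraint pins the coefficients near $0$; le Cam's two-point method with $\theta=0$ versus $\theta=\epsilon t_n e_1$ works when $nt_n^{2}/\sigma^{2}$ is bounded, and otherwise a Fano argument over the single-coordinate perturbations $\{\epsilon t_n e_j:1\le j\le M_n\}$ works, using that $m_\ast=1$ with $m^\ast>1$ forces $\log M_n$ to exceed a $q$-dependent constant; either way one obtains $\sigma^{2}\wedge t_n^{2}$. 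In every regime I also intersect with $\sigma^{2}$ by including the null model as a competing hypothesis.

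Part (ii) is the classical $k_n$-sparse regression lower bound: with $\gamma=k_n$, SRC makes the problem orthogonal on $k_n$-sparse supports, and for $a_n\ge \tilde c\,\sigma\sqrt{k_n(1+\log(M_n/k_n))/n}$ one uses $k_n$-sparse sign vectors with entries of magnitude $\mu\asymp\epsilon\sigma\sqrt{\log(M_n/k_n)/n}$, a Varshamov--Gilbert packing of supports of size $\exp(c\,k_n\log(M_n/k_n))$, and Fano, giving $\asymp\sigma^{2}k_n\log(M_n/k_n)/n$; the cap $\sigma^{2}$ comes from instead choosing $s\le k_n$ with $s\log(M_n/s)\asymp n$. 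When $a_n$ is below the threshold, the same vectors are rescaled to $\ell_2$-norm exactly $a_n$ — which only shrinks the KL divergence — and Fano then yields $a_n^{2}$. For part (iii) one uses $(k_n\wedge m_\ast)$-sparse vectors: if $m_\ast\le k_n$ the part-(i) sparse construction already produces $m_\ast$-sparse vectors lying in $\mathcal{F}_q(t_n;M_n)\cap\mathcal{F}_0(k_n;M_n)$ and yields $\underline{\Psi}^{\mathcal{F}_q^L}$; if $k_n<m_\ast$, one checks (using $k_n<m_\ast$ and monotonicity of $x\mapsto x^{2/q}SER(x)$) that $k_n$-sparse vectors with entries of size $\asymp\epsilon\sigma\sqrt{\log(M_n/k_n)/n}$ still satisfy $\|\theta\|_q\le t_n$, so the part-(ii) argument applies and yields $\Psi^{\mathcal{F}_0^L}$. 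Together these give $\underline{\Psi}^{\mathcal{F}_q^L}\wedge\Psi^{\mathcal{F}_0^L}$, the asserted bound.

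I expect the main obstacle to be bookkeeping rather than a single hard idea: one must (a) pin down the sparsity level $s$ and the magnitude $\mu$ in each regime so that the constructed vectors lie exactly in the prescribed constraint set while the ``detection threshold'' $\mu\asymp\sigma\sqrt{\log(M_n/s)/n}$ simultaneously balances $s\mu^{2}$ (the squared separation) against $s\log(M_n/s)$ (the log-cardinality of the packing); and (b) carry the SRC constants $\underline a,\overline a$ through Fano and choose the scaling constant $\epsilon$ small enough, depending on them and on $q$, that the ratio of KL to log-cardinality stays bounded below $1$ — this is precisely what forces the final constants $c,c',c''$ to depend on $\underline a,\overline a,q,\sigma^{2}$ as stated. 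A secondary care point is the degenerate boundary cases $m_\ast\in\{1,M_n,n\}$ and the sub-threshold case for $a_n$, where Fano collapses to le Cam's two-point bound or to a fixed $\asymp\sigma^{2}$ bound.
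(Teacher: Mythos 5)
Your proposal is correct and follows essentially the same route as the paper's proof: a Fano-type argument over sparse ternary sign-vector packings (Varshamov--Gilbert on the supports, as in Lemma 4 of \cite{Raskuttietal2010}), with Assumption SRC used exactly as you describe to transfer the separation and the Kullback--Leibler bounds between the coefficient $\ell_2$ metric and the function $L_2$ metric, the same rescaling to radius $a_n$ in the sub-threshold case of (ii), and the same case split in (iii) according to whether $m_{\ast}\leq k_{n}$ (with the check $\Vert\theta\Vert_q\leq k_n^{1/q-1/2}\Vert\theta\Vert_2$). The only presentational difference is that the paper packages the information-theoretic step through Proposition \ref{YangBarron} and, for the main regime of part (i), invokes the metric-entropy lower bound behind Theorem \ref{Th1} (K\"{u}hn's extremal packing set, which consists precisely of your $m_{\ast}$-sparse $\pm\mu$ vectors) rather than re-deriving the packing by hand.
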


\begin{remark}
\vspace{1pt}Note that in (i), at the transition from $m_{\ast }>1$ to $%
m_{\ast }=1,$ i.e., $nt_{n}^{2}\tau \approx 1+\log \frac{M_{n}}{%
(nt_{n}^{2}\tau )^{q/2}},$ we see continuity: 
\begin{equation*}
\sigma ^{2-q}t_{n}^{q}\left( \frac{1+\log \frac{M_{n}}{(nt_{n}^{2}\tau
)^{q/2}}}{n}\right) ^{1-q/2}\approx \frac{\sigma ^{2}\left( 1+\log \frac{%
M_{n}}{(nt_{n}^{2}\tau )^{q/2}}\right) }{n}\asymp t_{n}^{2}.
\end{equation*}
\end{remark}

For the second case (ii), the lower bound is stated in a more informative
way because the effect of the bound on $\Vert \theta \Vert _{2}$ is clearly
seen. Normality of the errors is not essential at all for the lower bounds.
With some additional efforts, one can show that these lower rates are also
valid under Assumption Y2, which we will not give here.

\subsection{The minimax rates of convergence}

Combining the upper and lower bounds, we give a representative minimax rate
result with the roles of the key quantities $n,$ $M_{n},$ $q,$ and $k_{n}$
explicitly seen in the rate expressions. Below \textquotedblleft $\asymp $%
\textquotedblright\ means of the same order when $L,$ $L_{0},$ $q,$ $t_{n}=t,
$ and $\overline{\sigma }^{2}$ ( $\overline{\sigma }^{2}$ is defined in
Theorem \ref{MinimaxReg} below) are held constant in the relevant
expressions.

%%%%%%%%%% Theorem 6 %%%%%%%%%

\begin{theorem}
\label{MinimaxReg} Suppose the noise $\varepsilon $ follows a normal
distribution with mean $0$ and variance $\sigma^2$, and there exists a known
constant $\overline{\sigma }$ such that $0<\sigma \leq \overline{\sigma }%
<\infty $. Also assume there exists a known constant $L_{0}>0$ such that $%
\sup_{1\leq j\leq M_{n}}\Vert X_{j}\Vert _{\infty }\leq L_{0}<\infty $.

(i) For $0<q\leq 1$, under Assumption SRC with $\gamma =m_{\ast }$, 
\begin{equation*}
\inf_{\hat{f}}\sup_{f_{0}\in \mathcal{F}_{q}^{L}(t;M_{n})}E\Vert \hat{f}%
-f_{0}\Vert ^{2}\asymp 1\wedge \left\{ 
\begin{array}{ll}
1 & \text{if }m_{\ast }\text{$=n$,} \\ 
\frac{M_{n}}{n} & \text{if }m_{\ast }\text{$=M_{n}<n$,} \\ 
\left( \frac{1+\log \frac{M_{n}}{(nt^{2}\tau )^{q/2}}}{n}\right) ^{1-q/2} & 
\text{if $1\leq m_{\ast }<M_{n}$}\wedge n\text{.}%
\end{array}%
\right. 
\end{equation*}

(ii) If there exists a constant $K_{0}>0$ such that $\frac{k_{n}^{2}\left(
1+\log \frac{M_{n}}{k_{n}}\right) }{n}\leq K_{0}$, then under Assumption SRC
with $\gamma =k_{n}$, 
\begin{equation*}
\inf_{\hat{f}}\sup_{f_{0}\in \mathcal{F}_{0}^{L}(k_{n};M_{n})\cap
\{f_{\theta }:\Vert \theta \Vert _{\infty }\leq L_{0}\}}E\Vert \hat{f}%
-f_{0}\Vert ^{2}\asymp 1\wedge \frac{k_{n}\left( 1+\log \frac{M_{n}}{k_{n}}%
\right) }{n}.
\end{equation*}

(iii) If $\sigma >0$ is actually known, then under the condition $\frac{%
k_{n}^{2}\left( 1+\log \frac{M_{n}}{k_{n}}\right) }{n}\leq K_{0}$ and
Assumption SRC with $\gamma =k_{n}$, we have%
\begin{equation*}
\inf_{\hat{f}}\sup_{f_{0}\in \mathcal{F}_{0}^{L}(k_{n};M_{n})}E\Vert \hat{f}%
-f_{0}\Vert ^{2}\asymp 1\wedge \frac{k_{n}\left( 1+\log \frac{M_{n}}{k_{n}}%
\right) }{n},
\end{equation*}%
and for any $0<q\leq 1$, under Assumption SRC with $\gamma = k_{n} \wedge
m_{\ast } $, we have 
\begin{equation*}
\inf_{\hat{f}}\sup_{f_{0}\in \mathcal{F}_{0}^{L}(k_{n};M_{n})\cap \mathcal{F}%
_{q}^{L}(t;M_{n})}E\Vert \hat{f}-f_{0}\Vert ^{2}\asymp 1\wedge \left\{ 
\begin{array}{ll}
\frac{k_{n}\left( 1+\log \frac{M_{n}}{k_{n}}\right) }{n} & \text{if $m_{\ast
}>k_{n}$,} \\ 
\left( \frac{1+\log \frac{M_{n}}{(nt^{2}\tau )^{q/2}}}{n}\right) ^{1-q/2}
& \text{if $1\leq m_{\ast }\leq k_{n}$.}%
\end{array}%
\right.
\end{equation*}

\begin{remark}
When considering jointly the $\ell _{q}$-constraint for a fixed $0<q\leq 1$
and $q=0$, since the associated function classes are not nested, one cannot
immediately deduct the optimal rate of convergence for their intersection.
In our problem, the simple rule works: when the upper bound $k_{n}$ of the $%
\ell _{0}$-constraint is smaller than the effective model size $m_{\ast }$,
the additional $\ell _{q}$-constraint does reduce the parameter searching
space, but this reduction is not essential and the rate is equal to the rate
for $q=0$. In contrast, when the effective model size $m_{\ast }$ is smaller
than $k_{n}$, the $\ell _{0}$-constraint does reduce the parameter searching
space determined by the $\ell _{q}$-constraint, but not essential from the
uniform estimation standpoint and the rate is then $m_{\ast }\log
(1+M_{n}/m_{\ast })/n$. Clearly, both rates can be interpreted as the log
number of models of size $k_{n}$ or $m_{\ast }$ over the sample size.
\end{remark}
\end{theorem}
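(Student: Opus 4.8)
The strategy is to derive Theorem \ref{MinimaxReg} as a corollary of the general upper bound of Theorem \ref{UpReg} and the lower bound of Theorem \ref{LowReg}. Three reductions are needed: picking, in each part, a strategy whose hypotheses $A_{\mathbf{E}-\mathbf{G}}$ are implied by the assumptions made here; discarding the extra logarithmic remainders carried by the \textbf{AC-} strategies; and rewriting the quantities $\Psi^{\mathcal{F}}$ and $\underline\Psi^{\mathcal{F}}$ of section \ref{upperbounds} as the displayed closed-form rates, using the definitions of $m^*$, $m_*$ and $SER$ from section \ref{aninsight}.

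For the upper bounds: in parts (i) and (ii) only $\overline\sigma$ is known, so I would run an \textbf{AC-} strategy — normality makes C1--C2 (equivalently Y1--Y2) hold with constants controlled by $\overline\sigma$ and $L$, while $\sup_j\|X_j\|_\infty\le L_0$ together with $f_0\in\mathcal{F}^L$ gives AC1--AC2 — and invoke Theorem \ref{UpReg}(ii), which yields $\sup_{f_0\in\mathcal{F}}R(\hat f_A;f_0;n)\le C_1\Psi^{\mathcal{F}}+C_2\sigma^2\log(1+t_n)/n$, or $+C_2\sigma^2\log(1+\alpha_n)/n$ in the pure $\ell_0$ case. Since $t_n=t$ is fixed, $\log(1+t)/n=O(1/n)$; and under Assumption SRC one has $\alpha_n\le L\sqrt{k_n}/\underline a$, so $\log(1+\alpha_n)/n=O(\log k_n/n)$. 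Every displayed rate is bounded below by a multiple of $1/n$ — indeed $((1+\log(\cdot))/n)^{1-q/2}\ge(1/n)^{1-q/2}\ge 1/n$ for $0<q\le 1$, $M_n/n\ge 1/n$, and the $1\wedge$ only raises it — so these remainders are absorbed into $C_1\Psi^{\mathcal{F}}$. In part (iii), $\sigma$ is known, so the \textbf{T-} strategy is admissible ($A_{\mathbf{T}-\mathbf{C}}$ holds) and Theorem \ref{UpReg}(i), applied with the linear classes $\mathcal{G}_{J_m}$ in place of $\mathcal{F}_{J_m}$, already gives $C_1\Psi^{\mathcal{F}}$ with no remainder. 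It then remains to check, case by case from the definitions, that $\Psi^{\mathcal{F}}\asymp$ the relevant right-hand side with $\sigma\le\overline\sigma$ absorbed into the constant; for the intersection class in part (iii) one uses $\Psi^{\mathcal{F}_q^L\cap\mathcal{F}_0^L}=\Psi^{\mathcal{F}_q^L}\wedge\Psi^{\mathcal{F}_0^L}$ and splits on whether $m_*>k_n$ or $m_*\le k_n$.

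For the lower bounds: I would apply Theorem \ref{LowReg} under Assumption SRC with the prescribed $\gamma$ ($m_*$, $k_n$, or $k_n\wedge m_*$), obtaining $\inf_{\hat f}\sup E\|\hat f-f_0\|^2\ge c\,\underline\Psi^{\mathcal{F}}$ (in the pure $\ell_0$ case with the $a_n$-refinement specialized to $a_n$ of order $\sigma\sqrt{k_n(1+\log(M_n/k_n))/n}$, the regime giving the full rate). The point needing care is that the least-favorable regression functions produced there must lie in the constrained classes appearing in Theorem \ref{MinimaxReg} ($\mathcal{F}_q^L$, or additionally $\|\theta\|_\infty\le L_0$, or $\|f_0\|_\infty\le L$ for the $\ell_0$ case): they are sparse with $\ell_2$-norm of order $\sigma\sqrt{(\text{model size})\cdot SER/n}$, so $\|\theta\|_\infty\le\|\theta\|_2$ and $\|f_\theta\|_\infty\le L_0\|\theta\|_1\le L_0\sqrt{k_n}\,\|\theta\|_2$, and the side condition $k_n^2(1+\log(M_n/k_n))/n\le K_0$ (respectively the boundedness assumed in Theorem \ref{LowReg}(iii)) is exactly what keeps these below the prescribed constants, after rescaling the packing by a fixed factor if necessary, which changes only the constant in the rate. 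Finally, $\Psi^{\mathcal{F}}$ and $\underline\Psi^{\mathcal{F}}$ agree up to constants over the regimes where the displayed equivalences are asserted — they can differ only at the boundary $m_*=1$, where $\Psi^{\mathcal{F}_q^L}$ carries a $\sigma^2/n$ term versus the $t^2$ in $\underline\Psi$, of lower order in the relevant regime — and together with the continuity identity at $nt^2\tau\asymp 1+\log(M_n/(nt^2\tau)^{q/2})$ recorded after Theorem \ref{LowReg} (used to merge the $m_*=1$ and $1<m_*<M_n\wedge n$ pieces into ``$1\le m_*$''), this closes the argument.

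The main obstacle is not any single estimate but the simultaneous bookkeeping of the transition regimes: matching the four-way definition of $\Psi^{\mathcal{F}_q^L}$ ($m_*=n$; $m_*=M_n<n$; $1<m_*<M_n\wedge n$; $m_*=1$) against the three displayed cases of part (i), fusing the last two through the continuity identity, and — for the $\ell_0\cap\ell_q$ statement of part (iii) — reconciling the minimum defining $\Psi^{\mathcal{F}_q^L\cap\mathcal{F}_0^L}$ with the clean dichotomy $m_*>k_n$ versus $m_*\le k_n$. The secondary difficulty is checking that the lower-bound packings respect every imposed norm constraint; this, however, is contained in the proof of Theorem \ref{LowReg} and here only needs to be invoked.
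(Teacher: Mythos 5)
Your proposal is correct and follows essentially the same route as the paper: the upper bounds are read off from Theorem \ref{UpReg} (the paper likewise invokes the \textbf{AC-C} strategy for part (ii), absorbing the $\log(1+\alpha_n)/n$ remainder into the main rate via $\|\theta\|_1\leq M_nL_0$, while you use the SRC bound $\alpha_n\lesssim\sqrt{k_n}$ — both work), and the lower bounds come from Theorem \ref{LowReg} after checking that the packing functions satisfy the sup-norm and $\ell_\infty$ constraints, exactly the verification the paper carries out using $\|f_\theta\|_\infty\leq L_0\sqrt{k_n}\,\|\theta\|_2$ and the condition $k_n^2(1+\log(M_n/k_n))/n\leq K_0$. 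The only nit is your parenthetical claim that C1--C2 are "equivalently" Y1--Y2: Y1 requires a known positive lower bound $\underline{\sigma}$ on $\sigma$, which is not assumed here, so the \textbf{AC-C} route (which the paper uses) is the one that actually applies.
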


%%%%%%%%%%%%%%%%%%%%%%%%%%%%%%%%%%%%%%%%%%%%%%%%%%%%%%%%%%
%%%%%%%%%%%%%%%%%%%% 5. Minimax rates for fixed designs %%%%%%%%%%%%%%%%%%%%%
%%%%%%%%%%%%%%%%%%%%%%%%%%%%%%%%%%%%%%%%%%%%%%%%%%%%%%%%%%
\vspace*{.1in}

\section{Adaptive minimax estimation under fixed design}

\label{linearregressionfixed}

Consider the linear regression model (\ref{linearregression}) under fixed
design, $Y_{i}=f_{0}(\mathbf{x}_{i})+\varepsilon _{i},$ $i=1,...,n,$ where $%
\mathbf{x}_{i}=(x_{i,1},\ldots ,x_{i,M_{n}})^{\prime }\in \mathcal{X}\subset 
\mathbb{R}^{M_{n}}$ are fixed, $1\leq i\leq n$, and the random errors $%
\varepsilon _{i}$ are i.i.d. $N(0,\sigma ^{2})$. Suppose $\max_{1\leq j\leq
M_{n}}\sum_{i=1}^{n}x_{i,j}^{2}/n\leq 1$. Let ${f}_{0}^{n}=(f_{0}(\mathbf{x}%
_{1}),\ldots ,f_{0}(\mathbf{x}_{n}))^{\prime }$. For any function $f$: $%
\mathcal{X}\rightarrow \mathbb{R}$, define the norm $\Vert \cdot \Vert _{n}$
by $\Vert f\Vert _{n}^{2}=\frac{1}{n}\sum_{i=1}^{n}f^{2}(\mathbf{x}_{i})$.
Our goal is to estimate the regression mean $f_{0}^{n}$ through a linear
combination of the predictors with the coefficients $\theta $ satisfying a $%
\ell _{q}$-constraint ($0\leq q\leq 1$). For an estimate $\hat{f}$ of $f_{0}$%
, define its average squared error to be
\begin{equation*}
ASE(\hat{f})=\Vert \hat{f}-f_{0}\Vert _{n}^{2}.
\end{equation*}%
We consider subset selection based estimators. Let $J_{m}\subset \{1,2,\ldots ,M_{n}\}$ be a model of size $m$ ($1\leq
m\leq M_{n}$). Our strategy is to choose a model using a model selection
criterion, and the resulting least squares estimator is used for $f_{0}^{n}$%
. The loss of a given model $J_{m}$ is $ASE(\hat{f}_{J_{m}})=\Vert \hat{Y}%
_{J_{m}}-{f}_{0}^{n}\Vert _{n}^{2}$ (with a slight abuse of notation), where 
$\hat{Y}_{J_{m}}=(\hat{Y}_{1,J_{m}},\ldots ,\hat{Y}_{n,J_{m}})^{\prime }$ is
the projection onto the column span of the design matrix of model $J_{m}$.
The alternative strategy of model mixing will be taken as well. Although our
estimators do not directly consider the $\ell _{q}$-constraint, it will be
shown to automatically adapt to the sparsity of $f_{0}$ in terms of $\ell
_{q}$-representation by the dictionary.

For a function class $\mathcal{F,}$ for the fixed design, define the
approximation error $d_{n}^{2}(f_{0};\mathcal{F})=\inf_{f\in \mathcal{F}%
}\Vert f-f_{0}\Vert _{n}^{2}.$ We will consider both $\sigma $ known and $%
\sigma $ unknown cases. As will be seen, the results are quite different in
some aspects, and an understanding on what the different assumptions can
lead to is important to reach a deeper insight on the theoretical issues.

%%%%%%% 5.1 sigma known %%%%%%%
\vspace*{0.1in}

\subsection{When $\protect\sigma $ is known}

\label{sigmaknown}

For a model $J_{m}$ of size $m$ ($1\leq m\leq M_{n}$), the ABC criterion
proposed in Yang (1999) is 
\begin{equation*}
ABC(J_{m})=\sum_{i=1}^{n}(Y_{i}-\hat{Y}_{i,J_{m}})^{2}+2r_{J_{m}}\sigma
^{2}+\lambda \sigma ^{2}C_{J_{m}},
\end{equation*}%
where $\lambda $ is a pure constant, $r_{J_{m}}$ is the rank of the design
matrix of $J_{m}$, and $C_{J_{m}}$ is the model index descriptive
complexity. Let $r_{M_{n}}$ denote the rank of the full model $J_{M_{n}},$
which is assumed to be at least $1$.

Let $\bar{J}$ denote the model that gives the full projection matrix $%
I_{n\times n}$ (since the ASE at the design points is the loss of interest,
this identity projection is permitted). We define $ABC(\bar{J})=2n\sigma
^{2}+\lambda \sigma ^{2}C_{\bar{J}}.$ Let $J_{0}$ denote the null model that
only includes the intercept and define $ABC(J_{0})=\sum_{i=1}^{n}(Y_{i}-\bar{%
Y})^{2}+2\sigma ^{2}+\lambda \sigma ^{2}C_{J_{0}}$, where $\bar{Y}%
=\sum_{i=1}^{n}Y_{i}/n$. The model index descriptive complexity $C_{J}$
satisfies $C_{J}>0$ and $\sum_{J}e^{-C_{J}}\leq 1,$ where the summation is
over all the candidate models being considered.

The subset models of size $1\leq m\leq M_{n}\wedge n$, the models $J_0$ and $%
\bar{J}$ are considered with the complexity $C_{J_{m}}=-\log 0.85+\log
\left( (M_{n}-1)\wedge n\right) +\log {\binom{M_{n}}{m}}$ for a subset model
with $m<M_{n}$, $C_{J_{M_{n}}}=-\log 0.05$ for the full model $J_{M_{n}}$, $%
C_{J_{0}}=-\log 0.05$ for the null model $J_{0}$, and $C_{\bar{J}}=-\log
0.05 $ for the full projection model $\bar{J}$. Note that for the purpose of
estimating $f_{0}^{n},$ there is no problem with duplication in the list of
candidate models.

Let $\Gamma _{n}$ denote the set of all the models considered and the model
chosen by the ABC criterion is 
\begin{equation*}
\hat{J}=\arg \min_{J\in \Gamma _{n}}ABC(J).
\end{equation*}

The ABC estimator $\hat{f}_{\hat{J}}$ is the fitted value $\hat{Y}_{\hat{J}}$%
. Let $\bar{f}_{J}=\mathcal{P}_{J}{f}_{0}^{n}$ be the projection of ${f}%
_{0}^{n}$ into the column space of the design matrix of model $J$.

For ease of presentation, define $\Phi ^{\mathcal{F}}$ as follows:%
\begin{equation*}
\Phi ^{\mathcal{F}_{q}(t_{n};M_{n})}=\left\{ 
\begin{array}{ll}
\frac{\sigma ^{2}r_{M_{n}}}{n} & \text{if $m_{\ast }=M_{n}\wedge n$,} \\ 
\sigma ^{2-q}t_{n}^{q}\left( \frac{1+\log \frac{M_{n}}{(nt_{n}^{2}\tau
)^{q/2}}}{n}\right) ^{1-q/2}\wedge \frac{\sigma ^{2}r_{M_{n}}}{n} & \text{if 
$1<m_{\ast }<M_{n}\wedge n$,} \\ 
(t_{n}^{2}\vee \frac{\sigma ^{2}}{n})\wedge \frac{\sigma ^{2}r_{M_{n}}}{n} & 
\text{if $m_{\ast }=1$.}%
\end{array}%
\right.
\end{equation*}%
\begin{equation*}
\Phi ^{\mathcal{F}_{0}(k_{n};M_{n})}=\frac{\sigma ^{2}k_{n}\left( 1+\log 
\frac{M_{n}}{k_{n}}\right) }{n}\wedge \frac{\sigma ^{2}r_{M_{n}}}{n},
\end{equation*}%
\begin{equation*}
\Phi ^{\mathcal{F}_{q}(t_{n};M_{n})\cap \mathcal{F}_{0}(k_{n};M_{n})}=\Phi ^{%
\mathcal{F}_{q}(t_{n};M_{n})}\wedge \Phi ^{\mathcal{F}_{0}(k_{n};M_{n})}.
\end{equation*}%
In addition, for lower bound results, let $\underline{\Phi }^{\mathcal{F}%
_{q}(t_{n};M_{n})}$ ($0\leq q\leq 1$) and $\underline{\Phi }^{\mathcal{F}%
_{q}(t_{n};M_{n})\cap \mathcal{F}_{0}(k_{n};M_{n})}$ ($0<q\leq 1$) be the
same as $\Phi ^{\mathcal{F}_{q}(t_{n};M_{n})}$ and $\Phi ^{\mathcal{F}%
_{q}(t_{n};M_{n})\cap \mathcal{F}_{0}(k_{n};M_{n})}$, respectively, except
that when $0<q\leq 1$ and $m_{\ast }=1,$ $\underline{\Phi }^{\mathcal{F}%
_{q}(t_{n};M_{n})}$ takes the value $t_{n}^{2}\wedge \frac{\sigma
^{2}r_{M_{n}}}{n}$ instead of $(t_{n}^{2}\vee \frac{\sigma ^{2}}{n})\wedge 
\frac{\sigma ^{2}r_{M_{n}}}{n}$ and $\underline{\Phi }^{\mathcal{F}%
_{q}(t_{n};M_{n})\cap \mathcal{F}_{0}(k_{n};M_{n})}$ is modified the same
way. In the fixed design case, the ranks of the design matrices are
certainly relevant in risk bounds (see, e.g., \cite{Yang1999,
RigolletTsybakov2011}).

%%%%%%% theorem 7 %%%%%%%

\begin{theorem}
\label{UpABC} When $\lambda \geq 5.1\log 2$, the ABC estimator $\hat{f}_{%
\hat{J}}$ simultaneously has the following properties.

\begin{itemize}
\item[(i)] For $\mathcal{F=F}_{q}(t_n;M_{n})$ with $0<q\leq 1,$ or $\mathcal{%
F=F}_{0}(k_{n};M_{n})$ with $1\leq k_{n}\leq M_{n},$ or $\mathcal{F=F}%
_{q}(t_n;M_{n})\cap \mathcal{F}_{0}(k_{n};M_{n})$ with $0<q\leq 1$ and $%
1\leq k_{n}\leq M_{n},$ we have%
\begin{equation*}
\sup_{f_{0}\in \mathcal{F}}E(ASE(\hat{f}_{\hat{J}}))\leq B \Phi ^{\mathcal{F}%
},
\end{equation*}%
where the constant $B$ depends only on $q$ and $\lambda $ for the first and
third cases of $\mathcal{F},$ and depends only on $\lambda $ for the second
case.

\item[(ii)] In general, for an arbitrary ${f}_{0}^{n},$ we have%
\begin{eqnarray*}
& & E(ASE(\hat{f}_{\hat{J}})) \leq B\left( \left\Vert \bar{f}_{J_{M_{n}}}-{f}%
_{0}^{n}\right\Vert _{n}^{2}+\inf_{J_{m}:1\leq m<M_{n}}\left( \left\Vert 
\bar{f}_{J_{m}}-\bar{f}_{J_{M_{n}}}\right\Vert _{n}^{2}+\frac{%
\sigma^{2}r_{J_{m}}}{n} \right. \right. \\
& & \left. \left. +\frac{\sigma^{2}\log (M_{n}\wedge n)}{n} +\frac{%
\sigma^{2}\log {\binom{M_{n}}{m}}}{n}\right) \wedge \frac{\sigma^{2}r_{M_{n}}%
}{n}\right) \wedge B\left( \left( \Vert \bar{f}_{J_{0}}-f_{0}^{n}\Vert
_{n}^{2}+\frac{\sigma^{2}}{n}\right) \wedge \sigma^{2}\right) ,
\end{eqnarray*}
\end{itemize}

where the constant $B$ depends only on $\lambda $.
\end{theorem}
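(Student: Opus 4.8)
Both parts will be derived from the index-of-resolvability (oracle) bound for the ABC criterion of \cite{Yang1999}, valid for Gaussian regression with known variance. The first step is to verify admissibility of the assigned model-index complexities: with $C_{J_m}=-\log 0.85+\log((M_n-1)\wedge n)+\log\binom{M_n}{m}$ for each subset model of size $1\le m<M_n$ and $C=-\log 0.05$ for each of $J_{M_n}$, $J_0$ and $\bar J$, one has $\sum_{J\in\Gamma_n}e^{-C_J}=0.85+3(0.05)=1$, so the Kraft-type condition $\sum_J e^{-C_J}\le 1$ holds. Since $\lambda\ge 5.1\log 2$ is the threshold under which the ABC bound is valid for this penalty form, \cite{Yang1999} gives, with a constant $B$ depending only on $\lambda$,
\[ E\,ASE(\hat f_{\hat J})\le B\inf_{J\in\Gamma_n}\Big(\|\mathcal{P}_J f_0^n-f_0^n\|_n^2+\frac{\sigma^2 r_J}{n}+\frac{\sigma^2 C_J}{n}\Big). \]

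Part (ii) then follows by restricting this infimum to three families of models and using the Pythagorean identity $\|\mathcal{P}_{J_m}f_0^n-f_0^n\|_n^2=\|\mathcal{P}_{J_m}f_0^n-\mathcal{P}_{J_{M_n}}f_0^n\|_n^2+\|\mathcal{P}_{J_{M_n}}f_0^n-f_0^n\|_n^2$ for nested column spaces: (a) the subset models $J_m$ with $1\le m<M_n$, for which $r_{J_m}\le m$ and $C_{J_m}\le\log(M_n\wedge n)+\log\binom{M_n}{m}+O(1)$; (b) the full model $J_{M_n}$, whose complexity is $O(1)$, contributing $\lesssim\|\bar f_{J_{M_n}}-f_0^n\|_n^2+\sigma^2 r_{M_n}/n$; and (c) the null model $J_0$, contributing $\lesssim(\|\bar f_{J_0}-f_0^n\|_n^2+\sigma^2/n)\wedge\sigma^2$, where the $\wedge\sigma^2$ uses $\|\bar f_{J_0}-f_0^n\|_n^2\le\|f_0^n\|_n^2$ together with the always-available full projection model $\bar J$ (for which $\|\mathcal{P}_{\bar J}f_0^n-f_0^n\|_n^2=0$, $C_{\bar J}=O(1)$, hence $E\,ASE\lesssim\sigma^2$). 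Collecting these contributions reproduces the displayed bound of part (ii).

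For part (i) I would specialize part (ii) with the sparse approximation estimate of Theorem \ref{Th1}(ii) applied to the discrete measure $\nu_0=n^{-1}\sum_{i=1}^n\delta_{\mathbf x_i}$, under which $\|f_j\|_{L^2(\nu_0)}=\|x_j\|_n\le 1$ and $\|\cdot\|_{L^2(\nu_0)}=\|\cdot\|_n$. In Case 1 ($\mathcal{F}=\mathcal{F}_q(t_n;M_n)$, $0<q\le 1$) one has $d_n^2(f_0;\mathcal{F}_q(t_n))=0$, so Theorem \ref{Th1}(ii) supplies, for each $1\le m\le M_n\wedge n$, a model $J_m$ with $\|\mathcal{P}_{J_m}f_0^n-f_0^n\|_n^2\le 2^{2/q-1}t_n^2 m^{1-2/q}$; inserting this into (ii) bounds the risk by a $q$-dependent multiple of
\[ \inf_{1\le m\le M_n\wedge n}\Big(t_n^2 m^{1-2/q}+\frac{\sigma^2 m(1+\log(M_n/m))}{n}\Big)\wedge\frac{\sigma^2 r_{M_n}}{n}, \]
and this minimization is exactly the approximation/estimation/search trade-off of Section \ref{aninsight}, whose minimizer has order $m_{\ast}$ and whose value is $\Phi^{\mathcal{F}_q(t_n;M_n)}$ once the boundary regimes $m_{\ast}=M_n\wedge n$ (use the full model, or $\bar J$) and $m_{\ast}=1$ (compare with the null model, producing the $t_n^2\vee\sigma^2/n$ form) are treated separately. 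In Case 2 ($\mathcal{F}=\mathcal{F}_0(k_n;M_n)$), $f_0^n$ is a linear combination of at most $k_n$ columns, so some model $J$ of rank $\le k_n$ has zero approximation error and resolvability term $\lesssim\sigma^2\big(k_n+\log(M_n\wedge n)+\log\binom{M_n}{k_n}\big)/n\lesssim\sigma^2 k_n(1+\log(M_n/k_n))/n$, which together with the full-model term $\sigma^2 r_{M_n}/n$ gives $\Phi^{\mathcal{F}_0(k_n;M_n)}$. Case 3 follows since both the exact size-$k_n$ representation and the $\ell_q$ approximation over $1\le m\le M_n\wedge n$ are available simultaneously, so the resolvability is governed by the smaller rate, $\Phi^{\mathcal{F}_q(t_n;M_n)}\wedge\Phi^{\mathcal{F}_0(k_n;M_n)}$.

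I expect the main obstacle to be the bookkeeping in part (i): verifying that the elementary minimization above reproduces $m_{\ast}$ and the closed form of $\Phi^{\mathcal{F}}$, and --- more delicately --- that the auxiliary quantities carried along from (ii), namely $\sigma^2\log(M_n\wedge n)/n$ (from subset complexities), the cap $\sigma^2 r_{M_n}/n$ (from the full model) and the cap $\sigma^2$ (from the null model and $\bar J$), are each dominated by $\Phi^{\mathcal{F}}$ in every boundary regime, including degenerate designs where $r_{M_n}$ is small so that the full model is itself the optimal choice. The ABC oracle inequality and the Kraft normalization are routine given \cite{Yang1999}.
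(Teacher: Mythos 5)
Your proposal is correct and follows essentially the same route as the paper: part (ii) is read off from the index-of-resolvability bound of Yang (1999) (Proposition \ref{Yang1999a}) restricted to the subset models, the full model, $J_0$ and $\bar J$, and part (i) then follows by inserting the sparse approximation bound of Theorem \ref{Th1}(ii) under the empirical measure and evaluating the resolvability at $J_{m_\ast}$ (resp.\ a size-$k_n$ model), with the boundary regimes handled exactly as you describe. Your explicit checks of the Kraft condition and the Pythagorean decomposition are details the paper leaves implicit, but the argument is the same.
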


%%%%%%%%%%

\begin{remark}
In (i), the case $\mathcal{F}=\mathcal{F}_{0}(k_{n};M_{n})$ does not require 
$\max_{1\leq j\leq M_{n}}\sum_{i=1}^{n}x_{i,j}^{2}/n\leq 1$.
\end{remark}

\begin{remark}
In pursuing the best performance in each case of $\mathcal{F}$, the general
risk bound in (ii) reduces to $B\Phi ^{\mathcal{F}}$ plus the approximation
error $d_{n}^{2}(f_{0};\mathcal{F})=\inf_{f\in \mathcal{F}}\Vert
f-f_{0}\Vert _{n}^{2}$.
\end{remark}

For the lower bound results, as before, additional conditions are needed.
Let $\Xi $ denote the design matrix of the full model $J_{M_{n}}$.

{\scshape Assumption SRC$^{\prime }$}: For some $\gamma > 0$, there exist
two positive constants $\underline{a}$ and $\overline{a}$ that do not depend
on $n$ such that for every $\theta $ with $\Vert \theta \Vert _{0}\leq \min
(2\gamma ,M_{n})$, we have 
\begin{equation*}
\underline{a}\Vert \theta \Vert _{2}\leq \frac{1}{\sqrt{n}}\Vert \Xi \theta
\Vert _{2}\leq \overline{a}\Vert \theta \Vert _{2}.
\end{equation*}

\vspace{1pt}This condition is slightly weaker than Assumption 2 in \cite%
{Raskuttietal2010}, which was used to derive minimax lower bounds for $%
0<q\leq 1$.

%%%%%%% theorem 8 %%%%%%%

\begin{theorem}
\label{LowABC} Suppose the noise $\varepsilon $ follows a normal
distribution with mean $0$ and variance $0<\sigma ^{2}<\infty $. For $%
\mathcal{F=F}_{q}(t_{n};M_{n})$ with $0<q\leq 1,$ or $\mathcal{F=F}%
_{0}(k_{n};M_{n})$ with $1\leq k_{n}\leq M_{n},$ or $\mathcal{F=F}%
_{q}(t_{n};M_{n})\cap \mathcal{F}_{0}(k_{n};M_{n})$ with $0<q\leq 1$ and $%
1\leq k_{n}\leq M_{n},$ under Assumption SRC$^{\prime }$ with $\gamma
=m_{\ast }$, or $k_{n},$ or $k_{n}\wedge m_{\ast }$ respectively, we have%
\begin{equation*}
\inf_{\hat{f}}\sup_{f_{0}\in \mathcal{F}}E(ASE(\hat{f}))\geq B^{^{\prime }}%
\underline{\Phi }^{\mathcal{F}},
\end{equation*}%
where the estimator $\hat{f}$ is over all estimators, and the constant $%
B^{^{\prime }}$ depends only on $\underline{a}$ and $\overline{a}$ for the
second case of $\mathcal{F}$ and additionally on $q$ for the first and third
cases of $\mathcal{F}$.
\end{theorem}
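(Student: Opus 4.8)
The plan is to prove the lower bound by Fano's method, using Assumption SRC$^{\prime}$ to transfer the $\|\cdot\|_n$-geometry on sparse coefficient vectors to the Euclidean geometry of the coefficients. Whenever $\theta,\theta^{\prime}$ satisfy $\|\theta-\theta^{\prime}\|_0\le\min(2\gamma,M_n)$, Assumption SRC$^{\prime}$ gives $\underline a^2\|\theta-\theta^{\prime}\|_2^2\le\|f_\theta-f_{\theta^{\prime}}\|_n^2\le\overline a^2\|\theta-\theta^{\prime}\|_2^2$; since the errors are $N(0,\sigma^2)$, the Kullback--Leibler divergence between the two observation laws is exactly $\frac{n}{2\sigma^2}\|f_\theta-f_{\theta^{\prime}}\|_n^2$, so a single packing simultaneously separates the hypotheses from below and controls their pairwise KL from above. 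Fano's inequality then yields $\inf_{\hat f}\sup_{f_0}E(ASE(\hat f))\ge c\,\epsilon^2$ for any $\epsilon$-separated family of such $f_\theta$ of cardinality $N$ with $\max_{i\ne j}\mathrm{KL}(P_i\|P_j)\le\tfrac1{16}\log N$. (Equivalently, one may phrase this via a packing-number lower bound in $\|\cdot\|_n$ that matches, under SRC$^{\prime}$, the covering bound of Theorem \ref{Th1}(i), which is sharp in order.)

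For Case 1 ($\mathcal F=\mathcal F_q(t_n;M_n)$, $\gamma=m_*$) in the main regime $1<m_*<M_n\wedge n$, I would take $\ell\asymp m_*$ and use coefficient vectors supported on $\ell$-subsets of $\{1,\dots,M_n\}$ with nonzero entries in $\{\pm\rho\}$, $\rho=c_0t_n\ell^{-1/q}$, so that $\|\theta\|_q\le c_0t_n\le t_n$. A Varshamov--Gilbert argument over the choice of support together with one over the sign pattern produces a family with $\log N\gtrsim\ell(1+\log(M_n/\ell))\asymp m_*\,SER(m_*)$ and pairwise $\|\theta_i-\theta_j\|_2^2\gtrsim\rho^2\ell\asymp t_n^2\ell^{1-2/q}$; a short computation from the definitions of $m^\ast$ and $m_*$ shows $t_n^2m_*^{1-2/q}\asymp\sigma^{2-q}t_n^q\big((1+\log(M_n/(nt_n^2\tau)^{q/2}))/n\big)^{1-q/2}\asymp\underline\Phi^{\mathcal F}$ in this regime. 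The delicate point is that the worst-case KL, of order $\tfrac{n}{\sigma^2}\overline a^2\rho^2m_*$, is of the \emph{same} order as $\log N$ --- this is precisely the approximation/estimation balance --- so one shrinks the universal constant $c_0$ (depending only on $\underline a,\overline a,q$) to force $\max\mathrm{KL}\le\tfrac1{16}\log N$ while keeping the separation of order $c_0^2\underline\Phi^{\mathcal F}$. When $m_*=M_n\wedge n$ I would instead pack a well-conditioned $\asymp r_{M_n}$-dimensional Euclidean ball of $\ell_2$-radius $\asymp c_0\sigma\sqrt{r_{M_n}/n}$ (its existence follows from SRC$^{\prime}$, which forces the rank to be $\ge m_*=M_n\wedge n\ge r_{M_n}$ and makes such a subset of columns well conditioned), obtaining $\inf\sup\gtrsim\sigma^2r_{M_n}/n$, and membership in the $\ell_q$-ball holds because $m^\ast=M_n\wedge n$ forces $t_n\gtrsim\sigma r_{M_n}^{1/q}/\sqrt n$. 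When $m_*=1$ I would use the $(M_n{+}1)$-point family $\{c_0t_nf_j\}_{j=1}^{M_n}\cup\{0\}$, whose separations are $\gtrsim t_n^2$ and whose KLs are $\lesssim nt_n^2/\sigma^2\lesssim 1+\log M_n$ exactly because $m_*=1$, yielding $\inf\sup\gtrsim t_n^2\wedge\frac{\sigma^2r_{M_n}}{n}$ (note $r_{M_n}\ge2$ under SRC$^{\prime}$ with $M_n\ge2$). The extra $\wedge\,\sigma^2r_{M_n}/n$ cap in the intermediate regime is obtained from the same sparse construction with $\rho$ reduced so that the induced separation has order $\sigma^2r_{M_n}/n$ whenever that is the smaller term.

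Case 2 ($\mathcal F=\mathcal F_0(k_n;M_n)$, $\gamma=k_n$) is the classical hard-sparsity construction: $k_n$-sparse vectors with entries $\pm c_0\sigma\sqrt{(1+\log(M_n/k_n))/n}$, Varshamov--Gilbert over supports and signs giving $\log N\asymp k_n(1+\log(M_n/k_n))$, separations $\asymp\sigma^2k_n(1+\log(M_n/k_n))/n$ and KLs of the same order (again rescaled by a small constant); since SRC$^{\prime}$ with $\gamma=k_n$ forces $r_{M_n}\ge\min(k_n,M_n)$, the cap $\wedge\,\sigma^2r_{M_n}/n$ is handled by shrinking $\rho$ when $r_{M_n}$ is the binding quantity. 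Case 3 reduces to one of the first two: the effective size is $m_*\wedge k_n$, and because $x\mapsto x(1+\log(M_n/x))$ is increasing, whichever of $m_*,k_n$ is smaller also delivers the smaller of $\underline\Phi^{\mathcal F_q(t_n;M_n)}$ and $\Phi^{\mathcal F_0(k_n;M_n)}$; the corresponding packing from Case 1 or Case 2 is $(m_*\wedge k_n)$-sparse, and its entries can be taken small enough to satisfy both the $\ell_q$- and $\ell_0$-constraints, this last verification relying only on elementary inequalities among $t_n,m^\ast,m_*,k_n$.

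I expect the main obstacle to be the constant bookkeeping at the approximation/estimation boundary: at the minimax scale the worst-case pairwise KL and $\log N$ coincide in order, so Fano's inequality does not apply directly and one must introduce a small universal scaling constant and propagate it consistently through every case and sub-case. A secondary nuisance is verifying that all constructed vectors genuinely lie in the prescribed constraint set and have support at most $\min(2\gamma,M_n)$ so that SRC$^{\prime}$ applies; this requires a handful of routine inequalities tying together $t_n$, $m^\ast$, $m_*$, $k_n$, and $r_{M_n}$, particularly in the regimes where $m^\ast$ or $k_n$ is close to $M_n\wedge n$.
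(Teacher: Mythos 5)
Your proposal is correct and follows essentially the same route as the paper: the paper's proof of Theorem \ref{LowABC} reduces to the argument for Theorem \ref{LowReg}, which is exactly Fano's inequality applied to packing sets of sparse sign vectors (the $D_k(\eta)$ construction and the $\le m_\ast$-sparse packing underlying the K\"{u}hn entropy bound), with SRC$^{\prime}$ transferring the $\ell_2$ coefficient geometry to $\Vert\cdot\Vert_n$ and with the same case split over $m_\ast$. The only difference is packaging — you run Fano directly with explicit Varshamov--Gilbert packings and pairwise KL control, while the paper invokes the global/local metric entropy machinery of Proposition \ref{YangBarron} together with Theorem \ref{Th1}; the underlying packing sets and the constant-shrinking calibration at the approximation/estimation boundary are the same.
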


%%%%%%%%%%%%%%%%%%%%%%%%%%%%%%%%%%%%%%%%%

\begin{remark}
If {SRC$^{\prime }$ is not satisfied on the set of all the predictors but is
satisfied on a subset of }$M_{0}$ predictors, as long as $\log \frac{M_{n}}{%
m_{\ast }},$ $\log \frac{M_{n}}{k_{n}},$ and $\log \frac{M_{n}}{m_{\ast
}\wedge k_{n}}$ are of the same order as $\log \frac{M_{0}}{m_{\ast }},$ $%
\log \frac{M_{0}}{k_{n}},$ and $\log \frac{M_{0}}{m_{\ast }\wedge k_{n}}$,
respectively, we get the same risk lower rates. When $M_{n}$ is really
large, this relaxation of {SRC$^{\prime }$ can be much less stringent for
application.}
\end{remark}

For the case $q=0,$ the achievability of the upper rate is a direct
consequence of \cite{Yang1999}. The lower rates for $q=0$ and/or $1$ are
given in \cite{RigolletTsybakov2011}, where the satisfiability of the SRC$%
^{^{\prime }}$ is also worked out. Raskutti et al. \cite{Raskuttietal2010},
under the assumption that the rank of the full design matrix is $n,$ derived
the minimax rates of convergence $t_{n}^{q}(\log \left( M_{n}\right)
/n)^{1-q/2}$ for $0<q<1$ in an in-probability sense for linear regression
with fixed design with the $\ell _{q}$-constraint when $M_{n}\gg n$ and $%
M_{n}/(t_{n}^{q}n^{q/2})\geq M_{n}^{\kappa }$ with some $\kappa \in (0,1)$.
From our result, the ABC estimator simultaneously achieves the minimax rates
of convergence for all $0\leq q\leq 1$ and for all $M_{n}\geq 2$ and $t_{n}$
no smaller than order $n^{-1/2},$ and also under the joint constraints when $%
q=0$ and $0<q\leq 1$. We also need to point out that we only work on
estimating the regression mean in this work, but \cite{Raskuttietal2010}
showed that, under additional conditions, these upper rates are also valid
for the estimation of the parameter $\theta $ under the squared error and
verified their minimaxity. Concurrent work by Ye and Zhang \cite{YeZhang2010}
also derived performance bounds on the coefficient estimation that are
optimal in a sense of uniformity over the different designs.

In application, the assumption that $f_{0}\in \mathcal{F}_{q}(t_{n};M_{n})$
or $f_{0}\in \mathcal{F}_{0}(k_{n};M_{n})$ may sometimes be too strong to be
appropriate. Thus, risk bounds that permit model mis-specification, i.e., $%
f_{0}\notin \mathcal{F}_{q}(t_{n};M_{n}),$ are desirable. Part (ii) in the
upper bound theorem (Theorem \ref{UpABC}) shows that the ABC estimator
handles model mis-specification. Indeed, for the different $\ell _{q}$%
-constraints, the risk of the ABC estimator is upper bounded by a multiple
of $d_{n}^{2}(f_{0};\mathcal{F}_{q}(t_{n};M_{n}))$ plus the earlier upper
bounds, respectively. Therefore, model mis-specification or not, our
estimator is minimax rate adaptive over the $\ell _{q,t_{n}}^{M_{n}}$-hulls
without any knowledge about the values of $q,$ $t_{n}$ and $k_{n}$ (as long
as $t_{n}$ is not trivially small).

%%%%%%%%%%%%%%%%%%%%%%%%%%%%%%%%%%%%%

One limitation of this result, from one theoretical point, is that the
factor is larger than one in front of $d_{n}^{2}(f_{0};\mathcal{F})$. When
the initial estimates need to be obtained based on the same data available,
the multiplying factor being one no longer necessarily has any essential
advantage. However, striving for the right constant is theoretically
attractive when the elements in the dictionary are observed or truly
provided by others.

In that direction, recently, Rigollet and Tsybakov \cite%
{RigolletTsybakov2011}, by considering an estimator based on the
mixing-least-square-estimators algorithm of Leung and Barron \cite%
{LeungBarron2006} with some specific choice of prior probabilities on the
models, have provided in-expectation optimal upper bounds for $\ell _{0}$-
and/or $\ell _{1}$-aggregation. With the power of the oracle inequality (or
the index of resolvability bound), their estimator is shown to be adaptive
over $\ell _{0}$- and $\ell _{1}$-hulls. Their results do not address $\ell
_{q}$-aggregation for $0<q<1.$ We next show that we can have an estimator
that handles all $0\leq q\leq 1$ in generality.

The mixed least squares estimator by the mixing algorithm of Leung and
Barron (2006) is given by 
\begin{equation*}
\hat{f}^{MLS}=\sum_{J\in \Gamma _{n}}w_{J}\hat{Y}_{J}\text{\hspace{0.5cm}
with\hspace{0.5cm}}w_{J}=\frac{\pi _{J}\exp \{-\widehat{R}_{J}/(4\sigma
^{2})\}}{\sum_{J^{^{\prime }}\in \Gamma _{n}}\pi _{J^{^{\prime }}}\exp \{-%
\widehat{R}_{J^{^{\prime }}}/(4\sigma ^{2})\}},
\end{equation*}%
where $\widehat{R}_{J}=n\Vert Y-\hat{Y}_{J}\Vert _{n}^{2}+2r_{J}\sigma
^{2}-n\sigma ^{2}$ is the unbiased risk estimate for $\hat{Y}_{J}$. Let the
prior on model $J$ be chosen as $\pi _{J_{m}}=0.85\left( ((M_{n}-1)\wedge n){%
\binom{M_{n}}{m}}\right) ^{-1}$ for $1\leq m\leq (M_{n}-1)\wedge n$, and $%
\pi _{J_{M_{n}}}=\pi _{J_{0}}=\pi _{\bar{J}}=0.05$.

%%%%%%% theorem 9 %%%%%%%

\begin{theorem}
\label{UpMLS} Suppose $0<\sigma <\infty $ is known. For any $M_{n}\geq 1$, $%
n\geq 1$, the estimator $\hat{f}^{MLS}$ simultaneously has the following
properties.

\begin{itemize}
\item[(i)] For any $0<q\leq 1$, $t_n>0$, 
\begin{eqnarray*}
E(ASE(\hat{f}^{MLS})) &\leq &d_n^{2}(f_{0};\mathcal{F}_{q}(t_n;M_{n})) \\
&+& B_{1}\left\{ 
\begin{array}{ll}
\frac{\sigma^{2}r_{M_{n}}}{n}, & \text{if $m_{\ast }=M_{n}\wedge n$,} \\ 
\sigma^{2-q}t_n^{q}\left( \frac{1+\log \frac{M_{n}}{(nt_n^{2}\tau )^{q/2}}}{n%
}\right) ^{1-q/2}\wedge \frac{\sigma^{2}r_{M_{n}}}{n}, & \text{if $1<m_{\ast
}<M_{n} \wedge n$,}%
\end{array}%
\right.
\end{eqnarray*}%
and 
\begin{eqnarray*}
E(ASE(\hat{f}^{MLS})) &\leq &\left(d_n^{2}(f_{0};\mathcal{F}_{q}(t_n;M_{n}))+%
\frac{B_{1}\left( \sigma^{2}(1+\log M_{n})\wedge \sigma^{2}r_{M_{n}}\right) 
}{n}\right) \\
& & \wedge \left( \Vert \bar{f}_{J_{0}} - f_{0}^n \Vert _{n}^{2}+\frac{%
\widetilde{B}_{1}\sigma^{2}}{n}\right) ,\text{ if $m_{\ast }=1$ }.
\end{eqnarray*}

\item[(ii)] For $1\leq k_{n}\leq M_{n}$, 
\begin{equation*}
E(ASE(\hat{f}^{MLS}))\leq d_n^{2}(f_{0};\mathcal{F}_{0}(k_{n};M_{n}))+B_{2}%
\left( \frac{\sigma^{2}k_{n}\left( 1+\log \frac{M_{n}}{k_{n}}\right) }{n}%
\wedge \frac{\sigma^{2}r_{M_{n}}}{n}\right) .
\end{equation*}

\item[(iii)] For any $0<q\leq 1$, $t_n>0$, and $1\leq k_{n}\leq M_{n}$, 
\begin{eqnarray*}
E(ASE(\hat{f}^{MLS})) &\leq &d_n^{2}(f_{0};\mathcal{F}_{q}(t_n;M_{n})\cap 
\mathcal{F}_{0}(k_{n};M_{n})) \\
&&+B_{3}\left\{ 
\begin{array}{ll}
\frac{\sigma ^{2}k_{n}\left( 1+\log \frac{M_{n}}{k_{n}}\right) }{n}\wedge 
\frac{\sigma ^{2}r_{M_{n}}}{n} & \text{if $m_{\ast }>k_{n}$,} \\ 
\sigma ^{2-q}t_n^{q}\left( \frac{1+\log \frac{M_{n}}{(nt_n^{2}\tau )^{q/2}}}{%
n}\right) ^{1-q/2}\wedge \frac{\sigma ^{2}r_{M_{n}}}{n} & \text{if $%
1<m_{\ast }\leq k_{n}$. }%
\end{array}%
\right.
\end{eqnarray*}%
and 
\begin{eqnarray*}
E(ASE(\hat{f}^{MLS})) &\leq &\left( d_n^{2}(f_{0};\mathcal{F}%
_{q}(t_n;M_{n})\cap \mathcal{F}_{0}(k_{n};M_{n}))+\frac{B_{3}\left( \sigma
^{2}(1+\log M_{n})\wedge \sigma ^{2}r_{M_{n}}\right) }{n}\right) \\
&&\wedge \left( \Vert \bar{f}_{J_{0}} - f_{0}^n \Vert _{n}^{2}+\frac{%
\widetilde{B}_{3}\sigma ^{2}}{n}\right) ,\text{ if $m_{\ast }=1$ }.
\end{eqnarray*}

\item[(iv)] For every $f_{0},$ we have 
\begin{equation*}
E(ASE(\hat{f}^{MLS}))\leq B_{4}\sigma ^{2}.
\end{equation*}
\end{itemize}

For these cases, the constants $\widetilde{B}_{1}$, $B_{2}$, $\widetilde{B}%
_3 $ and $B_{4}$ are pure constants, and $B_{1}$ and $B_{3}$ depend on $q$.
\end{theorem}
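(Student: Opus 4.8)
The plan is to derive the whole theorem by evaluating an index-of-resolvability bound for the mixed least squares estimator. The engine is the oracle inequality of Leung and Barron \cite{LeungBarron2006} (see also \cite{RigolletTsybakov2011}) for $\hat f^{MLS}$ with the stated exponential weights built from the unbiased risk estimates $\widehat R_J$: with leading constant one,
\[
E(ASE(\hat f^{MLS}))\le \inf_{J\in\Gamma_n}\left(\|\bar f_J-f_0^n\|_n^2+\frac{r_J\sigma^2}{n}+\frac{4\sigma^2}{n}\log\frac1{\pi_J}\right).
\]
With the chosen priors, $\log(1/\pi_{J_m})=\log(1/0.85)+\log((M_n-1)\wedge n)+\log\binom{M_n}{m}$ for a size-$m$ subset model $J_m$ with $1\le m\le (M_n-1)\wedge n$, and $\log(1/\pi_J)=\log 20$ for $J\in\{J_{M_n},J_0,\bar J\}$. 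Since $r_{J_m}\le m\wedge r_{M_n}$, $\log\binom{M_n}{m}\le m(1+\log(M_n/m))$, and $m\mapsto m(1+\log(M_n/m))$ is increasing on $[1,M_n]$ (so $\log((M_n-1)\wedge n)\le 1+\log M_n\le m(1+\log(M_n/m))$), the subset-model part of the infimum is at most a constant multiple of $\inf_{1\le m\le (M_n-1)\wedge n}\big(\inf_{|J|=m}\|\bar f_J-f_0^n\|_n^2+\sigma^2 m(1+\log(M_n/m))/n\big)$, while the model $\bar J$ alone (for which $\bar f_{\bar J}=f_0^n$ and $r_{\bar J}=n$) gives $E(ASE(\hat f^{MLS}))\le \sigma^2+4\sigma^2\log 20/n$, which is part (iv).

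For part (i) I would feed into the display the sparse approximation bound of Theorem~\ref{Th1}(ii), taking the measure $\nu$ there to be the empirical measure $\nu_0=\tfrac1n\sum_i\delta_{\mathbf x_i}$ (allowed since $\nu$ need only be $\sigma$-finite), so that $\|\cdot\|_{L^2(\nu_0)}=\|\cdot\|_n$ and $\|f_j\|_{L^2(\nu_0)}\le 1$ by hypothesis. For each $m$ this yields a size-$m$ model $J_m$ with $\inf_{|J|=m}\|\bar f_J-f_0^n\|_n^2\le\|f_{\theta^m}-f_0\|_n^2\le d_n^2(f_0;\mathcal F_q(t_n;M_n))+2^{2/q-1}t_n^2 m^{1-2/q}$, because $\bar f_{J_m}$ is the $\|\cdot\|_n$-projection of $f_0^n$ onto the span of $J_m$. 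Optimizing the bound over $m$ is exactly the balancing computation of Section~\ref{aninsight}: for $1<m_\ast<M_n\wedge n$ the choice $m\asymp m_\ast$ equates $t_n^2 m^{1-2/q}$ with $\sigma^2 m(1+\log(M_n/m))/n$ and gives $\sigma^{2-q}t_n^q\big((1+\log(M_n/(nt_n^2\tau)^{q/2}))/n\big)^{1-q/2}$; the full model $J_{M_n}$ always supplies the competing bound $d_n^2+C\sigma^2 r_{M_n}/n$ (its span contains the whole hull), which produces the ``$\wedge\,\sigma^2 r_{M_n}/n$'' in every regime, in particular when $m_\ast=M_n\wedge n$; and for $m_\ast=1$ the subset infimum is attained at $m=1$ (then $t_n^2$ is absorbed, since $m_\ast=1$ forces $t_n^2\lesssim\sigma^2(1+\log M_n)/n$), giving $d_n^2+B_1\sigma^2(1+\log M_n)/n$, while the null model $J_0$ (with $r_{J_0}=1$) contributes $\|\bar f_{J_0}-f_0^n\|_n^2+\widetilde B_1\sigma^2/n$; taking the minimum of these two bounds gives (i).

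Parts (ii) and (iii) use the same machinery with the approximation step tuned to the $\ell_0$-budget $k_n$. For (ii), let $f^\star$ nearly attain $d_n^2(f_0;\mathcal F_0(k_n;M_n))$; its support $S$ has $|S|\le k_n$, so the model $S$ itself gives $\|\bar f_S-f_0^n\|_n^2\le d_n^2(f_0;\mathcal F_0(k_n;M_n))$ with $r_S\le k_n\wedge r_{M_n}$ and $\log\binom{M_n}{|S|}\le k_n(1+\log(M_n/k_n))$, which together with the full-model bound is the claimed estimate; no sparsification is done, so $\|f_j\|\le 1$ is not needed here. For the intersection in (iii), let $f^\star$ nearly attain the approximation error over $\mathcal F_q(t_n;M_n)\cap\mathcal F_0(k_n;M_n)$, with support $S$, $|S|\le k_n$, and coefficients of $\ell_q$-norm $\le t_n$: when $m_\ast>k_n$ I would use $S$ directly (zero approximation overhead, estimation term $\sigma^2 k_n(1+\log(M_n/k_n))/n$), and when $m_\ast\le k_n$ I would apply Theorem~\ref{Th1}(ii) to the reduced dictionary $\{f_j:j\in S\}$ to sparsify $f^\star$ to $\min(m_\ast,|S|)$ terms inside $S$ (overhead $\le 2^{2/q-1}t_n^2 m_\ast^{1-2/q}$, and if $|S|<m_\ast$ simply keep $S$), which after the same balancing gives $\sigma^{2-q}t_n^q(\cdots)^{1-q/2}$; the null-model comparison again covers the $m_\ast=1$ sub-case.

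I expect the main obstacle to be the approximation step rather than the optimization: Theorem~\ref{Th1}(ii) must be invoked with the empirical measure $\nu_0$ so that the sparse-approximation error is measured in $\|\cdot\|_n$, and for the intersection case it must be applied to the sub-dictionary indexed by the support of the best intersection-approximant (not to the full dictionary), so that the $\ell_0$-budget is respected while the $\ell_q$-geometry still controls the tail of the expansion. The remaining work is routine bookkeeping: verifying in each regime of $m_\ast$ that the boundary constants $\log 20$, the additive $\log((M_n-1)\wedge n)/n$, and the slack between $r_{J_m}$ and $m$ are all dominated by the claimed rate, and that the elementary minimization of $t_n^2 m^{1-2/q}+\sigma^2 m(1+\log(M_n/m))/n$ reproduces the expressions for $\Phi^{\mathcal F}$ already recorded in Section~\ref{aninsight}.
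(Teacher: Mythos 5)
Your proposal is correct and follows essentially the same route as the paper: the paper's proof simply invokes Corollary 6 of Leung and Barron (2006) to get the oracle inequality with leading constant one (the analogue of Proposition \ref{Yang1999a} with $B=1$) and then repeats the index-of-resolvability evaluation from the proof of Theorem \ref{UpABC}, i.e., Theorem \ref{Th1}(ii) applied with the empirical measure, balanced at $m\asymp m_\ast$ and compared against the full, null, and full-projection models. Your treatment of the $m_\ast=1$ regime and of the intersection case (iii) just spells out details the paper leaves implicit.
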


\begin{remark}
From (ii) above, by taking $k_{n}=1,$ we have 
\begin{equation*}
E(ASE(\hat{f}^{MLS}))\leq \inf_{1\leq j\leq M_{n}}\Vert
f_{j}^{n}-f_{0}^{n}\Vert _{n}^{2}+B_{2}\left( \frac{\sigma ^{2}\left( 1+\log
M_{n}\right) }{n}\wedge \frac{\sigma ^{2}r_{M_{n}}}{n}\right) ,
\end{equation*}%
where $f_{j}^{n}=(x_{1,j},\ldots ,x_{n,j})^{\prime }$. Thus, we have
achieved aggregation for adaptation as well under the fixed design.
\end{remark}

The risk upper bounds above when $q$ is restricted to be either 0 or 1 or
under both constraints are already given in Theorem 6.1 of \cite%
{RigolletTsybakov2011}. The first four cases given there are clearly
reproduced here (note that their cases 3 and 1 are just special case and
immediate consequence, respectively, of their case 4, given in our bound in
(ii)). Their case 5, a sparse aggregation with $k_{n}$ estimates as studied
in \cite{Yang2004} (page 36) and \cite{Lounici2007} (called $D$-convex
aggregation) is implied by our bound in (iii) with $q$ taken to be 1. In the
case $q=1,$ a minor difference is that if $\Vert \bar{f}_{J_{0}}-f_{0}^{n}%
\Vert _{n}^{2}$ happens to be of a smaller order than $t_{n}\left( \frac{%
1+\log \frac{M_{n}}{(nt_{n}^{2})^{1/2}}}{n}\right) ^{1/2}\wedge \frac{%
r_{M_{n}}}{n},$ then our risk bound in (iii) yields a faster rate of
convergence. In addition, our inclusion of the full projection model among
the candidates guarantees that the risk of our estimator is always bounded,
which is not true for the estimator in \cite{RigolletTsybakov2011}. Our main
contribution here is to handle adaptive $\ell _{q}$-aggregation for the
whole range of $q$ between 0 and 1. Note that the upper bounds in the above
theorem have already been shown to be minimax-rate optimal under the
conditions in Theorem \ref{LowABC}.

\subsection{A comment on the model selection and model mixing approaches}

\vspace{1pt}From the risk bounds in the previous subsection, we see that the
model mixing approach leads to the optimal constant 1 in front of the
approximation error $d_{n}^{2}(f_{0};\mathcal{F})$ for the three choices of $%
\mathcal{F}$, which is not the case for the model selection based estimator.
However, the model selection approach may also have its own advantages.

From the proof of Theorem \ref{UpABC} and proof of Theorem 1 in \cite%
{Yang1999}, besides the given risk bounds, we also have a general
in-probability bound of the form: for any $x>0,$ there are constants $c,$ $%
c^{^{\prime }}$ (absolute constants) and $c^{^{\prime \prime }}$ (depending
on $\lambda $ and $\sigma ^{2}$) such that 
\begin{equation*}
P\left( \frac{ASE(\hat{f}_{\hat{J}})+\frac{\lambda \sigma ^{2}C_{\hat{J}}}{n}%
}{R_{n}(f_{0})}\geq c+x\right) \leq c^{^{\prime }}\exp \left( -c^{^{\prime
\prime }}x\right) ,
\end{equation*}%
where $R_{n}(f_{0})=$ $\inf_{J\in \Gamma _{n}}\left( \left\Vert \bar{f}_{J}-{%
f}_{0}^{n}\right\Vert _{n}^{2}+\frac{\sigma ^{2}r_{J}}{n}+\frac{\lambda
\sigma ^{2}C_{J}}{n}\right) $ is an index of resolvability, which
specializes to the upper bounds in (i) and (ii) of Theorem \ref{UpABC},
respectively in those situations. Thus, we know that not only $ASE(\hat{f}_{%
\hat{J}})$ is at order $R_{n}(f_{0})$ with upper deviation probability
exponentially small (in $x$), but also the complexity of the selected model, 
$\frac{\lambda \sigma ^{2}C_{\hat{J}}}{n}$, is upper bounded in probability
in the same way as well. In particular, for estimating a linear regression
function with the soft or hard (or both) constraint(s) on the coefficients,
the ABC estimator converges at rate $\frac{m_{\ast }\left( 1+\log \frac{M_{n}%
}{m_{\ast }}\right) }{n}\wedge \frac{r_{M_{n}}}{n}$ both in expectation and
with upper deviation probability exponentially small, where $m_{\ast }$ is
the corresponding effective model size in each case. Furthermore, the rank
(the actual number of free-parameters) of the model selected by ABC is right
at order $m_{\ast }\wedge r_{M_{n}}$ with exception probability
exponentially small.

For model mixing estimators based on exponential weighting, however, to our
knowledge, no result has shown that their losses are generally at the
optimal rate in probability. In fact, a negative result is given in \cite%
{Audibert2007} that shows that an exponential weighting based estimator 
\textit{optimal} for aggregation for adaptation (i.e., its risk regret, or
the expected excessive loss, is of order $\frac{\log M_{n}}{n}$) is
necessarily \textit{sub-optimal} in probability (with a non-vanishing
probability its excessive loss is at least at the much larger order of $%
\sqrt{\frac{\log M_{n}}{n}}$) in certain settings.

Thus, we tend to believe that both the model selection and model mixing
approaches have their own theoretical strengths in different ways.

%%%%%%% ABC' criterion %%%%%%%
\vspace*{0.1in}

\subsection{When $\protect\sigma$ is unknown}

\label{sigmaunknown}

Needless to say, the assumption that $\sigma $ is fully known is
unrealistic. When $\sigma $ is unknown but is upper bounded by a known
constant $\overline{\sigma }>0$, similar results for rate of convergence can
be obtained with a model selection rule different from ABC.

For this situation, Yang \cite{Yang1999} proposed the ABC$^{\prime }$
criterion: 
\begin{equation*}
ABC^{\prime }(J_{m})=\left( 1+\frac{2r_{J_{m}}}{n-r_{J_{m}}}\right) \left(
\sum_{i=1}^{n}(Y_{i}-\hat{Y}_{i,J_{m}})^{2}+\lambda \overline{\sigma }%
^{2}C_{J_{m}}\right) ,
\end{equation*}%
which is a modification of Akaike's FPE criterion \cite{Akaike1970}. We
define $ABC^{\prime }(\bar{J})=\left( 1+2n\right) \lambda \overline{\sigma }%
^{2}C_{\bar{J}}$ and $ABC^{\prime }(J_{0})=\left( 1+\frac{2}{n-1}\right)
\left( \sum_{i=1}^{n}(Y_{i}-\bar{Y})^{2}+\lambda \overline{\sigma }%
^{2}C_{J_{0}}\right) .$ The list of candidate models and complexity
assignments need to be different for the different situations, as described
below.

\begin{enumerate}
\item When $M_{n}\leq n/2,$ all the subset models, $J_0$ and $\bar{J}$ are
considered with the complexity $C_{J_{m}}= - \log 0.85+\log (M_{n}-1)+\log {%
\binom{M_n }{m}}$ for a subset model with $m<M_{n}$, $C_{J_{M_{n}}}=C_{J_0}
= C_{\bar{J}}=- \log 0.05$.

\item When $M_{n}>n/2$ and $r_{M_{n}}\geq n/2,$ we only consider models with
size $m\leq n/2$, the model $J_0$ and the model $\bar{J}$. Then we assign
the complexity $C_{J_{m}}=- \log 0.8+\log (\lfloor n/2\rfloor )+\log {\binom{%
M_n }{m}} $ for a subset model, $C_{J_0} =C_{\bar{J}}=- \log 0.1$.

\item When $M_{n}>n/2$ and $r_{M_{n}}<n/2,$ we only consider models with
size $m\leq n/2,$ the full model $J_{M_{n}}$, the null model $J_0$, and the
model $\bar{J}$. We assign the complexity $C_{J_{m}}=- \log 0.85+\log
(\lfloor n/2\rfloor )+\log {\binom{M_n }{m}}$ for a subset model, $%
C_{J_{M_{n}}}=C_{J_0} = C_{\bar{J}}=-\log0.05$.
\end{enumerate}

In any of the cases above, let $\Gamma _{n}^{\prime }$ denote the set of all
the models considered. The model chosen by the ABC$^{\prime }$ is 
\begin{equation*}
\hat{J}^{\prime }=\arg \min_{J\in \Gamma _{n}^{\prime }}ABC^{\prime }(J),
\end{equation*}%
producing the ABC$^{\prime }$ estimator $\hat{f}_{\hat{J}^{\prime }}=\hat{Y}%
_{\hat{J}^{\prime }}$.

%%%%%%% theorem 10 %%%%%%%

\begin{theorem} \label{UpABCunknown} 
When $\lambda \geq 40 \log 2$, the ABC$^{\prime }$
estimator $\hat{f}_{\hat{J}^{\prime }}$ simultaneously has the following
properties.

\begin{itemize}
\item[(i)] For $\mathcal{F=F}_{q}(t_{n};M_{n})$ with $0<q\leq 1,$ or $%
\mathcal{F=F}_{0}(k_{n};M_{n})$ with $1\leq k_{n}\leq M_{n},$ or $\mathcal{%
F=F}_{q}(t_{n};M_{n})\cap \mathcal{F}_{0}(k_{n};M_{n})$ with $0<q\leq 1$ and 
$1\leq k_{n}\leq M_{n},$ we have%
\begin{equation*}
\sup_{f_{0}\in \mathcal{F}}E(ASE(\hat{f}_{\hat{J}^{\prime }}))\leq B\Phi ^{%
\mathcal{F}},
\end{equation*}%
where the constant $B$ depends only on $q$, $\lambda $, $\overline{\sigma },$
$\sigma $ for the first and third cases of $\mathcal{F},$ and depends only
on $\lambda $, $\overline{\sigma },$ $\sigma $ for the second case.

\item[(ii)] In general, for an arbitrary ${f}_{0}^{n},$ we have%
\begin{eqnarray*}
\hspace*{-0.5in} &&E(ASE(\hat{f}_{\hat{J}^{\prime }})) \\
\hspace*{-0.5in} &\leq &B\left( \left\Vert \bar{f}_{J_{M_{n}}}-{f}%
_{0}^{n}\right\Vert _{n}^{2}+\inf_{J_{m}:1\leq m<M_{n}}\left( \left\Vert 
\bar{f}_{J_{m}}-\bar{f}_{J_{M_{n}}}\right\Vert _{n}^{2}+\frac{\sigma
^{2}r_{J_{m}}}{n}+\frac{\sigma ^{2}\log (M_{n}\wedge n)}{n}\right. \right. \\
&&+\left. \left. \frac{\sigma ^{2}\log {\binom{M_{n}}{m}}}{n}\right) \wedge 
\frac{\sigma ^{2}r_{M_{n}}}{n}\right) \wedge B\left( \left( \Vert \bar{f}%
_{J_{0}}-f_{0}^{n}\Vert _{n}^{2}+\frac{\sigma ^{2}}{n}\right) \wedge \sigma
^{2}\right) ,
\end{eqnarray*}
\end{itemize}

where the constant $B$ depends only on $\lambda ,$ $\overline{\sigma },$ $%
\sigma$.
\end{theorem}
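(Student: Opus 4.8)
The plan is to follow the two-stage structure already used for the $\sigma$-known case (Theorem~\ref{UpABC}): first establish a general index-of-resolvability bound valid for an arbitrary regression mean $f_0^n$ --- this is exactly part~(ii) --- and then deduce part~(i) by evaluating that bound at the near-optimal subset models supplied by Theorem~\ref{Th1}(ii).

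For part~(ii) I would reproduce the argument behind Theorem~1 of \cite{Yang1999} for the FPE-type criterion $ABC^{\prime}$. Writing $P_J$ for the projection onto the column span of model $J$ and $r_J$ for its rank, the identities $n\,ASE(\hat f_J)=\|(I-P_J)f_0^n\|^2+\|P_J\varepsilon\|^2$ and $\sum_i(Y_i-\hat Y_{i,J})^2=\|(I-P_J)f_0^n\|^2+2\langle(I-P_J)f_0^n,\varepsilon\rangle+\|(I-P_J)\varepsilon\|^2$ let me rewrite $ABC^{\prime}(J)$ in terms of $n\,ASE(\hat f_J)$, the penalty $\lambda\overline{\sigma}^2 C_J$, and noise terms. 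The crucial point is that the multiplier $1+2r_J/(n-r_J)=(n+r_J)/(n-r_J)$ turns $\|(I-P_J)\varepsilon\|^2\approx(n-r_J)\sigma^2$ into roughly $n\sigma^2$, so that $ABC^{\prime}$ mimics the $\sigma$-known criterion $ABC$ with $\sigma^2$ replaced by an implicit estimate; this is why every model in $\Gamma_n^{\prime}$ is constrained to rank at most $n/2$ (so the multiplier is bounded by $3$ and $n-r_J\ge n/2$), with the full-projection model $\bar J$ and, when present, the full model $J_{M_n}$ and the null model $J_0$ treated by hand --- in particular $\bar J$ caps $E(ASE)$ at order $\sigma^2$. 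Starting from $ABC^{\prime}(\hat{J}^{\prime})\le ABC^{\prime}(J)$ for every competitor $J$, after these substitutions one must control, uniformly over all models at once, $\|P_J\varepsilon\|^2$ together with the bilinear noise terms. These are handled by $\chi^2$ tail bounds, since $\|P_J\varepsilon\|^2/\sigma^2\sim\chi^2_{r_J}$, combined with a union bound over the $\binom{M_n}{m}$ models of size $m$, which is absorbed by the complexity $C_{J_m}\asymp\log\binom{M_n}{m}$ in the penalty once $\lambda$ is large enough --- here $\lambda\ge 40\log 2$, larger than the $5.1\log 2$ of the $\sigma$-known case because the fluctuations of $\|(I-P_J)\varepsilon\|^2$ now enter the picture as well. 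Integrating the resulting exponential deviation estimates and taking expectations yields
\begin{equation*}
E\big(ASE(\hat f_{\hat{J}^{\prime}})\big)\le B\inf_{J\in\Gamma_n^{\prime}}\Big(\|\bar f_J-f_0^n\|_n^2+\frac{\sigma^2 r_J}{n}+\frac{\sigma^2 C_J}{n}\Big),
\end{equation*}
with $B$ depending on $\lambda$ and on $\overline{\sigma}^2/\sigma^2$ (the ratio entering because the penalty carries $\overline{\sigma}^2\ge\sigma^2$ in place of $\sigma^2$); unwinding $C_{J_m}$ and using $r_{J_m}\le m$ gives the bound displayed in~(ii).

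For part~(i) I would insert into the resolvability bound of~(ii) the subset model $J_m$ and coefficient vector furnished by Theorem~\ref{Th1}(ii), so that for $\mathcal F=\mathcal F_q(t_n;M_n)$ one has $\|\bar f_{J_m}-f_0^n\|_n^2\le d_n^2(f_0;\mathcal F)+2^{2/q-1}t_n^2 m^{1-2/q}$, and then optimize over $m$. Since $f_0\in\mathcal F$ the approximation error drops out and the trade-off $t_n^2 m^{1-2/q}+\sigma^2 m(1+\log(M_n/m))/n$ is minimized, in order, at the effective model size, reproducing $\Phi^{\mathcal F_q(t_n;M_n)}$ exactly as in Section~\ref{aninsight} and the proof of Theorem~\ref{UpABC}; the cases $\mathcal F=\mathcal F_0(k_n;M_n)$ and the intersection are identical with $m=k_n$ and $m=k_n\wedge m_\ast$. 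The one genuinely new bookkeeping item compared with Theorem~\ref{UpABC} is that $\Gamma_n^{\prime}$ is regime-dependent: when $M_n\le n/2$ all subset sizes are available, but when $M_n>n/2$ only sizes $\le n/2$ are, so one must verify that in the parameter ranges where the target effective size would exceed $n/2$ the rate is already of order $1\wedge\sigma^2 r_{M_n}/n$, which is delivered either by $\bar J$ (rate $\sigma^2$) or by $J_{M_n}$ (rate $\sigma^2 r_{M_n}/n$, available precisely when $r_{M_n}<n/2$); this is exactly what the three complexity assignments are calibrated to make work.

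The step I expect to be the main obstacle is the uniform stochastic control in part~(ii): showing that the FPE-corrected criterion behaves, simultaneously over exponentially many non-nested models, like the $\sigma$-known $ABC$ criterion. One must dominate the worst-case over models of the $\chi^2$ fluctuations of both $\|P_J\varepsilon\|^2$ (order $r_J$) and $\|(I-P_J)\varepsilon\|^2$ (order $n-r_J$), as well as the bilinear noise terms, by the single penalty $\lambda\overline{\sigma}^2 C_J$; getting the constants to close --- hence the threshold $\lambda\ge 40\log 2$ and the rank-$\le n/2$ truncation of the candidate list --- is the delicate part. Everything downstream, namely the optimization over $m$, the case analysis of the three regimes, and the reduction of~(i) to evaluating the index of resolvability, is routine given Theorem~\ref{Th1} and the computations already carried out for Theorem~\ref{UpABC}.
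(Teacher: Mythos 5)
Your overall route is the same as the paper's: the paper first proves an in-expectation oracle inequality for the $ABC^{\prime}$ criterion over candidate models of rank at most $n/2$ (its Proposition \ref{Yang1999b}, which upgrades Theorem 4 of \cite{Yang1999} from an in-probability statement, with exactly the ingredients you describe --- control of the remainder terms $e_n^{\prime}(f_0^n-M_Jf_0^n)$, $r_J-e_n^{\prime}M_Je_n$ and $e_n^{\prime}A_Je_n-(n-r_J)\sigma^2$ by fractions of the resolvability plus $\lambda\log_2(1/\delta)$ terms, a union bound absorbed by $C_J$, and integration of the tail), and then evaluates the resolvability at the models supplied by Theorem \ref{Th1}(ii), treating $\bar J$, $J_0$ and $J_{M_n}$ by hand. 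So parts of your plan that you label routine are indeed carried out just as you say.

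The one step your proposal does not actually close is the passage, in part (ii), from the infimum over the candidate list $\Gamma_n^{\prime}$ to the infimum over \emph{all} subset models $J_m$ with $1\leq m<M_n$ that appears in the displayed bound. When $M_n>n/2$ the criterion never sees models of size $m>n/2$, yet the theorem's right-hand side ranges over them. Your remark about regimes where the effective model size exceeds $n/2$ only settles the part (i) consequence; for the general part (ii) statement the paper needs, and supplies, a separate argument showing that every excluded model is dominated in order by an included one: if $r_{J_m}\geq r_{M_n}/2$ its resolvability is already comparable to that of $\bar J$ or $J_{M_n}$; if $r_{J_m}<r_{M_n}/2$ and $n/2<m\leq M_n/2$ one can pass to a sub-model of size $r_{J_m}\leq n/2$ with the same column span, hence the same approximation error and rank but no larger complexity; and if $m>M_n/2$ then $\log{\binom{M_n}{m}}\geq\lfloor r_{M_n}/2\rfloor\log\frac{M_n}{\lfloor r_{M_n}/2\rfloor}$, so the complexity term alone is already of the order of the $J_{M_n}$ or $\bar J$ bound. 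Without some version of this rank/complexity comparison, your part (ii) only proves the weaker inequality with the infimum restricted to $m\leq n/2$. Everything else in the proposal matches the paper's proof.
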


\begin{remark}
For the results in (i), as seen before, when $f_{0}$ is not in the
respective class of linear combinations, an obvious modification is needed
by adding a multiple of the approximation error $d_{n}^{2}(f_{0};\mathcal{F}%
) $ in the risk bound.
\end{remark}

\vspace{1pt}When $0<\sigma <\infty $ is fully unknown, a model selection
method by Baraud, Giraud and Huet \cite{Baraudetal2009} can be used to
obtain results on $\ell _{q}$-regression.

They consider a different modification of the FPE criterion \cite{Akaike1970}%
: 
\begin{equation*}
BGH(J_{m})=\left( 1+\frac{pen(J_{m})}{n-r_{J_{m}}}\right) \left(
\sum_{i=1}^{n}(Y_{i}-\hat{Y}_{i,J_{m}})^{2}\right) ,
\end{equation*}%
where $pen(J_{m})$ is a penalty assigned to the model $J_{m}.$ They devise a
new form for $pen(J_{m})$ (Section 4.1 in \cite{Baraudetal2009}) to yield a
nice oracle inequality (Corollary 1) that does not require any knowledge of $%
\sigma ,$ but at the expense of excluding some large models in the
consideration. When $M_{n}\leq \left( n-7\right) \wedge {\varsigma n}$ for
some $0<{\varsigma <1,}$ we consider all subset models in the model
selection process. When $M_{n}$ is large, we consider only subset models
with $n-r_{J_{m}}\geq 7$ and $m\vee \log {\binom{M_{n}}{m}\leq \varsigma n}$
for a fixed $0<{\varsigma <1.}$ Combining the tools developed in this and
their papers, we have the following result.

\begin{theorem}
\label{Baraud} The BGH estimator $\hat{f}_{\hat{J}}$ has the following
properties.

\begin{itemize}
\item[(i)] When $M_{n}\leq \left( n-7\right) \wedge {\varsigma n},$ for $%
\mathcal{F=F}_{q}(t_{n};M_{n})$ with $0<q\leq 1,$ or $\mathcal{F=F}%
_{0}(k_{n};M_{n})$ with $1\leq k_{n}\leq M_{n},$ or $\mathcal{F=F}%
_{q}(t_{n};M_{n})\cap \mathcal{F}_{0}(k_{n};M_{n})$ with $0<q\leq 1$ and $%
1\leq k_{n}\leq M_{n},$ we have%
\begin{equation*}
\sup_{f_{0}\in \mathcal{F}}E(ASE(\hat{f}_{\hat{J}}))\leq B\Phi ^{\mathcal{F}%
},
\end{equation*}%
where the constant $B$ depends only on $q$ and ${\varsigma }$ for the first
and third cases of $\mathcal{F},$ and depends on ${\varsigma }$ for the
second case.

\item[(ii)] For a general $M_{n},$ if $m_{\ast }$ satisfies $m_{\ast }\leq
n-7$ and $m_{\ast }\vee \log {\binom{M_{n}}{m_{\ast }}\leq \varsigma n,}$ we
have%
\begin{equation*}
\sup_{f_{0}\in \mathcal{F}_{q}(t_{n};M_{n})}E(ASE(\hat{f}_{\hat{J}}))\leq
B\left\{ 
\begin{array}{ll}
\sigma ^{2-q}t_{n}^{q}\left( \frac{1+\log \frac{M_{n}}{(nt_{n}^{2}\tau
)^{q/2}}}{n}\right) ^{1-q/2} & \text{if $m_{\ast }>1$,} \\ 
t_{n}^{2}\vee \frac{\sigma ^{2}}{n} & \text{if $m_{\ast }=1$,}%
\end{array}%
\right.
\end{equation*}%
where $B$ depends only on $q$ and ${\varsigma .}$ If $k_{n}$ satisfies $%
k_{n}\leq n-7$ and $k_{n}\vee \log {\binom{M_{n}}{k_{n}}\leq \varsigma n,}$
we have%
\begin{equation*}
\sup_{f_{0}\in \mathcal{F}_{0}(k_{n};M_{n})}E(ASE(\hat{f}_{\hat{J}}))\leq
B^{^{\prime }}\frac{\sigma ^{2}k_{n}\left( 1+\log \frac{M_{n}}{k_{n}}\right) 
}{n},
\end{equation*}%
where $B^{^{\prime }}$ is a constant that depends only on ${\varsigma }$.
\end{itemize}
\end{theorem}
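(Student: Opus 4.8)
The plan is to obtain Theorem~\ref{Baraud} by feeding the sparse approximation bound of Theorem~\ref{Th1}(ii) into the oracle inequality of \cite{Baraudetal2009} for the BGH selector, exactly in the spirit of how the ABC bounds of Theorem~\ref{UpABC} follow from the risk bound of \cite{Yang1999}. First I would invoke Corollary~1 of \cite{Baraudetal2009}: for the model list $\Gamma_n'$ and the weights described before the theorem, with a penalty $pen(J_m)$ of order $r_{J_m}+L_{J_m}$ where $L_{J_m}\asymp\log(\lfloor n/2\rfloor)+\log\binom{M_n}{m}$ satisfies the Kraft-type summability constraint, that corollary yields an index-of-resolvability bound of the shape
\[
E\bigl(ASE(\hat f_{\hat J})\bigr)\le C(\varsigma)\inf_{J_m\in\Gamma_n'}\Bigl(\|\bar f_{J_m}-f_0^n\|_n^2+\tfrac{\sigma^2(r_{J_m}+L_{J_m}+1)}{n}\Bigr),
\]
no estimate of $\sigma^2$ being needed. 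With the chosen weights, $r_{J_m}+L_{J_m}$ is of order $m\bigl(1+\log(M_n/m)\bigr)$ uniformly in $1\le m\le M_n\wedge n$, as in Section~\ref{aninsight}.

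Next I would control the approximation term. Taking $\nu=\nu_0:=\frac1n\sum_{i=1}^n\delta_{\mathbf{x}_i}$ so that $\|\cdot\|_{L^2(\nu_0)}=\|\cdot\|_n$, Theorem~\ref{Th1}(ii) supplies, for each $m$, a subset model $J_m$ with $\|\bar f_{J_m}-f_0^n\|_n^2\le d_n^2(f_0;\mathcal F_q(t_n))+2^{2/q-1}t_n^2m^{1-2/q}$; since $f_0\in\mathcal F_q(t_n;M_n)$ the first term vanishes. Inserting this model into the oracle inequality gives the bound $t_n^2m^{1-2/q}+\sigma^2m\bigl(1+\log(M_n/m)\bigr)/n$ up to a $q$-dependent constant, and the balancing computation of Section~\ref{aninsight} picks out $m=m_\ast$, producing $\sigma^{2-q}t_n^q\bigl((1+\log\frac{M_n}{(nt_n^2\tau)^{q/2}})/n\bigr)^{1-q/2}$ when $1<m_\ast$, and the null-model / single-term bound $t_n^2\vee\sigma^2/n$ when $m_\ast=1$. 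In part~(i), where $M_n\le(n-7)\wedge\varsigma n$, both $J_{M_n}$ and the full-projection model $\bar J$ lie in $\Gamma_n'$, so the extra $\wedge\,\sigma^2r_{M_n}/n$ appearing in $\Phi^{\mathcal F}$ is realized directly; the $\ell_0$ case is immediate from the best subset of size $k_n$, and the intersection case follows by taking whichever sparsity level is smaller, giving $\Phi^{\mathcal F_q(t_n;M_n)}\wedge\Phi^{\mathcal F_0(k_n;M_n)}$. Part~(ii) is the same calculation, but because the BGH construction omits the large models when $M_n$ is large, one must assume $m_\ast\le n-7$ and $m_\ast\vee\log\binom{M_n}{m_\ast}\le\varsigma n$ (respectively for $k_n$) so that the optimizing model still belongs to $\Gamma_n'$; the $\wedge\,r_{M_n}/n$ truncation is then absent from the bound.

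The main obstacle is matching the precise form of the BGH penalty to the clean term $r_{J_m}+L_{J_m}$. Their $pen(J_m)$ is built from quantiles of Fisher-type ratios rather than being literally linear in $r_{J_m}$, and the leading constant in the oracle inequality degrades as the model collection becomes richer; this is exactly why \cite{Baraudetal2009} excludes models with $n-r_{J_m}<7$ and why we further restrict to $m\vee\log\binom{M_n}{m}\le\varsigma n$. So the genuine work is (a) verifying that with the weights fixed in the three regimes listed before the theorem the penalty stays of order $r_{J_m}+L_{J_m}$ with a bounded leading constant, and (b) checking that no excluded model is ever needed to achieve the minimax rate under the stated hypotheses on $m_\ast$ and $k_n$. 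Once the oracle inequality is in index-of-resolvability form, the rest is the routine balancing argument already carried out for ABC and MLS, so I would present that part briefly and refer back to the proof of Theorem~\ref{UpABC}.
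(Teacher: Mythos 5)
Your proposal follows essentially the same route as the paper: the paper's own (very terse) proof simply substitutes the oracle inequality (4.7)/Corollary 1 of Baraud, Giraud and Huet for Proposition \ref{Yang1999b}, feeds in the sparse approximation bound of Theorem \ref{Th1}(ii) under the empirical norm, and balances at $m_{\ast}$ exactly as in the proof of Theorem \ref{UpABCunknown}, with the hypotheses $m_{\ast}\vee\log{\binom{M_{n}}{m_{\ast}}}\leq\varsigma n$ (resp.\ for $k_{n}$) imposed precisely because the BGH criterion excludes the larger models, and with the observation that $M_{n}\leq(n-7)\wedge\varsigma n$ makes every subset model admissible. One minor slip worth noting: the full projection model $\bar{J}$ has $n-r_{\bar{J}}=0<7$ and so is \emph{not} available to the BGH selector; the $\wedge\,\sigma^{2}r_{M_{n}}/n$ truncation in $\Phi^{\mathcal{F}}$ in part (i) is realized by the full model $J_{M_{n}}$ alone, which is admissible since $r_{M_{n}}\leq M_{n}\leq n-7$.
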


%%%%%%%%%%

\begin{remark}
As before, when $f_{0}$ is not in the respective class, a multiple of the
approximation error $d_{n}^{2}(f_{0};\mathcal{F})=\inf_{f\in \mathcal{F}%
}\Vert f-f_{0}\Vert _{n}^{2}$ needs to be added in the aggregation risk
bound.
\end{remark}

\vspace{1pt}From the above theorem, we see that when $\sigma $ is fully
unknown, as long as $M_{n}\leq \left( n-7\right) \wedge {\varsigma n}$ for
some $0<{\varsigma <1,}$ similar risk bounds to those in Theorem \ref%
{UpABCunknown} for $\ell _{q}$-regression hold. However, when $M_{n}$ is
larger, the previous risk bounds are seriously compromised: 1) the possible
improvement in risk due to low rank of the full model is no longer
guaranteed; 2) the previous upper rates determined by the effective model
size $m_{\ast }$ or $k_{n}$ are valid only when those model sizes are not
excluded from consideration by the BGH criterion; 3) The risk is no longer
guaranteed to be always uniformly bounded. Indeed, due to the restriction on
the model sizes to be considered, the final risk here can be arbitrarily
large. It turns out that this last aspect is not due to technical deficiency
in the analysis, but it is a necessary price to pay for not knowing $\sigma $
at all (see \cite{Verzelen2010}).

%%%%%%%%%%%%%%%%%%%%%% Discussion %%%%%%%%%%%%%%%%%%%%%%%%%%%%%%%%%%%%%

\section{Discussion}

\label{discussion}

Since early 1990s, sparse estimation has been recognized as an important
tool for multi-dimensional function estimation. Emergence of
high-dimensional statistical problems in the information age has prompted an
increasing attention on the topic from theoretical, computational and
applied perspectives. We focus only on a theoretical standpoint in the
discussion below.

To our knowledge, several lines of research on sparse function estimation in
1990s produced theoretical foundations that still provide essential
understandings on ways to explore sparsity and associated price to pay when
pursuing sparse estimation from minimax perspectives. It has been discovered
that for some function classes, sparse representations (in contrast to
traditional full approximation) result in faster rates of convergence, which
alleviate the curse of dimensionality when the problem size is large. Such
function classes include, for example, Besov classes (e.g., \cite%
{Donohoetal1996}), Jones-Barron classes (\cite{Barron1994,
JuditskyNemirovski2000}) and may also be defined directly in terms of sparse
approximation (e.g., \cite{YangBarron1998}, Section III.D). Regarding
methods to achieve the optimal sparse estimation, wavelet thresholding with
one or more orthonormal dictionaries and model selection with a descriptive
complexity penalty term added to the sum of negative maximized likelihood
(or a general contrast function) and a multiple of the model dimension have
yielded successful theoretical advancements. Oracle inequalities/index of
resolvability bounds have been derived that readily give minimax-rate
adaptive estimators for various scenarios. In linear representation, $\ell
_{1}$-constraints on the coefficients have been long known to be associated
with fast rate of convergence for both orthogonal and non-orthogonal bases
by model selection or aggregation methods, as mentioned in the introduction
of this paper.

It is worth noticing that these research works usually target nonparametric
settings. In the past few years, the situation of a large number of
naturally observed predictors has attracted much attention, shifting the
focus to much simpler linear modeling. As pointed out earlier, the work in
the 1990s on model selection has direct implications for the
high-dimensional linear regression. For example, if the sum of the absolute
values of the linear coefficients is bounded ($\ell _{1}$-constraint), then
the rate of convergence is bounded by $\left( \log n/n\right) ^{1/2}$ as
long as $M_{n}$ increases only polynomially in $n.$ If only $k_{n}$ terms
have non-zero coefficients ($\ell _{0}$-constraint), then the rate of
convergence is of order $k_{n}(1+\log (M_{n}/k_{n}))/n$ based on model
selection with mild conditions on the predictors. However, such subset
selection based estimators pose computational challenges in real
applications.

In the direction of using the $\ell _{1}$-constraints in constructing
estimators, algorithmic and theoretical results have been well developed.
Both the Lasso and the Dantzig selector have been shown to achieve the rate $%
k_{n}\log (M_{n})/n$ under different conditions on correlations of
predictors and the hard sparsity constraint on the linear coefficients (see 
\cite{GeerBuhlmann2009} for a discussion about the sufficient conditions for
deriving oracle inequalities for the Lasso). Our upper bound results do not
require any of those conditions, but we do assume the sparse Riesz condition
for deriving the lower bounds. Computational issues aside, we have seen that
the approach of model selection/combination with descriptive complexity
penalty has provided the most general adaptive estimators that automatically
exploit sparsity natures of the target function in terms of linear
approximations subject to $\ell _{q}$-constraints.

Donoho and Johnstone \cite{DonohoJohnstone1994} derived insightful general
asymptotic minimax risk expressions for estimating the mean vector in $\ell
_{q}$-balls ($0<q<\infty $) under $\ell _{p}$ loss ($p\geq 1$) in a Gaussian
sequence framework. The work by Raskutti et al. \cite{Raskuttietal2010} and
by Rigollet and Tsybakov \cite{RigolletTsybakov2011} are directly related to
our work in the fixed design case. The former successfully obtains optimal
non-adaptive in-probability loss bounds for their main scenario that $M_{n}$
is much larger than $n$ for general $0\leq q\leq 1$ when the true regression
function is assumed to be in the $\ell _{q,t_{n}}^{M_{n}}$-hull. In
contrast, our estimators are adaptive and the risk bounds hold without
restrictions on $M_{n}$ or the \textquotedblleft norm\textquotedblright\
parameter $t_{n},$ also allowing the true regression function to be really
arbitrary. The work of Rigollet and Tsybakov \cite{RigolletTsybakov2011}
nicely shows the adaptive aggregation capability of model mixing over $\ell
_{0}$ and $\ell _{1}$-balls. Our results are valid over the whole range of $%
0\leq q\leq 1.$ For lower bounds, our formulation is somewhat different from
theirs. In addition, unlike those results, we have also provided results
when the error variance is unknown but upper bounded by a known constant or
fully unknown. Furthermore, our model selection based estimators have
optimal convergence rates also in terms of upper deviation probability,
which may not hold for the model mixing estimators. We need to point out
that both \cite{Raskuttietal2010} and \cite{RigolletTsybakov2011} have given
results on related problems that we do not address in this work.

In our results, the effective model size $m_{\ast }$ (as defined in Section %
\ref{aninsight}) plays a key role in determining the minimax rate of $\ell
_{q}$-aggregation for $0<q\leq 1.$ With the extended definition of the
effective model size $m_{\ast }$ to be simply the number of nonzero
components $k_{n}$ when $q=0$ and re-defining $m_{\ast }$ to be $m_{\ast
}\wedge k_{n}$ under both $\ell _{q}$- ($0<q\leq 1$) and $\ell _{0}$%
-constraints, the minimax rate of aggregation is unified to be the simple
form $1\wedge \frac{m_{\ast }\left( 1+\log \left( \frac{M_{n}}{m_{\ast }}%
\right) \right) }{n}$.

Risk bounds for selection/mixing least squares estimators from a countable
collection of linear models (such as given in \cite{Yang1999,
LeungBarron2006}), together with sparse approximation error bounds, are
essential for our approach to devise minimax optimal sparse estimation for
fixed design. When the predictors are taken as some initial estimates, the
selection/mixing methods can be regarded as aggregation methods with the
risk bounds as aggregation risk bounds. In a strict sense, however, these
results are not totally satisfactory for at least two reasons. First, the
evaluation of performance only at the design points that have been seen
already has limited value: i) The strengths of the candidate procedures may
not be reflected at all on such a measure; ii) A small ASE on the design
points does not mean good behaviors on future predictor values. Second, when
the initial estimates are not given (which is almost always the case), to
combine arbitrary estimators, data splitting is typically necessary to come
up with the candidate estimates and use the rest of the sample for weight
assignment. Then, the final risk bounds, unfortunately, depend on how the
data are split. In contrast, for the random design case, this is not an
issue. We have also seen that because ASE cares only about the performance
at the design points, given the i.i.d. normal error assumption, there is
absolutely no condition needed on the true regression function, as pointed
out in a remark to Theorem 1 in \cite{Yang1999}. For random design, however,
we have made the sup-norm bound assumption, but the risk bounds guarantee
optimal future performance as long as the sampling distribution is unchanged.

Regarding aggregation, we notice that the $\ell _{q}$-aggregation includes
as special cases the state-of-art aggregation problems, namely aggregation
for adaptation, convex and $D$-convex aggregations, linear aggregation, and
subset selection aggregation, and all of them can be defined (or essentially
so) by considering linear combinations under $\ell _{0}$- and/or $\ell _{1}$%
-constraints. Our investigation provides optimal rates of aggregation, which
not only agrees with (and, in some cases, improves over) previous findings
for the mostly studied aggregation problems, but also holds for a much
larger set of linear combination classes. Indeed, we have seen that $\ell
_{0}$-aggregation includes aggregation for adaptation over the initial
estimates (or model selection aggregation) ($\ell _{0}(1)$-aggregation),
linear aggregation when $M_{n}\leq n$ ($\ell _{0}(M_{n})$-aggregation), and
aggregation to achieve the best performance of linear combination of $k_{n}$
estimates in the dictionary for $1<k_{n}<M_{n}$ (sometimes called subset
selection aggregation) ($\ell _{0}(k_{n})$-aggregation). When $M_{n}$ is
large, aggregating a subset of the dictionary under a $\ell _{q}$-constraint
for $0<q\leq 1$ can be advantageous, which is just $\ell _{0}(k_{n})\cap
\ell _{q}(t_{n})$-aggregation. Since the optimal rates of aggregation as
defined in \cite{Tsybakov2003} can differ substantially in different
directions of aggregation and typically one does not know which direction
works the best for the unknown regression function, multi-directional or
universal aggregation is important so that the final estimator is
automatically conservative and aggressive, whichever is better (see \cite%
{Yang2004}). Our aggregation strategy is indeed multi-directional, achieving
the optimal rates over all $\ell _{q}$-aggregation for $0\leq q\leq 1$ and $%
\ell _{0}\cap \ell _{q}$-aggregation for all $0<q\leq 1.$

One interesting observation is that aggregation for adaptation is
essentially a special case of $\ell_{q}$-aggregation, yet our way of
achieving the simultaneous $\ell_{q}$-aggregation is by methods of
aggregation for adaptation through model selection/combination.

Aggregation of estimates and regression estimation problems are closely
related. For aggregation, besides that the predictors to be aggregated are
from some initial estimations (and thus are not directly observed), the
emphases are: i) One is unwilling to make assumptions on relationships
between the initial estimates so that they can have arbitrary dependence;
ii) One is unwilling to make specific assumptions on the true regression
function beyond that it is uniformly bounded and hence allow model
mis-specification. In this game, there is little interest on the true or
optimal coefficients in the representation of the regression function in
terms of the initial estimates.

Obviously, there are other directions of aggregation that one may pursue.
The $\ell _{q}$-aggregation strategy that relies on aggregating subset
choices of the initial estimates, as in \cite{Yang2004}, while producing the
most general aggregation risk bounds so far, follows a global aggregation
paradigm, i.e., the linear coefficients are globally determined. It is
conceivable that sometimes localized weights may provide better
estimation/prediction performance (see, e.g., \cite{Yang2008}). Much more
work is needed here to result in practically effective localized aggregation
methods.

Aggregation of estimates, as an important step in combining statistical
procedures, has proven to bring theoretically elegant and practically
feasible methods for regression estimation/prediction. It is an important
vehicle to share strengths of different function estimation methodologies to
produce adaptively optimal and robust estimators that work well under
minimal conditions. Aggregation by mixing certainly cannot replace model
selection when selection of an estimator among candidates or a set of
predictors is essential for interpretation or business/operational decisions.

Our focus in this work is of a theoretical nature to provide an
understanding of the fundamental theoretical issues about $\ell _{q}$%
-aggregation or linear regression under $\ell _{q}$-constraints.
Computational aspects will be studied in the future.

%------------------------------------------------------------------------------------------------------------------------------------------------------------------
%%%%%%%%%%%%%%%%%%%%%%% Appendix - Proofs %%%%%%%%%%%%%%%%%%%%%%%%%%
%------------------------------------------------------------------------------------------------------------------------------------------------------------------
%%%%%%%%%%%%%%%%%%%%%%% 6. Oracle inequalities %%%%%%%%%%%%%%%%%%%%%%%%%%

\section{General oracle inequalities for random design}

\label{generaloracle}

Consider the setting in Section 3.2.

\begin{theorem}
\label{Oracle} Suppose $A_{\mathbf{E}-\mathbf{G}}$ holds for the \textbf{E}-%
\textbf{G} strategy, respectively. Then, the following oracle inequalities
hold for the estimator $\hat{f}_{F_{n}}$.

(i) For \textbf{T-C} and \textbf{T-Y} strategies, 
\begin{eqnarray*}
&&R(\hat{f}_{F_{n}};f_{0};n) \\
&\leq &c_{0}\inf_{1\leq m\leq M_{n}\wedge n}\left(
c_{1}\inf_{J_{m}}d^{2}(f_{0};\mathcal{F}_{J_{m}})+c_{2}\frac{m}{n_{1}}+c_{3}%
\frac{1 + \log {\binom{M_{n}}{m}+}\log (M_{n}\wedge n)-\log (1-p_{0})}{%
n-n_{1}}\right) \\
&&\wedge c_{0}\left( \Vert f_{0}\Vert ^{2}+c_{3}\frac{1-\log p_{0}}{n-n_{1}}%
\right) ,
\end{eqnarray*}%
where $c_{0}=1$, $c_{1}=c_{2}=C_{L,\sigma}$, $c_{3}=\frac{2}{\lambda _{C}}$
for the \textbf{T-C} strategy; $c_{0}=C_{Y}$, $c_{1}=c_{2}=C_{L,\sigma}$, $%
c_{3}=\sigma^{2}$ for the \textbf{T-Y} strategy.

(ii) For \textbf{AC-C} and \textbf{AC-Y} strategies, 
\begin{eqnarray*}
&&R(\hat{f}_{F_{n}};f_{0};n) \\
&\leq &c_{0}\inf_{1\leq m\leq M_{n}\wedge n}\left( R(f_{0},m,n)+c_{2}\frac{m%
}{n_{1}}+c_{3}\frac{1+\log {\binom{M_{n}}{m}+}\log (M_{n}\wedge n)-\log
(1-p_{0})}{n-n_{1}}\right) \\
&&\wedge c_{0}\left( \Vert f_{0}\Vert ^{2}+c_{3}\frac{1-\log p_{0}}{n-n_{1}}%
\right) ,
\end{eqnarray*}%
where 
\begin{equation*}
R(f_{0},m,n)=c_{1}\inf_{J_{m}}\inf_{s\geq 1}\left( d^2 (f_0; \mathcal{F}%
^L_{J_m, s}) +2 c_{3}\frac{\log (1+s)}{n-n_{1}}\right) ,
\end{equation*}%
and $c_{0}=c_{1}=1$, $c_{2}=8c(\sigma^2 + 5 L^2)$, $c_{3}=\frac{2}{\lambda
_{C}} $ for the \textbf{AC-C} strategy; $c_{0}=C_{Y}$, $c_{1}=1$, $%
c_{2}=8c(\sigma^2 + 5 L^2)$, $c_{3}=\sigma^{2}$ for the \textbf{AC-Y}
strategy.
\end{theorem}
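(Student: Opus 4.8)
The plan is to compose the two building blocks of the \textbf{E}-\textbf{G} strategy and concatenate their risk guarantees by conditioning on the first half of the split sample. Write $Z^{(1)}=(\mathbf X_i,Y_i)_{i=1}^{n_1}$ and $Z^{(2)}=(\mathbf X_i,Y_i)_{i=n_1+1}^{n}$, which are independent. Conditionally on $Z^{(1)}$, the functions produced in Step~II are frozen: for \textbf{T-} strategies a T-estimator $\check f_{J_m}$ for each subset $J_m$, and for \textbf{AC-} strategies an AC-estimator $\check f_{J_m,s}$ for each pair $(J_m,s)$ with $s\in\mathbb N$, in both cases together with the null function $f\equiv0$. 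Under Assumption~{\scshape BD} each such $\check f_k$ may be truncated to $[-L,L]$ without increasing its $L_2$ risk, so that $\|\check f_k\|_\infty\le L$, and then the boundedness hypotheses of Proposition~\ref{Catoni} (for \textbf{-C}, using {\scshape C1}, {\scshape C2}) and of Proposition~\ref{Yang2004} (for \textbf{-Y}, using {\scshape Y1}, {\scshape Y2}) are met. Applying the relevant proposition to the aggregation over $Z^{(2)}$ (effective sample size $n-n_1$) gives, pointwise in $Z^{(1)}$,
\[
E\!\left[\|\hat f_{F_n}-f_0\|^2\mid Z^{(1)}\right]\le c_0\inf_k\!\left(\|\check f_k-f_0\|^2+c_3\,\frac{1+\log(1/\pi_k)}{n-n_1}\right),
\]
with $c_0=1,\ c_3=2/\lambda_C$ for Catoni and $c_0=C_Y,\ c_3=\sigma^2$ for ARM, where the extra $1$ in the numerator is inserted only to unify the two cases (Proposition~\ref{Catoni} has no such term, but $\log(1/\pi_k)\le1+\log(1/\pi_k)$) and is harmless. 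Two housekeeping points: the model priors are sub-probabilities (they sum to at most $1$), which by the parenthetical note in Step~III of the strategy does not affect validity; and for the \textbf{AC-} strategies the aggregation is over a countable family, so one uses the countable-index versions of the ARM and Catoni bounds, which hold verbatim.

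Next I would take expectation over $Z^{(1)}$ and move it inside the infimum via $E[\inf_k X_k]\le\inf_k E[X_k]$, bounding the right-hand side one candidate at a time. For a T-candidate $J_m$, Proposition~\ref{Birge} (using {\scshape BD}, {\scshape T1}, {\scshape T2}) gives $E\|\check f_{J_m}-f_0\|^2\le C_{L,\sigma}\big(d^2(f_0;\mathcal F_{J_m})+m/n_1\big)$; for an AC-candidate $(J_m,s)$, Proposition~\ref{Audibert} applied to the convex class $\mathcal F^L_{J_m,s}$ --- for which {\scshape AC1} holds with $H\le2L$ and {\scshape AC2} with $(\sigma')^2=\sigma^2+4L^2$, whence $c(2\sigma'+H)^2\le 8c(\sigma^2+5L^2)=c_2$ --- gives $E\|\check f_{J_m,s}-f_0\|^2\le d^2(f_0;\mathcal F^L_{J_m,s})+c_2\,m/n_1$; and for the null candidate the term is $\|f_0\|^2$. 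Substituting $\log(1/\pi_{J_m})=\log(M_n\wedge n)+\log\binom{M_n}{m}-\log(1-p_0)$, $\log(1/\pi_{J_m,s})=2\log(1+s)+\log(M_n\wedge n)+\log\binom{M_n}{m}-\log(1-p_0)$, and $\log(1/\pi_0)=-\log p_0$, then minimizing over $J_m$, over $s\ge1$, over $1\le m\le M_n\wedge n$, and over the null model, reproduces the two displayed bounds: the first branch comes from the subset candidates, the second ($\|f_0\|^2+c_3(1-\log p_0)/(n-n_1)$) from the null candidate, and the two combine by $\wedge$ since each upper-bounds $\inf_k E[X_k]$. The constant $c_1$ equals $C_{L,\sigma}$ for \textbf{T-} strategies and $1$ for \textbf{AC-} strategies, and the factor $2c_3$ in front of $\log(1+s)$ inside $R(f_0,m,n)$ tracks the $2\log(1+s)$ coming from the $(1+s)^2$ in $\pi_{J_m,s}$; collecting $c_0,c_2,c_3$ in each of the four cases yields the stated values.

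I do not expect a deep obstacle; once Propositions~\ref{Birge}--\ref{Catoni} are available the argument is essentially bookkeeping. The steps requiring genuine care are: (a) the conditioning/independence step --- verifying that Proposition~\ref{Catoni} or \ref{Yang2004} is legitimately applied with the $\check f_k$ held fixed, which is exactly where the sample split is used; (b) checking the side conditions of the ingredient propositions for the constructed candidates, in particular the truncation to $[-L,L]$ and the automatic satisfaction of {\scshape AC1}, {\scshape AC2} on $\mathcal F^L_{J_m,s}$; and (c) justifying the countable-family forms of the ARM and Catoni bounds for the \textbf{AC-} strategies and confirming that sub-probability priors can only weaken, never break, the inequalities. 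Everything else is substituting the explicit priors and gathering constants.
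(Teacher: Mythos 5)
Your proposal is correct and follows essentially the same route as the paper's proof: condition on $Z^{(1)}$, apply Proposition \ref{Catoni} or \ref{Yang2004} to the frozen candidates with the stated priors, then bound each candidate's risk via Proposition \ref{Birge} or \ref{Audibert} and substitute $\log(1/\pi_{J_m})$, $\log(1/\pi_{J_m,s})$, $\log(1/\pi_0)$. The paper simply records the resulting chains of inequalities for the four strategies, while you additionally spell out the conditioning/independence step, the interchange $E[\inf_k X_k]\le\inf_k E[X_k]$, and the constant bookkeeping $c(2\sigma'+H)^2\le 8c(\sigma^2+5L^2)$, all of which check out.
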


From the theorem, the risk $R(\hat{f}_{F_{n}};f_{0};n)$ is upper bounded by
a multiple of the best trade-off of the different sources of errors
(approximation error, estimation error due to estimating the linear
coefficients, and error associated with searching over many models of the
same dimension). For a model $J,$ let $IR(f_{0};J)$ generically denote the
sum of these three sources of errors. Then, the best trade-off is $%
IR(f_{0})=\inf_{J}IR(f_{0};J),$ where the infimum is over all the candidate
models. Following the terminology in \cite{BarronCover1991}, $IR(f_{0})$ is
the so-called index of resolvability of the true function $f_{0}$ by the
estimation method over the candidate models. We call $IR(f_{0};J)$ the index
of resolvability at model $J.$ The utility of the index of resolvability is
that for $f_{0}$ with a given characteristic, an evaluation of the index of
resolvability at the best $J$ immediately tells us how well the unknown
function is \textquotedblleft resolved\textquotedblright\ by the estimation
method at the current sample size. Thus, accurate index of resolvability
bounds often readily show minimax optimal performance of the model selection
based estimator.

\begin{proof}
(i) For the \textbf{T-C} strategy, 
\begin{eqnarray*}
&&R(\hat{f}_{F_{n}};f_{0};n) \\
&\leq &\inf_{1\leq m\leq M_{n}\wedge n}\left\{ C_{L,\sigma
}\inf_{J_{m}}d^{2}(f_{0}; \mathcal{F}_{J_{m}})+C_{L,\sigma }\frac{m}{n_1}+%
\frac{2}{\lambda _{C}} \left( \frac{ \log (M_{n}\wedge n) + \log {\binom{%
M_{n}}{m}} - \log (1-p_{0}) }{n - n_1} \right) \right\} \\
&&\wedge \left\{ \Vert f_{0}\Vert ^{2}-\frac{2}{\lambda _{C}}\frac{\log p_{0}%
}{n - n_1}\right\} .
\end{eqnarray*}

For the \textbf{T-Y} strategy, 
\begin{eqnarray*}
&&R(\hat{f}_{F_{n}};f_{0};n) \\
&\leq &C_{Y}\inf_{1\leq m\leq M_{n}\wedge n}\left\{ C_{L,\sigma
}\inf_{J_{m}}d^{2}(f_{0};\mathcal{F}_{J_{m}})+C_{L,\sigma }\frac{m}{n_1}%
+\sigma^{2} \left( \frac{1 + \log (M_{n}\wedge n) + \log {\binom{M_{n}}{m}}
- \log (1-p_{0}) }{n - n_1} \right) \right\} \\
&&\wedge C_{Y}\left\{ \Vert f_{0}\Vert ^{2}+\sigma^{2}\frac{1-\log p_{0}}{n
- n_1}\right\} .
\end{eqnarray*}

(ii) For the \textbf{AC-C} strategy, 
\begin{eqnarray*}
&&R(\hat{f}_{F_{n}};f_{0};n) \\
&\leq &\inf_{1\leq m\leq M_{n} \wedge n}\left\{ \inf_{J_{m}}\inf_{s\geq
1}\left( d^2 (f_0; \mathcal{F}^L_{J_m, s})+c(2 \sigma^{\prime} + H)^2\frac{m%
}{n_1}+\frac{2}{\lambda _{C}}\left( \frac{ \log (M_{n}\wedge n)+\log {\binom{%
M_{n}}{m}} -\log (1-p_{0}) }{n - n_1} \right. \right. \right. \\
& & \left. \left. \left. + \frac{2\log (1+s)}{n - n_1}\right) \right)
\right\} \wedge \left\{ \Vert f_{0}\Vert ^{2}-\frac{2}{\lambda _{C}}\frac{%
\log p_{0}}{n - n_1}\right\} \\
&\leq &\inf_{1\leq m\leq M_{n} \wedge n}\left\{ \inf_{J_{m}}\inf_{s\geq
1}\left( d^2 (f_0; \mathcal{F}^L_{J_m, s})+8c(\sigma^2 + 5 L^2) \frac{m}{n_1}%
+\frac{2}{\lambda _{C}}\left( \frac{ \log (M_{n}\wedge n) + \log {\binom{%
M_{n}}{m}} -\log (1-p_{0}) }{n - n_1} \right. \right. \right. \\
& & \left. \left. \left. + \frac{2\log (1+s)}{n - n_1}\right) \right)
\right\} \wedge \left\{ \Vert f_{0}\Vert ^{2}-\frac{2}{\lambda _{C}}\frac{%
\log p_{0}}{n - n_1}\right\} .
\end{eqnarray*}

For the \textbf{AC-Y} strategy, 
\begin{eqnarray*}
&&R(\hat{f}_{F_{n}};f_{0};n) \\
&\leq &C_{Y}\inf_{1\leq m\leq M_{n}\wedge n}\left\{ \inf_{J_{m}}\inf_{s\geq
1}\left( d^2 (f_0; \mathcal{F}^L_{J_m, s}) +c(2\sigma^{\prime} + H)^2 \frac{m%
}{n_1}+\sigma ^{2}\left( \frac{1+\log (M_{n}\wedge n)+\log {\binom{M_{n}}{m}}%
}{n - n_1}\right. \right. \right. \\
&&\left. \left. \left. +\frac{- \log (1-p_{0})+2 \log (1+s)}{n - n_1}\right)
\right) \right\} \wedge C_{Y}\left\{ \Vert f_{0}\Vert ^{2}+\sigma ^{2}\frac{%
1-\log p_{0}}{n - n_1}\right\} \\
&\leq &C_{Y}\inf_{1\leq m\leq M_{n}\wedge n}\left\{ \inf_{J_{m}}\inf_{s\geq
1}\left( d^2 (f_0; \mathcal{F}^L_{J_m, s}) +8c(\sigma^2 + 5 L^2) \frac{m}{n_1%
}+\sigma ^{2}\left( \frac{1+\log (M_{n}\wedge n) + \log {\binom{M_{n}}{m}}}{%
n - n_1}\right. \right. \right. \\
&&\left. \left. \left. +\frac{ - \log (1-p_{0})+2 \log (1+s)}{n - n_1}%
\right) \right) \right\} \wedge C_{Y}\left\{ \Vert f_{0}\Vert ^{2}+\sigma
^{2}\frac{1-\log p_{0}}{n - n_1}\right\} .
\end{eqnarray*}

\end{proof}

\begin{remark}
Similar oracle inequalities hold for the estimator $\hat{f}_A$ under the
linear regression setting with random design: $d^2(f_0; \mathcal{F}_{J_m}) $
is replaced by $d^2 (f_0; \mathcal{G}_{J_m})$, and $\sum_{j \in J_m}
\theta_j f_j$ is replaced by $\sum_{j \in J_m} \theta_j x_j$ in the above
theorem.
\end{remark}

%%%%%%%%%%%%%%%%%%%%%%% 7. Proofs %%%%%%%%%%%%%%%%%%%%%%%%%%
\vspace*{.1in}

\section{Proofs}

\label{proofs}

%%%%%%%%%%%%%%% Frank's theorem 1 %%%%%%%%%%%%%%
\textit{Proof of Theorem \ref{Th1}.} 
\begin{proof}
(i) Because $\{e_j\}_{j=1}^{N_{\epsilon}}$ is an ${\epsilon}$-net of $%
\mathcal{F}_q(t_n)$ if and only if $\{t_n^{-1} e_j\}_{j=1}^{N_{\epsilon}}$
is an ${\epsilon}/t_n$-net of $\mathcal{F}_q(1)$, we only need to prove the
theorem for the case $t_n=1$. Recall that for any positive integer $k$, the
unit ball of $\ell_q^{M_n}$ can be covered by $2^{k-1}$ balls of radius $%
\epsilon_k$ in $\ell_1$ distance, where 
\begin{equation*}
\epsilon_k\le c\left\{%
\begin{array}{ll}
{1} & {1\le k\le \log_2(2M_n)} \\ 
{\left(\frac{\log_2(1+\frac{2M_n}{k})}{k}\right)^{\frac1q-1}} & {%
\log_2(2M_n)\le k\le 2M_n} \\ 
{2^{-\frac{k}{2M_n}}(2M_n)^{1-\frac1q}} & {k\ge 2M_n}%
\end{array}%
\right.
\end{equation*}
(c.f., \cite{EdmundsTriebel1998}, page 98). Thus, there are $2^{k-1} $
functions $g_j $, $1\le j\le 2^{k-1}$, such that 
\begin{equation*}
\mathcal{F}_q(1)\subset \bigcup_{j=1}^{2^{k-1}}(g_j+\mathcal{F}%
_1(\epsilon_k)).
\end{equation*}
For any $g\in \mathcal{F}_1(\epsilon_k)$, $g$ can be expressed as $%
g=\sum_{i=1}^{M_n}c_if_i$ with $\sum_{i=1}^{M_n}|c_i|\le \epsilon_k$. We
define a random function $U$, such that 
\begin{equation*}
\mathbb{P}(U=\mathrm{sign} (c_i) \epsilon_k f_i)=|c_i|/ \epsilon_k, \ \ 
\mathbb{P}(U=0)=1-\sum_{i=1}^{M_n}|c_i| / \epsilon_k.
\end{equation*}
Then we have $\|U\|_2\le \epsilon_k $ a.s. and $\mathbb{E} U=g$ under the
randomness just introduced. Let $U_1, U_2,...,U_m$ be i.i.d. copies of $U$,
and let $V=\frac1m\sum_{i=1}^m U_i$. We have 
\begin{eqnarray*}
\mathbb{E}\|V-g\|_2 = \sqrt{ \frac{1}{m} \| \mathbb{V}ar (U) \|_2 } \leq 
\sqrt{ \frac{1}{m} \mathbb{E} \| U \|_2^2 } \leq \frac{\epsilon_k}{\sqrt{m}}.
\end{eqnarray*}
In particular, there exists a realization of $V$, such that $\|V-g\|_2\le
\epsilon_k / \sqrt{m}$. Note that $V$ can be expressed as $\epsilon_k
m^{-1}(k_1f_1+k_2f_2+\cdots + k_{M_n}f_{M_n})$, where $k_1$, $k_2$, ..., $%
k_{M_n}$ are integers, and $|k_1|+|k_2|+\cdots+|k_{M_n}|\le m$. Thus, the
total number of different realizations of $V$ is upper bounded by $\binom{{%
2M_n+m}}{{m}}$. Furthermore, $\|V\|_0\le m$.

If $\log_2(2M_n)\le k\le 2M_n$, we choose $m$ to be the largest integer such
that ${\binom{{2M_n+m}}{{m}}}\le 2^k$. Then we have 
\begin{equation*}
\frac1m\le \frac{c^{\prime }}{k}\log_2\left(1 + \frac{2M_n}{k} \right)
\end{equation*}
for some positive constant $c^{\prime }$. Hence, $\mathcal{F}_q(1)$ can be
covered by $2^{2k-1}$ balls of radius 
\begin{equation*}
\epsilon_k \sqrt{c^{\prime}k^{-1}\log_2\left( 1 + \frac{2M_n}{k} \right)}
\end{equation*}
in $L^2$ distance.

If $k\ge 2M_n$, we choose $m=M_n$. Then $\mathcal{F}_q(1)$ can be covered by 
$2^{k-1}{\binom{{2M_n+m}}{{m}}}$ balls of radius $\epsilon_k M_n^{-1/2}$ in $%
L^2$ distance. Consequently, there exists a positive constant $c^{\prime
\prime }$ such that $\mathcal{F}_q(1)$ can be covered by $2^{l-1}$ balls of
radius $r_l$, where 
\begin{equation*}
r_{l} \le c^{\prime \prime }\left\{%
\begin{array}{ll}
{1} & {1\le l \le \log_2(2M_n)}, \\ 
{l^{\frac12-\frac1q}[\log_2(1+\frac{2M_n}{l})]^{\frac{1}{q}-\frac12}} & {%
\log_2(2M_n)\le l \le 2M_n}, \\ 
{2^{-\frac{l}{2M_n}}(2M_n)^{\frac12-\frac1q}} & {l \ge 2M_n}.%
\end{array}%
\right.
\end{equation*}
For any given $0<{\epsilon}<1$, by choosing the smallest $l$ such that $%
r_{l}<{\epsilon}/2$, we find an ${\epsilon}/2$-net $\{u_i\}_{i=1}^N$ of $%
\mathcal{F}_q(1)$ in $L^2$ distance, where 
\begin{equation*}
N=2^{l-1}\le \left\{%
\begin{array}{ll}
{\exp\left(c^{\prime \prime \prime }{\epsilon}^{-\frac{2q}{2-q}%
}\log(1+M_n^{\frac1q-\frac12}{\epsilon})\right)} & {{\epsilon}%
>M_n^{\frac12-\frac1q}}, \\ 
{\exp\left(c^{\prime \prime \prime }M_n\log(1+M_n^{\frac12-\frac1q}{%
\epsilon^{-1}})\right)} & {{\epsilon}<M_n^{\frac12-\frac1q}},%
\end{array}%
\right.
\end{equation*}
and $c^{\prime \prime \prime }$ is some positive constant.

It remains to show that for each $1 \leq i \leq N$, we can find a function $%
e_i$ so that $\|e_i\|_0\le 5{\epsilon}^{2q/(q-2)}+1$ and $\|e_i-u_i\|_2\le {%
\epsilon}/2$.

Suppose $u_i=\sum_{j=1}^{M_n}c_{ij}f_j$, $1 \leq i \leq N$, with $%
\sum_{j=1}^{M_n}|c_{ij}|^q\le 1$. Let $L_i=\{j: |c_{ij}|>{\epsilon}%
^{2/(2-q)}\}$. Then, $|L_i|{\epsilon}^{2q/(2-q)}\le \sum |c_{ij}|^q\le 1$,
which implies $|L_i|\le {\epsilon}^{2q/(q-2)}$ and also 
\begin{equation*}
\sum_{j\notin L_i}|c_{ij}|\le \sum_{j\notin L_i}|c_{ij}|^q[{\epsilon}%
^{2/(2-q)}]^{1-q}\le {\epsilon}^{\frac{2-2q}{2-q}}.
\end{equation*}
Define $v_i=\sum_{j\in L_i}c_{ij}f_j$ and $w_i=\sum_{j\notin L_i}c_{ij}f_j$.
We have $w_i\in \mathcal{F}_1({\epsilon}^{\frac{2-2q}{2-q}})$. By the
probability argument above, we can find a function $w^{\prime }_i$ such that 
$\|w^{\prime }_i\|_0\le m$ and $\|w_i-w^{\prime }_i\|_2\le {\epsilon}^{\frac{%
2-2q}{2-q}}/\sqrt{m}$. In particular, if we choose $m$ to be the smallest
integer such that $m\ge 4{\epsilon}^{2q/(q-2)}$. Then, $\|w_i-w^{\prime
}_i\|_2\le {\epsilon}/2$.

We define $e_i=v_i+w^{\prime }_i$, we have $\|u_i-e_i\|_2\le {\epsilon} / 2$%
, and then we can show that 
\begin{equation*}
\|e_i\|_0=\|v_i\|_0+\|w^{\prime }_i\|_0\le |L_i|+m\le 5{\epsilon}%
^{2q/(q-2)}+1.
\end{equation*}

(ii) Let $f_{\theta }^{\ast }=\sum_{j=1}^{M_{n}}c_{j}f_{j}=\arg
\inf_{f_{\theta }\in \mathcal{F}_{q}(t_{n})}\Vert f_{\theta }-f_{0}\Vert
^{2} $ be the best approximation of $f_{0}$ over the class $\mathcal{F}%
_{q}(t_{n}) $. For any $1\leq m\leq M_{n}$, let $L^{\ast
}=\{j:|c_{j}|>t_{n}m^{-1/q}\}$. Because $\sum_{j=1}^{M_{n}}|c_{j}|^{q}\leq
t_{n}^{q}$, we have $|L^{\ast }|t_{n}^{q}/m<\sum |c_{j}|^{q}\leq t_{n}^{q}$.
So, $|L^{\ast }|<m$. Also, 
\begin{equation*}
\sum_{j\notin L^{\ast }}|c_{j}|\leq \sum_{j\notin L^{\ast
}}|c_{j}|^{q}[t_{n}(1/m)^{1/q}]^{1-q}=\sum_{j\notin L^{\ast
}}|c_{j}|^{q}t_{n}^{1-q}(1/m)^{(1-q)/q}\leq t_{n}m^{1-1/q}:=D.
\end{equation*}%
Define $v^{\ast }=\sum_{j\in L^{\ast }}c_{j}f_{j}$ and $w^{\ast
}=\sum_{j\notin L^{\ast }}c_{j}f_{j}$. We have $w^{\ast }\in \mathcal{F}%
_{1}(D)$. Define a random function $U$ so that $\mathbb{P}(U=D\text{sign}%
(c_{j})f_{j})=|c_{j}|/D$, $j\notin L^{\ast }$ and $\mathbb{P}%
(U=0)=1-\sum_{j\notin L^{\ast }}|c_{j}|/D$. Thus, $\mathbb{E}U=w^{\ast }$,
where $\mathbb{E}$ denotes expectation with respect to the randomness $%
\mathbb{P}$ (just introduced). Also, $\Vert U\Vert \leq D\sup_{1\leq j\leq
M_{n}}\Vert f_{j}\Vert \leq D$. Let $U_{1},U_{2},...,U_{m}$ be i.i.d. copies
of $U$, then $\forall \mathbf{x}\in \mathcal{X},$ 
\begin{equation*}
\mathbb{E}\left( f_{0}(\mathbf{x})-v^{\ast }(\mathbf{x})-\frac{1}{m}%
\sum_{i=1}^{m}U_{i}(\mathbf{x})\right) ^{2}=\left( f_{\theta }^{\ast }(%
\mathbf{x})-f_{0}(\mathbf{x})\right) ^{2}+\frac{1}{m}\mathbb{V}ar\left( U(%
\mathbf{x})\right) .
\end{equation*}%
Together with Fubini, 
\begin{equation*}
\mathbb{E}\left\Vert f_{0}-v^{\ast }-\frac{1}{m}\sum_{i=1}^{m}U_{i}\right\Vert ^{2}\leq \Vert f_{\theta }^{\ast }-f_{0}\Vert ^{2}+\frac{1}{m}E\Vert
U\Vert ^{2}\leq \Vert f_{\theta }^{\ast }-f_{0}\Vert ^{2}+t_{n}^{2}m^{1-2/q}.
\end{equation*}%
In particular, there exists a realization of $v^{\ast }+\frac{1}{m}%
\sum_{i=1}^{m}U_{i}$, denoted by $f_{\theta ^{m}}$, such that $\Vert
f_{\theta ^{m}}-f_{0}\Vert ^{2}\leq \Vert f_{\theta }^{\ast }-f_{0}\Vert
^{2}+t_{n}^{2}m^{1-2/q}$. Note that $\Vert f_{\theta ^{m}}\Vert _{0}\leq
2m-1 $. If we consider $\widetilde{m}=\lfloor (m+1)/2\rfloor $ instead, we
have $2\widetilde{m}-1\leq m$ and $\widetilde{m}\geq m/2.$ The conclusion
then follows.

\end{proof}

%%%%%%%%%%% theorem 2. Upper bounds for l_q aggregation of estimates%%%%%%%%%%%
\vspace*{0.2in} \textit{Proof of Theorem \ref{UpAgg}.} 
\begin{proof}
To derive the upper bounds, we only need to examine the index of
resolvability for each strategy. The natures of the constants in Theorem \ref%
{UpAgg} follow from Theorem \ref{Oracle}.

(i) For \textbf{T-} strategies, according to Theorem \ref{Th1} and the
general oracle inequalities in Theorem \ref{Oracle}, for each $1\leq m\leq
M_{n}\wedge n$, there exists a subset $J_{m}$ and the best $f_{\theta
^{m}}\in \mathcal{F}_{J_{m}}$ such that 
\begin{eqnarray*}
R(\hat{f}_{F_{n}};f_{0};n) &\leq &c_{0}\left( c_{1}\Vert f_{\theta
^{m}}-f_{0}\Vert ^{2}+2c_{2}\frac{m}{n}+2c_{3}\frac{1+\log {\binom{M_{n}}{m}+%
}\log (M_{n}\wedge n)-\log (1-p_{0})}{n}\right) \\
&&\wedge c_{0}\left( \Vert f_{0}\Vert ^{2}+2c_{3}\frac{1-\log p_{0}}{n}%
\right) .
\end{eqnarray*}%
Under the assumption that $f_{0}$ has sup-norm bounded, the index of
resolvability evaluated at the null model $f_{\theta }\equiv 0$ leads to the
fact that the risk is always bounded above by $C_{0}\left( \Vert f_{0}\Vert
^{2}+\frac{C_2 \sigma ^{2}}{n}\right) $ for some constant $C_{0}, C_2>0.$

For $\mathcal{F} = \mathcal{F}_q (t_n)$, and when $m_* = m^{\ast} = M_{n} <
n $, evaluating the index of resolvability at the full model $J_{M_{n}}$, we
get 
\begin{equation*}
R(\hat{f}_{F_{n}};f_{0};n)\leq c_{0}c_{1}d^{2}(f_{0};\mathcal{F}_{q}(t_n))+%
\frac{CM_{n}}{n} \hspace{0.1cm}\text{ with } \hspace{0.1cm} \frac{CM_{n}}{n}=%
\frac{Cm_{\ast }\left( 1+\log \left( \frac{M_{n}}{m_{\ast }}\right) \right) 
}{n}.
\end{equation*}
Thus, the upper bound is proved when $m_{\ast }=m^{\ast }=M_{n}.$

For $\mathcal{F} = \mathcal{F}_q (t_n)$, and when $m_{\ast}=m^{\ast }=n <
M_n,$ then clearly $m_{\ast }\left( 1+\log \left( \frac{M_{n}}{m_{\ast }}%
\right) \right)/n$ is larger than 1, and then the risk bound given in the
theorem in this case holds.

For $\mathcal{F}=\mathcal{F}_{q}(t_{n})$, and when $1\leq m_{\ast }\leq
m^{\ast }<M_{n}\wedge n$, for $1\leq m<M_{n},$ and from Theorem \ref{Th1},
we have 
\begin{eqnarray*}
R(\hat{f}_{F_{n}};f_{0};n) &\leq &c_{0}\left( c_{1}d^{2}(f_{0};\mathcal{F}%
_{q}(t_{n}))+c_{1}2^{2/q-1}t_{n}^{2}m^{1-2/q}+2c_{2}\frac{m}{n} \right. \\
&&\left. +2c_{3}\frac{1+\log {\binom{M_{n}}{m}+}\log (M_{n}\wedge n)}{n} -
2c_{3}\frac{\log (1-p_{0})}{n}\right) .
\end{eqnarray*}%
Since $\log {\binom{M_{n}}{m}}\leq m\log \left( \frac{eM_{n}}{m}\right)
=m\left( 1+\log \frac{M_{n}}{m}\right) $, then 
\begin{eqnarray*}
R(\hat{f}_{F_{n}};f_{0};n) &\leq &c_{0}c_{1}d^{2}(f_{0};\mathcal{F}%
_{q}(t_{n}))+C\left( t_{n}^{2}m^{1-2/q}+\frac{m\left( 1+\log \frac{M_{n}}{m}%
\right) }{n}+\frac{\log (M_{n}\wedge n)}{n}\right) \\
&\leq &c_{0}c_{1}d^{2}(f_{0};\mathcal{F}_{q}(t_{n}))+C^{^{\prime }}\left(
t_{n}^{2}m^{1-2/q}+\frac{m\left( 1+\log \frac{M_{n}}{m}\right) }{n}\right) ,
\end{eqnarray*}%
where $C$ and $C^{\prime }$ are constants that do not depend on $n$, $t_{n}$%
, and $M_{n}$ (but may depend on $q,$ $\sigma ^{2},$ $p_{0}$ and $L$). Choosing 
$m=m_{\ast },$ we have 
\begin{equation*}
t_{n}^{2}m^{1-2/q}+\frac{m\left( 1+\log \frac{M_{n}}{m}\right) }{n}\leq
C^{\prime \prime }\frac{m_{\ast }\left( 1+\log \left( \frac{M_{n}}{m_{\ast }}%
\right) \right) }{n}.
\end{equation*}%
The upper bound for this case then follows.

For $\mathcal{F} = \mathcal{F}_0 (k_n)$, by evaluating the index of
resolvability from Theorem \ref{Oracle} at $m=k_{n}$, the upper bound
immediately follows.

For $\mathcal{F} = \mathcal{F}_q (t_n) \cap \mathcal{F}_0 (k_n)$, both $\ell
_{q}$- and $\ell _{0}$-constraints are imposed on the coefficients, the
upper bound will go with the faster rate from the tighter constraint. The
result follows.

(ii) For \textbf{AC-} strategies, three constraints $\Vert \theta \Vert
_{1}\leq s$ ($s>0$), $\Vert \theta \Vert _{q}\leq t_{n}$ ($0\leq q\leq 1$, $%
t_{n}>0$) and $\Vert f_{\theta }\Vert _{\infty }\leq L$ are imposed on the
coefficients. Notice that $\Vert \theta \Vert _{1}\leq \Vert \theta \Vert
_{q}$ when $0<q\leq 1$, then the $\ell _{1}$-constraint is satisfied by
default as long as $s\geq t_{n}$ and $\Vert \theta \Vert _{q}\leq t_{n}$
with $0<q\leq 1$. Using similar arguments as used for \textbf{T-}strategies,
the desired upper bounds can be easily derived.

\end{proof}

\vspace*{.2in} %%%%%%% Global and local entropies %%%%%%
\textit{Global metric entropy and local metric entropy.} The tools developed
in Yang and Barron \cite{YangBarron1999} allow us to derive minimax lower
bounds for $\ell_{q}$-aggregation of estimates or regression under $\ell_{q}$%
-constraints. Both global and local entropies of the regression function
classes are relevant. The following lower bound result slightly generalizes
Lemma 1 in \cite{Yang2004}.

Consider estimating a regression function $f_{0}$ in a general function
class $\mathcal{F}$ based on i.i.d. observations $(\mathbf{X}%
_{i},Y_{i})_{i=1}^{n}$ from the model%
\begin{equation}
Y=f_{0}(\mathbf{X})+\sigma \cdot \varepsilon ,  \label{globallocal}
\end{equation}%
where $\sigma >0$ and $\varepsilon $ follows a standard normal distribution
and is independent of $\mathbf{X}$.

Given $\mathcal{F},$ we say $G\subset \mathcal{F}$ is an \textit{$\epsilon $%
-packing set} in $\mathcal{F}$ ($\epsilon >0$) if any two functions in $G$
are more than $\epsilon $ apart in the $L_{2}$ distance. Let $0<\alpha <1$
be a constant.

\textsc{Definition 1:} (\textit{Global metric entropy}) The packing $%
\epsilon $-entropy of $\mathcal{F}$ is the logarithm of the largest $%
\epsilon $-packing set in $\mathcal{F}$. The packing $\epsilon $-entropy of $%
\mathcal{F}$ is denoted by $M(\epsilon ).$

\textsc{Definition 2:} (\textit{Local metric entropy}) The $\alpha $-local $%
\epsilon $-entropy at $f\in \mathcal{F}$ is the logarithm of the largest $%
\left( \alpha \epsilon \right) $-packing set in $\mathcal{B} (f,\epsilon
)=\{f^{^{\prime }}\in \mathcal{F}:\parallel f^{^{\prime }}-f\parallel \leq
\epsilon \}$. The $\alpha $-local $\epsilon $-entropy at $f$ is denoted by $%
M_{\alpha }(\epsilon \mid f)$. The $\alpha $-local $\epsilon $-entropy of $%
\mathcal{F}$ is defined as $M_{\alpha }^{\text{loc}}(\epsilon )=\max_{f\in 
\mathcal{F}}M_{\alpha }(\epsilon \mid f).$

Suppose that $M_{\alpha }^{\text{loc }}(\epsilon )$ is lower bounded by $%
\underline{M}_{\alpha}^{\text{loc}}(\epsilon )$ (a continuous function), and
assume that $M(\epsilon )$ is upper bounded by $\overline{M}(\epsilon )$ and
lower bounded by $\underline{M}(\epsilon )$ (with $\overline{M}(\epsilon )$
and $\underline{M}(\epsilon )$ both being continuous).

Suppose there exist $\epsilon _{n}$, $\overline{\epsilon }_{n}$, and $%
\underline{\epsilon }_{n}$ such that 
\begin{eqnarray}
\underline{M}_{\alpha}^{\text{loc}}(\sigma \epsilon _{n}) &\geq& n \epsilon
_{n}^{2}+2\log 2, \\
\overline{M}(\sqrt{2} \sigma \overline{\epsilon }_{n})&=&n \overline{%
\epsilon }_{n}^{2},  \label{globalupper} \\
\underline{M}( \sigma \underline{\epsilon }_{n})&=& 4n \overline{\epsilon }%
_{n}^{2}+2\log 2.  \label{globallower}
\end{eqnarray}

%%%%%%% proposition 5 %%%%%%
\begin{proposition}
\label{YangBarron} (Yang and Barron \cite{YangBarron1999}) The minimax risk
for estimating $f_{0}$ from model (\ref{globallocal}) in the function class $%
\mathcal{F}$ is lower-bounded as the following 
\begin{equation*}
\inf_{\hat{f}}\sup_{f_{0}\in \mathcal{F}}E\Vert \hat{f}-f_{0}\Vert ^{2}\geq 
\frac{\alpha ^{2}\sigma ^{2}\epsilon _{n}^{2}}{8},
\end{equation*}%
\begin{equation*}
\inf_{\hat{f}}\sup_{f_{0}\in \mathcal{F}}E\Vert \hat{f}-f_{0}\Vert ^{2}\geq 
\frac{\sigma ^{2}\underline{\epsilon }_{n}^{2}}{8}.
\end{equation*}%
Let $\underline{\mathcal{F}}$ be a subset of $\mathcal{F}$. If a packing set
in $\mathcal{F}$ of size at least $\exp (\underline{M}_{\alpha }^{\text{loc}%
}(\sigma \epsilon _{n}))$ or $\exp (\underline{M}(\sigma \underline{\epsilon 
}_{n}))$ is actually contained in $\underline{\mathcal{F}}$, then $\inf_{%
\hat{f}}\sup_{f_{0}\in \underline{\mathcal{F}}}E\Vert \hat{f}-f_{0}\Vert
^{2} $ is lower bounded by $\frac{\alpha ^{2}\sigma ^{2}\epsilon _{n}^{2}}{8}
$ or $\frac{\sigma ^{2}\underline{\epsilon }_{n}^{2}}{8},$ respectively.
\end{proposition}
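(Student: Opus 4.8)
The plan is to reduce minimax estimation over $\mathcal{F}$ (or over $\underline{\mathcal{F}}$) to a multiple-hypothesis testing problem and invoke Fano's inequality, following the information-theoretic scheme of Yang and Barron; the only thing beyond Lemma~1 of \cite{Yang2004} is the observation that the packing set used in the argument can be forced to lie inside a prescribed subset $\underline{\mathcal{F}}$, which costs nothing. Throughout I will use that for two functions $g,g'$ the Kullback--Leibler divergence between the laws of $n$ i.i.d. observations from model~(\ref{globallocal}) equals $\frac{n}{2\sigma^{2}}\Vert g-g'\Vert^{2}$, since the marginal of $\mathbf{X}$ is common and the conditional law of $Y$ given $\mathbf{X}=\mathbf{x}$ is $N(g(\mathbf{x}),\sigma^{2})$ in either case.

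For the local-entropy bound $\frac{\alpha^{2}\sigma^{2}\epsilon_n^{2}}{8}$, first I would pick $f^{\ast}\in\mathcal{F}$ at which the $\alpha$-local $(\sigma\epsilon_n)$-entropy is at least $\underline{M}_{\alpha}^{\text{loc}}(\sigma\epsilon_n)$, and take a maximal $(\alpha\sigma\epsilon_n)$-separated set $\{f_1,\dots,f_N\}\subset\mathcal{B}(f^{\ast},\sigma\epsilon_n)$, so $\log N\geq \underline{M}_{\alpha}^{\text{loc}}(\sigma\epsilon_n)\geq n\epsilon_n^{2}+2\log 2$. Put the uniform prior on this set (index $\theta$), let $Z$ be the sample, and bound the mutual information using $f^{\ast}$ as reference measure: $I(\theta;Z)\leq \frac{1}{N}\sum_{j}D\big(P_{f_j}^{\otimes n}\Vert P_{f^{\ast}}^{\otimes n}\big)=\frac{n}{2\sigma^{2}}\,\frac{1}{N}\sum_{j}\Vert f_j-f^{\ast}\Vert^{2}\leq \tfrac12 n\epsilon_n^{2}$, since each $f_j$ lies in the ball of radius $\sigma\epsilon_n$. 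Fano then gives testing error probability at least $1-\frac{I(\theta;Z)+\log 2}{\log N}\geq \tfrac12$, and the nearest-packing-point decoder together with Markov's inequality (the decoder errs only if $\Vert\hat f-f_\theta\Vert\geq \alpha\sigma\epsilon_n/2$) yields $\sup_{f_0\in\mathcal{F}}E\Vert\hat f-f_0\Vert^{2}\geq \frac{1}{N}\sum_{j}E_{f_j}\Vert\hat f-f_j\Vert^{2}\geq \tfrac12\big(\tfrac{\alpha\sigma\epsilon_n}{2}\big)^{2}=\frac{\alpha^{2}\sigma^{2}\epsilon_n^{2}}{8}$.

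For the global-entropy bound $\frac{\sigma^{2}\underline{\epsilon}_n^{2}}{8}$, I would instead take a maximal $(\sigma\underline{\epsilon}_n)$-separated set $\{f_1,\dots,f_N\}\subset\mathcal{F}$, so $\log N\geq \underline{M}(\sigma\underline{\epsilon}_n)\geq 4n\overline{\epsilon}_n^{2}+2\log 2$, and control the mutual information by a covering rather than a single reference. Choose a $(\sqrt2\,\sigma\overline{\epsilon}_n)$-net $\{g_1,\dots,g_K\}$ of $\mathcal{F}$ (a maximal separated set at that scale is such a net), so $\log K\leq \overline{M}(\sqrt2\,\sigma\overline{\epsilon}_n)=n\overline{\epsilon}_n^{2}$. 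With the mixture $Q=\frac{1}{K}\sum_{l}P_{g_l}^{\otimes n}$ as reference and $g_{l(j)}$ the net point nearest $f_j$, the chain $D\big(P_{f_j}^{\otimes n}\Vert Q\big)\leq \log K+D\big(P_{f_j}^{\otimes n}\Vert P_{g_{l(j)}}^{\otimes n}\big)=\log K+\frac{n}{2\sigma^{2}}\Vert f_j-g_{l(j)}\Vert^{2}\leq n\overline{\epsilon}_n^{2}+n\overline{\epsilon}_n^{2}$ gives $I(\theta;Z)\leq 2n\overline{\epsilon}_n^{2}$. Fano then yields testing error at least $1-\frac{2n\overline{\epsilon}_n^{2}+\log 2}{4n\overline{\epsilon}_n^{2}+2\log 2}\geq\tfrac12$, and the same Markov conversion, now at separation $\sigma\underline{\epsilon}_n$, produces $\sup_{f_0\in\mathcal{F}}E\Vert\hat f-f_0\Vert^{2}\geq \frac{\sigma^{2}\underline{\epsilon}_n^{2}}{8}$.

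The $\underline{\mathcal{F}}$-refinement is then immediate: in both cases the estimation lower bound was obtained by averaging $E_{f_j}\Vert\hat f-f_j\Vert^{2}$ over the packing points $f_1,\dots,f_N$ only, while the auxiliary objects ($f^{\ast}$ in the local case, the net $\{g_l\}$ in the global case) enter solely as reference measures for the information bound and need not belong to any particular set; hence if the packing set is contained in $\underline{\mathcal{F}}$ the identical inequalities lower bound $\sup_{f_0\in\underline{\mathcal{F}}}E\Vert\hat f-f_0\Vert^{2}$. I expect the only genuinely delicate step to be the mutual-information estimate in the global case --- choosing the net radius $\sqrt2\,\sigma\overline{\epsilon}_n$ so that the $\log K$ term and the squared-distance term each contribute exactly $n\overline{\epsilon}_n^{2}$, and then checking that $I(\theta;Z)+\log 2$ stays below $\tfrac12\log N$; given the three balancing relations defining $\epsilon_n$, $\overline{\epsilon}_n$, $\underline{\epsilon}_n$, the constants come out precisely as stated, and the remainder is the standard Fano-to-estimation reduction.
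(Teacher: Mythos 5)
Your proposal is correct and follows essentially the same route as the paper: Fano's inequality applied to a uniform prior on a packing set, with the mutual information controlled by a single reference measure in the local case and by a mixture over a KL-type net in the global case, and with the observation that only the packing points (not the reference/net points) need to lie in $\underline{\mathcal{F}}$. You simply fill in more of the standard details that the paper delegates to Yang and Barron (1999); the constants and the balancing of $\log K$ against the squared-distance term check out.
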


\begin{proof}
The result is essentially given in \cite{YangBarron1999}, but not in
the concrete forms. The second lower bound is given in \cite{Yang2004}. We
briefly derive the first one.

Let $N$ be an $\left( \alpha \epsilon _{n}\right) $-packing set in $\mathcal{%
B}(f,\sigma \epsilon _{n})=\{f^{^{\prime }}\in \mathcal{F}:\parallel
f^{^{\prime }}-f\parallel \leq \sigma \epsilon _{n}\}.$ Let $\Theta $ denote
a uniform distribution on $N.$ Then, the mutual information between $\Theta $
and the observations $(\mathbf{X}_{i},Y_{i})_{i=1}^{n}$ is upper bounded by $%
\frac{n}{2}\epsilon _{n}^{2}$ (see Yang and Barron \cite{YangBarron1999},
Sections 7 and 3.2) and an application of Fano's inequality to the
regression problem gives the minimax lower bound%
\begin{equation*}
\frac{\alpha ^{2}\sigma ^{2}\epsilon _{n}^{2}}{4}\left( 1-\frac{I\left(
\Theta ;(\mathbf{X}_{i},Y_{i})_{i=1}^{n}\right) +\log 2}{\log |N|}\right) ,
\end{equation*}%
where $|N|$ denote the size of $N.$ By our way of defining $\epsilon _{n},$
the conclusion of the first lower bound follows.

For the last statement, we prove for the global entropy case and the
argument for the local entropy case similarly follows. Observe that the
upper bound on $I\left( \Theta ;(\mathbf{X}_{i}, Y_i)_{i=1}^{n}\right) $ by $%
\log (|G|)+n\overline{\epsilon }_{n}^{2},$ where $G$ is an $\overline{%
\epsilon }_{n}$-net of $\mathcal{F}$ under the square root of the
Kullback-Leibler divergence (see \cite{YangBarron1999}, page 1571),
continues to be an upper bound on $I\left( \underline{\Theta };(\mathbf{X}%
_{i}, Y_i)_{i=1}^{n}\right) ,$ where $\underline{\Theta }$ is the uniform
distribution on a packing set in $\underline{\mathcal{F}}.$ Therefore, by
the derivation of Theorem 1 in \cite{YangBarron1999}, the same lower bound
holds for $\underline{\mathcal{F}}$ as well.

\end{proof}
\newpage
%%%%%%% theorem 3. Lower bounds for l_q aggregation of estimates %%%%%%%
 \textit{Proof of Theorem \ref{LowAgg}.}

\begin{proof}
Assume $f_{0}\in \mathcal{F}$ in each case of $\mathcal{F}$ so that $%
d^{2}(f_{0};\mathcal{F})=0$. Without loss of generality, assume $\sigma =1$. 
%Consequently, only the second terms in the lower bounds of the theorem are relevant.

(i) We first derive the lower bounds without $L_{2}$ or $L_{\infty }$ upper
bound assumption on $f_{0}$. To prove case 1 (i.e., $\mathcal{F}=\mathcal{F}%
_{q}(t_{n})$), it is enough to show that 
\begin{equation*}
\inf_{\hat{f}}\sup_{f_{0}\in \mathcal{F}_{q}(t_{n})}E\Vert \hat{f}%
-f_{0}\Vert ^{2}\geq C_{q}\left\{ 
\begin{array}{ll}
\frac{M_{n}}{n} & \text{if }\widetilde{\text{$m$}}\text{$^{\ast }=M_{n}$,}
\\ 
t_{n}^{q}\left( \frac{1+\log \frac{M_{n}}{(nt_{n}^{2})^{q/2}}}{n}\right)
^{1-q/2} & \text{if $1<\widetilde{m}_{\ast }\leq $}\widetilde{\text{$m$}}%
\text{$^{\ast }<M_{n}$,} \\ 
t_{n}^{2} & \text{if }\widetilde{m}_{\ast }\text{$=1$, }%
\end{array}%
\right. 
\end{equation*}%
in light of the fact that, by definition, when $\widetilde{\text{$m$}}^{\ast
}=M_{n}$, $\widetilde{m}_{\ast }=M_{n}$ and when $1<\widetilde{m}_{\ast
}\leq \widetilde{\text{$m$}}^{\ast }<M_{n}$, we have $\frac{\widetilde{m}%
_{\ast }(1+\log \frac{M_{n}}{\widetilde{\text{$m$}}_{\ast }})}{n}$ is upper
and lower bounded by multiples (depending only on $q$) of $t_{n}^{q}\left( 
\frac{1+\log \frac{M_{n}}{(nt_{n}^{2})^{q/2}}}{n}\right) ^{1-q/2}.$ Note
that $\widetilde{\text{$m$}}^{\ast }$ and $\widetilde{m}_{\ast }$ are
defined as $m$$^{\ast }$ and $m_{\ast }$ except that no ceiling of $n$ is
imposed there.

Given that the basis functions are orthonormal, the $L_{2}$ distance on $%
\mathcal{F}_{q}(t_n)$ is the same as the $\ell_{2}$ distance on the
coefficients in $B_{q}(t_n;M_{n})=\{\theta :\Vert \theta \Vert _{q}\leq
t_n\}.$ Thus, the entropy of $\mathcal{F}_{q}(t_n)$ under the $L_{2}$
distance is the same as that of $B_{q}(t_n;M_{n})$ under the $\ell_{2}$
distance.

When $\widetilde{\text{$m$}}^{\ast }=M_{n}$, we use the lower bound tool in
terms of local metric entropy. Given the $\ell _{q}$-$\ell _{2}$%
-relationship $\Vert \theta \Vert _{q}\leq {M_{n}}^{1/q-1/2}\Vert \theta
\Vert _{2}$ for $0<q\leq 2,$ for $\epsilon \leq \sqrt{M_{n}/n},$ taking $%
f_{0}^{\ast }\equiv 0$, we have 
\begin{equation*}
\mathcal{B}(f_{0}^{\ast };\epsilon )=\{f_{\theta }:\Vert f_{\theta
}-f_{0}^{\ast }\Vert \leq \epsilon ,\Vert \theta \Vert _{q}\leq
t_{n}\}=\{f_{\theta }:\Vert \theta \Vert _{2}\leq \epsilon ,\Vert \theta
\Vert _{q}\leq t_{n}\}=\{f_{\theta }:\Vert \theta \Vert _{2}\leq \epsilon \},
\end{equation*}%
where the last equality holds because when $\epsilon \leq \sqrt{M_{n}/n},$
for $\Vert \theta \Vert _{2}\leq \epsilon $, $\Vert \theta \Vert _{q}\leq
t_{n}$ is always satisfied. Consequently, for $\epsilon \leq \sqrt{M_{n}/n},$
the $(\epsilon /2)$-packing of $\mathcal{B}(f_{0}^{\ast };\epsilon )$ under
the $L_{2}$ distance is equivalent to the $(\epsilon /2)$-packing of $%
B_{\epsilon }=\{\theta :\Vert \theta \Vert _{2}\leq \epsilon \}$ under the $%
\ell _{2}$ distance. Note that the size of the maximum packing set is at
least the ratio of volumes of the balls $B_{\epsilon }$ and $B_{\epsilon /2},
$ which is $2^{M_{n}}.$ Thus, the local entropy $M_{1/2}^{\text{loc}%
}(\epsilon )$ of $\mathcal{F}_{q}(t)$ under the $L_{2}$ distance is at least 
$\underline{M}_{1/2}^{\text{loc}}(\epsilon )=M_{n}\log 2$ for $\epsilon \leq 
\sqrt{M_{n}/n}.$ The minimax lower bound for the case of $\widetilde{\text{$m
$}}^{\ast }=M_{n}$ then directly follows from Proposition \ref{YangBarron}.

When $1<\widetilde{m}_{\ast }\leq \widetilde{\text{$m$}}^{\ast }<M_{n}$, the
use of global entropy is handy. Applying the minimax lower bound in terms of
global entropy in Proposition \ref{YangBarron}, with the metric entropy
order for larger $\epsilon $ (which is tight in our case of orthonormal
functions in the dictionary) from Theorem \ref{Th1}, the minimax lower rate
is readily obtained. Indeed, for the class $\mathcal{F}_{q}(t_{n}),$ with ${%
\epsilon }>t_{n}M_{n}^{\frac{1}{2}-\frac{1}{q}},$ there are constants $%
c^{\prime }$ and $\underline{c}^{\prime }$ (depending only on $q$) such that 
\begin{equation*}
\underline{c}^{\prime }\left( t_{n}\epsilon ^{-1}\right) ^{\frac{2q}{2-q}%
}\log (1+M_{n}^{\frac{1}{q}-\frac{1}{2}}t_{n}^{-1}\epsilon )\leq \underline{M%
}(\epsilon )\leq \overline{M}(\epsilon )\leq c^{\prime }\left( t_{n}\epsilon
^{-1}\right) ^{\frac{2q}{2-q}}\log (1+M_{n}^{\frac{1}{q}-\frac{1}{2}%
}t_{n}^{-1}\epsilon ).
\end{equation*}%
Thus, we see that $\underline{\epsilon }_{n}$ determined by (8.4) is lower
bounded by $c^{^{\prime \prime \prime }}t_{n}^{\frac{q}{2}}\left( (1+\log 
\frac{M_{n}}{(nt_{n}^{2})^{q/2}})/n\right) ^{\frac{1}{2}-\frac{q}{4}},$
where $c^{^{\prime \prime \prime }}$ is a constant depending only on $q.$

When $\widetilde{m}_{\ast }=1$, note that with $f_{0}^{\ast }=0$ and $%
\epsilon \leq t_{n},$ 
\begin{equation*}
\mathcal{B}(f_{0}^{\ast };\epsilon )=\{f_{\theta }:\Vert \theta \Vert
_{2}\leq \epsilon ,\Vert \theta \Vert _{q}\leq t_{n}\}\supset \{f_{\theta
}:\Vert \theta \Vert _{q}\leq \epsilon \}.
\end{equation*}%
Observe that the $(\epsilon /2)$-packing of $\{f_{\theta }:\Vert \theta
\Vert _{q}\leq \epsilon \}$ under the $L_{2}$ distance is equivalent to the $%
(1/2)$-packing of $\{f_{\theta }:\Vert \theta \Vert _{q}\leq 1\}$ under the
same distance. Thus, by applying Theorem \ref{Th1} with $t_{n}=1$ and $%
\epsilon =1/2$, we know that the $(\epsilon /2)$-packing entropy of $%
\mathcal{B}(f_{0}^{\ast };\epsilon )$ is lower bounded by $\underline{c}%
^{^{\prime \prime }}\log (1+\frac{1}{2}M_{n}^{1/q-1/2})$ for some constant $%
\underline{c}^{^{\prime \prime }}$depending only on $q,$ which is at least a
multiple of $nt_{n}^{2}$ when $\widetilde{\text{$m$}}^{\ast }\leq \left(
1+\log \frac{M_{n}}{\widetilde{\text{$m$}}\text{$^{\ast }$}}\right) ^{q/2}$.
Therefore we can choose $0<\delta <1$ small enough (depending only on $q$)
such that 
\begin{equation*}
\underline{c}^{^{\prime \prime }}\log (1+\frac{1}{2}M_{n}^{1/q-1/2})\geq
n\delta ^{2}t_{n}^{2}+2\log 2.
\end{equation*}%
The conclusion then follows from applying the first lower bound of
Proposition \ref{YangBarron}.

To prove case 2 (i.e., $\mathcal{F} = \mathcal{F}_0 (k_n)$), noticing that
for $M_{n}/2\leq k_{n}\leq M_{n},$ we have $(1 + \log2 )/ 2 M_{n} \leq k_{n}
\left(1 + \log \frac{M_{n}}{k_{n}} \right) \leq M_{n},$ together with the
monotonicity of the minimax risk in the function class, it suffices to show
the lower bound for $k_{n}\leq M_{n}/2.$ Let $B_{k_{n}}(\epsilon )=\{\theta
:\Vert \theta \Vert _{2}\leq \epsilon ,\Vert \theta \Vert _{0}\leq k_{n}\}.$
As in case 1, we only need to understand the local entropy of the set $%
B_{k_{n}}(\epsilon )$ for the critical $\epsilon $ that gives the claimed
lower rate. Let $\eta =\epsilon /\sqrt{k_{n}}.$ Then $B_{k_{n}}(\epsilon )$
contains the set $D_{k_{n}}(\eta ),$ where%
\begin{equation*}
D_{k}(\eta )=\{\theta =\eta I:I\in \{1,0,-1\}^{M_{n}},\Vert I\Vert _{0}\leq
k\}.
\end{equation*}%
Clearly $\Vert \eta I_{1}-\eta I_{2}\Vert _{2} \geq \eta \left(
d_{HM}(I_{1},I_{2})\right) ^{1/2},$ where $d_{HM}(I_{1},I_{2})$ is the
Hamming distance between $I_{1},I_{2}\in \{1,0,-1\}^{M_{n}}.$ From Lemma 4
of \cite{Raskuttietal2010} (the result there actually also holds when
requiring the pairwise Hamming distance to be strictly larger than $k/2$;
see also the derivation of a metric entropy lower bound in \cite{Kuhn2001}),
there exists a subset of $\{I:I\in \{1,0,-1\}^{M_{n}},\Vert I\Vert _{0}\leq
k\}$ with more than $\exp \left( \frac{k}{2} \log \frac{2(M_{n}-k)}{k}%
\right) $ points that have pairwise Hamming distance larger than $k/2$.
Consequently, we know the local entropy $M_{1 / \sqrt{2}}^{loc}(\epsilon )$
of $\mathcal{F}_{0}(k_{n}) $ is lower bounded by $\frac{k_{n}}{2}\log \frac{%
2(M_{n}-k_{n})}{k_{n}}.$ The result follows.

To prove case 3 (i.e., $\mathcal{F}_{q}(t_{n})\cap \mathcal{F}_{0}(k_{n})$),
for the larger $k_{n}$ case, from the proof of case 1, we have used fewer
than $k_{n}$ nonzero components to derive the minimax lower bound there.
Thus, the extra $\ell _{0}$-constraint does not change the problem in terms
of lower bound. For the smaller $k_{n}$ case, note that for $\theta $ with $%
\Vert \theta \Vert _{0}\leq k_{n},$ $\Vert \theta \Vert _{q}\leq
k_{n}^{1/q-1/2}\Vert \theta \Vert _{2}\leq k_{n}^{1/q-1/2} \cdot $ $\sqrt{%
Ck_{n}\left( 1+\log \frac{M_{n}}{k_{n}}\right) /n}$ for $\theta $ with $%
\Vert \theta \Vert _{2}\leq \sqrt{Ck_{n}\left( 1+\log \frac{M_{n}}{k_{n}}%
\right) /n}$ for some constant $C>0$. Therefore the $\ell _{q}$-constraint
is automatically satisfied when $\Vert \theta \Vert _{2}$ is no larger than
the critical order $\sqrt{k_{n}\left( 1+\log \frac{M_{n}}{k_{n}}\right) /n}$%
, which is sufficient for the lower bound via local entropy techniques. The
conclusion follows.

(ii) Now, we turn to the lower bounds under the $L_{2}$ norm condition. When
the regression function $f_{0}$ satisfies the boundedness condition in $%
L_{2} $ norm, the estimation risk is obviously upper bounded by $L^{2}$ by
taking the trivial estimator $\hat{f}=0.$ In all of the lower boundings in
(i) through local entropy argument, if the critical radius $\epsilon $ is of
order $1$ or lower, the extra condition $\Vert f_{0}\Vert \leq L$ does not
affect the validity of the lower bound. Otherwise, we take $\epsilon $ to be 
$L$. Then, since the local entropy stays the same, it directly follows from the
first lower bound in Proposition \ref{YangBarron} that $L^{2}$ is a lower
order of the minimax risk. The only case remained is that of $\left( 1+\log 
\frac{M_{n}}{m^{\ast }}\right) ^{q/2}\leq m^{\ast }<M_{n}.$ If $%
t_{n}^{q}\left( (1+\log \frac{M_{n}}{(nt^{2})^{q/2}})/n\right) ^{1-q/2}$ is
upper bounded by a constant, from the proof of the lower bound of the metric
entropy of the $\ell _{q}$-ball in \cite{Kuhn2001}, we know that the
functions in the special packing set satisfy the $L_{2}$ bound. Indeed,
consider $\{f_{\theta }:\theta \in D_{m_{n}}(\eta )\}$ with $m_{n}$ being a
multiple of $\left( nt_{n}^{2}/\left( 1+\log \frac{M_{n}}{(nt_{n}^{2})^{q/2}}%
\right) \right) ^{q/2}$ and $\eta $ being a (small enough) multiple of $%
\sqrt{(1+\log \frac{M_{n}}{(nt_{n}^{2})^{q/2}})/n}.$ Then these $f_{\theta }$
have $\Vert f_{\theta }\Vert $ upper bounded by a multiple of $%
t_{n}^{q}\left( (1+\log \frac{M_{n}}{(nt_{n}^{2})^{q/2}})/n\right) ^{1-q/2}$
and the minimax lower bound follows from the last statement of Proposition %
\ref{YangBarron}. If $t_{n}^{q}\left( (1+\log \frac{M_{n}}{(nt_{n}^{2})^{q/2}%
})/n\right) ^{1-q/2}$ is not upper bounded, we reduce the packing radius to $%
L$ (i.e., choose $\eta $ so that $\eta \sqrt{m_{n}}$ is bounded by a
multiple of $L$). Then the functions in the packing set satisfy the $L_{2}$
bound and furthermore, the number of points in the packing set is of a
larger order than $nt_{n}^{q}\left( (1+\log \frac{M_{n}}{(nt_{n}^{2})^{q/2}}%
)/n\right) ^{1-q/2}$. Again, adding the $L_{2}$ condition on $f_{0}\in 
\mathcal{F}_{q}(t)$ does not increase the mutual information bound in our
application of Fano's inequality. We conclude that the minimax risk is lower
bounded by a constant.

(iii) Finally, we prove the lower bounds under the sup-norm bound condition.
For 1), under the direct sup-norm assumption, the lower bound is obvious.
For the general $M_{n}$ case 2), note that the functions $f_{\theta }$'s in
the critical packing set satisfies that $\Vert \theta \Vert _{2}\leq
\epsilon $ with $\epsilon $ being a multiple of $\sqrt{\frac{k_{n}\left(
1+\log \frac{M_{n}}{k_{n}}\right) }{n}}.$ Then together with $\Vert \theta
\Vert _{0}\leq k_{n},$ we have $\Vert \theta \Vert _{1}\leq \sqrt{k_{n}}%
\Vert \theta \Vert _{2},$ which is bounded by assumption. The lower bound
conclusion then follows from the last part of Proposition \ref{YangBarron}.
To prove the results for the case $M_{n} / \left( 1 + \log \frac{M_n}{k_n}
\right) \leq bn,$ as in \cite{Tsybakov2003}, we consider the special
dictionary $F_{n}=\{f_{i}:1\leq i\leq M_{n}\}$ on $[0,1],$ where 
\begin{equation*}
f_{i}(\mathbf{x})=\sqrt{M_{n}}I_{[\frac{i-1}{M_{n}},\frac{i}{M_{n}})}(%
\mathbf{x}),\hspace{1cm}i=1,...,M_{n}.
\end{equation*}%
Clearly, these functions are orthonormal. By the last statement of
Proposition \ref{YangBarron}, we only need to verify that the functions in
the critical packing set in each case do have the sup-norm bound condition
satisfied. Note that for any $f_{\theta }$ with $\theta \in D_{k_{n}}(\eta )$
(as defined earlier), we have $\Vert f_{\theta }\Vert \leq \eta \sqrt{k_{n}}$
and $\Vert f_{\theta }\Vert _{\infty }\leq \eta \sqrt{M_{n}}.$ Thus, it
suffices to show that the critical packing sets for the previous lower
bounds without the sup-norm bound can be chosen with $\theta $ in $%
D_{k_{n}}(\eta )$ for some $\eta =O\left( M_{n}^{-1/2}\right) .$ Consider $%
\eta $ to be a (small enough) multiple of $\sqrt{\left( 1+\log \frac{M_{n}}{%
k_{n}}\right) /n}=O\left( M_{n}^{-1/2}\right) $ (which holds under the
assumption $\frac{M_{n}}{1+\log \frac{M_{n}}{k_{n}}}\leq bn$). From the
proof of part (ii) without constraint, we know that there is a subset of $%
D_{k_{n}}(\eta )$ that with more than $\exp (\frac{k_{n}}{2}\log \frac{%
2(M_{n}-k_{n})}{k_{n}})$ points that are separated in $\ell _{2}$ distance
by at least $\sqrt{k_{n}\left( 1+\log \frac{M_{n}}{k_{n}}\right) /n}$.

\end{proof}

%%%%%%% theorem 4. Upper bounds for linear regression (random) %%%%%%%
\vspace*{.2in} \textit{Proof of Theorem \ref{UpReg}.}

\begin{proof}
For linear regression with random design, we assume the true regression
function $f_0$ belongs to $\mathcal{F}^L_q (t_n; M_n)$, or $\mathcal{F}^L_0
(k_n; M_n) $, or both, thus $d^2(f_0,\mathcal{F})$ is equal to zero for all
cases (except for \textbf{AC-} strategies when $\mathcal{F} = \mathcal{F}%
^L_0 (k_n; M_n)$, which we discuss later).

(i) For \textbf{T-} strategies and $\mathcal{F}=\mathcal{F}%
_{q}^{L}(t_{n};M_{n})$. For each $1\leq m\leq M_{n}\wedge n$, according to
the general oracle inequalities in Theorem \ref{Oracle} , the adaptive
estimator $\hat{f}_{A}$ has 
\begin{eqnarray*}
\sup_{f_{0}\in \mathcal{F}}R(\hat{f}_{A};f_{0};n) & \leq & c_{0}\left( 2c_{2}%
\frac{m}{n}+2c_{3}\frac{1+\log \binom{M_{n}}{m}+\log (M_{n}\wedge n)-\log
(1-p_{0})}{n}\right) \\
& & \wedge c_{0}\left( \Vert f_{0}\Vert ^{2}-2c_{3}\frac{\log p_{0}}{n}%
\right) .
\end{eqnarray*}

When $m_* = m^* = M_n <n$, the full model $J_{M_n}$ results in an upper
bound of order $M_n / n$.

When $m_* = m^* = n < M_n$, we choose the null model and the upper bound is
simply of order one.

When $1 < m_* \leq m^* < M_n \wedge n$, the similar argument of Theorem \ref%
{UpAgg} leads to an upper bound of order $1 \wedge \frac{m_*}{n} \left( 1 +
\log \frac{M_n}{m_*} \right)$. Since $(n t_n^2)^{q / 2} \left( 1 + \log 
\frac{M_n}{(n t_n^2)^{q / 2}} \right)^{-q / 2} \leq m_* \leq 4 (n t_n^2)^{q
/ 2} \left( 1 + \log \frac{M_n}{2 (n t_n^2)^{q / 2}} \right)^{-q / 2}$, then
the upper bound is further upper bounded by $c_q t_n^q \cdot$ $\left( \frac{%
1 + \log \frac{M_n}{(n t_n^2)^{q / 2}}}{n} \right)^{1 - q / 2}$ for some
constant $c_q $ only depending on $q$.

When $m_* = 1$, the null model leads to an upper bound of order $\| f_0 \|^2
+ \frac{1}{n} \leq t_n^2 + \frac{1}{n} \leq 2 (t_n^2 \vee \frac{1}{n})$ if $%
f_0 \in \mathcal{F}^L_q (t_n; M_n)$.

For $\mathcal{F} = \mathcal{F}^L_{0} (k_n; M_n)$ or $\mathcal{F} = \mathcal{F%
}^L_q (t_n; M_n) \cap \mathcal{F}^L_{0} (k_n; M_n)$, one can use the same
argument as in Theorem \ref{UpAgg}.

(ii) For \textbf{AC-} strategies, for $\mathcal{F}=\mathcal{F}%
_{q}^{L}(t_{n};M_{n})$ or $\mathcal{F}=\mathcal{F}_{q}^{L}(t_{n};M_{n})\cap 
\mathcal{F}_{0}^{L}(k_{n};M_{n})$, again one can use the same argument as in
the proof of Theorem \ref{UpAgg}. For $\mathcal{F}=\mathcal{F}%
_{0}^{L}(k_{n};M_{n})$, the approximation error is $\inf_{s\geq 1}\left(
\inf_{\{\theta :\Vert \theta \Vert _{1}\leq s,\Vert \theta \Vert _{0}\leq
k_{n},\Vert f_{\theta }\Vert _{\infty }\leq L\}}\Vert f_{\theta }-f_{0}\Vert
^{2}+2c_{3}\frac{\log (1+s)}{n}\right) \leq \inf_{\{\theta :\Vert \theta
\Vert _{1}\leq \alpha _{n},\Vert \theta \Vert _{0}\leq k_{n},\Vert f_{\theta
}\Vert _{\infty }\leq L\}}\Vert f_{\theta }-f_{0}\Vert ^{2}+2c_{3}\frac{\log
(1+\alpha _{n})}{n}=2c_{3}\frac{\log (1+\alpha _{n})}{n}$ if $f_{0}\in 
\mathcal{F}_{0}^{L}(k_{n};M_{n})$. The upper bound then follows.

\end{proof}

%%%%%%% theorem 5. Lower bounds for linear regression (random) %%%%%%%
\vspace*{0.2in} \textit{Proof of Theorem \ref{LowReg}.}

\begin{proof}
Without loss of generality, we assume $\sigma^2 = 1$ for the error variance.
First, we give a simple fact. Let $B_{k}(\eta )=\{\theta :\Vert \theta \Vert
_{2}\leq \eta ,\Vert \theta \Vert _{0}\leq k\}$ and $\mathcal{B}%
_{k}(f_{0};\epsilon )=\{f_{\theta }:\Vert f_{\theta }\Vert \leq \epsilon
,\Vert \theta \Vert _{0}\leq k\}$ (take $f_{0}=0$). Then, under Assumption
SRC with $\gamma =k$, the $\frac{\underline{a}}{2\overline{a}}$-local $%
\epsilon $-packing entropy of $\mathcal{B}_{k}(f_{0};\epsilon )$ is lower
bounded by the $\frac{1}{2}$-local $\eta $-packing entropy of $B_{k}(\eta )$
with $\eta =\epsilon / \overline{a}$.

(i) The proof is essentially the same as that of Theorem \ref{LowAgg}. When $%
m^{\ast }=M_{n},$ the previous lower bounding method works with a slight
modification. When $\left( 1+\log \frac{M_{n}}{m^{\ast }}\right)
^{q/2}<m^{\ast }<M_{n},$ we again use the global entropy to derive the lower
bound based on Proposition \ref{YangBarron}. The key is to realize that in
the derivation of the metric entropy lower bound for $\{\theta :$ $\Vert
\theta \Vert _{q}\leq t_n\}$ in \cite{Kuhn2001}, an optimal size packing set
is constructed in which every member has at most $m_{\ast }$ non-zero
coefficients. Assumption SRC with $\gamma =m_{\ast }$ ensures that the $%
L_{2} $ distance on this packing set is equivalent to the $\ell_{2}$
distance on the coefficients and then we know the metric entropy of $%
\mathcal{F}_{q}(t_n;M_{n}) $ under the $L_{2}$ distance is at the order
given. The result follows as before. When $m^{\ast }\leq \left( 1+\log \frac{%
M_{n}}{m^{\ast }}\right) ^{q/2},$ observe that $\mathcal{F}%
_{q}(t_n;M_{n})\supset \{\beta x_{j}:|\beta |\leq t_n\}$ for any $1\leq
j\leq M_{n}.$ The use of the local entropy result in Proposition \ref%
{YangBarron} readily gives the desired result.

(ii) As in the proof of Theorem \ref{LowAgg}, without loss of generality, we
can assume $k_{n}\leq M_{n}/2.$ Together with the simple fact given at the
beginning of the proof, for $B_{k_{n}}(\epsilon /\overline{a})=\{\theta :$$%
\Vert \theta \Vert _{2}\leq \epsilon /\overline{a},\Vert \theta \Vert
_{0}\leq k_{n}\},$ with $\eta ^{\prime }=\epsilon /(\overline{a}\sqrt{k_{n}}%
),$ we know $B_{k_{n}}(\epsilon /\overline{a})$ contains the set 
\begin{equation*}
\{\theta =\eta ^{\prime }I:I\in \{1,0,-1\}^{M_{n}},\Vert I\Vert _{0}\leq
k_{n}\}.
\end{equation*}%
For $\theta _{1}=\eta ^{\prime }I_{1},\theta _{2}=\eta ^{\prime }I_{2}$ both
in the above set, by Assumption SRC, $\Vert f_{\theta _{1}}-f_{\theta
_{2}}\Vert ^{2}\geq \underline{a}^{2}\eta ^{^{\prime
}2}d_{HM}(I_{1},I_{2})\geq \underline{a}^{2}\epsilon ^{2}/(2\overline{a}%
^{2}) $ when the Hamming distance $d_{HM}(I_{1},I_{2})$ is larger than $%
k_{n}/2.$ With the derivation in the proof of part (i) of Theorem \ref%
{LowAgg} (case 2), we know the local entropy $M_{\underline{a}/(\sqrt{2}%
\overline{a})}^{loc}(\epsilon )$ of $\mathcal{F}_{0}(k_{n};M_{n})\cap
\{f_{\theta }:\Vert \theta \Vert _{2}\leq a_{n}\}$ with $a_{n}\geq \epsilon $
is lower bounded by $\frac{k_{n}}{2}\log \frac{2(M_{n}-k_{n})}{k_{n}}.$
Then, under the condition $a_{n}\geq C\sqrt{k_{n}\left( 1+\log \frac{M_{n}}{%
k_{n}}\right) /n}$ for some constant $C,$ the minimax lower rate $%
k_{n}\left( 1+\log \frac{M_{n}}{k_{n}}\right) /n$ follows from a slight
modification of the proof of Theorem \ref{LowAgg} with $\epsilon =C^{\prime }%
\sqrt{k_{n}\left( 1+\log \frac{M_{n}}{k_{n}}\right) /n}$ for some constant $%
C^{\prime }>0.$ When $0<a_{n}<C\sqrt{k_{n}\left( 1+\log \frac{M_{n}}{k_{n}}%
\right) /n},$ with $\epsilon $ of order $a_{n},$ the lower bound follows.

(iii) For the larger $k_{n}$ case, from the proof of part (i) of the
theorem, we have used fewer than $k_{n}$ nonzero components to derive the
minimax lower bound there. Thus, the extra $\ell_{0}$-constraint does not
change the problem in terms of lower bound. For the smaller $k_{n}$ case,
note that for $\theta $ with $\Vert \theta \Vert _{0}\leq k_{n},$ $\Vert
\theta \Vert _{q}\leq k_{n}^{1/q-1/2}\Vert \theta \Vert _{2}\leq
k_{n}^{1/q-1/2} \sqrt{ C k_n \left( 1 + \log \frac{M_n}{k_n} \right) / n }$
for $\theta $ with $\Vert \theta \Vert _{2}\leq \sqrt{ C k_n \left( 1 + \log 
\frac{M_n}{k_n} \right) / n }.$ Therefore the $\ell_{q}$-constraint is
automatically satisfied when $\Vert \theta \Vert _{2}$ is no larger than the
critical order $\sqrt{ k_n \left( 1 + \log \frac{M_n}{k_n} \right) / n }$,
which is sufficient for the lower bound via local entropy techniques. The
conclusion follows.

\end{proof}

%%%%%%%%%%%% Theorem 6 %%%%%%%%%%%%%%%%%%%%%%%%%
\vspace*{0.2in} \textit{Proof of Theorem \ref{MinimaxReg}.}

\begin{proof}
(i) We only need to derive the lower bound part. Under the assumptions that $%
\sup_{j}\Vert X_{j}\Vert _{\infty }\leq L_{0}<\infty $ for some constant $%
L_{0}>0$, for a fixed $t_{n}=t>0,$ we have $\forall f_{\theta }\in \mathcal{F%
}_{q}(t_{n};M_{n})$, $\Vert f_{\theta }\Vert _{\infty }\leq \sup_{j}\Vert
X_{j}\Vert _{\infty }\cdot \sum_{j=1}^{M_{n}}|\theta _{j}|\leq L_{0}\Vert
\theta \Vert _{1}\leq L_{0}\Vert \theta \Vert _{q}\leq L_{0}t$. Then the
conclusion follows directly from Theorem \ref{LowReg} (Part (i)). Note that
when $t_{n}$ is fixed, the case $m_{\ast }=1$ needs not to be separately
considered.

(ii) For the upper rate part, we use the \textbf{AC-C} upper bound. For $%
f_{\theta }$ with $\Vert \theta \Vert _{\infty }\leq L_{0},$ clearly, we
have $\Vert \theta \Vert _{1}\leq M_{n}L_{0},$ and consequently, since $\log
(1+M_{n}L_{0})$ is upper bounded by a multiple of $k_{n}\left( 1+\log \frac{%
M_{n}}{k_{n}}\right) ,$ the upper rate $\frac{k_{n}}{n}\left( 1+\log \frac{%
M_{n}}{k_{n}}\right) \wedge 1$ is obtained from Theorem \ref{UpReg}. Under
the assumptions that $\sup_{j}\Vert X_{j}\Vert _{\infty }\leq L_{0}<\infty $
and $k_{n}\sqrt{\left( 1+\log \frac{M_{n}}{k_{n}}\right) /n}\leq \sqrt{K_{0}}
$, we know that $\forall f_{\theta }\in \mathcal{F}_{0}(k_{n};M_{n})\bigcap
\{f_{\theta }:\Vert \theta \Vert _{2}\leq a_{n}\}$ with $a_{n}=C\sqrt{%
k_{n}\left( 1+\log \frac{M_{n}}{k_{n}}\right) /n}$ for some constant $C>0$,
the sup-norm of $f_{\theta }$ is upper bounded by 
\begin{equation*}
\Vert \sum_{j=1}^{M_{n}}\theta _{j}x_{j}\Vert _{\infty }\leq L_{0}\Vert
\theta \Vert _{1}\leq L_{0}\sqrt{k_{n}}a_{n}=CL_{0}k_{n}\sqrt{\frac{1+\log 
\frac{M_{n}}{k_{n}}}{n}}\leq C\sqrt{K_{0}}L_{0}.
\end{equation*}%
Then the functions in $\mathcal{F}_{0}(k_{n};M_{n})\bigcap \{f:\Vert \theta
\Vert _{2}\leq a_{n}\}$ have sup-norm uniformly bounded. Note that for
bounded $a_{n},$ $\Vert \theta \Vert _{2}\leq a_{n}$ implies that $\Vert
\theta \Vert _{\infty }\leq a_{n}.$ Thus, the extra restriction $\Vert
\theta \Vert _{\infty }\leq L_{0}$ does not affect the minimax lower rate
established in part (ii) of Theorem \ref{LowReg}.

(iii) The upper and lower rates follow similarly from Theorems \ref{UpReg}
and \ref{LowReg}. The details are thus skipped.

\end{proof}

%%%%%%% proposition 6 %%%%%%%

Now we turn to the setup in Section \ref{linearregressionfixed} with $\sigma
^{2}$ known.

\begin{proposition}
\label{Yang1999a} (Yang \cite{Yang1999}, Theorem 1) When $\lambda \geq
5.1\log 2$, we have 
\begin{equation*}
E(ASE(\hat{f}_{\hat{J}}))\leq B\inf_{J\in \Gamma _{n}}\left( \left\Vert \bar{%
f}_{J}-{f}_{0}^{n}\right\Vert _{n}^{2}+\frac{\sigma ^{2}r_{J}}{n}+\frac{%
\lambda \sigma ^{2}C_{J}}{n}\right) ,
\end{equation*}%
where $B>0$ is a constant that depends only on $\lambda $.
\end{proposition}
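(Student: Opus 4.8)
The plan is to prove this as a standard oracle inequality for penalized least squares in the Gaussian fixed‑design model; since the $ABC$ criterion has the ``AIC $+$ descriptive complexity'' form, the argument follows the Barron--Cover / Yang line. First I would fix an arbitrary target model $J\in\Gamma_n$ and start from the defining inequality $ABC(\hat J)\le ABC(J)$. Writing $Y=f_0^n+\varepsilon$, $\hat Y_K=\mathcal{P}_K Y$, $\bar f_K=\mathcal{P}_K f_0^n$, and using the orthogonal decompositions $n\,ASE(\hat f_K)=\|(I-\mathcal{P}_K)f_0^n\|_n^2\cdot n+\|\mathcal{P}_K\varepsilon\|_n^2\cdot n$ and $n\|Y-\hat Y_K\|_n^2=n\|(I-\mathcal{P}_K)f_0^n\|_n^2+\|\varepsilon\|^2-n\|\mathcal{P}_K\varepsilon\|_n^2+2n\langle(I-\mathcal{P}_K)f_0^n,\varepsilon\rangle_n$, the selection inequality, after cancelling $\|\varepsilon\|^2$ and dropping a favourable $-\,n\|\mathcal{P}_J\varepsilon\|_n^2$, rearranges into
\[
n\,ASE(\hat f_{\hat J})+\lambda\sigma^2 C_{\hat J}\le n\|\bar f_J-f_0^n\|_n^2+2\sigma^2 r_J+\lambda\sigma^2 C_J+2n\langle\bar f_{\hat J}-\bar f_J,\varepsilon\rangle_n+2n\|\mathcal{P}_{\hat J}\varepsilon\|_n^2-2\sigma^2 r_{\hat J},
\]
where $\langle\cdot,\cdot\rangle_n$ is the empirical inner product. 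Everything on the right except the first three deterministic terms is random and indexed by the data‑dependent $\hat J$, so the task is to control those two families uniformly over $\Gamma_n$.

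Second, I would bound the chi‑square family $n\|\mathcal{P}_K\varepsilon\|_n^2\sim\sigma^2\chi^2_{r_K}$ and the Gaussian linear form $\langle\bar f_{\hat J}-\bar f_J,\varepsilon\rangle_n$ over all $K$ simultaneously. For the former, a Laurent--Massart upper and lower tail bound applied at level $x+C_K$, combined with a union bound weighted by $e^{-C_K}$ — admissible because the complexities are constructed so that $\sum_{K\in\Gamma_n}e^{-C_K}\le1$ — gives, off an event of probability $\le e^{-x}$, that $n\|\mathcal{P}_K\varepsilon\|_n^2-\sigma^2 r_K$ is at most $\sigma^2$ times a small multiple of $\sqrt{r_K(x+C_K)}+(x+C_K)$ for every $K$. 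For the linear term, note that for each $K$ the vector $\bar f_K-\bar f_J$ lies in $V_K:=\mathrm{range}(\mathcal{P}_K)+\mathrm{range}(\mathcal{P}_J)$ of dimension $\le r_K+r_J$, so $|\langle\bar f_K-\bar f_J,\varepsilon\rangle_n|\le\|\bar f_K-\bar f_J\|_n\,\|\mathcal{P}_{V_K}\varepsilon\|_n$; applying the same union chi‑square bound to $\|\mathcal{P}_{V_K}\varepsilon\|_n^2$, Young's inequality $2ab\le\eta a^2+\eta^{-1}b^2$, and $\|\bar f_{\hat J}-\bar f_J\|_n^2\le2\,ASE(\hat f_{\hat J})+2\|\bar f_J-f_0^n\|_n^2$ lets one feed an $\eta$‑fraction back into the left side. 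The quantities $\sigma^2 r_{\hat J}$ that carry the wrong sign are not present on the left, but are reabsorbed via the chi‑square \emph{lower} tail, $\sigma^2 r_{\hat J}\le n\|\mathcal{P}_{\hat J}\varepsilon\|_n^2+(\text{fluctuation})\le n\,ASE(\hat f_{\hat J})+(\text{fluctuation})$, at the cost only of $C_{\hat J}$‑terms plus a noise term.

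Third, collecting everything, the hypothesis $\lambda\ge 5.1\log2$ is exactly the calibration making $\lambda\sigma^2 C_{\hat J}$ on the left strictly exceed the union‑bound price ($\approx2\sigma^2 C_{\hat J}$ plus the Young slack) it must pay, leaving a positive residual coefficient in front of $n\,ASE(\hat f_{\hat J})$. One then gets, on an event of probability $\ge1-ce^{-x}$, $c_1\,n\,ASE(\hat f_{\hat J})\le B_1\bigl(n\|\bar f_J-f_0^n\|_n^2+\sigma^2 r_J+\sigma^2 C_J\bigr)+B_2\sigma^2 x$; integrating $\int_0^\infty P(\cdot>t)\,dt$ yields $E(ASE(\hat f_{\hat J}))\le B\bigl(\|\bar f_J-f_0^n\|_n^2+\sigma^2 r_J/n+\lambda\sigma^2 C_J/n\bigr)$, and taking the infimum over the arbitrary $J\in\Gamma_n$ gives the claim with $B$ depending only on $\lambda$. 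A slicker variant replaces the tail‑plus‑union step by a single moment‑generating‑function estimate, bounding $E\exp\{c(n\,ASE(\hat f_{\hat J})+\lambda\sigma^2 C_{\hat J})\}$ by $\sum_K e^{-C_K}$ times a power of a $\chi^2$ MGF factor, which is $\le1$ precisely when $c$ and $\lambda$ are matched; this route also delivers the in‑probability bound quoted later in \S\ref{sigmaknown}.

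I expect the main obstacle to be exactly the uniform control over the exponentially many subset models: one must verify that the Laurent--Massart / Gaussian exponential bounds stay summable against the specific complexities $C_{J_m}=-\log0.85+\log((M_n-1)\wedge n)+\log\binom{M_n}{m}$, and that after the Young absorptions the leftover coefficient of $n\,ASE(\hat f_{\hat J})$ is genuinely positive for $\lambda\ge5.1\log2$ — careful but not conceptually deep bookkeeping. A minor additional wrinkle is the degenerate full‑projection model $\bar J$ (rank $n$, zero approximation error, contributing only the $\sigma^2$‑order estimation term) and the null model $J_0$, which must be carried through the same union bound with weights $e^{-C_{\bar J}},e^{-C_{J_0}}$ but otherwise require no special treatment.
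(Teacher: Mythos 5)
Your overall architecture --- start from $ABC(\hat{J})\le ABC(J)$, control the fluctuations uniformly over $\Gamma_n$ by a union bound weighted by $e^{-C_K}$, absorb fractions into the left-hand side, and integrate the tail --- is exactly the Yang (1999) argument that the paper imports here by citation (the paper does not reprove this proposition; its proof of the companion Proposition~\ref{Yang1999b} for $ABC'$ displays the same machinery). However, your specific treatment of the cross term does not close. You bound $2n\langle \bar{f}_{\hat{J}}-\bar{f}_J,\varepsilon\rangle_n$ by Cauchy--Schwarz against $\|\mathcal{P}_{V_{\hat{J}}}\varepsilon\|$ with $\dim V_{\hat{J}}\le r_{\hat{J}}+r_J$ and then apply Young's inequality. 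Besides the feedback term $2\eta\, n\,ASE(\hat{f}_{\hat{J}})$ (from $\|\bar{f}_{\hat{J}}-\bar{f}_J\|_n^2\le 2\,ASE(\hat{f}_{\hat{J}})+2\|\bar{f}_J-f_0^n\|_n^2$), this produces a term of order $\eta^{-1}\sigma^2 r_{\hat{J}}$ on the right. Your only mechanism for handling $\sigma^2 r_{\hat{J}}$ is to dominate it by $n\,ASE(\hat{f}_{\hat{J}})$ plus fluctuation via the chi-square lower tail, so the right-hand side acquires a coefficient of at least $2\eta+\eta^{-1}\ge 2\sqrt{2}>1$ in front of $n\,ASE(\hat{f}_{\hat{J}})$, while the left-hand side has coefficient $1$: the inequality becomes vacuous. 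Nor can $\eta^{-1}\sigma^2 r_{\hat{J}}$ be charged to $\lambda\sigma^2 C_{\hat{J}}$ instead, since the full model $J_{M_n}$ and the full-projection model $\bar{J}$ carry constant complexity but rank as large as $n$.

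The fix --- and what the actual proof does --- is to never project the noise onto the $(r_{\hat{J}}+r_J)$-dimensional space: write $2n\langle\bar{f}_{\hat{J}}-\bar{f}_J,\varepsilon\rangle_n=2\langle(I-\mathcal{P}_J)f_0^n,\varepsilon\rangle-2\langle(I-\mathcal{P}_{\hat{J}})f_0^n,\varepsilon\rangle$, a difference of two \emph{scalar} Gaussians whose variances are $\sigma^2$ times the respective squared approximation errors rather than the model dimensions. Young's inequality then feeds back only $\eta\, n\|(I-\mathcal{P}_{\hat{J}})f_0^n\|_n^2\le\eta\, n\,ASE(\hat{f}_{\hat{J}})$ and $\eta\, n\|\bar{f}_J-f_0^n\|_n^2$, plus $\eta^{-1}\sigma^2(x+C_{\hat{J}})$-type deviations that are covered by $\lambda\sigma^2 C_{\hat{J}}$; with $\eta$ small the residual coefficient of $n\,ASE(\hat{f}_{\hat{J}})$ stays positive, and $\lambda\ge 5.1\log 2$ emerges from optimizing the resulting trade-off. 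These two quantities are precisely $\mathrm{rem}_1(J)=e_n^{\prime}(f_0^n-M_Jf_0^n)$ and $\mathrm{rem}_2(J)=r_J-e_n^{\prime}M_Je_n$ in the paper's proof of Proposition~\ref{Yang1999b}. The remaining steps of your plan (summability of the union bound against $\sum_K e^{-C_K}\le 1$, the lower-tail absorption for $2(n\|\mathcal{P}_{\hat{J}}\varepsilon\|_n^2-\sigma^2 r_{\hat{J}})$, the handling of $\bar{J}$ and $J_0$, and the tail integration) are sound.
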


%%%%%%%%%%% theorem 7. Upper bounds for linear regression (fixed) with sigma known %%%%%%%%%%%
\vspace*{.2in} \textit{Proof of Theorem \ref{UpABC}.}

%\vspace*{.2in} \textit{Proof of Theorem \ref{UpABC}.} 
\begin{proof}
The general case (ii) is easily derived based on our estimation procedure
and Proposition \ref{Yang1999a}.

To prove (i), when $\mathcal{F} = \mathcal{F}_q (t_n; M_n)$, according to
the upper bound in (ii) and Theorem \ref{Th1}, when $f_0^n \in \mathcal{F}_q
(t_n; M_n)$, for any $1 \leq m \leq (M_n - 1) \wedge n$, there exists a
subset $J_m$ and $f_{\theta^m} \in \mathcal{F}_{J_m}$ such that 
\begin{eqnarray*}
& & E (ASE (\hat{f}_{\hat{J}})) \\
& \leq & B \left( \left( \left\| {f}_{\theta^{m}} -{f}_0^n \right\|_n^2 + 
\frac{ \sigma^2 r_{J_m}}{n} + \frac{\sigma^2 \log ( M_n \wedge n) }{n} + 
\frac{\sigma^2 \log {\binom{M_n }{m}} }{n} \right) \wedge \frac{\sigma^2
r_{M_n}}{n} \right) \\
& & \wedge B \left( \left( \| \bar{f}_{J_0} - f_0^n \|_n^2 + \frac{\sigma^2 
}{n} \right) \wedge \sigma^2 \right) \\
& \leq & B \left( \left( 2^{2 / q - 1} t_n^{2} m^{1 - 2 / q} + \frac{
\sigma^2 r_{J_m}}{n} + \frac{\sigma^2 \log ( M_n \wedge n) }{n} +\frac{%
\sigma^2 \log {\binom{M_n }{m}}}{n} \right) \wedge \frac{\sigma^2 r_{M_n}}{n}
\right) \\
& & \wedge B \left( \left( t_n^2 + \frac{\sigma^2 }{n} \right) \wedge
\sigma^2 \right).
\end{eqnarray*}
Since $\log {\binom{M_n }{m}} \leq m \left( 1 + \log \frac{M_n}{m} \right)$
and $\log M_n \leq m \left( 1 + \log \frac{M_n}{m} \right)$, then for models
with size $1 \leq m \leq (M_n - 1) \wedge n$, we have 
\begin{eqnarray*}
E (ASE (\hat{f}_{\hat{J}})) & \leq & B^{\prime }\left( \left( t_n^{2} m^{1 -
2 / q} + \frac{\sigma^2 r_{J_m}}{n} + \frac{\sigma^2 m \left( 1 + \log \frac{%
M_n }{m} \right)}{n} \right) \wedge \frac{\sigma^2 r_{M_n}}{n} \right) \\
& & \wedge B \left( \left( t_n^2 + \frac{\sigma^2 }{n} \right) \wedge
\sigma^2 \right),
\end{eqnarray*}
where $B^{\prime }$ only depends on $q$ and $\lambda$.

When $m_{\ast }=m^{\ast }=M_{n}\wedge n$, the full model $J_{M_{n}}$ leads
to an upper bound of order $\frac{\sigma ^{2}r_{M_{n}}}{n}$. When $1<m_{\ast
}\leq m^{\ast }<M_{n}\wedge n$, we get the desired upper bounds by
evaluating the risk bounds choosing $J_{m_{\ast }}$ and $J_{M_{n}}$. When $%
m_{\ast }=1$, models $J_{0}$ and $J_{M_{n}}$ result in the desired upper
bound.

The arguments for cases $\mathcal{F}=\mathcal{F}_{0}(k_{n};M_{n})$ and $%
\mathcal{F}=\mathcal{F}_{q}(t_{n};M_{n})\cap \mathcal{F}_{0}(k_{n};M_{n})$
are similar to those of Theorem \ref{UpAgg} and above with $r_{J_{m}}$
replacing $m$ in the upper bounds.

\end{proof}

%%%%%%%%%%% theorem 8. Lower bounds for linear regression (fixed) with sigma known%%%%%%%%%%%
\vspace*{.2in} \textit{Proof of Theorem \ref{LowABC}.}

\begin{proof}
Without loss of generality, assume the error variance $\sigma^2 = 1$. Let $%
P_{f}(y^{n})=\prod _{i=1}^{n}\frac{1}{\sqrt{2\pi }}\exp \left( \right.$ $%
\left. -\frac{1}{2}(y_{i}-f(\mathbf{x}_{i})^{2} )\right) $ denote the joint
density of $Y^{n}=(Y_{1},...,Y_{n})^{^{\prime }},$ where the components are
independent with mean $f(\mathbf{x}_{i})$ and variance $1,$ $1\leq i\leq n.$
Then the Kullback-Leibler distance between $P_{f_{1}}(y^{n})$ and $%
P_{f_{2}}(y^{n})$ is%
\begin{equation*}
D\left( P_{f_{1}}(y^{n})\parallel P_{f_{2}}(y^{n})\right) =\frac{1}{2}%
\sum_{i=1}^{n}\left( f_{1}(\mathbf{x}_{i})-f_{2}(\mathbf{x}_{i})\right) ^{2}.
\end{equation*}%
To prove the lower bounds, instead of the global $L_{2}$ distance on the
regression functions, we need to work with the distance $d(f_{1},f_{2})=%
\sqrt{\sum_{i=1}^{n}\left( f_{1}(\mathbf{x}_{i})-f_{2}(\mathbf{x}%
_{i})\right) ^{2}}.$

First consider the case $\mathcal{F}=\mathcal{F}_{q}(t_{n};M_{n})$. Let $%
B_{k}(\eta )=\{\theta :\Vert \theta \Vert _{2}\leq \eta ,\Vert \theta \Vert
_{0}\leq k\}$ and $\mathcal{B}_{k}(f_{0};\epsilon )=\{f_{\theta }:\Vert
f_{\theta }\Vert _{n}\leq \epsilon ,\Vert \theta \Vert _{0}\leq k\}$ ($%
f_{0}=0$). Then, under Assumption SRC$^{\prime }$ with $\gamma =k$, the $%
\frac{\underline{a}}{2\overline{a}}$-local $\epsilon $-packing entropy of $%
\mathcal{B}_{k}(f_{0};\epsilon )$ is lower bounded by the $\frac{1}{2}$%
-local $\eta $-packing entropy of $B_{k}(\eta )$ with $\eta =\frac{\epsilon 
}{\overline{a}}$. When $\gamma =m_{\ast }$, the proof is the same as the
proof of Theorem \ref{LowReg}.

Now consider the case $\mathcal{F}=\mathcal{F}_{0}(k_{n};M_{n})$ and again
assume $k_{n}\leq M_{n}/2$ as in the proof of Theorem \ref{LowReg}. When
Assumption SRC$^{\prime }$ holds with $\gamma =k_{n}$, the lower bound is of
order $\frac{k_{n}(1+\log M_{n}/k_{n})}{n}$ as before in the random design
case. The proof for the last case $\mathcal{F}=\mathcal{F}%
_{q}(t_{n};M_{n})\cap \mathcal{F}_{0}(k_{n};M_{n})$ is similarly done as in
the proof of Theorem \ref{LowReg}.

\end{proof}

%%%%%%%%%%%%% theorem 9. Upper bounds for linear regression (mixing estimators)  %%%%%%%%%%%%%%%%
%\vspace*{0.2in} \textit{Proof of Theorem \ref{UpMLS}.}
\newpage
 \textit{Proof of Theorem \ref{UpMLS}.}
\begin{proof} 
According to Corollary 6 from \cite{LeungBarron2006}, we have 
\begin{equation*}
E(ASE(\hat{f}^{MLS}))\leq \inf_{J\in \Gamma _{n}}\left( \Vert \bar{f}%
_{J}-f_{0}^{n}\Vert _{n}^{2}+\frac{\sigma ^{2}r_{J}}{n}+\frac{4\sigma
^{2}\log (1/\pi _{J})}{n}\right) ,
\end{equation*}%
which is basically the same as Proposition \ref{Yang1999a} with $B=1$. Thus,
the rest of the proof is basically the same as that of Theorem \ref{UpABC}.

\end{proof}

\vspace*{0.2in} To prove Theorem \ref{UpABCunknown} , we need an oracle
inequality, which improves Theorem 4 of \cite{Yang1999}, where only a
convergence in probability result is given. Suppose that only the subset
models $J_{m}$ with rank $r_{J_{m}}\leq n/2$ are considered (which is
automatically satisfied when $M_{n}\leq n/2$). Let $\Gamma $ denote these
models. (More generally, a risk bound similar to the following holds if we
consider models with size no more than $(1-\rho )n$ for any small $\rho >0$%
.) Let $C_{J}$ be the descriptive complexity of the model $J$ in $\Gamma .$

%%%%%%% proposition 7 %%%%%%%

\begin{proposition}
\label{Yang1999b} When $\lambda \geq 40\log 2$, the selected model $\hat{J}%
^{\prime }$ by ABC$^{\prime }$ satisfies 
\begin{equation*}
E(ASE(\hat{f}_{\hat{J}^{\prime }}))\leq B\inf_{J\in \Gamma }\left(
\left\Vert \bar{f}_{J}-{f}_{0}^{n}\right\Vert _{n}^{2}+\frac{\sigma ^{2}r_{J}%
}{n}+\frac{\lambda \sigma ^{2}C_{J}}{n}\right) ,
\end{equation*}%
where $B$ is a constant that depends on $\lambda ,$ $\overline{\sigma }^{2},$
and $\sigma ^{2}$.
\end{proposition}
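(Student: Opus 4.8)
The plan is to run, in parallel with the proof of Proposition~\ref{Yang1999a}, the argument behind Theorem~4 of \cite{Yang1999}, and then upgrade its in-probability conclusion to the stated bound on the expected loss. Write $\mathcal{P}_{J}$ for the orthogonal projection onto the column span of the design matrix of $J$, so that $\hat{Y}_{J}=\mathcal{P}_{J}(f_{0}^{n}+\varepsilon)=\bar{f}_{J}+\mathcal{P}_{J}\varepsilon$; by orthogonality $ASE(\hat{f}_{J})=\Vert(I-\mathcal{P}_{J})f_{0}^{n}\Vert_{n}^{2}+\Vert\mathcal{P}_{J}\varepsilon\Vert_{n}^{2}$, while $\sum_{i}(Y_{i}-\hat{Y}_{i,J})^{2}=n\Vert(I-\mathcal{P}_{J})f_{0}^{n}\Vert_{n}^{2}+2\langle(I-\mathcal{P}_{J})f_{0}^{n},\varepsilon\rangle+n\Vert(I-\mathcal{P}_{J})\varepsilon\Vert_{n}^{2}$. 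First I would fix an oracle model $J^{\ast}\in\Gamma$ attaining (up to a factor $2$) the infimum defining $R_{n}(f_{0})=\inf_{J\in\Gamma}\big(\Vert\bar{f}_{J}-f_{0}^{n}\Vert_{n}^{2}+\sigma^{2}r_{J}/n+\lambda\sigma^{2}C_{J}/n\big)$ and start from the selection inequality $ABC^{\prime}(\hat{J}^{\prime})\le ABC^{\prime}(J^{\ast})$. Expanding $ABC^{\prime}(J)=\frac{n+r_{J}}{n-r_{J}}\big(\sum_{i}(Y_{i}-\hat{Y}_{i,J})^{2}+\lambda\overline{\sigma}^{2}C_{J}\big)$ on both sides, and using the restriction $r_{J}\le n/2$ (which keeps the multiplicative FPE factor in $[1,3]$ and $n-r_{J}\ge n/2$), one rearranges to an inequality bounding $ASE(\hat{f}_{\hat{J}^{\prime}})$ by a constant multiple of $R_{n}(f_{0})$ plus a short list of stochastic error terms: the $\chi^{2}$-type forms $\Vert\mathcal{P}_{J}\varepsilon\Vert_{n}^{2}$ and $\Vert(I-\mathcal{P}_{J})\varepsilon\Vert_{n}^{2}$ for $J\in\{\hat{J}^{\prime},J^{\ast}\}$ and the Gaussian linear forms $n^{-1}\langle(I-\mathcal{P}_{J})f_{0}^{n},\varepsilon\rangle$. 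The purpose of the multiplicative correction $\frac{n+r_{J}}{n-r_{J}}$ together with the conservative choice $\overline{\sigma}^{2}\ge\sigma^{2}$ in the penalty is exactly to render the criterion self-calibrating when $\sigma$ is unknown: it lets the penalty dominate both the estimation cost $\sigma^{2}r_{J}/n$ and the cost of searching over $\binom{M_{n}}{m}$ models of each size, at the price of a final constant depending on $\overline{\sigma}^{2}/\sigma^{2}$.

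Next I would control the stochastic terms uniformly over $\Gamma$. Since $\Vert\mathcal{P}_{J}\varepsilon\Vert_{n}^{2}\sim(\sigma^{2}/n)\chi_{r_{J}}^{2}$, $\Vert(I-\mathcal{P}_{J})\varepsilon\Vert_{n}^{2}\sim(\sigma^{2}/n)\chi_{n-r_{J}}^{2}$, and $\langle(I-\mathcal{P}_{J})f_{0}^{n},\varepsilon\rangle$ is centered Gaussian with variance $\sigma^{2}\Vert(I-\mathcal{P}_{J})f_{0}^{n}\Vert^{2}$, Laurent--Massart-type deviation bounds give, for each fixed $J$ and each $x>0$, an exceptional probability of order $\exp(-c(r_{J}+C_{J}+x))$ that one of these terms exceeds a suitable multiple of $\sigma^{2}(r_{J}+C_{J}+x)/n$ (for the quadratic forms) or of $\delta\Vert(I-\mathcal{P}_{J})f_{0}^{n}\Vert_{n}^{2}+\sigma^{2}(C_{J}+x)/n$ (for the cross term, with $\delta$ a small fixed number). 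Summing over $J\in\Gamma$ via $\sum_{J}e^{-C_{J}}\le1$ and using $\lambda\ge40\log2$ (so that the penalty level outruns the combinatorial factor $e^{C_{J}}$ with enough slack to also absorb the FPE factor), one obtains a single event $\Omega_{x}$ with $P(\Omega_{x}^{c})\le c^{\prime}e^{-c^{\prime\prime}x}$ on which, simultaneously for every $J\in\Gamma$, all these fluctuations are dominated by $\lambda\sigma^{2}C_{J}/n$. On $\Omega_{x}$ the inequality from the selection step then collapses to $ASE(\hat{f}_{\hat{J}^{\prime}})+\lambda\sigma^{2}C_{\hat{J}^{\prime}}/n\le B\big(R_{n}(f_{0})+\sigma^{2}x/n\big)$ for a constant $B$ depending only on $\lambda,\overline{\sigma}^{2},\sigma^{2}$; this is precisely the exponential deviation bound alluded to in Section~\ref{linearregressionfixed}, refining the in-probability statement of \cite{Yang1999}.

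Finally, to pass to the expected loss I would integrate the tail: $E(ASE(\hat{f}_{\hat{J}^{\prime}}))=\int_{0}^{\infty}P(ASE(\hat{f}_{\hat{J}^{\prime}})>u)\,du$, splitting at $u=BR_{n}(f_{0})$ and using the bound just obtained to estimate the upper tail by a multiple of $\sigma^{2}/n$, which is in turn $\lesssim R_{n}(f_{0})$ because the null model lies in $\Gamma$. It then remains to check that $\Omega_{x}^{c}$ contributes negligibly: there one bounds $ASE(\hat{f}_{\hat{J}^{\prime}})=\Vert(I-\mathcal{P}_{\hat{J}^{\prime}})f_{0}^{n}\Vert_{n}^{2}+\Vert\mathcal{P}_{\hat{J}^{\prime}}\varepsilon\Vert_{n}^{2}$ by observing that the candidate models contain the intercept, so $\Vert(I-\mathcal{P}_{\hat{J}^{\prime}})f_{0}^{n}\Vert_{n}^{2}\le\Vert\bar{f}_{J_{0}}-f_{0}^{n}\Vert_{n}^{2}\le R_{n}(f_{0})$, while $\Vert\mathcal{P}_{\hat{J}^{\prime}}\varepsilon\Vert_{n}^{2}\le\Vert\varepsilon\Vert_{n}^{2}$ and $E\big(\Vert\varepsilon\Vert_{n}^{2}\mathbf{1}_{\Omega_{x}^{c}}\big)\le(E\Vert\varepsilon\Vert_{n}^{4})^{1/2}P(\Omega_{x}^{c})^{1/2}$ is exponentially small. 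I expect the main obstacle to be the second paragraph: arranging the numerical constants so that the FPE factor, the over-estimate $\overline{\sigma}^{2}$, and the penalty level $\lambda\ge40\log2$ jointly dominate the combined $\chi^{2}$ and Gaussian fluctuations uniformly over the (possibly exponentially many) models, while simultaneously keeping the \emph{lower} tail of $\chi_{n-r_{J}}^{2}$ under control---this is where $r_{J}\le n/2$ is indispensable and where the constant $40\log2$ originates.
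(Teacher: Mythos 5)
Your proposal follows essentially the same route as the paper's proof: start from the selection inequality $ABC^{\prime}(\hat{J}^{\prime})\leq ABC^{\prime}(J^{\ast})$, decompose the criterion into the index of resolvability plus Gaussian linear and $\chi^{2}$-type remainder terms, control those uniformly over $\Gamma$ via the summability $\sum_{J}e^{-C_{J}}\leq 1$ with the penalty level $\lambda$ absorbing the union bound (the paper packages the deviation constants into the function $h(\tau_{1},\tau_{2})$ minimized over $6\tau_{1}+3\tau_{2}<1$, which is where $40\log 2$ arises, and uses $r_{J}\leq n/2$ to keep the FPE factor bounded), and finally integrate the resulting exponential tail to pass to expectation. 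The only cosmetic difference is the last step, where the paper integrates the normalized ratio $\widetilde{W}$ directly rather than splitting the tail integral and treating the exceptional event by Cauchy--Schwarz; both are valid.
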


\begin{remark}
If we add models with rank $r_{J}>n/2$ into the competition, as long as the
complexity assignment over all the models is valid (i.e., satisfying the
summability condition), if we can show that for these added models, $%
ABC^{\prime }(J)$ are also upper and lower bounded with high probabilities
as in (\ref{ABCLB}) and (\ref{ABCUB}), then the risk bound in the
proposition continues to hold.
\end{remark}

\begin{proof}
Let $e_{n}=(\varepsilon _{1},\ldots ,\varepsilon _{n})^{\prime }$.
For ease in writing, we simplify $\left\Vert \cdot \right\Vert _{n}^{2}$ to $%
\left\Vert \cdot \right\Vert ^{2}$ in this proof. From page 495 in \cite{Yang1999}, for each candidate model $J$, we have 
\begin{equation*}
ABC^{\prime }(J)=\Vert
A_{J}f_{0}^{n}\Vert ^{2}+r_{J}\left( \frac{2}{n-r_{J}}\left( \Vert Y_{n}-%
\hat{Y}_{J}\Vert ^{2}+\lambda \overline{\sigma }^{2}C_{J}\right) -\sigma
^{2}\right) +\lambda \overline{\sigma }^{2}C_{J}+2\text{rem}_{1}(J)+\text{rem%
}_{2}(J),
\end{equation*}
where $\Vert A_{J}f_{0}^{n}\Vert ^{2}=\Vert \overline{f}_{J}-{f}%
_{0}^{n}\Vert ^{2}$, $\text{rem}_{1}(J)=e_{n}^{\prime
}(f_{0}^{n}-M_{J}f_{0}^{n})$ and $\text{rem}_{2}(J)=r_{J}-e_{n}^{\prime
}M_{J}e_{n}$. Note also that $\Vert Y_{n}-\hat{Y}_{J}\Vert ^{2}+\lambda 
\overline{\sigma }^{2}C_{J}=\Vert A_{J}f_{n}\Vert ^{2}+(n-r_{J})\sigma
^{2}+\left( e_{n}^{\prime }A_{J}e_{n}-(n-r_{J})\sigma ^{2}\right)
+2e_{n}^{\prime }A_{J}f_{n}+\lambda \overline{\sigma }^{2}C_{J}$. Let 
\begin{equation*}
T(J)=\Vert A_{J}f_{0}^{n}\Vert ^{2}+(n-r_{J})\sigma ^{2}+\lambda \overline{%
\sigma }^{2}C_{J},\mbox{ and }nR_{n}(J)=\Vert A_{J}f_{0}^{n}\Vert
^{2}+r_{J}\sigma ^{2}+\lambda \overline{\sigma }^{2}C_{J}.
\end{equation*}%
As is shown in the proof of Theorem 1, \cite{Yang1999}, if $\lambda >h(\tau
_{1},\tau _{2})=\max (\sup_{\xi \geq 0}((2(\log 2)\xi )^{1/2}/\tau _{1}-\xi
),\sup_{\rho \geq 0}(\rho /\tau _{2}-1)2(\log 2)/(\rho -\log (\rho +1)))$
for some constants $\tau _{1}$ and $\tau _{2}$ with $2\tau _{1}+\tau _{2}<1,$
then for any $\delta >0$, with probability no less than $1-5\delta $, $|%
\text{rem}_{1}(J)|\leq \tau _{1}(nR_{n}(J)+g_{1}(\delta ))$, $|\text{rem}%
_{2}(J)|\leq \tau _{2}(nR_{n}(J)+g_{2}(\delta ))$, and $|e_{n}^{\prime
}A_{J}e_{n}-(n-r_{J})\sigma ^{2}|\leq \tau _{2}(T(J)+g_{2}(\delta ))$, where 
$g_{1}(\delta )=g_{2}(\delta )=\lambda \log _{2}(1/\delta )$. Then with
probability no less than $1-5\delta $, we have 
\begin{eqnarray}
ABC^{\prime }(J) &\geq &\Vert A_{J}f_{0}^{n}\Vert ^{2}+r_{J}\left( \frac{%
2(T(J)-\tau _{2}(T(J)+g_{2}(\delta ))-2\tau _{1}(nR_{n}(J)+g_{1}(\delta )))}{%
n-r_{J}}-\sigma ^{2}\right)  \notag \\
&&-2\tau _{1}(nR_{n}(J)+g_{1}(\delta ))-\tau _{2}(nR_{n}(J)+g_{2}(\delta
))+\lambda \overline{\sigma }^{2}C_{J}  \notag \\
&\geq &\Vert A_{J}f_{0}^{n}\Vert ^{2}+r_{J}\left( \frac{2(1-(2\tau _{1}+\tau
_{2}))T(J)}{n-r_{J}}-\frac{2(2\tau _{1}g_{1}(\delta )+\tau _{2}g_{2}(\delta
))}{n-r_{J}}-\sigma ^{2}\right)  \notag \\
&&-(2\tau _{1}+\tau _{2})nR_{n}(J)-(2\tau _{1}g_{1}(\delta )+\tau
_{2}g_{2}(\delta ))+\lambda \overline{\sigma }^{2}C_{J}  \notag \\
&\geq &\Vert A_{J}f_{0}^{n}\Vert ^{2}+r_{J}(1-(4\tau _{1}+2\tau _{2}))\sigma
^{2}-\frac{2r_{J}(2\tau _{1}g_{1}(\delta )+\tau _{2}g_{2}(\delta ))}{n-r_{J}}
\notag \\
&&-(2\tau _{1}+\tau _{2})nR_{n}(J)-(2\tau _{1}g_{1}(\delta )+\tau
_{2}g_{2}(\delta ))+\lambda \overline{\sigma }^{2}C_{J}  \notag \\
&\geq &(1-(6\tau _{1}+3\tau _{2}))nR_{n}(J)-\frac{n+r_{J}}{n-r_{J}}(2\tau
_{1}g_{1}(\delta )+\tau _{2}g_{2}(\delta )).  \label{ABCLB}
\end{eqnarray}%
Suppose $6\tau _{1}+3\tau _{2}<1$. Let $J_{n}$ be the candidate model that
minimizes $R_{n}(J)$. Then with exception probability less than $5\delta $,
we have 
\begin{eqnarray*}
ABC^{\prime }(J_{n}) &\leq &\Vert A_{J_{n}}f_{0}^{n}\Vert
^{2}+r_{J_{n}}\left( \frac{2(1+(2\tau _{1}+\tau _{2}))T(J_{n})}{n-r_{J_{n}}}%
-\sigma ^{2}\right) +(2\tau _{1}+\tau _{2})nR_{n}(J_{n}) \\
&&+\frac{n+r_{J_{n}}}{n-r_{J_{n}}}(2\tau _{1}g_{1}(\delta )+\tau
_{2}g_{2}(\delta ))+\lambda \overline{\sigma }^{2}C_{J_{n}}.
\end{eqnarray*}%
Since $%
T(J_{n})/(n-r_{J_{n}})=(1+r_{J_{n}}/(n-r_{J_{n}}))R_{n}(J_{n})+(1-r_{J_{n}}/(n-r_{J_{n}}))\sigma ^{2}\leq 2R_{n}(J_{n})+\sigma ^{2} 
$, then 
\begin{equation}
ABC^{\prime }(J_{n})\leq (5+14\tau _{1}+7\tau _{2})nR_{n}(J_{n})+\frac{%
n+r_{J_{n}}}{n-r_{J_{n}}}(2\tau _{1}g_{1}(\delta )+\tau _{2}g_{2}(\delta )).
\label{ABCUB}
\end{equation}%
Thus, for any $\delta >0$, when the sample size is large enough, we have
that with probability no less than $1-5\delta $, 
\begin{eqnarray*}
nR_{n}(\hat{J}^{\prime }) &\leq &\frac{ABC^{\prime }(\hat{J}^{\prime })+%
\frac{n+r_{\hat{J}^{\prime }}}{n-r_{\hat{J}^{\prime }}}(2\tau
_{1}g_{1}(\delta )+\tau _{2}g_{2}(\delta ))}{1-(6\tau _{1}+3\tau _{2})} \\
&\leq &\frac{ABC^{\prime }(J_{n})+\frac{n+r_{\hat{J}^{\prime }}}{n-r_{\hat{J}%
^{\prime }}}(2\tau _{1}g_{1}(\delta )+\tau _{2}g_{2}(\delta ))}{1-(6\tau
_{1}+3\tau _{2})} \\
&\leq &\frac{(5+14\tau _{1}+7\tau _{2})nR_{n}(J_{n})+\frac{n+r_{J_{n}}}{%
n-r_{J_{n}}}(2\tau _{1}g_{1}(\delta )+\tau _{2}g_{2}(\delta ))+\frac{n+r_{%
\hat{J}^{\prime }}}{n-r_{\hat{J}^{\prime }}}(2\tau _{1}g_{1}(\delta )+\tau
_{2}g_{2}(\delta ))}{1-(6\tau _{1}+3\tau _{2})}.
\end{eqnarray*}%
Thus, with probability at least $1-5\delta ,$ 
\begin{eqnarray*}
R_{n}(\hat{J}^{\prime })/R_{n}(J_{n}) &\leq &\frac{5+14\tau _{1}+7\tau _{2}}{%
1-(6\tau _{1}+3\tau _{2})}+\frac{\left( \frac{n+r_{J_{n}}}{n-r_{J_{n}}}+%
\frac{n+r_{\hat{J}^{\prime }}}{n-r_{\hat{J}^{\prime }}}\right) (2\tau
_{1}g_{1}(\delta )+\tau _{2}g_{2}(\delta ))}{(1-(6\tau _{1}+3\tau
_{2}))nR_{n}(J_{n})} \\
&\leq &\frac{5+14\tau _{1}+7\tau _{2}}{1-(6\tau _{1}+3\tau _{2})}+\frac{%
\left( \frac{n+r_{J_{n}}}{n-r_{J_{n}}}+\frac{n+r_{\hat{J}^{\prime }}}{n-r_{%
\hat{J}^{\prime }}}\right) (2\tau _{1}g_{1}(\delta )+\tau _{2}g_{2}(\delta ))%
}{(1-(6\tau _{1}+3\tau _{2}))\sigma ^{2}} \\
&\leq &\frac{5+14\tau _{1}+7\tau _{2}}{1-(6\tau _{1}+3\tau _{2})}+\frac{%
6(2\tau _{1}g_{1}(\delta )+\tau _{2}g_{2}(\delta ))}{(1-(6\tau _{1}+3\tau
_{2}))\sigma ^{2}}.
\end{eqnarray*}%
Let 
\begin{equation*}
\widetilde{W}=b_{n}^{-1}\left( \frac{R_{n}(\hat{J}^{\prime })}{R_{n}(J_{n})}-%
\frac{5+14\tau _{1}+7\tau _{2}}{1-(6\tau _{1}+3\tau _{2})}\right) \text{ and 
}b_{n}=\frac{6(2\tau _{1}+\tau _{2})\lambda }{(1-(6\tau _{1}+3\tau
_{2}))\sigma ^{2}}.
\end{equation*}%
Then $P\left( \widetilde{W}\geq -\log _{2}\delta \right) \leq 5\delta $ for $%
0<\delta <1$. Since $E(\widetilde{W}^{+})=\int_{0}^{\infty }P(\widetilde{W}%
\geq t)dt\leq 5\int_{0}^{\infty }2^{-t}dt=5/\ln 2$ and $R_{n}(J_{n})\leq (%
\overline{\sigma }^{2}/\sigma ^{2})\inf_{J\in \Gamma }R_{n}(f_{0};J)$ where $%
R_{n}(f_{0};J)=\Vert \overline{f}_{J}-f_{0}^{n}\Vert _{n}^{2}+r_{J}\sigma
^{2}/n+\lambda \sigma ^{2}C_{J}/n$, then we have 
\begin{eqnarray*}
\frac{E\left( R_{n}(\hat{J}^{\prime })\right) }{\inf_{J\in \Gamma
}R_{n}(f_{0};J)} &=&\frac{E\left( R_{n}(\hat{J}^{\prime })\right) }{%
R_{n}(J_{n})}\cdot \frac{R_{n}(J_{n})}{\inf_{J\in \Gamma }R_{n}(f_{0};J)} \\
&\leq &\left( \frac{5+14\tau _{1}+7\tau _{2}}{1-(6\tau _{1}+3\tau _{2})}+%
\frac{30(2\tau _{1}+\tau _{2})\lambda }{(\ln 2)(1-(6\tau _{1}+3\tau
_{2}))\sigma ^{2}}\right) \cdot \left( \frac{\overline{\sigma }^{2}}{\sigma
^{2}}\right) .
\end{eqnarray*}%
So $E(ASE(\hat{f}_{\hat{J}^{\prime }}))\leq B\inf_{J\in \Gamma
}R_{n}(f_{0};J)$, where the constant $B$ depends on $\tau _{1}$, $\tau _{2},$
$\overline{\sigma },$ and $\sigma $. Minimizing $h(\tau _{1},\tau _{2})$
over $\tau _{1}>0$ and $\tau _{2}>0$ in the region $6\tau _{1}+3\tau _{2}<1$%
, one finds a minimum value less than $40\log 2$. Thus, the results of the
theorem hold when $\lambda \geq 40\log 2$.

\end{proof}

Proposition \ref{Yang1999b} may not provide optimal risk rate when $%
r_{M_{n}} $ is small, or when $r_{M_{n}}$ is larger than $n/2$ (in which
case the risk bound on $E(ASE(\hat{J}^{\prime }))$ can be arbitrarily large
because the approximation errors can be arbitrarily large when the models
are restricted to be of size $n/2$ or smaller). The issue can be resolved by
considering the full model $J_{M_{n}}$ and the full projection model $\bar{J}
$ in the candidate model list, as described before Theorem \ref{UpABCunknown} .

%%%%%%%%%%%%%% Theorem 10 %%%%%%%%%%%%%%%%%%%%
\vspace*{.2in} \textit{Proof of Theorem \ref{UpABCunknown} :} 
%\vspace*{0.2in} \textit{Proof of Theorem \ref{UpABC'}:} 

\begin{proof}
Observe that for the full projection model $\bar{J}$, with the chosen 
$C_{\bar{J}}$, we have that%
\begin{equation*}
(1-(6\tau _{1}+3\tau _{2}))nR_{n}(\bar{J})\leq ABC^{\prime }(\bar{J})\leq
\xi nR_{n}(\bar{J})=\xi \left( n\sigma ^{2}+\lambda \overline{\sigma }^{2}C_{%
\bar{J}}\right)
\end{equation*}%
for some constant $\xi >0$ that depends only on $\lambda ,$ $\overline{%
\sigma }^{2}$ and $\sigma ^{2}$. From the remark after Proposition \ref%
{Yang1999b}, we have the following risk bounds for the three situations.
Below $B$ and $B^{\prime }$ are constants depending only on $\lambda $, $%
\overline{\sigma }^2,$ and $\sigma ^{2}$.

\begin{enumerate}
\item When $M_{n}\leq n/2,$ we have the general risk bound 
\begin{eqnarray*}
&&E(ASE(\hat{f}_{\hat{J}^{\prime }})) \\
&\leq &B^{\prime} \left( \inf_{J_{m}:1\leq m<M_{n}}\left( \left\Vert \bar{f}%
_{J_{m}}-{f}_{0}^{n}\right\Vert _{n}^{2}+\frac{\sigma ^{2}r_{J_{m}}}{n}+%
\frac{\lambda \sigma ^{2}C_{J_{m}}}{n}\right) \wedge \left( \left\Vert \bar{f%
}_{J_{M_{n}}}-{f}_{0}^{n}\right\Vert _{n}^{2}+\frac{\sigma ^{2}r_{M_{n}}}{n}%
\right) \right. \\
&&\left. \wedge R_{n}(\bar{J})\wedge R_{n}(J_{0})\right) \\
&\leq &B^{\prime }\left( \Vert \bar{f}_{J_{M_{n}}}-f_{0}^{n}\Vert
_{n}^{2}+\inf_{J_{m}:1\leq m<M_{n}}\left( \Vert \bar{f}_{J_{m}}-\bar{f}%
_{J_{M_{n}}}\Vert _{n}^{2}+\frac{\sigma ^{2}r_{J_{m}}}{n}+\frac{\sigma
^{2}\log (M_{n}-1)}{n}\right. \right. \\
&&\left. \left. +\frac{\sigma ^{2}\log {\binom{M_{n}}{m}}}{n}\right) \wedge 
\frac{\sigma ^{2}r_{M_{n}}}{n}\right) \wedge B^{\prime }\left( \left( \Vert 
\bar{f}_{J_{0}}-f_{0}^{n}\Vert _{n}^{2}+\frac{\sigma ^{2}}{n}\right) \wedge
\sigma ^{2}\right) .
\end{eqnarray*}%
For $f_{0}^{n}\in \mathcal{F}_{q}(t_{n};M_{n})$, from above, by an argument
similar to that in Theorem \ref{UpABC}, for any $1\leq m<M_{n}$, there
exists a subset $J_{m}$ and $f_{\theta ^{m}}\in \mathcal{F}_{J_{m}}$ such
that 
\begin{eqnarray}
E(ASE(\hat{f}_{\hat{J}^{\prime }})) &\leq &B^{\prime }\left( \left(
t_{n}^{2}m^{1-2/q}+\frac{\sigma ^{2}r_{J_{m}}}{n}+\frac{\sigma ^{2}m\left(
1+\log \frac{M_{n}}{m}\right) }{n}\right) \wedge \frac{\sigma ^{2}r_{M_{n}}}{%
n}\right)  \notag \\
&&\wedge B^{\prime }\left( \left( t_{n}^{2}+\frac{\sigma ^{2}}{n}\right)
\wedge \sigma ^{2}\right) .  \label{UpABC'}
\end{eqnarray}%
When $m_{\ast }=m^{\ast }=M_{n}$, the full model $J_{M_{n}}$ leads to an
upper bound of order $\frac{\sigma ^{2}r_{M_{n}}}{n}$. When $1<m_{\ast
}<M_{n}$, we get the desired upper bound by taking the smaller value of the
index of resolvability at $J_{m_{\ast }}$ and $J_{M_{n}}$. When $m_{\ast }=1$%
, the smaller value of the index of resolvability at $J_{0}$ and $J_{M_{n}}$
results in the given upper bound.

The arguments for cases $\mathcal{F}=\mathcal{F}_{0}(k_{n};M_{n})$ and $%
\mathcal{F}=\mathcal{F}_{q}(t_n;M_{n})\cap \mathcal{F}_{0}(k_{n};M_{n})$ are
similar to those of Theorem \ref{UpABC}.

\item When $M_{n}>n/2$ and $r_{M_{n}}\geq n/2,$ evaluating the index of
resolvability gives 
\begin{eqnarray*}
&&E(ASE(\hat{f}_{\hat{J}^{\prime }})) \\
&\leq &B\left( \inf_{J_{m}:1\leq m\leq n/2}\left( \left\Vert \bar{f}_{J_{m}}-%
{f}_{0}^{n}\right\Vert _{n}^{2}+\frac{\sigma ^{2}r_{J_{m}}}{n}+\frac{\lambda
\sigma ^{2}C_{J_{m}}}{n}\right) \wedge R_{n}(\bar{J})\wedge
R_{n}(J_{0})\right) \\
&\leq &B^{\prime }\inf_{J_{m}:1\leq m\leq n/2}\left( \Vert \bar{f}_{J_{m}}-{f%
}_{0}^{n}\Vert _{n}^{2}+\frac{\sigma ^{2}r_{J_{m}}}{n}+\frac{\sigma ^{2}\log
\lfloor n/2\rfloor }{n}+\frac{\sigma ^{2}\log {\binom{M_{n}}{m}}}{n}\right)
\\
&&\wedge B^{\prime }\left( \left( \Vert \bar{f}_{J_{0}}-f_{0}^{n}\Vert
_{n}^{2}+\frac{\sigma ^{2}}{n}\right) \wedge \sigma ^{2}\right) .
\end{eqnarray*}%
In this case, for the full model, clearly, we have $\Vert \bar{f}%
_{J_{M_{n}}}-f_{0}^{n}\Vert _{n}^{2}+\frac{\sigma ^{2}r_{M_{n}}}{n}\geq 
\frac{1}{2}\sigma ^{2},$ which cannot be better than the model $\bar{J}$ up
to a constant factor. We next show that adding the models with size $%
n/2<m<M_{n}$ does not help either in terms of the rate in the risk bound. If 
$r_{J_{m}}\geq r_{M_{n}}/2$, then obviously $\Vert \bar{f}_{J_{m}}-{f}%
_{0}^{n}\Vert _{n}^{2}+\frac{\sigma ^{2}r_{J_{m}}}{n}+\frac{\sigma ^{2}\log
\lfloor n/2\rfloor }{n}+\frac{\sigma ^{2}\log {\binom{M_{n}}{m}}}{n}\geq 
\frac{1}{4}\sigma ^{2}.$ For $r_{J_{m}}<r_{M_{n}}/2$, if $n/2<m\leq M_{n}/2,$
then there exists a smaller model with size $\widetilde{m}\leq n/2$ that has
the same approximation error and rank, but smaller complexity $C_{J_{%
\widetilde{m}}}$ (i.e., $C_{J_{\widetilde{m}}}\leq C_{J_{m}}$), where $%
C_{J_{m}}=\log (n\wedge M_{n})+\log {\binom{M_{n}}{m}}$ when $m>n/2$. If $%
m>M_{n}/2$ (and $r_{J_{m}}<r_{M_{n}}/2$), then due to the monotonicity of
the function ${\binom{M_{n}}{m}}$ in $m\geq M_{n}/2,$ since there must be
more than $r_{M_{n}}/2$ terms left out in the model, we must have $\log {%
\binom{M_{n}}{m}\geq \log \binom{M_{n}}{M_{n}-\lfloor r_{M_{n}}/2\rfloor }%
\geq }\lfloor r_{M_{n}}/2\rfloor \log \frac{M_{n}}{\lfloor
r_{M_{n}}/2\rfloor },$ which is at least of order $n$ under the condition $%
r_{M_{n}}\geq n/2.$ Putting the above facts together, we conclude that
adding the models with size $n/2<m\leq M_{n}$ does not affect the validity
of the risk bound given in part (ii) of Theorem \ref{UpABCunknown} (note
that $\log \lfloor n/2\rfloor $ is of the same order as $\log (M_{n}\wedge
n) $ in our case). Then, the general risk upper bound becomes (with $%
B^{\prime } $ enlarged by an absolute constant factor) 
\begin{eqnarray*}
&&B^{\prime }\inf_{J_{m}:1\leq m<M_{n}}\left( \Vert \bar{f}%
_{J_{m}}-f_{0}^{n}\Vert _{n}^{2}+\frac{\sigma ^{2}r_{J_{m}}}{n}+\frac{\sigma
^{2}\log (M_{n}\wedge n)}{n}+\frac{\sigma ^{2}\log {\binom{M_{n}}{m}}}{n}%
\right) \\
&&\wedge B^{\prime }\left( \left( \Vert \bar{f}_{J_{0}}-f_{0}^{n}\Vert
_{n}^{2}+\frac{\sigma ^{2}}{n}\right) \wedge \sigma ^{2}\right) \wedge
B^{\prime }\left( \Vert \bar{f}_{J_{M_{n}}}-f_{0}^{n}\Vert _{n}^{2}+\frac{%
\sigma ^{2}r_{M_{n}}}{n}\right) \\
&\leq &B^{\prime }\left( \Vert \bar{f}_{J_{M_{n}}}-f_{0}^{n}\Vert
_{n}^{2}+\inf_{J_{m}:1\leq m<M_{n}}\left( \Vert \bar{f}_{J_{m}}-\bar{f}%
_{J_{M_{n}}}\Vert _{n}^{2}+\frac{\sigma ^{2}r_{J_{m}}}{n}+\frac{\sigma
^{2}\log (M_{n}\wedge n)}{n}\right. \right. \\
&&\left. \left. +\frac{\sigma ^{2}\log {\binom{M_{n}}{m}}}{n}\right) \wedge 
\frac{\sigma ^{2}r_{M_{n}}}{n}\right) \wedge B^{\prime }\left( \left( \Vert 
\bar{f}_{J_{0}}-f_{0}^{n}\Vert _{n}^{2}+\frac{\sigma ^{2}}{n}\right) \wedge
\sigma ^{2}\right) .
\end{eqnarray*}
For $f_{0}^{n}\in \mathcal{F}_{q}(t_{n};M_{n})$ and any $1\leq m<M_{n}$,
there exists a subset $J_{m}$ and $f_{\theta ^{m}}\in \mathcal{F}_{J_{m}}$
such that the inequality (\ref{UpABC'}) holds. When $m_{\ast }=m^{\ast
}=M_{n}\wedge n$, the full projection model $\bar{J}$ leads to an upper
bound of order $\sigma ^{2}$. When $1<m_{\ast }<M_{n}\wedge n$, we get the
desired upper bounds by choosing $J_{m_{\ast }}$ and $\bar{J}$ to evaluate
the index of resolvability. When $m_{\ast }=1$, models $J_{0}$ and $\bar{J}$
result in the desired upper bound.

\item When $M_{n}>n/2$ and $r_{M_{n}}<n/2,$ the full model is already
included, and, similarly as above, the models with $n/2<m<M_{n}$ can be
included in the minimization set of the general risk bound. Indeed, if $%
r_{M_{n}}=1,$ the statement is trivial. If $r_{J_{m}}\geq r_{M_{n}}/2$, then 
$\Vert \bar{f}_{J_{m}}-{f}_{0}^{n}\Vert _{n}^{2}+\frac{\sigma ^{2}r_{J_{m}}}{%
n}+\frac{\sigma ^{2}\log \lfloor n/2\rfloor }{n}+\frac{\sigma ^{2}\log {%
\binom{M_{n}}{m}}}{n}\geq \left\Vert \bar{f}_{J_{M_{n}}}-{f}%
_{0}^{n}\right\Vert _{n}^{2}+\frac{\sigma ^{2}J_{M_{n}}}{2n},$ which means
that the model cannot beat the full model up to a constant factor. For $%
r_{J_{m}}<r_{M_{n}}/2$, if $m>M_{n}/2,$ then we again have $\log {\binom{%
M_{n}}{m}\geq \log \binom{M_{n}}{M_{n}-\lfloor r_{M_{n}}/2\rfloor }\geq }%
\lfloor r_{M_{n}}/2\rfloor \log \frac{M_{n}}{\lfloor r_{M_{n}}/2\rfloor }.$
Thus there exists a model in $\Gamma _{n}^{^{\prime }}$ with the same rank
of $r_{J_{m}}\leq n/2$ and approximation error, and its complexity is at
most at the same order as $J_{m}.$ Then with the same arguments for the case
of $r_{M_{n}}\geq n/2$, we again conclude that adding the models with size $%
n/2<m\leq M_{n}$ does not affect the validity of the risk bound given in
part (ii) of Theorem \ref{UpABCunknown}. Thus, the general risk bound is 
\begin{eqnarray*}
&&E(ASE(\hat{f}_{\hat{J}^{\prime }})) \\
&\leq &B\left\{ \left( \left\Vert \bar{f}_{J_{M_{n}}}-{f}_{0}^{n}\right\Vert
_{n}^{2}+\inf_{J_{m}:1\leq m\leq n/2}\left( \left\Vert \bar{f}_{J_{m}}-\bar{f%
}_{J_{M_{n}}}\right\Vert _{n}^{2}+\frac{\sigma ^{2}r_{J_{m}}}{n}+\frac{%
2\lambda \sigma ^{2}C_{J_{m}}}{n}\right) \wedge \right. \right. \\
&&\left. \left. \frac{\sigma ^{2}r_{M_{n}}}{n}\right) \right\} \wedge
B\left( \left( \Vert \bar{f}_{J_{0}}-f_{0}^{n}\Vert _{n}^{2}+\frac{\sigma
^{2}}{n}\right) \wedge \sigma ^{2}\right) \\
&\leq &B^{\prime }\left( \Vert \bar{f}_{J_{M_{n}}}-f_{0}^{n}\Vert
_{n}^{2}+\inf_{J_{m}:1\leq m<M_{n}}\left( \Vert \bar{f}_{J_{m}}-\bar{f}%
_{J_{M_{n}}}\Vert _{n}^{2}+\frac{\sigma ^{2}r_{J_{m}}}{n}+\frac{\sigma
^{2}\log (M_{n}\wedge n)}{n}\right. \right. \\
&&\left. \left. +\frac{\sigma ^{2}\log {\binom{M_{n}}{m}}}{n}\right) \wedge 
\frac{\sigma ^{2}r_{M_{n}}}{n}\right) \wedge B^{\prime }\left( \left( \Vert 
\bar{f}_{J_{0}}-f_{0}^{n}\Vert _{n}^{2}+\frac{\sigma ^{2}}{n}\right) \wedge
\sigma ^{2}\right) .
\end{eqnarray*}%
For $f_{0}^{n}\in \mathcal{F}_{q}(t_{n};M_{n})$ and any $1\leq m<M_{n}$,
there exists a subset $J_{m}$ and $f_{\theta ^{m}}\in \mathcal{F}_{J_{m}}$
such that the inequality (\ref{UpABC'}) holds. When $m_{\ast }=m^{\ast
}=M_{n}\wedge n$, the full model $J_{M_{n}}$ leads to an upper bound of
order $\frac{\sigma ^{2}r_{M_{n}}}{n}$. When $1<m_{\ast }<M_{n}\wedge n$, we
get the desired upper bounds by choosing $J_{m_{\ast }}$ and $J_{M_{n}}$
when evaluating the index of resolvability. When $m_{\ast }=1 $, taking
models $J_{0}$ and $J_{M_{n}}$ results in the desired upper bound.
\end{enumerate}

\end{proof}

%%%%%%%%%%%%%%% Theorem 11 %%%%%%%%%%%%%%%%%%%%%%
\vspace{.2in} \textit{Proof of Theorem \ref{Baraud}:}

\begin{proof}
The proof is similar to that of Theorem \ref{UpABCunknown} except that we
use the oracle inequality (4.7) in \cite{Baraudetal2009} instead of that in
Proposition \ref{Yang1999b} (and there is no need to consider the different
scenarios). Note that if $M_{n}\leq \left( n-7\right) \wedge {\varsigma n}, $
then $m\vee \log {\binom{M_{n}}{m}<\varsigma n}$ for all $1\leq m\leq $ $%
M_{n}.$ Thus all subset models are allowed by the BGH criterion. When $M_{n}$
is larger, however, the conditions required in Corollary 1 of \cite%
{Baraudetal2009} may invalidate the choice of $m_{\ast }$ or $k_{n}$ when it
is too large, hence the upper bound assumption on $m_{\ast }$ and $k_{n}$.
We skip the details of the proof.

\end{proof}

%%%%%%%%%%%%%%%%%%%%%%%%% 1. References %%%%%%%%%%%%%%%%%%
%--------------------------------------------------------------------------------------------------------------------------------------
%\vspace*{.2in}
\newpage

\section*{Acknowledgments}

We thank Yannick Baraud for helpful discussions and for pointing out the
work of Audibert and Catoni (2010) that helped us to remove a logarithmic
factor in some of our previous aggregation risk bounds and for his
suggestion of handling the case of fully unknown error variance for fixed
design. Sandra Paterlini conducted part of this research while visiting the
School of Mathematics, University of Minnesota.

\vspace*{.2in}


\begin{thebibliography}{99}
\bibitem{Akaike1970} \textsc{Akaike, H.} (1970) Statistical prediction
identification. \textit{Ann. Inst. Statist. Math.} \textbf{22}, 203-217.

\bibitem{Audibert2007} \textsc{Audibert, J.-Y.} (2007) No fast exponential
deviation inequalities for the progressive mixture rule. Available at
http://arxiv.org/abs/math.ST/0703848v1.

\bibitem{Audibert2009} \textsc{Audibert, J.-Y.} (2009) Fast learning rates
in statistical inference through aggregation. \textit{Annals of Statistics} 
\textbf{37} 1591-1646.

\bibitem{AudibertCatoni2010} \textsc{Audibert, J.-Y} and \textsc{Catoni, O.}
(2010) Risk bounds in linear regression through PAC-Bayesian truncation.
Available at arXiv:1010.0072.

\bibitem{Baraud2000} \textsc{Baraud, Y.} (2000) Model selection for
regression on a fixed design. \textit{Probability Theory Related Fields} 
\textbf{117} 467-493.

\bibitem{Baraud2002} \textsc{Baraud, Y.} (2002) Model selection for
regression on a random design. \textit{ESAIM Probab.Statist.} \textbf{6}
127-146.

\bibitem{Baraudetal2009} \textsc{Baraud, Y., Giraud, C. and Huet, S. }(2009)
Gaussian model selection with an unknown variance. \textit{Ann. Statist.} 
\textbf{37} 630-672.

\bibitem{Barron1993} \textsc{Barron, A. R.} (1993) Universal approximation
bounds for superpositions of a sigmoidal function. \textit{IEEE Transactions
on Information Theory} \textbf{39} 930-945.

\bibitem{Barron1994} \textsc{Barron, A .R.} (1994) Approximation and
estimation bounds for artificial neural networks. \textit{Machine Learning} 
\textbf{14} 115-133.

\bibitem{BarronCover1991} \textsc{Barron, A. R.} and \textsc{Cover, T. M.}
(1991) Minimum complexity density estimation. \textit{IEEE Transactions on
Information Theory} \textbf{37} 1034-1054.

\bibitem{Barronetal1999} \textsc{Barron, A. R.}, \textsc{Birg\'{e}, L.} and 
\textsc{Massart, P.} (1999) Risk bounds for model selection via
penalization. \textit{Probability Theory and Related Fields} \textbf{113}
301-413.

\bibitem{Barronetal2008} \textsc{Barron, A. R., Cohen, A., Dahmen, W.}, and 
\textsc{Devore, R.} (2008). Approximation and learning by greedy algorithms. 
\textit{Annals of Statistics} \textbf{36}, 64-94.

\bibitem{Bickeletal2009} \textsc{Bickel, P. J.}, \textsc{Ritov, Y.} and 
\textsc{Tsybakov, A. B.} (2009) Simultaneous analysis of Lasso and Dantzig
selector. \textit{Annals of Statistics} \textbf{37} 1705-1732.

\bibitem{Birge1986} \textsc{Birg\'{e}, L.} (1986) On estimating a density
using Hellinger distance and some other strange facts. \textit{Probab.
Theory Related Fields} \textbf{97} 113-150.

\bibitem{Birge2004} \textsc{Birg\'{e}, L.} (2004) Model selection for
Gaussian regression with random design. \textit{Bernoulli} \textbf{10}
1039-1051.

\bibitem{Birge2006} \textsc{Birg\'{e}, L.} (2006) Model selection via
testing: an alternative to (penalized) maximum likelihood estimators. 
\textit{Ann. I. H. Poincar\'{e}} \textbf{42} 273-325.

\bibitem{BirgeMassart1998} \textsc{Birg\'{e}, L. }and\textsc{\ Massart, P. }%
(1998) Minimum Contrast Estimators on Sieves: Exponential Bounds and Rates
of Convergence. \textit{Bernoulli} \textbf{4} 329-375.

\bibitem{BirgeMassart2001} \textsc{Birg\'{e}, L. }and\textsc{\ Massart, P. }%
(2001) Gaussian model selection. \textit{Journal of European Math. Society} 
\textbf{3} 203-268.

%\bibitem{Bucklandetal1997} \textsc{Buckland, S.T.}, \textsc{Burnham, K.P.} and \textsc{Augustin, N.H.} (1997) Model selection: an integral part of inference. \textit{Biometrics} \textbf{53} 603-618.

\bibitem{Buneaetal2007} \textsc{Bunea, F.}, \textsc{Tsybakov, A.B.} and 
\textsc{Wegkamp, M. H.} (2007) Aggregation for Gaussian regression. \textit{%
Annals of Statistics} \textbf{35} 1674-1697.

\bibitem{BuneaNobel2008} \textsc{Bunea, F.} and \textsc{Nobel, A.} (2008)
Sequential procedures for aggregating arbitrary estimators of a conditional
mean. \textit{IEEE Transactions on Information Theory} \textbf{54} 1725 -
1735.

\bibitem{CandesTao2007} \textsc{Candes, E.} and \textsc{Tao, T.} (2007) The
Dantzig selector: Statistical estimation when $p$ is much larger than $n$. 
\textit{Annals of Statistics} \textbf{35} 2313-2351.

\bibitem{Catoni1997} \textsc{Catoni, O.} (1997) The mixture approach to
universal model selection. Preprint LMENS-97-30, Ecole Normale Sup\mbox{\`e}%
rieure, Paris, France.

\bibitem{Catoni1999} \textsc{Catoni, O.} (1999) `Universal' aggregation
rules with exact bias bounds. Preprint 510, Laboratoire de Probabilit$\grave{%
e}$s et Mod$\grave{e}$les Al\'{e}atoires, Univ. Paris 6 and Paris 7.
Available at http://www.proba.jussieu.fr/mathdoc/preprints/index.html\#1999.

\bibitem{Catoni2004} \textsc{Catoni, O.} (2004) \textit{Statistical Learning
Theory and Stochastic Optimization.} \textit{Ecole d'Et\'{e} de Probabilit%
\'{e}s de Saint-Flour XXXI--2001, Lectures Notes in Mathematics} \textbf{1851%
}, Springer, New York.

%\bibitem{Chartrand2007} \textcolor{red}{DELETE} \textsc{Chartrand, R.} (2007) Exact reconstructions of sparse signals via nonconvex minimization. \textit{IEEE Signal Process. Lett.} \textbf{14} 707-710.

%\bibitem{ChartrandStaneva2008} \textcolor{red}{DELETE} \textsc{Chartrand, R.} and \textsc{Staneva, V.} (2008) Restricted isometry properties and nonconvex compressive sensing. \textit{Inverse Problems} \textbf{24} 1-14.

\bibitem{Chenetal1998} \textsc{Chen, D.}, \textsc{Donoho, D. L.} and \textsc{%
Saunders, M. A.} (1998) Atomic decomposition by basis pursuit. \textit{SIAM
J. Sci. Computing} \textbf{20} 33-61.

%\bibitem{ChenYang2010} \textsc{Chen, L.} and \textsc{Yang, Y.} (2011) High-dimensional data analysis. \textit{Frontiers of Statistics}, Volume 2, 2011. World Scientific \& Higher Education Press. Editors T. Tony and X. Shen. \textbf{Chapter 10} 275-298.

\bibitem{DalalyanTsybakov2007} \textsc{Dalalyan, A. S.} and \textsc{%
Tsybakov, A. B.} (2007) Aggregation by exponential weighting and sharp
oracle inequalities. \textit{Lecture Notes in Computer Science} \textbf{4539}
97-111.

\bibitem{DalalyanTsybakov2009} \textsc{Dalalyan, A. S.} and \textsc{%
Tsybakov, A. B.} (2009) Sparse regression learning by aggregation and
Langevin Monte Carlo. Available at ArXiv:0903.1223.

\bibitem{Donoho1993} \textsc{Donoho, L.} ( 1993) Unconditional Bases are
optimal bases for data compression and for statistical estimation. \textit{%
Appl. Computational Harmonic analysis} \textbf{1} 100-115.

%\bibitem{Donoho2001} \textsc{Donoho, D. L.} (2001) Sparse components of images and optimal atomic decomposition. \textit{Constructive Approximation} \textbf{17} 353-382.

\bibitem{Donoho2006} \textsc{Donoho, D. L.} (2006) Compressed sensing. 
\textit{IEEE Transactions on Information Theory} \textbf{52} 1289-1306.

\bibitem{DonohoJohnstone1994} \textsc{Donoho, D. L.} and \textsc{Johnstone,
I. M.} (1994) Minimax risk over $\ell_p$-balls for $\ell_q$-error. \textit{%
Prob. Theory and Related Fields} \textbf{99} 277-303.

\bibitem{Donohoetal1996} \textsc{Donoho, D. L., Johnstone, I. M.,
Kerkyacharian, G.} and \textsc{Picard, D.} (1996) Density estimation by
wavelet thresholding. \textit{Annals of Statistics} \textbf{24} 508-539.

%\bibitem{EdmundsTriebel1989} \textsc{Edmunds, D. E.} and \textsc{Triebel H.}
%(1989) Entropy numbers and approximation numbers in function spaces. \textit{%
%Proc. London Math. Soc.} \textbf{58} 137-152. %\U{9225}"152.

\bibitem{EdmundsTriebel1998} \textsc{Edmunds, D. E.} and \textsc{Triebel H.}
(1998) Function spaces, entropy numbers, and differential operators. \textit{%
Cambridge Tracts in Mathematics} \textbf{120}. Cambridge University Press.

%\bibitem{FanLi2001} \textsc{Fan, J.} and \textsc{Li, R.} (2001), Variable Selection via Nonconcave Penalized Likelihood and Its Oracle Properties.  \textit{Journal of the American Statistical Association} \textbf{96} 1348-1360.

\bibitem{Gassoetal2009} \textsc{Gasso, G.}, \textsc{Rakotomamonjy, A.} and 
\textsc{Canu, S.} (2009) Recovering sparse signals with non-convex penalties
and DC programming. \textit{IEEE Trans. Signal Processing} \textbf{57}
4686-4698.

\bibitem{GeerBuhlmann2009} \textsc{van de Geer, S.A.} and \textsc{B\mbox{\"u}%
hlmann, P.} (2009) On the conditions used to prove oracle results for the
Lasso. \textit{Electron. J. Statist.} \textbf{3} 1360-1392.

\bibitem{Giraud2008} \textsc{Giraud, C.} (2008) Mixing least-squares
estimators when the variance is unknown. \textit{Bernoulli} \textbf{14}
1089-1107.

\bibitem{Goldenshluger2009} \textsc{Goldenshluger, A.} (2009) A universal
procedure for aggregating estimators. \textit{Annals of Statistics} \textbf{%
37} 542-568.

\bibitem{GreenshteinRitov2004} \textsc{Greenshtein, E.} and \textsc{Ritov, Y.%
}, (2004) Persistency in high dimensional linear predictor-selection and the
virtue of overparametrization. \textit{Bernoulli} \textbf{10} 971-988.

\bibitem{Gyorfietal2002} \textsc{Gy\mbox{\"o}rfi, L.}, \textsc{Kohler, M.}, 
\textsc{Krzy\mbox{\.z}ak, A.} and \textsc{Walk, H.} (2002) \textit{A
Distribution-Free Theory of Nonparametric Regression.} Springer, New York.

\bibitem{GyorfiOttucsak2007} \textsc{Gy\"{o}rfi, L.} and \textsc{Ottucs\'{a}%
k, Gy.} (2007) Sequential prediction of unbounded stationary time series. 
\textit{IEEE Transactions on Information Theory} \textbf{53} 1866-1872.

\bibitem{Huangetal2008} \textsc{Huang, C.}, \textsc{Cheang, G. H. L.} and 
\textsc{Barron, A. R.} (2008) Risk of penalized least squares, greedy
selection and $L_{1}$-penalization for flexible function libraries.
Manuscript.

\bibitem{Ing2010} \textsc{Ing, C-K.} (2010) On the optimal convergence rate
of orthogonal greedy algorithms in high-dimensional sparse regression
models. Presented at \textit{The Eighth ICSA International Conference:
Frontiers of Interdisciplinary and Methodological Statistical Research},
Guangzhou, China, December 19-22, 2010.

\bibitem{JuditskyNemirovski2000} \textsc{Juditsky, A.} and \textsc{%
Nemirovski, A.} (2000) Functional aggregation for nonparametric estimation. 
\textit{Annals of Statistics} \textbf{28} 681-719.

\bibitem{Koltchinskii2009a} \textsc{Koltchinskii, V.} (2009) The Dantzig
selector and sparsity oracle inequalities. \textit{Bernoulli} \textbf{15}
799-828.

\bibitem{Koltchinskii2009b} \textsc{Koltchinskii, V.} (2009) Sparsity in
penalized empirical risk minimization. \textit{Ann. Inst. H. Poincar\'{e}
Probab. Statist.} \textbf{45} 7-57.

\bibitem{Kuhn2001} \textsc{K\"{u}hn, T.} (2001) A lower estimate for entropy
numbers. \textit{Journal of Approximation Theory} \textbf{110} 120-124.

%\bibitem{Lecue2007} \textsc{Lecu\'{e}, G.} (2007) Simultaneous adaptation to the margin and to complexity in classification. \textit{Annals of Statistics} \textbf{35} 1698-1721.

\bibitem{LeungBarron2006} \textsc{Leung, G.} and \textsc{Barron, A. R.}
(2006) Information theory and mixing least-squares regressions. \textit{IEEE
Transactions on Information Theory} \textbf{52} 3396-3410.

\bibitem{LecueMendelson2010} \textsc{Lecu\'{e}, G.} and \textsc{Mendelson S.}
(2010) Optimality of the aggregate with exponential weights for low
temperatures. Manuscript.

\bibitem{Li1987} \textsc{Li, K.C. (1987)} Asymptotic optimality for $C_{p}$, 
$C_{l}$, cross-validation and generalized cross-validation: discrete index
set. \textit{Annals of Statistics} \textbf{15} 958-975.

\bibitem{Lounici2007} \textsc{Lounici, K.} (2007) Generalized mirror
averaging and $D$-convex aggregation. \textit{Mathematical methods of
statistics} \textbf{16} 246-259.

\bibitem{MeinshausenBuhlmann2006} \textsc{Meinshausen, N.} and \textsc{B%
\mbox{\"u}hlmann, P.} (2006) High-dimensional graphs and variable selection
with the lasso. \textit{Annals of Statistics} \textbf{34} 1436-1462.

\bibitem{MeinshausenYu2009} \textsc{Meinshausen, N.} and \textsc{Yu, B.}
(2009) Lasso-type recovery of sparse representations for high-dimensional
data. \textit{Annals of Statistics} \textbf{37} 246-270.

\bibitem{Nemirovski2000} \textsc{Nemirovski, A.} (2000) \textit{Topics in
Non-parametric Statistics.} In: P. Bernard, editor, \textit{Ecole d'Et\'{e}
de Probabilit\'{e}s de Saint-Flour 1998} volume XXVIII of \textit{Lecture
Notes in Mathematics} \textbf{1738} Springer, New York.

\bibitem{Raskuttietal2010} \textsc{Raskutti, G.}, \textsc{Wainwright, M.}
and \textsc{Yu, B.} (2011) Minimax rates of estimation for high-dimensional
linear regression over $\ell_q$-balls. \textit{IEEE Transactions on
Information Theory} \textbf{57} 6976-6994.

\bibitem{RigolletTsybakov2011} \textsc{Rigollet, P.} and \textsc{Tsybakov,
A. B.} (2010) Exponential screening and optimal rates of sparse estimation 
\textit{Annals of Statistics} \textbf{39} 731-771.

\bibitem{ShenWong1994} \textsc{Shen, X. and Wong, W.H.} (1994) Convergence
rates of sieve estimates. \textit{Annals of Statistics} \textbf{22},
580-615.

\bibitem{Shibata1981} \textsc{Shibata, R.} (1981) An optimal selection of
regression variables. \textit{Biometrika} \textbf{68} 45-54.

\bibitem{Tibshirani1996} \textsc{Tibshirani, R.} (1996) Regression shrinkage
and selection via the lasso. \textit{Journal of Royal Statistical Society B} 
\textbf{58} 267-288.

\bibitem{Tsybakov2003} \textsc{Tsybakov, A.B.} (2003) Optimal rates of
aggregation. In: \textit{Proceedings of 16th Annual Conference on Learning
Theory (COLT)} and \textit{7th Annual Workshop on Kernel Machines}. Lecture
Notes in Artificial Intelligence, \textbf{2777} 303-313. Springer-Verlag,
Heidelberg.

\bibitem{vandegeer1995} \textsc{van de Geer, S.A.} (1995) The method of
sieves and minimum contrast estimators. \textit{Math. Methods Statist.} 
\textbf{4}, 20-38.

\bibitem{vandegeer2008} \textsc{van de Geer, S.A.} (2008) High-dimensional
generalized linear models and the Lasso. \textit{Annals of Statistics} 
\textbf{36} 614-645.

\bibitem{Verzelen2010} \textsc{Verzelen, N.} (2010) Minimax risks for sparse
regressions: Ultra-high-dimensional phenomenons. Arxiv preprint
arXiv:1008.0526.

\bibitem{Wainwright2009} \textsc{Wainwright, M.J.} (2009) Sharp thresholds
for high-dimensional and noisy sparsity recovery using $\ell_1$-constrained
quadratic programming (lasso). \textit{\ IEEE Trans. Information Theory} 
\textbf{55} 2183-2202.

\bibitem{Wegkamp2003} \textsc{Wegkamp, M.} (2003) Model selection in
nonparametric regression. \textit{Annals of Statistics} \textbf{31} 252-273.

\bibitem{Yang1996} \textsc{Yang, Y.} (1996) \textquotedblleft Minimax
optimal density estimation\textquotedblright ,\ Ph.D. Dissertation,
Department of Statistics, Yale University, May 1996.

\bibitem{Yang1999} \textsc{Yang, Y.} (1999) Model selection for
nonparametric regression. \textit{Statistica Sinica}, \textbf{9} 475-499.

\bibitem{Yang2000b} \textsc{Yang, Y.} (2000) Mixing strategies for density
estimation. \textit{Annals of Statistics}, \textbf{28} 75-87.

\bibitem{Yang2000} \textsc{Yang, Y.} (2000) Combining different procedures
for adaptive regression. \textit{Journal of Multivariate Analysis} \textbf{74%
} 135-161.

\bibitem{Yang2001} \textsc{Yang, Y.} (2001) Adaptive regression by mixing. 
\textit{Journal of American Statistical Association} \textbf{96} 574-588.

\bibitem{Yang2004} \textsc{Yang, Y.} (2004) Aggregating regression
procedures to improve performance. \textit{Bernoulli} \textbf{10} 25-47. An
older version of the paper is Pre-print \#1999-17 of Department of
Statistics at Iowa State University.

\bibitem{Yang2008} \textsc{Yang, Y.} (2008) Localized model selection for
regression. \textit{Econometric Theory} \textbf{24}, 472-492.

\bibitem{YangBarron1998} \textsc{Yang, Y.} and \textsc{Barron, A.R.} (1998)
An asymptotic property of model selection criteria. \textit{IEEE Trans.
Inform. Theory} \textbf{44} 95-116.

\bibitem{YangBarron1999} \textsc{Yang, Y.} and \textsc{Barron, A.R.} (1999)
Information theoretic determination of minimax rates of convergence. \textit{%
Annals of Statistics}, \textbf{27} 1564-1599.

\bibitem{YeZhang2010} \textsc{Ye, F.} and \textsc{Zhang, C.H.} (2010) Rate
minimaxity of the Lasso and Dantzig selector for the $\ell_q$ loss in $%
\ell_r $ balls. \textit{Journal of Machine Learning Research} \textbf{11}
3519-2540.

\bibitem{YuanYang2005} \textsc{Yuan, Z.} and \textsc{Yang, Y.} (2005)
Combining linear regression models: When and how? \textit{Journal of the
American Statistical Association} \textbf{100} 1202-1214.

\bibitem{ZhangCH2010} \textsc{Zhang, C.H. }(2010) Nearly unbiased variable
selection under minimax concave penalty. \textit{Annals of Statistics} 
\textbf{38} 894-942.

\bibitem{ZhangHuang2008} \textsc{Zhang, C.H.} and \textsc{Huang, J. }(2008)
The sparsity and bias of the lasso selection in high-dimensional linear
regression. \textit{Annals of Statistics} \textbf{36} 1567-1594.

\bibitem{ZhangT2009} \textsc{Zhang, T.} (2009) Some sharp performance bounds
for least squares regression with $L_{1}$ regularization. \textit{Annals of
Statistics} \textbf{37} 2109-2144.

\bibitem{Zhang2010} \textsc{Zhang, T.} (2010) Analysis of multi-stage convex
relaxation for sparse regularization. \textit{Journal of Machine Learning
Research} \textbf{11} 1081-1107.

%\bibitem{ZhaoYu2006} \textsc{Zhao, P.} and \textsc{Yu, B.} (2006) On model selection consistency of lasso. \textit{Journal of Machine Learning Research} \textbf{7} 2541-2563.

%\bibitem{Zou2006} \textsc{Zou, H.} (2006) The adaptive Lasso and its oracle properties. \textit{Journal of the American Statistical Association} \textbf{101} 1418-1429.
\end{thebibliography}
\end{document}